\newcommand{\ds}{\displaystyle}
\newcommand{\tensor}{\otimes}
\newcommand{\leftsub}[2]{{\vphantom{#2}}_{#1}{#2}} 
\newcommand{\xycirc}[2]{\leftsub{#1}{\circ}_{#2}}
\newcommand{\op}{\mathcal}
\newcommand{\fr}{\mathfrak}
\newcommand{\cdc}{,\dots,}
\newcommand{\tdt}{\tensor\dots\tensor}
	\newcommand{\fcops}{\mathbb{O}}
	\newcommand{\fccyc}{\mathbb{C}}
	\newcommand{\fcdi}{\mathbb{DO}}
	\newcommand{\fcmod}{\mathbb{M}}
	\newcommand{\meta}{k[C_{\ast+1}]}
\numberwithin{equation}{section}
\newtheorem{theorem}{Theorem}[section]
\theoremstyle{plain}
\newtheorem{assumption}[theorem]{Assumption}
\newtheorem*{nonumbertheoremA}{Theorem A}
\newtheorem*{nonumbertheoremB}{Theorem B}
\newtheorem*{nonumbertheoremC}{Theorem C}
\newtheorem*{nonumbertheoremD}{Theorem D}
\newtheorem{corollary}[theorem]{Corollary}
\newtheorem{lemma}[theorem]{Lemma}
\newtheorem{proposition}[theorem]{Proposition}
\theoremstyle{definition}
\newtheorem{definition}[theorem]{Definition}
\newtheorem{remark}[theorem]{Remark}
\newtheorem{calculation}[theorem]{Calculation}
\begin{document}

\title{Six operations formalism for generalized operads}
\author{Benjamin C. Ward}
\email{bward@math.su.se}


\begin{abstract}  This paper shows that generalizations of operads equipped with their respective bar/cobar dualities are related by a six operations formalism analogous to that of classical contexts in algebraic geometry.  As a consequence of our constructions, we prove intertwining theorems which govern derived Koszul duality of push-forwards and pull-backs.
\end{abstract}
\maketitle
\tableofcontents

\section{Introduction.}

 The crux of the homotopical algebra of dg algebras and operads is the property of cofibrancy.  To produce explicit cofibrant resolutions of these objects one may use the bar/cobar constructions, either by a single application to a Koszul dual object or by double application in general.  By the ``Feynman transform'' we refer to a generalization of the bar/cobar construction which may be applied within categories of dg operads, cyclic operads, modular operads, dioperads, properads, etc, and we will denote the Feynman transform by $D$ (a notational choice suggested by the results of this paper).  We will refer to application of the Feynman transform as (derived) Koszul duality.

The mantra of this paper holds that it is desirable to consider Koszul duality not as an aspect of these categories separately, but rather as a construction which intertwines the relationships (i.e.\ functors) between them.  The paradigm is Verdier duality.  Namely, Verdier duality provides a duality theory in the category of sheaves over a fixed space, but moreover intertwines natural operations of push-forward and pull-back associated to suitable continuous maps between spaces.  The categorical formulations of sheaf theory and of generalized operads via Feynman categories \cite{KW} tightens this analogy.  In particular, both classes of objects can be described as locally defined functors from categories whose morphisms take a prescribed simple form.  The following table presents this analogy:

\bigskip
\begin{center}
	\begin{tabular}{r||c|c}  & Verdier Duality (sheaves) &  Koszul Duality (gen.\ operads) \\ \hline\hline 
		Encoding   &    $Open(X)$,       & $\mathbb{F}$  \\ 
		category&         category of open sets              &   a Feynman category   \\  \hline
		Functors        &    presheaves                &$\mathbb{V}$-modules \\ 
		&        &                              \\ \hline
		Functors       &     sheaves             &$\mathbb{F}$-operads \\ 
		+gluing axiom      &   &                             \\ \hline
		Interposing &  $Open(Y)\to Open(X)$ &$\mathbb{F}\stackrel{f}\to\mathbb{F}^\prime$ \\  
		functor      &  via continuous $f\colon X\to Y$       &      \\ \hline
		Pull-back      &  inverse image sheaf         &  composition   \\ 
		of $f$&        $f^*$       &         $-\circ f$      \\ \hline
		Push-forward      &  direct image sheaf    & left Kan extension\\ 
		of $f$ &    $f_\ast$         &     $Lan_{f}(-)$        \\ \hline
		Dualizing  &   $D_X= RHom(-,\omega_X)$  &  $D =$ (co)bar construction or\\ 
	functor	&    & Feynman transform     \\ \hline
		Intertwining    &  push-forward w/ compact support $f_!$ & Novel construction \\
		adjunction  &   and  exceptional pull-back $f^!$   & of this paper.                    
	\end{tabular}
\end{center}
\bigskip

The existence of a pair of adjunctions intertwined by duality, along with compatible closed monoidal structures, is sometimes known as Grothendieck's six operations formalism.  In the language of symmetric monoidal categories, this structure was studied in \cite{FHM}, who disentangled the axiomatics from their implications.  Following their work, we present the ``six operations formalism'' (Definition $\ref{6def}$) as two pairs of adjunctions which satisfy the projection formula.  This structure is most useful in the presence of compatible dualizing objects, and we call such a six operations formalism ``effective'' (Definition $\ref{e6ops}$).  

To state the first result of this paper we fix a field $k$ of characteristic zero and define $\mathbb{F}$-$\op{O}ps$ to be the category of symmetric monoidal functors $\mathbb{F}\to dgVect_k$.  We recall from \cite{KW} that $\mathbb{F}$-$\op{O}ps$ is a model category with level-wise weak equivalences and fibrations.  We also recall that $\mathbb{F}$-$\op{O}ps$ is a symmetric monoidal category with level-wise tensor product.  We call the unit of this monoidal structure $I$. 

\begin{nonumbertheoremA}
	Let $f\colon \mathbb{F}_1\to\mathbb{F}_2$ be a (nice) morphism of (nice) Feynman categories.  Then:
\begin{enumerate}
	\item  The standard adjunction $f_\ast \colon \mathbb{F}_1$-$\op{O}ps \leftrightarrows \mathbb{F}_2$-$\op{O}ps \colon f^\ast$ extends to a six operations formalism from $\mathbb{F}_1\text{-}\op{O}ps$ to $\mathbb{F}_2\text{-}\op{O}ps$.
	\item  The six-operations formalism of (1) passes to the homotopy categories.  Moreover, it extends to an effective six-operations formalism from $Ho(\mathbb{F}_1\text{-}\op{O}ps)$ to $Ho(\mathbb{F}_2\text{-}\op{O}ps)$ with dualizing object $D(I_2)$.
\end{enumerate}	
	
\end{nonumbertheoremA}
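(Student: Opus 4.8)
The plan is to verify the axioms of Definition~\ref{6def} by exhibiting the external functors of the formalism as the members of the Kan-extension triple attached to $f$, and then to promote the whole package to homotopy categories, where the Feynman transform $D$ supplies the duality that upgrades it to an effective formalism in the sense of Definition~\ref{e6ops}. The organizing observation throughout is that restriction $f^\ast=-\circ f$ is \emph{strong} symmetric monoidal: since the tensor product is level-wise and $dgVect_k$ is closed symmetric monoidal, one has $f^\ast(G_1\otimes G_2)(X)=G_1(f(X))\otimes G_2(f(X))=(f^\ast G_1\otimes f^\ast G_2)(X)$ on the nose. This strong monoidality is exactly the input the axiomatics of \cite{FHM} require, and it is the source of the projection formula.

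For part (1), I would first use the niceness hypotheses to produce, \emph{within} $\mathbb{F}_i\text{-}\op{O}ps$, both adjoints of $f^\ast$, i.e.\ the triple $\mathrm{Lan}_f\dashv f^\ast\dashv\mathrm{Ran}_f$ with $f_\ast=\mathrm{Lan}_f$; the content here is that left and right Kan extension along $f$ preserve the symmetric monoidal functor property, which is what niceness buys (cf.~\cite{KW}). The two halves of this triple furnish the two adjoint pairs demanded by Definition~\ref{6def}, and I would recall or construct from \cite{KW} the internal hom $\underline{\mathrm{Hom}}$ making each $\mathbb{F}_i\text{-}\op{O}ps$ closed symmetric monoidal. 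It then remains to check the projection formula: strong monoidality of $f^\ast$ produces by mate calculus a canonical comparison $f_\ast(X\otimes f^\ast Y)\to f_\ast X\otimes Y$, and I would prove it an isomorphism by writing $\mathrm{Lan}_f$ as a coend and commuting it past the level-wise tensor, which is legitimate because $\otimes$ preserves colimits variable-wise. The remaining coherences of Definition~\ref{6def} then follow formally from the two adjunctions together with the strong monoidal structure of $f^\ast$.

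For part (2), I would check that the level-wise model structure of \cite{KW} makes $\mathbb{F}_i\text{-}\op{O}ps$ a symmetric monoidal model category (pushout-product and unit axioms) and that both adjunctions are Quillen: $f^\ast$ preserves fibrations and weak equivalences because these are level-wise, so $(f_\ast,f^\ast)$ and $(f^\ast,\mathrm{Ran}_f)$ are both Quillen pairs. Deriving everything yields the six operations on $Ho(\mathbb{F}_i\text{-}\op{O}ps)$, the derived projection formula being checked on cofibrant objects via the fact that $\otimes$ is a left Quillen bifunctor. Effectivity is where the Feynman transform enters: I would show that $D$ descends to a contravariant involution (biduality $D^2\simeq\mathrm{id}$ on the dualizable objects) and that $\omega:=D(I_2)$ is a dualizing object, so that the induced Verdier duality $\underline{\mathrm{Hom}}(-,\omega)$ agrees with $D$. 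Granting this, the exceptional pair is realized as the Verdier conjugates $f_!\simeq D\,f^\ast D$ and $f^!\simeq D\,f_\ast D$; the adjunction $f_!\dashv f^!$ falls out of $f_\ast\dashv f^\ast$ together with $D^2\simeq\mathrm{id}$, and the intertwining isomorphisms $D\,f_\ast\simeq f^!\,D$ and $D\,f^\ast\simeq f_!\,D$ are then exactly the assertion that $D$ carries one adjoint pair to the other, identifying the second derived pair (up to the dualizing object $\omega$) with the conjugate of the first.

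The main obstacle is this final step: showing that $D(I_2)$ is a genuine dualizing object and that the Feynman transform---built from bar/cobar rather than from an a priori internal hom---implements Verdier duality compatibly with $f_\ast$ and $f^\ast$. Concretely, the hard points are biduality $D^2\simeq\mathrm{id}$ on $Ho(\mathbb{F}_i\text{-}\op{O}ps)$ and the intertwining $D\,f_\ast\simeq f^!\,D$, which requires controlling the interaction of $D$ with the colimit defining $\mathrm{Lan}_f$: a contravariant duality should exchange $\mathrm{Lan}_f$ and $\mathrm{Ran}_f$, and this must be made precise at the derived level. This is where the characteristic-zero hypothesis and the cofibrancy properties of the model structure are indispensable, and it constitutes the ``intertwining theorems'' flagged in the abstract. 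By comparison, the strict statement (1) is comparatively formal once the Kan extensions are known to stay inside $\mathbb{F}_i\text{-}\op{O}ps$.
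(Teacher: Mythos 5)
There is a genuine gap, and it is precisely the one the paper warns about: the exceptional adjunction cannot be taken to be $(f^\ast,\mathrm{Ran}_f)$. In Definition~\ref{6def} the second adjunction $(f^!,f_!)$ must satisfy the projection formula $c\tensor f_!(d)\cong f_!(f^\ast(c)\tensor d)$ --- a condition on $f_!$, not on $f_\ast$ --- and $\mathrm{Ran}_f$ does not satisfy it; worse, $\mathrm{Ran}_f$ is a priori only \emph{lax} monoidal, so it need not even produce objects of $\mathbb{F}_2$-$\op{O}ps$. It agrees with the correct $f_!$ only in the co-proper (Wirthm{\"u}ller) case (Proposition~\ref{coprop}); for the morphism from dioperads to cyclic operads all four functors $(f_\ast,f^\ast,f^!,f_!)$ are distinct, so no Kan-extension triple can supply them. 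The paper instead constructs $f_!=L^!$ by hand using the groupoid of vertices, $L^!(\op{P})(v)=\bigoplus_{\phi(w)=v}\left[\op{P}(w)\tensor k[Aut(v)]\right]_{Aut(w)}$, with a generating morphism acting by ``factor through the image of $\phi$ up to an automorphism if possible, else zero''; the projection formula is then verified directly for this functor (Proposition~\ref{projformula}), and $f^!=R^!$ is obtained as the \emph{left} adjoint of $L^!$ via the groupoid-level ambidexterity lemma (Lemma~\ref{amblem}) and the adjoint lifting theorem --- it is neither $f^\ast$ nor any adjoint of $f^\ast$. Your projection-formula check also fails on its own terms: for the level-wise tensor, the comparison $\mathrm{Lan}_f(X\tensor f^\ast Y)\to \mathrm{Lan}_f(X)\tensor Y$ compares colimits of $X(w)\tensor Y(f(w))$ against $X(w)\tensor Y(v)$ over $(f\downarrow v)$, and this is not an isomorphism in general.

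The derived part has related problems. First, the formalism here is the ``right'' (covariant) one, so what is needed is a \emph{co-closed} structure ($cohom$), not an internal hom; the paper produces it on $Ho(\mathbb{F}\text{-}\op{O}ps)$ via the master-equation description of maps out of the Feynman transform, yielding $cohom(\op{O},\op{P})\cong D(D(\op{O})\tensor\op{P})$, from which $D(I_2)$ being dualizing is immediate. Second, your proposed definitions $f_!\simeq Df^\ast D$ and $f^!\simeq Df_\ast D$ are swapped and do not typecheck: $Df^\ast D$ goes from $\mathbb{F}_2$-$\op{O}ps$ to $\mathbb{F}_1$-$\op{O}ps$, which is the direction of $f^!$, not $f_!$ (the paper's dictionary is $f^!\sim DRD$, $f_!\sim DLD$). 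More substantively, in the paper nothing at the derived level is \emph{defined} by conjugation with $D$: the functor $f_!=L^!$ already exists strictly, the intertwining $DL^!\cong LD$ is a theorem proved by an explicit comparison of Feynman-transform differentials (Theorem~\ref{itertwiningthm}), and the derived adjunctions $(LD^2,R)$ and $(R^!D^2,L^!)$ then fall out. So the ``hard points'' you correctly identify at the end (biduality and intertwining) cannot be resolved within your setup, because the functor that is supposed to be intertwined with $f_\ast$ by $D$ is the one your construction never produces.
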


The appearance of the word ``nice'' in the theorem indicates, of course, that some conditions are required for the theorem to hold.  For the first statement, these conditions are spelled out in Definition $\ref{admisdef}$ and are comprised of two technical requirements about factorization of morphisms in the image of $f$, along with the requirement that $f$ is faithful as a functor.  For the second statement, we further restrict our attention to a context in-which $D$ is suitably defined.  This context, given in \cite{KW}, is provided by so-called cubical Feynman categories, a notion we recall in Appendix $\ref{FCA}$.  While it may be possible to weaken these requirements (discussed in section $\ref{discussion}$ below), we emphasize that the theorem applies to the expected examples as we explain below in Section $\ref{examples}$.

Let us emphasize right from the beginning that the proof of Theorem A is not simply an exercise in existence of adjoints in symmetric monoidal categories.  Indeed, our construction of the push-forward $f_!$ depends crucially on the axiomatics of a Feynman category and uses the groupoid of vertices $\mathbb{V}\subset\mathbb{F}$ in an essential way.  This should not be surprising in light of statement (2) of the theorem which indicates compatibility between our constructions and the Feynman transform.  However, after constructing $f_!$ we can reduce the existence of its left adjoint $f^!$ to an ambidexterity result at the level of the groupoid of vertices (Lemma $\ref{amblem}$).  After constructing the adjunction $(f^!,f_!)$, the main technical requirement is then to verify the projection formula (Proposition $\ref{projformula}$).  

As an immediate consequence to Theorem A we find intertwining theorems at the level of homotopy categories which state that $f^!D\sim Df^\ast$ and $f_\ast D \sim Df_!$.  As a consequence of the explicit nature of our constructions of $f^!$ and $f_!$, we are able to exhibit maps realizing these weak equivalences and to moreover show that they are actually isomorphisms:

\begin{nonumbertheoremB}  Let $f\colon \mathbb{F}_1\to\mathbb{F}_2$ be as in Theorem A.  Then there are isomorphisms of functors:
	\begin{equation*}
f^!D\cong Df^\ast \ \ \ \text{and} \ \ \  f_\ast D\cong Df_!
	\end{equation*}
\end{nonumbertheoremB}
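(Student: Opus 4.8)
The plan is to derive the two isomorphisms in two stages. First I would extract the underlying weak equivalences formally from the effective six operations formalism of Theorem A(2): by the abstract results of \cite{FHM}, in any effective six operations formalism the associated duality functor $\mathbb{D}(-)=\underline{\mathrm{Hom}}(-,\omega)$ intertwines the two adjoint pairs, giving $f^!\mathbb{D}\simeq\mathbb{D}f^\ast$ and $f_\ast\mathbb{D}\simeq\mathbb{D}f_!$. Since Theorem A(2) identifies $\mathbb{D}$ with the Feynman transform $D$ and its dualizing object with $D(I_2)$, this already produces the weak equivalences $f^!D\sim Df^\ast$ and $f_\ast D\sim Df_!$. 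The derivation rests on the internal-hom compatibility $f^!\underline{\mathrm{Hom}}(M,N)\cong\underline{\mathrm{Hom}}(f^\ast M,f^!N)$ together with the compatibility of dualizing objects $f^!D(I_2)\cong D(I_1)$; the latter is the special case $M=I_2$ of the first relation, using that the monoidal functor $f^\ast$ sends $I_2$ to $I_1$.

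The real content of Theorem B is that these comparisons are isomorphisms, and for this I would abandon the derived identities --- the involutivity $DD\simeq\mathrm{id}$ is only a weak equivalence and so cannot by itself yield strict isomorphisms --- in favor of the explicit constructions. I would treat $f^!D\cong Df^\ast$ first. The pull-back $f^\ast$ is level-wise restriction along $f$, while the Feynman transform is a quasi-free construction whose underlying (co)free object and bar differential are assembled from the generating morphisms of the ambient Feynman category, twisted by determinant lines on their sets of edges. Consequently $D(f^\ast\op{O})$, evaluated at an object $X$ of $\mathbb{F}_1$, is a complex indexed by the $\mathbb{F}_1$-decomposition data of $X$ with $\op{O}$ evaluated on the $f$-images of the sources, whereas $f^!(D\op{O})$ at $X$ is the ambidextrous transfer (Lemma \ref{amblem}) of $(D\op{O})(f(X))$, a complex indexed by the $\mathbb{F}_2$-decomposition data of $f(X)$. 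The functor $f$ induces a map between these indexing data, and I would use it to build a natural chain map $Df^\ast\to f^!D$. Because every functor in sight is local in the sense of \cite{KW}, checking that this map is an isomorphism reduces, under the admissibility hypotheses of Definition \ref{admisdef}, to the statement that $f$ induces an equivalence of the relevant data over the groupoid of vertices $\mathbb{V}$ --- which is precisely the ambidexterity isomorphism of Lemma \ref{amblem}.

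The hard part will be reconciling the two independent sources of twists. The Feynman transform $D$ builds in a determinant-of-edges (suspension) line on each side, while the relative determinant line distinguishing $f^!$ from $f^\ast$ is produced by the ambidexterity of Lemma \ref{amblem} at the level of $\mathbb{V}$. The comparison map is an isomorphism exactly when these orientation/sign data are matched compatibly, so the essential --- and genuinely combinatorial rather than formal --- step is to track the suspensions and determinant lines simultaneously through the bar differential and through the vertex-groupoid transfer and verify that they agree. Once $f^!D\cong Df^\ast$ is in hand with an explicit map, I would obtain $f_\ast D\cong Df_!$ either by the entirely parallel level-wise analysis, now comparing the left Kan extension $f_\ast$ with the vertex-groupoid push-forward $f_!$ and using the projection formula of Proposition \ref{projformula} for the monoidal bookkeeping, or more economically by symmetry: conjugating the adjunction $f_\ast\dashv f^\ast$ by the equivalence $D$ (self-inverse up to $DD\simeq\mathrm{id}$) turns $f^\ast$ into $f^!$ via the first relation and hence exhibits $Df_\ast D$ as a left adjoint to $f^!$, which the ambidexterity $f_!\dashv f^!$ identifies with $f_!$, yielding $f_\ast D\cong Df_!$; the explicit maps again upgrade this to a strict isomorphism.
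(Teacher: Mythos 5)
Your plan inverts the paper's division of labor between the explicit and the formal arguments, and the inversion is fatal rather than cosmetic. The functor that admits the levelwise vertex-transfer description is $f_!=L^!$ (Equation $\ref{ldualeq}$), not $f^!=R^!$: the latter is produced abstractly as the left adjoint of $L^!$ via the adjoint lifting theorem (Lemma $\ref{adjlem}$), and no formula of the shape ``the ambidextrous transfer of $(D\op{O})(f(X))$'' holds for it. Indeed, if $f^!(D\op{O})(X)$ were indexed by the $\mathbb{F}_2$-decomposition data of $f(X)$, as you assert, your proposed comparison map $Df^\ast\to f^!D$ could never be an isomorphism: already for the inclusion of cyclic into modular operads, $f(X)$ admits decompositions involving positive genus which are not in the image of $f$, while $Df^\ast(\op{O})(X)$ sees only $\mathbb{F}_1$-data. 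The isomorphism $f^!D\cong Df^\ast$ holds precisely because $R^!$ applied to a quasi-free object retains only the generators that restrict along $f$ (compare Proposition $\ref{quad}$, where relations are transferred by the summation map $\sigma$), and this is invisible in your levelwise picture; so the explicit half of your argument fails at the first step, before any question of matching determinant lines arises.

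The paper proceeds in the opposite order, and each half uses a tool you have not invoked. It first proves $f_\ast D\cong Df_!$, i.e.\ $LD\cong DL^!$ (Theorem $\ref{itertwiningthm}$): both sides are quasi-free, their underlying free objects are identified by $F_2l\cong LF_1$ (Lemma $\ref{ntlem}$), and the real work is a direct comparison of the two bar differentials via cone maps over degree-one morphisms --- this is where the explicit construction of $L^!$ is genuinely usable, and it is essentially the ``entirely parallel level-wise analysis'' you relegate to an aside. It then deduces $f^!D\cong Df^\ast$ (Theorem $\ref{rint}$) purely formally, by Yoneda, from the two adjunctions together with the one ingredient your formal manipulations lack: the master-equation symmetry $Hom(D(\op{O}),\op{Q})\cong Hom(D(\op{Q}),\op{O})$ of Theorem 7.17 of \cite{KW}, which is a strict natural bijection, not merely a homotopical one. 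Without it, the only formal move available is conjugation by $D$, and since $D^2\simeq id$ is a weak equivalence rather than an isomorphism, that route can only ever produce the derived statement; your closing claim that ``the explicit maps again upgrade this to a strict isomorphism'' is circular, as no such maps are ever constructed for this half. (Two smaller slips: the paper's formalism has $f^!\dashv f_!$, not $f_!\dashv f^!$, and the adjunction $(f^!,f_!)$ is not itself an ambidexterity statement --- ambidexterity, Lemma $\ref{amblem}$, lives at the level of the vertex groupoids.)
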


This theorem allows us to interpret our constructions as liftings of the intertwined push-forward and pull-back from the homotopy categories to the naive categories:

\begin{corollary}  There exist weak equivalences:
	\begin{equation*}
	Df^!D\stackrel{\sim}\to f^\ast \ \ \ \text{and} \ \ \ Df_\ast D\stackrel{\sim}\to f_! 
	\end{equation*}
\end{corollary}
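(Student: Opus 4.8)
The plan is to deduce the Corollary directly from Theorem B together with the standard fact that the Feynman transform $D$ is a homotopy involution, i.e.\ that the double application $DD$ is naturally weakly equivalent to the identity (this is precisely the statement that bar/cobar duality furnishes cofibrant resolutions, as recalled in the introduction). First I would apply the Feynman transform $D$ on the left to both isomorphisms of Theorem B. From $f^!D\cong Df^\ast$ we obtain $Df^!D\cong DDf^\ast$, and from $f_\ast D\cong Df_!$ we obtain $Df_\ast D\cong DDf_!$. The content of the Corollary is then simply that the right-hand sides $DDf^\ast$ and $DDf_!$ are weakly equivalent to $f^\ast$ and $f_!$ respectively, which follows by composing with the natural weak equivalence $DD\stackrel{\sim}\to \id$ applied to $f^\ast(-)$ and $f_!(-)$.

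The key steps, in order, are as follows. First I would recall the natural transformation $\epsilon\colon DD\Rightarrow \id$ exhibiting $D$ as a homotopy involution in the cubical setting (this is where the hypothesis that we are in a context where $D$ is suitably defined, i.e.\ the cubical Feynman categories of Appendix $\ref{FCA}$, is used). Second, I would whisker $\epsilon$ by the functors $f^\ast$ and $f_!$ to produce weak equivalences $DDf^\ast \stackrel{\sim}\to f^\ast$ and $DDf_! \stackrel{\sim}\to f_!$. Third, I would precompose these with the isomorphisms $Df^!D\cong DDf^\ast$ and $Df_\ast D\cong DDf_!$ obtained by applying $D$ to Theorem B, yielding the desired composite weak equivalences $Df^!D\stackrel{\sim}\to f^\ast$ and $Df_\ast D\stackrel{\sim}\to f_!$. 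Since an isomorphism composed with a weak equivalence is again a weak equivalence, this completes the argument.

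The main point requiring care, rather than a genuine obstacle, is the invocation of $DD\sim \id$: one must ensure that the double Feynman transform is applied to objects in the appropriate image so that the homotopy involution property is available, and that the naturality of $\epsilon$ is compatible with whiskering by the functors $f^\ast$ and $f_!$ at the level of the underlying categories (not merely the homotopy categories). Because Theorem B is stated as an isomorphism of functors on the nose and $\epsilon$ is a levelwise weak equivalence, the resulting composites are honest natural transformations that are levelwise weak equivalences, which is exactly the assertion. The proof is therefore a short formal consequence, and I would present it in a few lines once the homotopy-involution property of $D$ has been cited.
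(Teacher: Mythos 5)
Your proposal is correct and is essentially the paper's own (implicit) argument: the corollary follows by applying $D$ to the two isomorphisms of Theorem B and composing with the homotopy involutivity $DD \sim \mathrm{id}$ of the Feynman transform (Theorem 7.12 of \cite{KW}, recalled in Appendix \ref{FCA}), which is exactly how the body of the paper deduces the corresponding statements $DLD\sim L^{!}$ and $DR^{!}D\sim R$ from Theorems \ref{itertwiningthm} and \ref{rint}. Your direction of the arrow, via whiskering the counit-type transformation $DD\Rightarrow \mathrm{id}$ with $f^{\ast}$ and $f_{!}$, matches the paper's stated direction as well.
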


In general, the four functors which comprise the adjunctions $(f_\ast,f^\ast)$ and $(f^!,f_!)$ are distinct.  In many cases, however, these functors satisfy some ambidexterity which (in Section $\ref{WGsec}$) we characterize via:

\begin{nonumbertheoremC} (Ambidexterity Results)
	\begin{enumerate}
		
	\item 	If for every morphism $\psi\in Mor(\mathbb{F}_2)$ of the form $\psi \colon f(X)\to v$
	there exists $\sigma \in Aut(v)$ and  $\phi\colon X\to w \in Mor(\mathbb{F}_1)$ with $\psi=\sigma f(\phi)$, then $(f^!,f_\ast,f^\ast)$ is a triple of adjoint functors.

		
		\item  If for every morphism $\psi\in Mor(\mathbb{F}_2)$ of the form $\psi\colon Y \to f(w)$, there exists $\sigma\in Aut(Y)$ and $\phi\colon X\to w \in Mor(\mathbb{F}_1)$ with $\psi=f(\phi)\sigma$, then $(f_\ast,f^\ast,f_!)$ is a triple of adjoint functors.
	\end{enumerate}

\end{nonumbertheoremC}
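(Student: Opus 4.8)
The plan is to reduce each of the two statements to a single natural isomorphism of functors, and then to extract that isomorphism from the explicit vertex-level constructions of $f_!$ and $f^!$. I would begin by noting that in each part two of the three constituent adjunctions hold with no hypotheses at all: $f_\ast \dashv f^\ast$ is the standard adjunction, and $f^! \dashv f_!$ is the adjunction constructed earlier. Thus in part (1) the claim that $(f^!, f_\ast, f^\ast)$ is a triple reduces to the lone adjunction $f^! \dashv f_\ast$; since $f^!$ is already left adjoint to $f_!$, uniqueness of right adjoints makes this equivalent to a natural isomorphism $f_\ast \cong f_!$. Dually, in part (2) the claim that $(f_\ast, f^\ast, f_!)$ is a triple reduces to $f^\ast \dashv f_!$, and since $f_!$ is already right adjoint to $f^!$, uniqueness of left adjoints makes this equivalent to $f^\ast \cong f^!$. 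It therefore suffices to establish $f_\ast \cong f_!$ under the hypothesis of (1) and $f^\ast \cong f^!$ under the hypothesis of (2).

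For (1) I would compare the pointwise colimit formula for $f_\ast = \mathrm{Lan}_f$ with the construction of $f_!$. The value $(f_\ast M)(Y)$ is the colimit over the comma category of pairs $(X,\psi\colon f(X)\to Y)$. The hypothesis of (1) says that every such $\psi$ lies in $\mathrm{im}(f)$ after left multiplication by an isomorphism $\sigma$; equivalently, every connecting arrow in this indexing diagram is, up to an automorphism correction, in the image of $f$. Hence the indexing category is equivalent to a groupoid, the colimit becomes an ambidextrous Kan extension, and it collapses onto precisely the vertex-level construction (Lemma \ref{amblem}) defining $f_!$. Checking naturality in $Y$ then gives $f_\ast\cong f_!$.

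For (2) I would run the dual comparison, matching the construction of $f^!$ against simple restriction $f^\ast = -\circ f$. The hypothesis of (2) provides, for each $\psi\colon Y\to f(w)$, a factorization $\psi = f(\phi)\sigma$ with $\sigma\in \mathrm{Aut}(Y)$ and $\phi\in \mathrm{Mor}(\mathbb{F}_1)$. This factorization says precisely that the value $(f^! M)(Y)$, assembled from the vertex-level ambidextrous Kan extension applied to the morphisms into $f(w)$, is already determined by the restricted functor $M\circ f$; tracking the automorphisms $\sigma$ shows the identification is natural, giving $f^! \cong f^\ast$.

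The step I expect to be the main obstacle is coherence. Each hypothesis furnishes only a choice of correcting isomorphism $\sigma$ morphism-by-morphism, and the real content is that these choices assemble into honest natural transformations, i.e.\ that the comparison maps between the two (co)limit diagrams respect every connecting morphism. This is where the Feynman-categorical structure is indispensable: the free symmetric monoidal extension from the vertex groupoid $\mathbb{V}$ to $\mathbb{F}$ packages the automorphisms $\sigma$ as precisely the symmetry data over which the relevant (co)limits are formed, and the ambidexterity of Lemma \ref{amblem} guarantees that the left and right vertex-level Kan extensions along $f|_{\mathbb{V}}$ agree, so that the corrected diagram has a well-defined value independent of the choices and the resulting isomorphism is canonical.
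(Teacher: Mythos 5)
Your reduction of both parts, via uniqueness of adjoints, to the single isomorphisms $f_\ast\cong f_!$ (part 1) and $f^\ast\cong f^!$ (part 2) is valid, and your execution of part (1) is essentially the paper's argument: under hypothesis (1) every object $(X,\psi\colon f(X)\to Y)$ of the comma category computing $f_\ast=Lan_f$ admits a morphism to an object whose structure map is an isomorphism, so the colimit collapses onto the iso-comma groupoid, i.e.\ onto the vertex-level extension $l$ of Lemma $\ref{amblem}$, which is exactly $f_!$; one then checks compatibility with the action on morphisms of $\mathbb{F}_2$. (Your claim that the indexing category is \emph{equivalent} to a groupoid is stronger than what is true or needed---finality of the iso-comma subcategory is the correct statement---but the conclusion you draw from it is the right one.)

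Part (2), however, has a genuine gap. Your argument presupposes a pointwise, vertex-level description of $f^!$ (``the value $(f^!M)(Y)$, assembled from the vertex-level ambidextrous Kan extension applied to the morphisms into $f(w)$'') which does not exist: $f^!=R^!$ is produced abstractly by the adjoint lifting theorem (Lemma $\ref{adjlem}$), and its only concrete description (Proposition $\ref{quad}$) is through presentations. The only access to $f^!$ is the adjunction $f^!\dashv f_!$ itself, so a ``comparison of constructions'' between $f^!$ and $f^\ast$ cannot even be set up; note also that the expression fails to typecheck, since $f^!M$ is an object of $\mathbb{F}_1$-$\op{O}ps$ while $Y$ and the morphisms $Y\to f(w)$ live in $\mathbb{F}_2$. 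The workable dualization of your part-(1) argument compares the two \emph{right} adjoints rather than the two left adjoints: one shows that under hypothesis (2) the functor $f_!$ coincides with the right Kan extension $Ran_f$, which does have a pointwise limit formula over the comma category $(Y\searrow f)$; hypothesis (2) makes $Iso(Y\searrow f)$ initial there, collapsing the limit to the vertex-level construction. This route also forces a step with no analogue in part (1): $Ran_f$ is a priori only lax monoidal, so one must verify strong monoidality, $Ran_f(\op{P})(\tensor_i v_i)\cong\tensor_i Ran_f(\op{P})(v_i)$, which the paper (Proposition $\ref{coprop}$) extracts from the hereditary axiom in the form $Iso(Y\searrow f)\cong\times_i Iso(v_i\searrow f)$; without this, $Ran_f$ need not be a right adjoint on the $\op{O}ps$ categories at all. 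Once $f_!\cong Ran_f$ is established one has $f^\ast\dashv f_!$, and only then does $f^!\cong f^\ast$ follow, by uniqueness of left adjoints---the reverse of the order of deduction you propose.
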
 

In the parlance of \cite{FHM} the first criterion then gives a necessary condition for the ``Grothendieck context'' whereas the second criterion gives a necessary condition for the ``Wirthm{\"u}ller context''.  Interpreting Theorem C in the examples of Feynman categories built from graphs, we develop the following heuristic:  inclusions often realize the Wirthm{\"u}ller context, symmetrizations often realize the Grothendieck context.  We give specific examples of these heuristics in Section $\ref{examples}$.

The principal notion of duality on the categories $\mathbb{F}$-$\op{O}ps$ that we consider is bar-cobar duality.  However, if we restrict our attention to sub-categories of quadratic objects, there is another candidate namely quadratic duality, denoted $(-)^!$.  There is an analog of Theorem A for quadratic subcategories, where $\tensor$-product is replaced by (a generalization of) Manin products, see Section $\ref{sec:quad}$.  This in particular allows us to characterize when $f_!$ and $f^!$ can be described by intertwining with $(-)^!$.  But we emphasize that this is not generally the case, and the quadratic story is a specialization of the general intertwining story summarized above.

Finally, note that the intertwining theorems above rely on the existence of dualizing objects which we created via the Feynman transform.  On the other hand, our analogy with Verdier duality suggests a way to create a dualizing object $\omega_\mathbb{F}$ by pulling back via $p^!$, where $p$ is the map to the analog of a point.  In Section $\ref{vdkd}$ we show that these two ways of producing dualizing objects coincide.  In particular, the analog of the Verdier dual is the Feynman transform:

\begin{nonumbertheoremD}  $D(-)\cong cohom(\omega_\mathbb{F},-)$ in $Ho(\mathbb{F}$-$ops)$.
\end{nonumbertheoremD} 

Morphisms from the Feynman transform correspond to Maurer-Cartan elements in an associated $L_\infty$ algebra and the associated deformation complexes may be studied via the following corollary (Corollary $\ref{prescor}$) of Theorem D:

\begin{corollary}  Given $f$ as in Theorem A, $f^!$ preserves dualizing objects and $f_!$ preserves deformation complexes.
\end{corollary}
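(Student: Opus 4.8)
The plan is to derive both halves of the corollary formally from the intertwining isomorphisms of Theorem B together with the adjunction $(f^!,f_!)$ of Theorem A, after recording one identification. By Theorem A(2), and by the coincidence of the two constructions of dualizing objects established in Theorem E, the dualizing object on $Ho(\mathbb{F}_i\text{-}\op{O}ps)$ is
\begin{equation*}
\omega_{\mathbb{F}_i}\;\cong\;D(I_i),
\end{equation*}
the Feynman transform of the monoidal unit. With this in hand, both statements become short computations; the only structural input beyond Theorem B is that the restriction $f^\ast$ is strong symmetric monoidal for the level-wise tensor product (a morphism of Feynman categories preserves the monoidal unit object), whence $f^\ast(I_2)\cong I_1$.

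First I would treat the dualizing objects. Applying the first intertwining isomorphism of Theorem B, namely $f^!D\cong Df^\ast$, to the unit $I_2$ and using unit-preservation of $f^\ast$ gives
\begin{equation*}
f^!(\omega_{\mathbb{F}_2})\;\cong\;f^!D(I_2)\;\cong\;Df^\ast(I_2)\;\cong\;D(I_1)\;\cong\;\omega_{\mathbb{F}_1},
\end{equation*}
which is precisely the assertion that $f^!$ preserves dualizing objects.

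For the deformation complexes, recall (as in the discussion preceding the corollary) that a morphism from a Feynman transform $D\op{Q}\to\op{O}$ is governed by a convolution $L_\infty$-algebra whose underlying complex is $\underline{Hom}(D\op{Q},\op{O})$, and whose Maurer--Cartan locus records such morphisms. The claim that $f_!$ preserves deformation complexes I read as: pushing the target forward along $f_!$ while pulling the source back along $f^\ast$ leaves the deformation complex unchanged. The computation uses the dg-enriched form of the adjunction $f^!\dashv f_!$ followed again by Theorem B:
\begin{equation*}
\underline{Hom}_{\mathbb{F}_2}(D\op{Q},f_!\op{O})\;\cong\;\underline{Hom}_{\mathbb{F}_1}(f^!D\op{Q},\op{O})\;\cong\;\underline{Hom}_{\mathbb{F}_1}(Df^\ast\op{Q},\op{O}).
\end{equation*}
Specializing to $\op{Q}=I_2$ recovers the compatible special case $\underline{Hom}(\omega_{\mathbb{F}_2},f_!\op{O})\cong\underline{Hom}(\omega_{\mathbb{F}_1},\op{O})$, matching the preservation of $\omega$ above.

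The main obstacle is not the displayed isomorphism of underlying complexes, which is formal once the adjunction $f^!\dashv f_!$ is enriched over $dgVect_k$ and Theorem B is applied. Rather, the real content is to upgrade it to an isomorphism of deformation complexes, i.e.\ to check compatibility with the $L_\infty$-structure (equivalently, to see that Maurer--Cartan elements correspond). The convolution bracket on $\underline{Hom}(D\op{Q},\op{O})$ is assembled from the cobar-type differential on $D\op{Q}$ and the $\mathbb{F}$-operad composition in $\op{O}$, so transporting it along the isomorphism requires that both the adjunction unit/counit and the intertwiner of Theorem B respect these operations. Here I expect to rely on the explicit, vertex-level descriptions of $f_!$ and $f^!$ built in the proofs of Theorems A and B, together with the projection formula (Proposition \ref{projformula}), which governs the interaction of $f_!$ with the monoidal product entering the bracket. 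Once this compatibility is established, the two deformation theories agree and $f_!$ preserves deformation complexes.
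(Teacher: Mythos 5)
Your proof of the first half (preservation of dualizing objects) is correct, but it takes a different route from the paper's. You identify $\omega_{\mathbb{F}_i}\cong D(I_i)$ via Theorem E and then apply the intertwining isomorphism $f^!D\cong Df^\ast$ (Theorem \ref{rint}) to the monoidal unit, using that $f^\ast=R$ preserves the unit. The paper never leaves the definition $\omega_{\mathbb{F}}:=R^!_p(D(k))$: it proves $L^!_{p_2}L^!_\phi\cong L^!_{p_1}$ (Lemma \ref{compcor}), passes to left adjoints to get $R^!_\phi R^!_{p_2}\cong R^!_{p_1}$, and applies this to $D(k)$. Your route is shorter once Theorem E is available; the paper's route buys more, because Lemma \ref{compcor} is an isomorphism of functors valued in odd $L_\infty$-algebras, and that extra strength is exactly what the second half of the corollary needs.

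That is where your proposal has a genuine gap. The deformation complex of $\alpha\colon\omega_{\mathbb{F}_1}\to\op{P}$ is, by definition in Section \ref{vdkd}, the odd $L_\infty$-algebra $L^!_{p_1}(\op{P})=\oplus_w\op{P}(w)_{Aut(w)}$, whose operations $l_n$ are sums over degree-$1$ morphisms of $\mathbb{F}_1$, twisted by the Maurer--Cartan element corresponding to $\alpha$. Your adjunction-plus-Theorem-B computation produces the bijection $Hom(\omega_{\mathbb{F}_2},L^!(\op{P}))\cong Hom(\omega_{\mathbb{F}_1},\op{P})$, hence a bijection of Maurer--Cartan elements; this is also how the paper uses adjointness, and for it the set-level adjunction suffices (no dg-enrichment of $(f^!,f_!)$ is needed or constructed in the paper). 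But you then explicitly defer the essential point --- ``once this compatibility is established'' --- namely that $L^!_{p_2}(L^!_\phi(\op{P}))$ and $L^!_{p_1}(\op{P})$ are isomorphic \emph{as $L_\infty$-algebras}, so that twisting by corresponding Maurer--Cartan elements yields isomorphic deformation complexes. That deferred statement is precisely Lemma \ref{compcor}, and it is not formal: it holds because morphisms of cubical Feynman categories are by definition degree-preserving, so the sum over degree-$1$ morphisms in $\mathbb{F}_1$ is identified with the sum over preimages of degree-$1$ morphisms in $\mathbb{F}_2$ (the underlying spaces being identified via Proposition \ref{comp}).

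Moreover, the tools you propose for closing this gap would not supply it. The projection formula (Proposition \ref{projformula}) governs $\op{O}\tensor L^!(\op{P})\cong L^!(R(\op{O})\tensor\op{P})$, i.e.\ the interaction of $L^!$ with the level-wise tensor product; but in this paper the bracket on the deformation complex is not assembled from that product --- it comes from the structure maps $l_n$ of $L^!_p$, i.e.\ from composition of degree-$1$ morphisms in $\mathbb{F}$. The compatibility you need is a statement about degrees of morphisms under $\phi$, not about the monoidal structure, so to complete the argument you should replace the appeal to the projection formula by (a proof of) Lemma \ref{compcor}.
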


\subsection{Applications: an overview}  Let us now discuss a few applications of our results for particular morphisms of Feynman categories.  Further results and discussion can be found in Section $\ref{examples}$.

{\bf The derived modular envelope.} There is a morphism between Feynman categories for cyclic and modular operads given by inclusion.  In this case we find the ``Wirthm{\"u}ller context'' and hence the triple of adjoint functors $(f_\ast,f^\ast,f_!)$.  Our results prove:
\begin{corollary}
	Let $\op{O}$ be a cyclic operad, let $f_\ast$ be the modular envelope construction, and let $f_!$ be extension by zero.  Then there is an isomorphism of ($\fr{K}$-twisted) modular operads:
	\begin{equation*}
f_\ast (D(\op{O}))\cong D(f_!(\op{O}))
	\end{equation*}

\end{corollary}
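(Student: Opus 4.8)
The plan is to realize this statement as a direct instance of the second isomorphism of Theorem B, namely $f_\ast D\cong Df_!$, applied to a specific morphism of Feynman categories and then evaluated at the cyclic operad $\op{O}$. First I would fix $f\colon \mathbb{F}_1\to\mathbb{F}_2$ to be the inclusion of the Feynman category governing cyclic operads into the Feynman category governing modular operads, i.e.\ the map which includes genus-$0$ connected graphs into all connected graphs. Since both of these Feynman categories are cubical in the sense of \cite{KW} (so that the Feynman transform $D$ is defined on each, the modular side carrying its $\fr{K}$-twist), and since $f$ is readily checked to satisfy the niceness hypotheses of Definition $\ref{admisdef}$, Theorem A equips $f$ with its full six operations formalism and Theorem A(2)/Theorem B supply the isomorphism $f_\ast D\cong Df_!$ of functors on the relevant homotopy categories, where $f_\ast$ denotes the derived left Kan extension.

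The content of the corollary then lies in identifying the abstract functors $f_\ast$ and $f_!$ with their named incarnations. For $f_\ast=Lan_f$ this is the defining property of the modular envelope: the left Kan extension of a cyclic operad along the inclusion into modular operads is precisely the modular envelope construction. For $f_!$ I would first verify the Wirthm\"uller criterion of Theorem C(2): any morphism $\psi\colon Y\to f(w)$ in $\mathbb{F}_2$ has target a genus-$0$ corolla, so the underlying connected graph is a tree built from genus-$0$ corollas, whence $\psi$ is, up to an automorphism $\sigma$ of $Y$, the image of a morphism of $\mathbb{F}_1$. This places us in the Wirthm\"uller context, so $(f_\ast,f^\ast,f_!)$ is a triple of adjoints and in particular $f_!$ is right adjoint to the restriction $f^\ast$. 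One then checks that this right adjoint is extension by zero: a map of modular operads $M\to f_!(\op{O})$ is forced to vanish on all positive-genus components (since self-contractions land in the vanishing positive-genus part of $f_!(\op{O})$) and is thus the same datum as a map of cyclic operads $f^\ast M\to\op{O}$, exhibiting $f_!(\op{O})$ as the modular operad which agrees with $\op{O}$ in genus $0$ and is zero otherwise.

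With both identifications in hand, evaluating the isomorphism $f_\ast D\cong Df_!$ of Theorem B on $\op{O}$ yields $f_\ast(D(\op{O}))\cong D(f_!(\op{O}))$ as claimed, the isomorphism being one of $\fr{K}$-twisted modular operads since $D$ on the modular side carries the $\fr{K}$-twist. I expect the main obstacle to be the bookkeeping of this twist: one must confirm that the constructions of $f_!$ and of $D$ underlying Theorems A and B interact with the $\fr{K}$-twisting consistently, so that the resulting isomorphism genuinely lives in the twisted category rather than merely after forgetting the twist. The remaining identifications---the modular envelope as $Lan_f$, and extension by zero as the Wirthm\"uller $f_!$---are essentially formal once the Wirthm\"uller context has been established.
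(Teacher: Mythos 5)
Your proposal is correct and follows essentially the same route as the paper: the paper checks that the inclusion $\fccyc\to\fcmod$ is admissible and co-proper (co-properness via Proposition~\ref{coprop}, which is exactly your Theorem C(2) verification using the genus labels), identifies $L$ with the modular envelope and $L^!$ with extension by zero---read off directly from the explicit construction of $L^!$, though your universal-property argument via the Wirthm\"uller adjunction gives the same answer---and concludes by the intertwining isomorphism $DL^!\cong LD$ of Theorem~\ref{itertwiningthm} (recorded as Corollary~\ref{modint}). Only your framing of Theorem B needs correcting: it is an isomorphism of functors on the naive categories with $f_\ast$ the \emph{underived} left Kan extension (which is precisely why the corollary asserts an isomorphism rather than a mere weak equivalence), and the $\fr{K}$-twist bookkeeping you flag as the main obstacle is already handled by the $\mathbb{F}^{\pm}$ parity conventions built into the statement of Theorem~\ref{itertwiningthm}.
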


This result is most interesting in the case that $\op{O}$ is a Koszul cyclic operad, in that it gives us a model for the derived modular envelope:

\begin{corollary}\label{gccor}
	Let $\op{O}$ be a Koszul cyclic operad and let $\mathbb{L}f_\ast$ denote the derived modular envelope.  Then  
		\begin{equation*}
		\mathbb{L}f_\ast(\op{O}^!)\sim D(f_!(\op{O}))
		\end{equation*}
\end{corollary}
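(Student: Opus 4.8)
The plan is to deduce this corollary directly from the immediately preceding corollary ($f_\ast(D(\op{O}))\cong D(f_!(\op{O}))$ as $\fr{K}$-twisted modular operads) together with the standard characterization of Koszulity in terms of the Feynman transform. The key observation is that, for a Koszul cyclic operad $\op{O}$, the Feynman transform $D(\op{O})$ \emph{is} a cofibrant replacement of the quadratic dual $\op{O}^!$, so that computing the derived modular envelope $\mathbb{L}f_\ast$ on $\op{O}^!$ amounts to applying the underived $f_\ast$ to $D(\op{O})$.

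First I would isolate the two structural facts about $D$ that are needed. On one hand, $D(\op{O})$ is cofibrant for every $\op{O}$, since the Feynman transform always produces cofibrant objects; this is part of the bar/cobar formalism recalled from \cite{KW}. On the other hand, the hypothesis that $\op{O}$ is Koszul is precisely the assertion that the natural comparison between $\op{O}^!$ and $D(\op{O})$ is a weak equivalence, so that $D(\op{O})\sim\op{O}^!$ in the relevant category of ($\fr{K}$-twisted) cyclic operads. Together these say that $D(\op{O})$ is a cofibrant object weakly equivalent to $\op{O}^!$.

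Since the derived functor $\mathbb{L}f_\ast$ is computed by applying $f_\ast$ to any cofibrant object weakly equivalent to its argument, the previous paragraph yields a natural isomorphism $\mathbb{L}f_\ast(\op{O}^!)\cong f_\ast(D(\op{O}))$ in the homotopy category. Rewriting the right-hand side by the preceding corollary gives $f_\ast(D(\op{O}))\cong D(f_!(\op{O}))$, and chaining the two identifications produces the desired weak equivalence $\mathbb{L}f_\ast(\op{O}^!)\sim D(f_!(\op{O}))$.

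The only genuine point to verify—and hence the main obstacle—is that $D(\op{O})$ really functions as a cofibrant replacement of $\op{O}^!$: this requires both the cofibrancy of the Feynman transform and the Koszulity equivalence $D(\op{O})\sim\op{O}^!$ to be taken in the same twisted category, with the $\fr{K}$-twist arising from the Feynman transform of a cyclic operad matching the twist built into the quadratic dual $\op{O}^!$. Once this twist bookkeeping is reconciled, the remaining assembly is formal, since the substantive content—the identification of $f_\ast(D(\op{O}))$ with $D(f_!(\op{O}))$—has already been established in the preceding corollary.
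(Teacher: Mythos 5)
Your proof is correct and follows essentially the same route the paper intends: the preceding corollary gives $f_\ast(D(\op{O}))\cong D(f_!(\op{O}))$, and Koszulity together with cofibrancy of the Feynman transform makes $D(\op{O})$ a cofibrant replacement of $\op{O}^!$, so $\mathbb{L}f_\ast(\op{O}^!)\sim f_\ast(D(\op{O}))$. Your closing remark on the parity/twist bookkeeping is also consistent with the paper's conventions, since both $D$ and $(-)^!$ reverse parity, so the Koszulity map $D(\op{O})\to\op{O}^!$ lives in the same ($\fr{K}$-twisted) category.
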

This result is particularly interesting in view of Kontsevich's theorem \cite{Kont2, CV} and generalizations, \cite{CKV},  which relate graph complexes associated to cyclic operads -- via the functor $Df_!$ -- to homology of (generalizations of) moduli spaces of Riemann surfaces and outer automorphisms of free groups.  Corollary $\ref{gccor}$ allows us to alternatively view graph homology as measuring the failure of $f_\ast$ to preserve the Koszul weak equivalence $D\op{O}\stackrel{\sim}\to \op{O}^!$.  As a specific application, we consider the morphism $f_\ast (D\op{L}ie) \to f_\ast(\op{C}om)$, and using known information about the Lie graph complex and the homotopy involutivity of the Feynman transform, we compute (Calculation $\ref{calculation})$:

\begin{corollary} $\chi(Df_\ast\op{C}om(1,n))=(-1)^n(n-1)!/2$.
\end{corollary}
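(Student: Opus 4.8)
The plan is to compute the Euler characteristic directly from the graph complex that presents $Df_\ast\op{C}om$, using only that the Euler characteristic is a homotopy invariant and is additive over the decomposition of the Feynman transform by graph type. The first step is to identify the (underived) modular envelope $f_\ast\op{C}om$. Since $\op{C}om$ is the commutative cyclic operad, with $\op{C}om$ the trivial representation $k$ in each arity, I would record first that its modular envelope is the commutative modular operad: the left Kan extension identifies all genus-$g$ commutative compositions and collapses each stable arity, so that $f_\ast\op{C}om(g,n)=k$ concentrated in degree zero for every stable $(g,n)$. Consequently $Df_\ast\op{C}om$ is the $\fr{K}$-twisted commutative graph complex, whose arity-$(g,n)$ component has a basis indexed by connected graphs $\Gamma$ of genus $g=b_1(\Gamma)+\sum_v g_v$ with $n$ labeled legs, each vertex stable and decorated by the one-dimensional $f_\ast\op{C}om(g_v,n_v)$, equipped with an orientation $\det(kE(\Gamma))$; the homological degree is the internal edge number and the differential contracts edges.

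Second, I would specialize to genus one, where a graph is either a tree carrying a single genus-one vertex, or a one-loop graph all of whose vertices have genus zero. The Euler characteristic is then the alternating edge-count sum over isomorphism classes of such graphs, weighted by the orientation sign and by $1/|\mathrm{Aut}(\Gamma)|$, so that graphs with an orientation-reversing automorphism drop out and the fractional contributions responsible for the values at small $n$ are correctly recorded. The most efficient engine here should be the Getzler--Kapranov-type Euler-characteristic transform for the Feynman transform: the $S_n$-equivariant Euler characteristic of $D$ is an explicit transform of that of the modular operad, organized by a genus expansion whose genus-one part is a single one-loop term. Feeding in the trivial symmetric function of the commutative modular operad then reduces the entire genus-one computation to extracting the arity-$n$ coefficient of this one-loop term.

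The heart of the matter is that the one-loop term is governed by the necklace graphs, namely an $n$-gon whose vertices carry the $n$ legs, of which there are exactly $(n-1)!/2$ up to the dihedral symmetry of the loop (the cyclic rotations account for the $(n-1)!$ and the single reflection for the $1/2$); this is where the factor $(n-1)!/2$ is born, while all remaining genus-one graphs either cancel against their differential images or are absorbed into the closed form of the one-loop term. The overall sign $(-1)^n$ comes from the degree of the orientation line $\det(kE)$ on the $n$ edges of the necklace together with the suspension in the $\fr{K}$-twist. For $n\geq 3$ this would match the Chan--Galatius--Payne identification of the tropical moduli link $\Delta_{1,n}$ with a wedge of $(n-1)!/2$ spheres of dimension $n-1$, giving an independent check of both the rank and the degree.

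I expect the main obstacle to be the sign and automorphism bookkeeping rather than the enumeration: one must pin down the orientation conventions of the $\fr{K}$-twisted Feynman transform precisely enough to produce the exact sign $(-1)^n$ and the fractional coefficient $1/2$, and one must justify the one-loop truncation, i.e.\ verify that the contributions of the tree-with-a-genus-one-vertex family and of the higher-vertex one-loop graphs either cancel in the complex or are exactly accounted for by the closed form of the genus-one term of the transform.
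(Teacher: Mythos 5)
Your reduction of the problem is sound as far as it goes: $f_\ast\op{C}om$ is indeed one-dimensional in every stable biarity, so $Df_\ast\op{C}om(1,n)$ is the $\fr{K}$-twisted graph complex spanned by genus-one graphs --- trees carrying a single genus-one vertex, or one-loop graphs with genus-zero vertices --- with orientation $\det(kE(\Gamma))$, and its Euler characteristic is the signed count of isomorphism classes admitting no orientation-reversing automorphism. (One quibble: each such class contributes $0$ or $1$, not $1/|Aut(\Gamma)|$; the $1/|Aut|$-weighted sum over pairs of a graph and an automorphism is an equivalent rewriting, but there are no ``fractional contributions'' to the Euler characteristic of a complex of vector spaces.) The genuine gap is the passage from this signed count to the necklace count. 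Your justification --- that the remaining graphs ``cancel against their differential images or are absorbed into the closed form of the one-loop term'' --- cannot work as stated, because the Euler characteristic does not see the differential at all; what is needed is a purely numerical cancellation in the alternating sum over all genus-one graphs, and that cancellation is precisely the content of the calculation. Nor does invoking the Getzler--Kapranov transform effect the truncation for you: since the input $f_\ast\op{C}om$ is nonzero in \emph{every} genus, the genus-one part of the transform receives contributions from trees decorated by the genus-one part of the input as well as from one-loop graphs of genus-zero vertices, so it is not ``a single one-loop term.'' Concretely, for $n=3$ the trees with a genus-one vertex contribute $1-4+3=0$ and the non-necklace one-loop graphs (self-loop with legs, self-loop vertex with trees attached, double edges killed by orientation) contribute $-1+4-3=0$, leaving only the triangle's $(-1)^3$; showing that this pattern of cancellation persists for all $n$ is the entire problem, and your proposal leaves it open --- as you yourself concede in your final paragraph.

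It is worth contrasting this with the paper's route, which never enumerates genus-one graphs. The paper feeds in the known value $\chi(f_\ast D\op{C}om^!(1,n))=\chi(\Gamma_{1,n})=2^{n-2}$ of \cite{CHKV}, and uses the intertwining isomorphism $Df_!\cong f_\ast D$ of Corollary $\ref{modint}$ together with the fact that $f_!$ is extension by zero to obtain the vanishing $\chi(Df_\ast D(\op{C}om^!)(1,n))=\chi(f_!(\op{C}om^!)(1,n))=0$. This vanishing converts the sought genus-one Euler characteristic into a weighted count of trees with one distinguished vertex of weight $2^{|v|-2}-1$ (coming from $\chi(H_{\geq 1}(\Gamma_{1,i}))=2^{i-2}-1$), which is then evaluated by the Getzler--Kapranov characteristic calculus for \emph{cyclic} operads (the power-series inversion defining $p_n$) and the elementary tree identities $B=p_n(1,\dots,1)$, $C=(n-2)p_n(1,\dots,1)$, $2A=B+C$. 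In other words, the Koszul-duality formalism performs exactly the cancellation your proposal postulates. To complete your direct approach you would have to prove the identity $\sum_\Gamma(-1)^{|E(\Gamma)|}=(-1)^n(n-1)!/2$ over all genus-one graphs without orientation-reversing automorphisms --- say via the full Getzler--Kapranov formula, or via an explicit sign-reversing involution on the non-necklace graphs --- a computation of essentially the same order of difficulty as the paper's argument.
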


The input for this calculation was the calculation $\chi(f_\ast D\op{L}ie(1,n))=2^{n-2}$ of \cite{CHKV}, and it agrees with the results of \cite{CGP} as we discuss below.

{\bf Generalized Drinfeld double.} 
We consider now the morphism $\phi$ between the Feynman category for dioperads and the Feynman category for cyclic operads which forgets directions on the edges of graphs.

For a dg vector space of finite type $A$, let $A^\ast$ denote its linear dual and define $\mathsf{d}(A):= A\oplus A^\ast$.  We equip $\mathsf{d}(A)$ with the non-degenerate bilinear form $\langle a\oplus \eta, b\oplus \xi \rangle= \eta(b)+ \xi(a)$.  We first must establish the ``binomial theorem'' for cyclic operads, which states that $\phi_!(End^{di}_A)= End^{cyc}_{\mathsf{d}(A)}$.  We then have the following immediate consequence:

\begin{corollary}  Let $\op{O}$ be a cyclic operad.  A $\phi^!(\op{O})$-algebra structure on $A$ is equivalent to an $\op{O}$-algebra structure on $(\mathsf{d}(A), \langle-,-\rangle)$.    
\end{corollary}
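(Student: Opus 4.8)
The plan is to deduce the corollary formally from the binomial theorem $\phi_!(End^{di}_A)= End^{cyc}_{\mathsf{d}(A)}$ together with the adjunction $(\phi^!,\phi_!)$ furnished by Theorem A(1), by reinterpreting both sides as hom-sets out of (resp.\ into) endomorphism objects. First I would recall the relevant definitions of algebras as structure maps. For the dioperad $\phi^!(\op{O})$, a $\phi^!(\op{O})$-algebra structure on $A$ is by definition a morphism $\phi^!(\op{O}) \to End^{di}_A$ in $\mathbb{F}_1\text{-}\op{O}ps$, where $End^{di}_A$ is the endomorphism dioperad of the finite-type dg vector space $A$. Dually, for the cyclic operad $\op{O}$, an $\op{O}$-algebra structure on the pair $(\mathsf{d}(A), \langle-,-\rangle)$ is a morphism $\op{O}\to End^{cyc}_{\mathsf{d}(A)}$ in $\mathbb{F}_2\text{-}\op{O}ps$; here the cyclic endomorphism operad $End^{cyc}_{\mathsf{d}(A)}$ is built from $\mathsf{d}(A)$ using the nondegenerate pairing $\langle-,-\rangle$ to identify $\mathsf{d}(A)$ with its linear dual, which is precisely what makes the cyclic (as opposed to merely dioperadic) structure available.

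With these identifications in hand, the decisive step is to invoke the adjunction. Since $\phi^!$ is the left adjoint of $\phi_!$ (Theorem A(1)), there is a natural bijection
\[
\operatorname{Hom}_{\mathbb{F}_1\text{-}\op{O}ps}\bigl(\phi^!(\op{O}),\, End^{di}_A\bigr)\;\cong\;\operatorname{Hom}_{\mathbb{F}_2\text{-}\op{O}ps}\bigl(\op{O},\, \phi_!(End^{di}_A)\bigr).
\]
Applying the binomial theorem to the target of the right-hand side replaces $\phi_!(End^{di}_A)$ by $End^{cyc}_{\mathsf{d}(A)}$, so the right-hand side becomes $\operatorname{Hom}_{\mathbb{F}_2\text{-}\op{O}ps}(\op{O},\, End^{cyc}_{\mathsf{d}(A)})$. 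Reading off the two sides through the definitions of the first paragraph, this chain of bijections is exactly a bijection between $\phi^!(\op{O})$-algebra structures on $A$ and $\op{O}$-algebra structures on $(\mathsf{d}(A), \langle-,-\rangle)$, which is the assertion.

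I expect no genuine obstacle here: the mathematical content is entirely carried by the binomial theorem (which I am assuming) and by the existence of the adjunction $(\phi^!,\phi_!)$ at the level of the naive categories, which is Theorem A(1) rather than merely its homotopy-categorical refinement. The only points requiring care are bookkeeping ones --- that $A$ be of finite type so that $End^{di}_A$ and the dual $A^\ast$ are defined, that $\langle-,-\rangle$ be nondegenerate so that $End^{cyc}_{\mathsf{d}(A)}$ genuinely exists as a cyclic operad, and that one applies the correct variance of the adjunction (the left adjoint $\phi^!$ on the cyclic side, the right adjoint $\phi_!$ on the dioperadic side). Since an algebra structure is a single structure morphism, a bijection of the relevant hom-sets is all that is needed, so no naturality beyond that provided by the adjunction enters.
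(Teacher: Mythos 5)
Your proof is correct and takes exactly the approach the paper intends: the paper states this corollary as an \emph{immediate consequence} of the binomial theorem $L^!(End^{di}_A)\cong End^{cyc}_{(\mathsf{d}(A),\langle-,-\rangle)}$ combined with the adjunction $(R^!,L^!)$ (the body-text notation for $(\phi^!,\phi_!)$, with $R^!=\phi^!$ the left adjoint), which is precisely your chain of bijections $\operatorname{Hom}(\phi^!(\op{O}),End^{di}_A)\cong\operatorname{Hom}(\op{O},\phi_!(End^{di}_A))\cong\operatorname{Hom}(\op{O},End^{cyc}_{\mathsf{d}(A)})$. Your attention to the handedness of the adjunction and to the finite-type hypothesis also matches the paper's conventions, so there is nothing to add.
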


Considering this result in the case $\op{O}=\op{L}ie$, the cyclic operad encoding Lie algebras, we compute that $\phi^!(\op{L}ie)$ is the dioperad encoding Lie bialgebras.  Hence we see the adjunction $(\phi^!,\phi_!)$ as a generalization of the correspondence between Lie bialgebras and so-called Manin triples established in \cite{Drinfeld}. 

{\bf  Moduli space actions on deformation complexes}.  Consider the notion of a planar dioperad.  This is a collection of objects $\op{O}(n,m)$ having operations parametrized by planar directed graphs of genus zero.  The dualizing complex in the category of planar dioperads encodes the notion of a $V_\infty$ algebra due to \cite{TZ}.  

Recall that the shifted homology of the moduli space of punctured Riemann spheres forms an operad called the gravity operad which has a Koszul resolution by the dg operad $\op{G}rav_\infty$.  Using our result on the preservation of deformation complexes we prove:

\begin{corollary}  The Lie bracket on the deformation complex of a $V_\infty$-algebra extends to an action of a $\op{G}rav_\infty$-algebra.
\end{corollary}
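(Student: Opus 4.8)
The plan is to realize the given Lie bracket as the binary part of an action that is transported from a dualizing object by the six operations, using that the gravity operad has a one-dimensional space of binary operations which is exactly a Lie bracket (its higher arities being the higher gravity brackets). First I would make the source of the bracket precise. A $V_\infty$-structure on $A$ is a morphism out of the dualizing object of the Feynman category $\mathbb{F}_{pd}$ of planar genus-zero dioperads; by Theorem E this dualizing object is modelled by $D(I)$. The paper's general deformation principle then identifies the associated deformation complex with the convolution $L_\infty$-algebra structure on $\mathrm{Hom}(\omega_{\mathbb{F}_{pd}},\op{E}nd_A)$, whose (dg) Lie bracket is precisely the bracket in the statement. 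The goal is then to exhibit this bracket as the arity-two component of a $\op{G}rav_\infty$-algebra structure.

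Second, I would identify the operad responsible for the higher operations. The relevant map is the genus-zero, direction-forgetting (and symmetrizing) morphism $f\colon \mathbb{F}_{pd}\to\mathbb{F}_2$ to the Feynman category $\mathbb{F}_2$ governing (cyclic) genus-zero operads; I would first check that $f$ is nice in the sense of Theorem A, so that the six operations and, crucially, Corollary $\ref{prescor}$ apply. The key computation is that the Feynman transform of the unit $D(I_2)$ in $\mathbb{F}_2$ (equivalently, $Df_!$ of the unit as in the derived modular envelope computations above) is a model for $\op{G}rav_\infty$: its homology is the gravity operad, a desuspension of the homology of the moduli spaces $M_{0,n}$ of punctured spheres, and since the gravity operad is Koszul with Koszul dual the hypercommutative operad (Getzler), the Feynman transform $D(I_2)$ is the cobar resolution $\op{G}rav_\infty$.

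Third, I would assemble the action. By Corollary $\ref{prescor}$, $f^!$ preserves dualizing objects, so $\omega_{\mathbb{F}_{pd}}\cong f^!(\omega_{\mathbb{F}_2})$, and dually $f_!$ preserves the deformation complex; hence the deformation complex of the $V_\infty$-algebra is a module over $D(I_2)\simeq\op{G}rav_\infty$, compatibly with the operadic self-gluing that produces the higher brackets. Tracking the lowest arity through the on-the-nose intertwining $f_\ast D\cong Df_!$ of Theorem B shows that the arity-two part of this $\op{G}rav_\infty$-action agrees with the convolution Lie bracket of the first step, so that the bracket indeed extends to a full $\op{G}rav_\infty$-algebra structure.

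The main obstacle is the identification $D(I_2)\simeq\op{G}rav_\infty$. This requires matching the genus-zero graph-complex generators and differential of the Feynman transform with Getzler's cobar model of the gravity operad, while carefully tracking planarity, the suspension and degree shifts, and the $\fr{K}$-twist, and then verifying that the binary operation produced is genuinely the prescribed Lie bracket rather than a multiple or a shift of it. A secondary, and more routine, point is confirming that the symmetrizing morphism $f$ meets the admissibility hypotheses of Theorem A in this example, so that Corollary $\ref{prescor}$ is available.
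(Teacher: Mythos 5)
Your first step, and your use of Corollary $\ref{prescor}$ to transport the deformation complex along the direction-forgetting morphism, do match the paper's strategy, but your second step contains a genuine error on which the rest of the argument rests. The identification $D(I_2)\simeq \op{G}rav_\infty$ is false: the monoidal unit $I_2$ of (planar) cyclic operads is one-dimensional in each arity, so its Feynman transform is the cyclic $\op{A}_\infty$ operad --- indeed the paper uses exactly this fact when it writes the dualizing object of planar dioperads as $R^!_\phi(\op{A}_\infty)=D(I)$, with $\op{A}_\infty$ the cyclic $\op{A}_\infty$ operad. The homology of $D(I_2)$ is the quadratic dual of the unit (an $\op{A}s$-type, respectively odd $\op{L}ie$-type, object), not the homology of the open moduli spaces $M_{0,n}$; Getzler's Koszul duality between gravity and hypercommutative would be relevant only if the unit were $H_\ast(\overline{M}_{0,n+1})$, which it is not. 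So the ``main obstacle'' you flag is not a matter of tracking suspensions and $\fr{K}$-twists: the identification simply cannot be made.

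Relatedly, the inference in your third step --- that preservation of dualizing objects and deformation complexes makes the deformation complex ``a module over $D(I_2)$'' --- is a non sequitur: Corollary $\ref{prescor}$ only says that the deformation complex of $\omega_{\mathbb{F}_1}\to\op{P}$ agrees with that of $\omega_{\mathbb{F}_2}\to L^!(\op{P})$; it produces no action of any operad. The gravity action is not a formal output of the six operations at all. The paper's proof uses Corollary $\ref{prescor}$ (together with the symmetrizations by proper maps and the binomial theorem of Subsection $\ref{fdex}$, which identifies $L^!(End^{di}_A)$ with the endomorphism cyclic operad of the double $A\oplus A^\ast$) only to re-express the deformation complex of the $V_\infty$-algebra as a deformation complex in the category of planar cyclic operads; the $\op{G}rav_\infty$-action on such cyclic deformation complexes is then an external input, namely the $S^1$-equivariant cyclic Deligne conjecture of \cite{Ward2} together with the formality result of \cite{CWard}. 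Your proposal is missing precisely this input, and no internal computation with $D(I_2)$ can replace it.
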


More generally we emphasize the paradigm that the calculation of the dg operad of natural operations on deformation complexes (in the sense of 6.6.4 of \cite{KW}) is preserved by forgetting directions of graphs.

There are more applications to be found by applying our general results to specific morphisms.  See Section $\ref{examples}$.

\subsection{Discussion and future directions.}  \label{discussion}

The {\it raison d'{\^e}tre} of Feynman categories \cite{KW} was that it would allow the study of generalizations of operads via the rather well understood language of symmetric monoidal categories.  We view the results of this paper as a realization of this assertion.  In particular we had easy access to the formulation of the six operations formalism via symmetric monoidal categories given in \cite{FHM}.   Of course it is not difficult to translate our results into other languages, such as multi-categories or algebras over colored operads.  Appendix $\ref{FCA}$ contains a reference guide for Feynman categories and an overview of our conventions regarding signs, parity and the Feynman transform.

A parallel between Verdier duality and Koszul duality for operads was there from the very beginning; see \cite{GiK} where the authors give an interpretation of linear operads as (nice) sheaves on a particular space, such that Verdier duality coincides with their dual dg operad $D$, from which we borrow notation.  Taking this work a step further, in \cite{LVor}  the authors construct a space of metric graphs on which cyclic operads determine sheaves.  They show that the Verdier dual of the sheaf associated to $\op{O}$ is the sheaf associated to $D\op{O}$ and study ensuing connections to graph complexes.  Our work suggests a possible generalization of these works along the following lines: functorially associate a space to a cubical Feynman category $\mathbb{F}$ such that functors correspond to sheaves, and show that the six operations formalism, in the classical sense, coincides with that in Theorem A.  This would give an interesting, if somewhat round-about, interpretation of the constructions presented here.

There are several directions in which the results of this paper may be generalized.  One could consider more general base categories; interesting examples include spectra or $k[G]$-modules.  It is worth pointing out, however, that these generalizations will be of a somewhat different character since it may no longer be the case that Theorem B holds on the nose (see \cite{Wardsdp} for an example of this).   One could also look to loosen the conditions imposed on the encoding categories, or the morphisms between them.   For example, it may be possible to loosen the faithfulness requirement of Definition $\ref{admisdef}$ in some contexts by a symmetrically invariant averaging over all pull backs.

Finally, we remark that there is a valuable perspective of why this structure should exist which is internal to the language of operads.  The cubical Feynman categories have underlying colored operads which are quadratic and self-dual (up to questions of parity).  Thus, passing to an enriched context, a morphism $f\colon\mathbb{F}_1\to\mathbb{F}_2$, gives rise to a Koszul dual morphism $g\colon\mathbb{F}_2\to\mathbb{F}_1$, and one may compare $g^\ast$ and $f_!$.  From this perspective it is perhaps surprising to note that our conditions for the existence of $f_!$ (Definition $\ref{admisdef}$) do not imply self-duality of the enriched colored quadratic operad.  If one were looking to generalize the conditions under which the six operations are expected, keeping this perspective in mind seems particularly valuable.

\subsection{Notation}  In the body of the text we will not continue to use the standard sheaf theoretic notation for the four functors $f^\ast,f_\ast,f^!,f_!$.  Our notation and the translation is given as:

\begin{center}
	\begin{tabular}{c||c|c|c}
		
		Sheaves    & $\mathbb{F}$-$\op{O}ps$     & Derived  & Quadratic  \\ \hline
		$f^\ast$    & $R$  & $R$   & $R$ \\ \hline
		$f_\ast$    & $L$  & $LD^2$   & $L$ \\ \hline
		$f^!$      & $R^!$   & $DRD=R^!D^2$   & $R(-^!)^!$ \\ \hline
		$f_!$       & $L^!$  & $DLD=L^!$   & $L(-^!)^!$ \\ 
	\end{tabular}

\end{center}

Other notation: $k$ is a field of characteristic $0$,  $dgVect_k$ is the category of differential graded (dg) $k$-vector spaces, $s(-)$ and $t(-)$ associate the source and target object to a morphism.

\section{Categorical six operations formalism.} 

In this section we give a definition of six operations formalisms at the level of symmetric monoidal categories.  The material in this section mainly follows \cite{FHM}, but has been adapted a bit to the situation that we shall consider in future sections.  In particular, we will consider categories of covariant functors (as opposed to sheaves), and so {\it all of the natural transformations go in the opposite direction as would be expected} (from the latter) and in particular the handedness of all adjunctions is the opposite of \cite{FHM}.  We formalize this dichotomy by introducing a handedness (left vs. right) in the definition.

We work in the context of co-closed symmetric monoidal categories.  To say a symmetric monoidal category $(\op{C},\tensor, I)$ is co-closed means that there is a bifunctor denoted $cohom\colon \op{C}\times\op{C}^{op}\to\op{C}$ such that for each object $a\in\op{C}$ there is an adjunction $(cohom(-,a), a\tensor -)$.

\begin{definition} \label{6def} Let $\op{B}$ and $\op{C}$ be co-closed symmetric monoidal categories.  A (right) six-operations formalism from $\op{B}$ to $\op{C}$ is the following data:

\begin{enumerate}[i.]
\item  An adjunction $f_\ast\colon \op{B}\rightleftarrows\op{C}: f^\ast$.
\item  An adjunction $f^!\colon \op{C}\rightleftarrows\op{B}: f_!$.

	\item  A natural transformation $\pi \colon \tensor \circ (id\times f_!)\Rightarrow f_!\circ\tensor\circ(f^\ast\times id)$ of functors (with source and target $\op{C}\times \op{B} \rightrightarrows \op{C}$).
\end{enumerate}	

\begin{flushleft}
		subject to the following conditions:	
\end{flushleft}

\begin{enumerate}		
	\item  $f^\ast$ is strong symmetric monoidal.
	\item  For each pair of objects $c$ and $d$, $\pi_{c,d}\colon c\tensor f_!(d) \stackrel{\cong}\to f_!(f^\ast(c)\tensor d)$ is an isomorphism. 
\end{enumerate}	
\end{definition}

\begin{remark}  It may be helpful to keep the following points in mind:
\begin{itemize}
	\item  The notation $f^\ast$ is suggestive of an ``underlying'' $f$ which will exist in our examples, but need not exist at this level of generality.  The terminology ``from $\op{B}$ to $\op{C}$'' is suggestive of this morphism.
	\item  The functors $f_\ast, f^!, f_!$ are not assumed to be symmetric monoidal.  It is a consequence of the definition that $f_\ast$ is op-lax monoidal.
	\item  The ``six'' operations referred to in the terminology are $(f_\ast,f^\ast,f^!,f_!,cohom,\tensor)$, even though (of course) each category has its own $(cohom, \tensor)$ adjunction.
	\item The maps $\pi_{c,d}$ will be called the projection formula maps and the condition (2) above will be called the projection formula.
	\item The ``right'' in the definition refers to the handedness of the monoidal product.  We suggest the terminology ``left'' for the mirror image structure having the handedness of all three adjunctions reversed.  This would encompass the examples appearing in sheaf theory and in the treatment \cite{FHM}.
\end{itemize}	
\end{remark}

Before examining consequences of this structure we need the notion of a dualizing object.  The units of the adjunctions $(cohom(-,a), a\tensor -)$ give rise to co-evaluation maps $b \to  a\tensor cohom(b,a) \cong cohom(b,a)\tensor a$.  Taking the adjoint we have maps $\eta_a^b\colon cohom(b,cohom(b,a))\to a$.

\begin{definition}  An object $b$ in a co-closed symmetric monoidal category $\op{C}$ is {\it dualizing} if $\eta_a^b$ is an isomorphism for every object $a\in\op{C}$.  
\end{definition}

Give an object $b$ we define the contravariant functor $D_b(-):= cohom(b,-)$.

\begin{lemma}  The six operations formalism has the following consequences:
	\begin{enumerate}

\item There are isomorphisms of functors:  
\begin{equation*}
D_{f^!(\omega)}f^\ast\cong f^! D_\omega \ \ \ \text{ and } \ \ \ f_\ast D_{f^!(\omega)} \cong D_\omega f_!
\end{equation*}

\item If $\omega\in\op{C}$ is a dualizing object, there are isomorphisms of functors: 
\begin{equation*}
D_{f^!(\omega)}f^\ast D_\omega\cong f^! \ \ \ \text{ and } \ \ \ D_\omega f_\ast D_{f^!(\omega)} \cong f_!
\end{equation*}

\item If $f^!(\omega)\in\op{B}$ is a dualizing object, there are isomorphisms of functors:
\begin{equation*}
f^\ast\cong D_{f^!(\omega)}f^! D_\omega \ \ \ \text{ and } \ \ \ f_\ast  \cong D_\omega f_!D_{f^!(\omega)}
\end{equation*}

\end{enumerate}

\end{lemma}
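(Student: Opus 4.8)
\emph{Plan.} The three parts are of decreasing content: part (1) is the crux and needs no hypothesis beyond the six-operations structure of Definition \ref{6def}, while (2) and (3) each follow from (1) by one formal manipulation. The only extra input is that a dualizing object $b$ yields a natural isomorphism $D_b D_b\cong\mathrm{id}$, which is precisely the assertion that $\eta^b$ is invertible. So the plan is to first establish (1) by a pair of Yoneda arguments, then read off (2) and (3). For the final statement (3) I would pre-compose the isomorphisms of (1) with $D_{f^!(\omega)}$, which is legitimate exactly because $f^!(\omega)$ is dualizing, so $D_{f^!(\omega)}D_{f^!(\omega)}\cong\mathrm{id}$: applying $D_{f^!(\omega)}$ on the left of $D_{f^!(\omega)}f^\ast\cong f^! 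D_\omega$ gives $f^\ast\cong D_{f^!(\omega)}f^! D_\omega$, and applying it on the right of $f_\ast D_{f^!(\omega)}\cong D_\omega f_!$ gives $f_\ast\cong D_\omega f_! D_{f^!(\omega)}$. (Part (2) is the mirror of this, post-composing (1) with $D_\omega$ and using that $\omega$ is dualizing.)

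\emph{Proof of (1).} To prove $f^! D_\omega\cong D_{f^!(\omega)}f^\ast$ I would fix $c\in\op{C}$, test both functors against an arbitrary $b\in\op{B}$, and run the chain of natural isomorphisms
\begin{align*}
\mathrm{Hom}_{\op{B}}\big(cohom(f^!(\omega),f^\ast(c)),\,b\big)
&\cong \mathrm{Hom}_{\op{B}}\big(f^!(\omega),\,f^\ast(c)\tensor b\big) \\
&\cong \mathrm{Hom}_{\op{C}}\big(\omega,\,f_!(f^\ast(c)\tensor b)\big) \\
&\cong \mathrm{Hom}_{\op{C}}\big(\omega,\,c\tensor f_!(b)\big) \\
&\cong \mathrm{Hom}_{\op{C}}\big(cohom(\omega,c),\,f_!(b)\big) \\
&\cong \mathrm{Hom}_{\op{B}}\big(f^!\, cohom(\omega,c),\,b\big).
\end{align*}
Here the first and fourth isomorphisms are the co-closed adjunction $(cohom(-,a),\,a\tensor-)$, taken with $a=f^\ast(c)$ and $a=c$ respectively; the second and fifth are the adjunction $(f^!,f_!)$; and the third is the projection-formula isomorphism $\pi_{c,b}$. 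Because $cohom(-,a)$ is the \emph{left} adjoint of the co-closed structure, it is essential that each Yoneda test be taken against morphisms out of the constructed objects — this is where the ``right'' handedness of Definition \ref{6def} enters — and the Yoneda lemma then identifies the two functors $\op{C}\to\op{B}$. The second isomorphism of (1), $f_\ast D_{f^!(\omega)}\cong D_\omega f_!$, is proved by the dual chain, now testing against $y\in\op{C}$ starting from $\mathrm{Hom}_{\op{C}}\big(cohom(\omega,f_!(b)),y\big)$; the only extra bookkeeping is inserting the symmetry of $\tensor$ so that $\pi$ and the co-closed adjunction appear in the one-sided form of the definition.

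\emph{Main obstacle.} There is no deep difficulty once the handedness is pinned down — the argument is formal, exactly as in \cite{FHM} — so the real work is in the bookkeeping, and I expect two points to demand care. First, in the chains one must apply the projection formula in the correct variable and insert symmetry isomorphisms so that every step genuinely matches the one-sided forms of $\pi$ and of $(cohom(-,a),a\tensor-)$; it is easy to misplace a $D$ or a factor of $f^\ast$ against $f^!$. Second, in deducing (2) and (3) one must invoke the involutivity $D_bD_b\cong\mathrm{id}$ on the correct side and only under the dualizing hypothesis attached to that specific object — $\omega$ for (2), $f^!(\omega)$ for (3) — since neither object is assumed dualizing in (1). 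Verifying that the composite of the constituent adjunction isomorphisms is natural in the test object is routine, as each factor is.
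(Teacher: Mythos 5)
Your proposal is correct and follows exactly the paper's (much terser) argument: part (1) by the chain of adjunction isomorphisms together with the projection formula and the Yoneda lemma, and parts (2) and (3) by composing with $D_\omega$ resp.\ $D_{f^!(\omega)}$ and invoking the involutivity $D_bD_b\cong \mathrm{id}$ that the dualizing hypothesis provides. Your explicit Hom-chains, the handedness bookkeeping, and the symmetry insertions in the second chain all check out against Definition $\ref{6def}$.
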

\begin{proof}  Using adjointness and the projection formula we see that, for every pair of objects $c_1,c_2\in ob(\op{C})$, there are natural isomorphism of functors $\op{B}\to Set$ of the form:
	\begin{eqnarray}\label{Yoneda}
	 Hom_\op{B}(cohom(f^!(c_1),f^\ast(c_2)), -) & \cong &  Hom_\op{C}(c_1, f_!(f^\ast(c_2)\tensor -)) \\   & \cong & Hom_\op{C}(c_1,c_2\tensor f_!(-)) \nonumber  \\ & \cong & Hom_\op{B}(f^!(cohom(c_1,c_2)),-).	 \nonumber
	\end{eqnarray}

Statement (1) then follows from the Yoneda lemma (evaluating at $\omega=c_1$).  Statements (2) and (3) follow from the definition of dualizing. \end{proof}

Let us emphasize that the structure of a six operations formalism is not inherently interesting, e.g.\ we might take all four functors to be zero.  But the preceding lemma shows that if one is interested in such a dualizing functor, it can be studied in particular cases via the six operations, due to statement (1).  On the other hand, if one is interested in the six operations to begin with, finding dualizing objects will permit the study of these functors via statements (2) and (3).  In particular, this structure is most useful in the presence of such dualizing objects, hence we make the following definition:

\begin{definition}\label{e6ops}
	An effective six-operations formalism from $\op{B}$ to $\op{C}$ is a six-operations formalism from $\op{B}$ to $\op{C}$ and a dualizing object $\omega\in\op{C}$ such that $f^!(\omega)$ is dualizing.
\end{definition}

\section{Quillen adjunctions from morphisms of Feynman categories.}  

The six operations formalisms that we consider in this paper will be associated to morphisms of Feynman categories.  We remind the reader that additional prerequisites, references, and technical assumptions related to Feynman categories appear in Appendix $\ref{FCA}$. 

Fix a morphism of Feynman categories $\phi\colon\mathbb{F}_1\to\mathbb{F}_2$ and a cocomplete and closed symmetric monoidal category $\op{C}$.  Associated to $\phi$ there is an adjunction:
\begin{equation*}
L_\phi \colon {\mathbb{F}_1}\text{-}\op{O}ps_\op{C} \leftrightarrows {\mathbb{F}_2}\text{-}\op{O}ps_\op{C} \colon R_\phi
\end{equation*}
in which $R_\phi$ is given by composition, $R_\phi(\op{O})=\op{O}\circ \phi$, and $L_\phi$ is given by left Kan extension, $L_\phi(\op{P})=Lan_\phi(\op{P})$.  We often suppress the subscript if $\phi$ is fixed.  This adjunction gives rise to the following not necessarily commutative square of adjunctions.  We emphasize to the reader that these functors and their accompanying notation will be used frequently throughout.
\begin{equation*}
\xymatrix{ {\mathbb{V}_1}\text{-}Mods_\op{C} \ar[rr]^{F_1} \ar[dd]^{l} & & \ar@<1ex>[ll]^{G_1} {\mathbb{F}_1}\text{-}\op{O}ps_\op{C} \ar[dd]^{L} \\ & & \\ \ar@<1ex>[uu]^{r}  {\mathbb{V}_2}\text{-}Mods_\op{C} \ar[rr]^{F_2} & &\ar@<1ex>[ll]^{G_2} \ar@<1ex>[uu]^{R} {\mathbb{F}_2}\text{-}\op{O}ps_\op{C}}
\end{equation*}

Here $(F,G)$ and $(l,r)$ are adjunctions which arise for the particular morphisms of inclusion(s) $\iota\colon\mathbb{V}\hookrightarrow \mathbb{F}$ and restriction $\phi\colon\mathbb{V}_1\to\mathbb{V}_2$.

\begin{lemma}\label{ntlem}  With respect to the diagram:
\begin{enumerate}
\item  $r G_2= G_1R$
\item  $F_2l \cong L F_1$

and consequently there exist natural transformations:

\item  $F_1r \Rightarrow R F_2$
\item  $l G_1 \Rightarrow G_2 L$ 
\end{enumerate}
\end{lemma}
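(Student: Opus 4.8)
The plan is to produce $lG_1 \Rightarrow G_2 L$ as the \emph{mate} of the isomorphism of statement (2) under the two horizontal adjunctions $F_1 \dashv G_1$ and $F_2 \dashv G_2$. The only ingredients needed are the units and counits of these adjunctions — write $\eta_i\colon \mathrm{id}\Rightarrow G_i F_i$ and $\epsilon_i\colon F_i G_i\Rightarrow \mathrm{id}$ for $i\in\{1,2\}$ — together with the isomorphism $\beta\colon F_2 l \stackrel{\cong}{\Rightarrow} L F_1$ supplied by (2). I would not need (1) directly here, since it is already encoded in (2) via uniqueness of adjoints.

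The transformation is then the vertical composite
\begin{equation*}
lG_1 \xrightarrow{\ \eta_2 lG_1\ } G_2 F_2 lG_1 \xrightarrow{\ G_2 \beta G_1\ } G_2 L F_1 G_1 \xrightarrow{\ G_2 L \epsilon_1\ } G_2 L,
\end{equation*}
each arrow being a whiskering of $\eta_2$, $\beta$, or $\epsilon_1$ by the displayed functors. The sole bookkeeping step is to confirm that source and target match at every stage: each functor appearing sends the category of $\mathbb{F}_1$-operads to that of $\mathbb{V}_2$-modules, which one reads directly off the diagram.

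Naturality is then automatic: whiskering a natural transformation by a functor again yields a natural transformation, and vertical composites of natural transformations are natural, so the displayed $2$-cell is natural in the $\mathbb{F}_1$-operad argument. The companion statement (3) is proved by the mirror-image mate, this time of the \emph{equality} (1) under the adjunctions $l\dashv r$ and $L\dashv R$, namely $F_1 r \xrightarrow{F_1 r \eta_2} F_1 r G_2 F_2 = F_1 G_1 R F_2 \xrightarrow{\epsilon_1 R F_2} R F_2$, in which the middle identification is precisely (1). I would present (4) in full and remark that (3) is identical after interchanging the roles of the two adjunction pairs.

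The main point to flag is a caveat rather than a technical obstacle: the construction is purely formal and uses \emph{no} invertibility of $\eta_2$ or $\epsilon_1$, so it yields only a natural transformation, not an isomorphism. This is unavoidable — the question of when these comparison maps are invertible is genuinely nontrivial and is settled only later, in the ambidexterity results of Theorem C, where invertibility is controlled by conditions on the morphisms of $\mathbb{F}_2$. Accordingly the proof should stop once the comparison $2$-cell has been exhibited and its naturality noted.
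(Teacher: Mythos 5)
Your constructions of (3) and (4) coincide exactly with the paper's: the paper produces (3) as $F_1r \Rightarrow F_1rG_2F_2 = F_1G_1RF_2 \Rightarrow RF_2$ and (4) as $lG_1 \Rightarrow G_2F_2lG_1 \cong G_2LF_1G_1 \Rightarrow G_2L$, which are precisely the whiskered unit/counit composites you write down, and your remark that these are merely natural transformations (with invertibility deferred to the ambidexterity results) is also the paper's point of view. One small slip: your composite for (3) is the mate of (1) under the adjunctions $F_1\dashv G_1$ and $F_2\dashv G_2$ (the vertical functors $r$ and $R$ are the ones being transformed), not under $l\dashv r$ and $L\dashv R$; the formula you give is nevertheless correct.

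The genuine gap is that statements (1) and (2) are themselves assertions of the lemma, and your proposal never proves them: you take the isomorphism $\beta\colon F_2l \cong LF_1$ as ``supplied by (2)'' and declare (1) to be ``encoded in (2) via uniqueness of adjoints.'' This leaves both unestablished, and the logical order is in fact the reverse of what you suggest. The paper first proves (1) by a direct computation: the definition of a morphism of Feynman categories gives $\iota_2\phi = \phi\iota_1$, hence for $\op{O}\in\mathbb{F}_2$-$\op{O}ps$ and $v\in\mathbb{V}_1$ one has $rG_2(\op{O})(v) = \op{O}(\iota_2\phi(v)) = \op{O}(\phi\iota_1(v)) = G_1R(\op{O})(v)$, an equality on the nose. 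It then deduces (2) from (1) by uniqueness of left adjoints (Yoneda): $F_2l$ and $LF_1$ are both left adjoint to $rG_2 = G_1R$, hence canonically isomorphic. Note that going in your direction, taking right adjoints of (2) would only yield an isomorphism $rG_2 \cong G_1R$, whereas (1) claims --- and both your and the paper's proofs of (3) use --- a strict equality in the middle of the composite. To complete the proof you need to add these two short arguments; once they are in place, the rest of your proposal goes through verbatim.
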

\begin{proof}{\bf The first statement:}  By the definition of a morphism of Feynman categories, $\iota_2\phi=\phi\iota_1$.  Thus if $\op{O}\in{\mathbb{F}_2}\text{-}\op{O}ps_\op{C}$ and $v\in\text{ob}(\mathbb{V}_1)$, then:
\begin{equation*}
r (G_2(\op{O}))(v) :=  G_2(\op{O})(\phi(v))=\op{O}(\iota_2\circ \phi(v)) = \op{O}(\phi\circ \iota_1(v)) =: G_1(R(\op{O}))(v)
\end{equation*}

{\bf The second statement:} follows from the first statement and the Yoneda lemma.

{\bf The third statement:}  $F_1r \Rightarrow F_1r G_2F_2 = F_1G_1R F_2 \Rightarrow R F_2$.

{\bf The fourth statement:} $l G_1 \Rightarrow G_2F_2 l G_1 \cong G_2 L F_1G_1 \Rightarrow G_2 L$. \end{proof}

\begin{lemma}  $L,R,l,r$ are preserved by composition: $L_{\phi_1\phi_2}=L_{\phi_1}L_{\phi_2}$, $R_{\phi_1\phi_2}=R_{\phi_2}R_{\phi_1}$ etc.
\end{lemma}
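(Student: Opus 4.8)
The claim is that the four functors $L,R,l,r$ are all compatible with composition of morphisms of Feynman categories, in the sense that $L_{\phi_1\phi_2}=L_{\phi_1}L_{\phi_2}$, $R_{\phi_1\phi_2}=R_{\phi_2}R_{\phi_1}$, and the analogous statements for $l$ and $r$. The plan is to dispatch the four functors in two groups, since $R$ and $r$ are both defined by strict precomposition while $L$ and $l$ are both defined by left Kan extension.

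\textbf{The restriction functors $R$ and $r$.} These are the easy cases. By definition $R_\phi(\op{O})=\op{O}\circ\phi$ is literal precomposition with $\phi$, and $r$ is precomposition with the restricted morphism $\phi|_{\mathbb{V}_1}\colon\mathbb{V}_1\to\mathbb{V}_2$. First I would simply observe that given $\phi_1\colon\mathbb{F}_2\to\mathbb{F}_3$ and $\phi_2\colon\mathbb{F}_1\to\mathbb{F}_2$, for any operad $\op{O}\in\mathbb{F}_3\text{-}\op{O}ps_\op{C}$ we have
\begin{equation*}
R_{\phi_1\phi_2}(\op{O})=\op{O}\circ(\phi_1\phi_2)=(\op{O}\circ\phi_1)\circ\phi_2=R_{\phi_2}(R_{\phi_1}(\op{O})),
\end{equation*}
which is an on-the-nose equality of functors because composition of functors is strictly associative. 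The reversal of order is exactly what one expects from precomposition and matches the contravariant bookkeeping. The identical argument, applied to the restricted morphisms on groupoids of vertices and using that restriction of a composite is the composite of restrictions (which follows from the Feynman category axiom $\iota\phi=\phi\iota$ used in Lemma \ref{ntlem}), gives $r_{\phi_1\phi_2}=r_{\phi_2}r_{\phi_1}$.

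\textbf{The Kan extension functors $L$ and $l$.} Here $L_\phi(\op{P})=Lan_\phi(\op{P})$, and the statement $L_{\phi_1\phi_2}=L_{\phi_1}L_{\phi_2}$ is the standard transitivity of left Kan extensions along a composite. The clean way to see it is via adjunctions rather than pointwise colimit formulas: $L_\phi$ is by construction the left adjoint of $R_\phi$, and we have just shown $R_{\phi_1\phi_2}=R_{\phi_2}R_{\phi_1}$. Since a left adjoint of a composite is (up to canonical natural isomorphism) the composite of the left adjoints in the reverse order, $L_{\phi_1\phi_2}$ is left adjoint to $R_{\phi_2}R_{\phi_1}$, which is left adjoint to the same functor as $L_{\phi_1}L_{\phi_2}$; by uniqueness of adjoints this yields a natural isomorphism $L_{\phi_1\phi_2}\cong L_{\phi_1}L_{\phi_2}$. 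The same argument applied to the restricted groupoid morphisms gives $l_{\phi_1\phi_2}\cong l_{\phi_1}l_{\phi_2}$ from the corresponding statement for $r$. Note that for the Kan extensions one only expects an isomorphism, not an equality, which is why the lemma statement hedges with ``etc.''

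\textbf{The main obstacle.} The genuine content, and the only place requiring care, is ensuring that $L_\phi$ really is defined as the left adjoint to $R_\phi$ and that these Kan extensions exist — which is guaranteed by the running hypothesis that $\op{C}$ is cocomplete, so that the pointwise left Kan extension formula applies and the $(L_\phi,R_\phi)$ adjunction is genuinely available for every morphism in sight, including the composites. Given that, the argument is purely formal: the $R$-and-$r$ equalities are definitional, and the $L$-and-$l$ isomorphisms are forced by uniqueness of adjoints. I would therefore present the proof in roughly a sentence or two per functor, emphasizing that the order reversal for $R,r$ is contravariant precomposition and that the $L,l$ statements follow by passing to left adjoints, rather than grinding through the colimit coend defining the Kan extension.
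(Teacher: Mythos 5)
Your proposal is correct: the identities $R_{\phi_1\phi_2}=R_{\phi_2}R_{\phi_1}$ and $r_{\phi_1\phi_2}=r_{\phi_2}r_{\phi_1}$ hold strictly because precomposition is strictly associative (and restriction to vertices commutes with composition by $\iota_2\phi=\phi\iota_1$), while $L_{\phi_1\phi_2}\cong L_{\phi_1}L_{\phi_2}$ and $l_{\phi_1\phi_2}\cong l_{\phi_1}l_{\phi_2}$ follow by uniqueness of left adjoints, since both sides are left adjoint to the same (strictly equal) composite of restriction functors. The paper states this lemma without any proof, treating it as immediate, and your argument — working through the already-established adjunctions $(L_\phi,R_\phi)$ and $(l_\phi,r_\phi)$ rather than the pointwise Kan extension formula, and correctly noting that for $L$ and $l$ one gets a canonical isomorphism rather than an on-the-nose equality — is exactly the standard justification the paper implicitly relies on.
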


We consider $\mathbb{F}$-$\op{O}ps_\op{C}$ to be a symmetric monoidal category via the $\tensor$-product in $\op{C}$; i.e.\ for $X\in ob(\mathbb{F}),  (\op{O}\tensor\op{P})(X):=\op{O}(X)\tensor\op{P}(X)$, and likewise for morphisms.  Then:

\begin{lemma} $R$ is a symmetric monoidal functor.  In particular $R$ preserves the $\tensor$-unit.
\end{lemma}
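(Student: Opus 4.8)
The plan is to show that $R=R_\phi$, defined by $R_\phi(\op{O})=\op{O}\circ\phi$, is strong symmetric monoidal with respect to the level-wise tensor product. The key observation is that $R$ acts by precomposition with $\phi$, and the monoidal structure on both $\mathbb{F}_1$-$\op{O}ps_\op{C}$ and $\mathbb{F}_2$-$\op{O}ps_\op{C}$ is defined object-wise in $\op{C}$. Since precomposition does not touch the values in $\op{C}$ but merely reindexes which object of the source category is evaluated, the tensor product should be preserved on the nose.

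First I would spell out the structure maps. For $\op{O},\op{P}\in\mathbb{F}_2$-$\op{O}ps_\op{C}$ and any $X\in ob(\mathbb{F}_1)$, I compute
\begin{equation*}
\big(R(\op{O})\tensor R(\op{P})\big)(X) = R(\op{O})(X)\tensor R(\op{P})(X) = \op{O}(\phi(X))\tensor\op{P}(\phi(X)),
\end{equation*}
while on the other hand
\begin{equation*}
R(\op{O}\tensor\op{P})(X) = (\op{O}\tensor\op{P})(\phi(X)) = \op{O}(\phi(X))\tensor\op{P}(\phi(X)).
\end{equation*}
These agree, and the identity natural isomorphism serves as the monoidal structure map. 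For the unit, I would recall that the monoidal unit $I$ of $\mathbb{F}$-$\op{O}ps_\op{C}$ assigns to each object of $\mathbb{F}$ the $\tensor$-unit of $\op{C}$ (with appropriate structure on morphisms). Then $R(I_2)(X)=I_2(\phi(X))$ is the $\tensor$-unit of $\op{C}$ for every $X$, so $R(I_2)\cong I_1$, giving the unit compatibility and the second assertion of the lemma.

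To complete the argument I would check naturality and the coherence conditions. Naturality in the variables $\op{O},\op{P}$ amounts to verifying that the identity isomorphisms above are compatible with morphisms of $\mathbb{F}_2$-operads, which holds since all structure is defined level-wise and precomposition with $\phi$ is functorial. The associativity and unit coherence diagrams reduce to the corresponding coherence diagrams for $\op{C}$ evaluated object-wise, hence commute automatically. One must also confirm compatibility with the symmetry, which again follows because the braiding on $\mathbb{F}$-$\op{O}ps_\op{C}$ is the level-wise braiding of $\op{C}$ and is unaffected by reindexing along $\phi$.

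I do not anticipate a genuine obstacle here: the entire content is that precomposition commutes with an object-wise construction, so the strong monoidal structure is witnessed by identity maps. The only point requiring minor care is bookkeeping on morphisms rather than objects, namely that the functorial action of $\op{O}\tensor\op{P}$ and of $\op{O},\op{P}$ on morphisms of $\mathbb{F}_2$ restricts correctly along $\phi$; but this is immediate from the definition of the level-wise tensor product on morphisms. Thus the lemma is essentially formal, and the proof is a short verification that the candidate structure maps are identities satisfying the coherence axioms.
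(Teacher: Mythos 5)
Your verification is correct and is exactly the formal argument the paper leaves implicit: the paper states this lemma without proof, treating it as immediate from the fact that $R$ is precomposition with $\phi$ while the tensor product, unit, and symmetry on $\mathbb{F}$-$\op{O}ps_\op{C}$ are all defined level-wise in $\op{C}$. Your identity structure maps, unit identification $R(I_2)\cong I_1$, and coherence checks are precisely the bookkeeping any reader would supply.
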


\begin{remark}\label{Quillenadj}
	If $\op{C}=dgVect_k$,  then $\mathbb{F}$-$\op{O}ps$ is a model category whose quasi-isomorphisms are level-wise weak equivalences (\cite{KW} Theorem 8.2.1).  We will often be concerned with the case in which $\phi$ is faithful, which implies that $(L,R)$ is a Quillen adjunction (\cite{KW} Lemma 8.28).
\end{remark}

\begin{remark}\label{LRminusrmk}  If $\phi$ is cubical (see Appendix $\ref{FCA}$), then $L$ and $R$ preserve skew invariance and thus restrict to an adjunction:
	\begin{equation*}
	L^- \colon {\mathbb{F}^-_1}\text{-}\op{O}ps_\op{C} \leftrightarrows {\mathbb{F}^-_2}\text{-}\op{O}ps_\op{C} \colon R^-
	\end{equation*}
\end{remark}

\subsection{Ambidexterity}
We now impose $\op{C}=dgVect_k$ and establish the following ambidexterity result.

\begin{lemma}\label{amblem}
	The functors $(r,l)$ form an adjoint pair.  Explicitly, $r$ is both a left and right adjoint of $l$ and vice-versa.
\end{lemma}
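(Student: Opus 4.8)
The plan is to exploit the fact that $\phi\colon \mathbb{V}_1\to\mathbb{V}_2$ is a morphism of \emph{groupoids}, where the strong finiteness and rigidity built into a Feynman category forces the adjoint functors $l=L_\phi$ and $r=R_\phi$ on $\mathbb{V}$-modules to coincide up to canonical isomorphism. Recall that $r(\op{M})=\op{M}\circ\phi$ is restriction along $\phi$ and $l=\mathrm{Lan}_\phi$ is left Kan extension, so that $(l,r)$ is an adjoint pair on the nose by general nonsense. The content of the lemma is the \emph{reverse} adjunction $(r,l)$, i.e. that $r$ is \emph{also} left adjoint to $l$. Since $\mathbb{V}_1$ and $\mathbb{V}_2$ are groupoids, every morphism is invertible, and left Kan extension along a functor of groupoids admits an explicit pointwise formula: for $w\in\mathrm{ob}(\mathbb{V}_2)$,
\begin{equation*}
l(\op{M})(w)\;\cong\;\bigoplus_{[v]}\;\op{M}(v)\tensor_{k[\mathrm{Aut}(v)]}k[\mathrm{Mor}_{\mathbb{V}_2}(\phi(v),w)],
\end{equation*}
the sum running over isomorphism classes of $v\in\mathbb{V}_1$ with $\phi(v)\cong w$. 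Over a field of characteristic zero the group algebras $k[\mathrm{Aut}(v)]$ are semisimple, so these induction-type operations are simultaneously left and right adjoint to restriction; this is the abstract source of the ambidexterity.

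The concrete steps I would carry out are as follows. First I would reduce to the case where $\mathbb{V}_1$ and $\mathbb{V}_2$ are each equivalent to a disjoint union of one-object groupoids $B\mathrm{Aut}(v)$, using that a groupoid is equivalent to the disjoint union over its isomorphism classes of the automorphism groups of chosen representatives; this replaces the statement about $\mathbb{V}$-modules by a statement about modules over the group algebras $k[\mathrm{Aut}(v)]$. Second, on each such block the functor $\phi$ becomes a group homomorphism $H=\mathrm{Aut}(v)\to G=\mathrm{Aut}(\phi(v))$ (together with the bookkeeping of which target components are hit), and $r$ becomes restriction $\mathrm{Res}^G_H$ while $l$ becomes induction $\mathrm{Ind}^G_H=k[G]\tensor_{k[H]}(-)$. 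Third, I would invoke the classical fact that for finite groups (and more generally whenever the relevant group algebras are semisimple, which holds in characteristic zero) induction and coinduction agree, $\mathrm{Ind}^G_H\cong \mathrm{Coind}^G_H$, and coinduction is precisely the right adjoint to restriction. Hence $\mathrm{Res}$ is \emph{both} left adjoint (with right adjoint $\mathrm{Coind}=\mathrm{Ind}$) and right adjoint (with left adjoint $\mathrm{Ind}$) to induction, which is exactly the claimed two-sided adjunction between $r$ and $l$. Finally I would assemble these block-wise adjunctions into the required natural isomorphisms of hom-sets for $\mathbb{V}$-modules, checking naturality in both variables.

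The main obstacle I anticipate is purely bookkeeping rather than conceptual: one must track the automorphism groups correctly through $\phi$ and ensure that the isomorphism $\mathrm{Ind}\cong\mathrm{Coind}$ is natural and compatible with the decomposition into isomorphism classes of objects, so that the block-wise ambidexterity glues to a genuine adjunction at the level of $\mathbb{V}$-modules. In particular I would need to verify that the finiteness hypotheses built into a Feynman category guarantee that each fibre of $\phi$ over an object of $\mathbb{V}_2$ is essentially finite (finitely many isomorphism classes with finite automorphism groups), so that the direct sums above are finite and the semisimplicity argument applies uniformly. Once these finiteness and semisimplicity points are in place, the ambidexterity is a formal consequence of the induction/coinduction coincidence, and the two unit/counit pairs can be written down explicitly using the trace/averaging maps coming from the semisimple group algebras.
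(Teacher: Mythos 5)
Your proposal is correct, but it takes a different route from the paper's proof. The paper argues entirely by hand: starting from the skeletal formula for $l$, it constructs the bijection $Hom_{\mathbb{V}_1\text{-}Mods}(rE,A)\cong Hom_{\mathbb{V}_2\text{-}Mods}(E,lA)$ explicitly, sending $\lambda$ to the map $\psi_w(x)=\frac{1}{|Aut(w)|}\sum_{\sigma\in Aut(v)}[\lambda_w(\sigma x)\tensor \sigma^{-1}]$ and writing down an explicit inverse assignment $\psi\mapsto\lambda$; no representation-theoretic machinery is cited. You instead reduce to one-object blocks and quote the coincidence of induction and coinduction over semisimple group algebras. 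The engine is the same in both cases---averaging over finite groups in characteristic zero---but your argument is structural where the paper's is constructive. What the constructive route buys is that the unit and counit are available in closed form for later use: the paper subsequently relies on the description of $rl\Rightarrow id$ as the averaging projection from $Aut(v)$ to $Aut(w)$ (in the proof of Proposition~\ref{quad}), so on your route these maps would still have to be unwound from the abstract isomorphism, essentially reproducing the paper's formulas. One point needs care in your reduction: at this stage of the paper $\phi$ is not yet assumed faithful (admissibility is imposed only in the following section), so the homomorphisms $Aut(w)\to Aut(\phi(w))$ may have nontrivial kernel, and the classical statement $\mathrm{Ind}^G_H\cong\mathrm{Coind}^G_H$ for a subgroup $H\leq G$ does not literally apply. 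What you actually need is ambidexterity of restriction along an arbitrary homomorphism of finite groups in characteristic zero---for instance, factor through the image and handle the surjective part via the norm isomorphism between coinvariants and invariants. Your parenthetical appeal to semisimplicity does cover this, but it should be made explicit rather than left as an invocation of the subgroup case.
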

\begin{proof} First, we recall that $r$ is given by restriction of $\phi$ to the groupoids of vertices $\phi\colon\mathbb{V}_1\to\mathbb{V}_2$ and $l$ is given by left Kan extension of this restriction.  In particular, using our skeletal assumption (Appendix $\ref{FCA}$) we may avoid colimits and just write:
	\begin{equation*}
	lA(v):= \bigoplus_{\substack{w\in\mathbb{V}_1 \\ \phi(w) = v}} \left[A(w)\tensor k[Aut(v)]\right]_{Aut(w)}
	\end{equation*}	
	
We know that  $(l,r)$ form an adjoint pair, and we endeavor to show that $(r,l)$ do as well.  For this we will construct a natural isomorphism:
	\begin{equation*}
	Hom_{\mathbb{V}_1\text{-}Mods}(rE,A)\cong Hom_{\mathbb{V}_2\text{-}Mods}(E,lA)
	\end{equation*}
	To begin, suppose we are given $\lambda\colon rE\to A$ and we will construct $\psi\colon E\to lA$.  We thus start with an $Aut(w)$-equivariant map $\lambda_w\colon E(\phi(w))\to A(w)$ for each $w\in \mathbb{V}_1$.  We will define an $Aut(v)$-equivariant map $\psi_v\colon E(v)\to lA(v)$ as the direct sum of morphisms indexed over the objects of $\mathbb{V}_1$:
	\begin{equation*}
	\psi_w\colon E(\phi(w))\to \left[A(w)\tensor k[Aut(\phi(w))]\right]_{Aut(w)}
	\end{equation*}	
	Explicitly: $\psi_v=\oplus_{w\mapsto v} \psi_w$.  Define $\psi_w$ by:
	\begin{equation*}
	\psi_w(x) = \frac{1}{|Aut(w)|}\ds\sum_{\sigma \in Aut(v)}[\lambda_{w}(\sigma x)\tensor \sigma^{-1}]
	\end{equation*}
 Notice that each $\psi_w$ is $Aut(v)$-equivariant, and thus $\psi_v$ is.  Here we are using that $Aut(v)$ acts on the right factor only (on the left hand side of the right factor).  It may be the case that there does not exist a $w$ with $\phi(w)=v$, in which case the definition results in an empty sum and hence $\psi_v=0$.  Whence the map:  $Hom_{\mathbb{V}_1\text{-}Mods}(rE,A)\to Hom_{\mathbb{V}_2\text{-}Mods}(E,lA)$.
	
	An inverse to this map is given as follows.  Suppose we are given $\psi\colon E\to lA$.  For $w\in\mathbb{V}_1$ restricting to the $w$-summand (as above) we have a map:
	\begin{equation*}
	\psi_w\colon E(\phi(w))\to \left[A(w)\tensor k[Aut(\phi(w))]\right]_{Aut(w)}
	\end{equation*}	
	Writing $v:=\phi(w)$, this map is $Aut(v)$-equivariant (since $Aut(v)$ acts on each summand of $lA(v)$ individually).  For $x\in E(v)$ we may write: $\psi_w(x)=[\sum_{g\in Aut(v)}y_g\tensor g]$, and we define $ \lambda_w :E(\phi(w))\to A(w)$ by:
	\begin{equation*}
	\lambda_w(x) = \ds\sum_{h\in Aut(w)} h\cdot y_{\phi(h)} 
	\end{equation*}
	
	Having given the construction, the remaining details of the proof are straightforward.  One can easily check that $\lambda$ is well defined and $Aut(w)$-equivariant, and then that the assignments $\lambda\leftrightarrow \psi$ are inverses of each other. \end{proof}

One way to view the results of this paper is that we measure the failure of ambidexterity (Lemma $\ref{amblem}$) for Feynman categories which are not groupoids.

\section{The functor $L^!$.}\label{Lsec}  We continue with $\phi\colon\mathbb{F}_1\to\mathbb{F}_2$ as above.  The purpose of this section is to give the construction of the exceptional push-forward 
\begin{equation*}
L^!_\phi\colon\mathbb{F}_1\text{-}\op{O}ps\to \mathbb{F}_2\text{-}\op{O}ps
\end{equation*}
which we denote by $L^!$ if $\phi$ is fixed.  In this section we also establish the intertwining theorem $DL^!\cong LD$ and the projection formula. 

\subsection{Criterion for the existence of $L^!$}
The functor $L^!$ exists quite generally but not universally, so in the interest of restricting to when it exists we make the definition:

\begin{definition}\label{admisdef}  The functor $\phi$ is called admissible if it is faithful and satisfies the following two ``factorization axioms''.  First, if $\phi(g)=f\circ \phi(l)$ in $Mor(\mathbb{F}_2)$ then $f\in im(\phi)$.  Second, if $\phi(g)=f\circ h$ in $Mor(\mathbb{F}_2)$ then there exist a decomposition $g=g_1\circ g_2$ and an automorphism $\sigma$ such that $f=\phi(g_1)\sigma^{-1}$ and $h=\sigma\phi(g_2)$.

\end{definition}

We will see in Section $\ref{examples}$ that the examples of interest are in fact admissible.  Let us briefly give an informal description of why this is so.  These examples relate Feynman categories whose morphisms $f$ have underlying graphs $\Gamma_f$.  These graphs may carry decorations and we may also impose restrictions upon which types of graphs we choose to consider.

A factorization of a morphism $f$ as $f=h\circ g$ specifies a nesting of $\Gamma_f$ comprised of an ``external'' graph $\Gamma_h$ whose vertices are blown-up and labeled with ``internal'' graphs corresponding to the components of $\Gamma_g$.  The factorization axioms then assert that such a nested graph appears in the image of $\phi$ if and only if both its internal nested components and its external graph appear in the image.  This will be the case for morphisms given by inclusions of graphs, as well as for morphisms which forget or alter decorations, provided those decorations are determined locally, i.e.\ at vertices.

To see that functors relating these graphical examples are faithful requires the fact that a morphism is not specified just by its underlying graph, but carries the data of how the graph was assembled.  So to be faithful says that if we specify pre-images of the vertices of a graph and we specify a method of assembly of our graph, then there is at most one assembly of the pre-images whose image under $\phi$ is the graph in question.  This is clearly the case for inclusions of one class of graphs into another, but it is also the case for forgetting decorations such as directed structures or colors, since specifying a pre-image of the vertices remembers these decorations.  Examples are discussed in Section $\ref{examples}$.

\begin{assumption} From now on we will assume $\phi$ is admissible unless stated otherwise.	
\end{assumption}

\subsection{Definition of $L^!$}  Let $\op{P}\in \mathbb{F}_1\text{-}\op{O}ps$.  We first define the $\mathbb{V}_2$-module underlying $L^!(\op{P})$: for an object $v\in\mathbb{V}_2$ we define,
\begin{equation}\label{ldualeq}
L^!(\op{P})(v):= l\circ G_1(\op{P})(v) = \bigoplus_{\substack{w\in\mathbb{V}_1 \\ \phi(w) = v}} \left[\op{P}(w)\tensor k[Aut(v)]\right]_{Aut(w)}
\end{equation}
where the $Aut(w)$ subscript indicates the balanced $\tensor$-product, explicitly: \begin{equation*}
[p\tensor \sigma]= [\op{P}(\lambda)(p)\tensor\sigma\phi(\lambda^{-1})] \ \ \ \ \ \ \ \ \text{ for } \ \ \ \ \ \ \ \ \lambda \in Aut(w), \sigma \in Aut(v), \text{ and } p\in\op{P}(w).
\end{equation*}
and we often abuse notation by writing $\op{P}(w)\tensor_w k[Aut(v)] := \left[\op{P}(w)\tensor k[Aut(v)]\right]_{Aut(w)}$.  Note the sum in Equation $\ref{ldualeq}$ could be empty; an empty sum is zero by definition.

On morphisms in $\mathbb{V}_2$, i.e. for an automorphism $\tau\colon v\to v$ we define
\begin{equation*}
L^!(\tau)([p\tensor\sigma])=[p\tensor\tau\sigma]
\end{equation*}
and extend linearly.  Note that this definition is independent of choice of representative.  Whence $L^!(\op{P})$ as a $\mathbb{V}_2$-module.

We then extend $L^!(\op{P})$ strict monoidally to all objects of $\mathbb{F}_2$ and it suffices to define the image of a generating morphism $f \colon X\to v_0$.  Let's say $X=v_1\tdt v_n$.  We must then define:
\begin{equation*}
\bigotimes_{i=1}^n\left[\bigoplus_{\substack{w\in\mathbb{V}_1 \\ \phi(w) = v_i}} \left[\op{P}(w)\tensor k[Aut(v_i)]\right]_{Aut(w)}\right] \stackrel{L^!(\op{P})(f)}\longrightarrow \bigoplus_{\substack{w\in\mathbb{V}_1 \\ \phi(w) = v_0}} \left[\op{P}(w)\tensor k[Aut(v_0)]\right]_{Aut(w)}
\end{equation*}
To do this we define the image on summands and extend linearly.  To this end choose $w_i$ so that $\phi(w_i)=v_i$ for $i=0\cdc n$; if ever there does not exist such a $w_i$ then the source (if $i\neq 0$) or the target (if $i=0$) of $L^!(\op{P})(f)$ is $0$, and so the morphism is then defined to be $0$ on these summands.  We must then define:
\begin{equation*}
\bigotimes_{i=1}^n \op{P}(w_i)\tensor_{w_i} k[Aut(v_i)] \stackrel{L^!(\op{P})(f)}\longrightarrow \op{P}(w_0)\tensor_{w_0} k[Aut(v_0)]
\end{equation*}

To do this we first ask; does there exist $g\colon \tensor_{i=1}^nw_i\to w_0$ and $\sigma\in Aut(v_0)$ such that $f=\sigma\circ\phi(g)$?  If the answer is no we define $L^!(\op{P})(f)$ to be $0$ on this summand.  If the answer is yes, we make a choice of such $g$ and $\sigma$ and define:
\begin{equation*}
L^!(\op{P})(f)(\tensor_i [(p_i\tensor id_{v_i})]):= [\op{P}(g)(\tensor_i p_i)\tensor \sigma]
\end{equation*}
for $p_i\in \op{P}(w_i)$.  We do this for all generating morphisms in $\mathbb{F}_2$.  We then define:
\begin{equation*}
L^!(\op{P})(f)(\tensor_i [p_i\tensor \sigma_i]):=L^!(\op{P})(f\circ \tensor_i\sigma_i)(\tensor_i [p_i\tensor id_{v_i}]) 
\end{equation*}

Finally, we extend $L^!(\op{P})$ strict monoidally over products of generating morphisms.  

\begin{lemma}\label{wdlem}  As presented above, $L^!(\op{P})$ is a well defined symmetric monoidal functor.
\end{lemma}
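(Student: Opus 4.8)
The plan is to verify the four requirements packaged into the statement: well-definedness of the assignment on objects and on automorphisms of $\mathbb{V}_2$, independence of the value on a generating morphism $f\colon X\to v_0$ from the auxiliary choices (the preimages and the pair $(g,\sigma)$), functoriality, and finally strict symmetric monoidality. The first two requirements are essentially immediate. On objects $L^!(\op{P})=l\circ G_1(\op{P})$ is manifestly well defined, and for an automorphism $\tau$ the formula $L^!(\tau)([p\tensor\sigma])=[p\tensor\tau\sigma]$ respects the balanced-product relation because $\tau$ acts only within the right tensor factor (from the left), as already noted in the construction. Moreover, since the source of $L^!(\op{P})(f)$ is a direct sum indexed by tuples $(w_1\cdc w_n)$ of preimages and its target is a direct sum indexed by $w_0$, there is in fact no genuine freedom in the $w_i$: each pair of summands receives its own component, so it suffices to check that each such component is independent of the remaining data and compatible with the defining relations.

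First I would establish independence of the choice of $(g,\sigma)$ using faithfulness and the \emph{first} factorization axiom of Definition $\ref{admisdef}$. Suppose $f=\sigma\phi(g)=\sigma'\phi(g')$ with $g,g'\colon w_1\tdt w_n\to w_0$ and $\sigma,\sigma'\in Aut(v_0)$. Putting $\rho:=\sigma^{-1}\sigma'\in Aut(v_0)$ gives $\phi(g)=\rho\,\phi(g')$, which displays $\rho$ as a left factor of a morphism in $im(\phi)$; the first axiom then yields $\tilde\rho$ with $\phi(\tilde\rho)=\rho$, and faithfulness forces $g=\tilde\rho\circ g'$ with $\tilde\rho\in Aut(w_0)$. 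Applying $\op{P}$ and then the balanced-product relation with $\lambda=\tilde\rho$ transports $\op{P}(\tilde\rho)$ from the left factor into the right, and a short computation (using $\sigma'\phi(\tilde\rho^{-1})=\sigma'\rho^{-1}=\sigma$) gives precisely
\begin{equation*}
[\op{P}(g)(\tensor_i p_i)\tensor\sigma]=[\op{P}(g')(\tensor_i p_i)\tensor\sigma'].
\end{equation*}
A parallel manipulation, based on the defining identity $L^!(\op{P})(f)(\tensor_i[p_i\tensor\sigma_i])=L^!(\op{P})(f\circ\tensor_i\sigma_i)(\tensor_i[p_i\tensor id_{v_i}])$, shows that each component respects the balanced-product relations in the source, so $L^!(\op{P})(f)$ is a well-defined map.

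The substantive step, and the one I expect to be the main obstacle, is functoriality, i.e.\ that $L^!(\op{P})$ preserves composition; this is where the \emph{second} factorization axiom is indispensable. By the generation axiom for Feynman categories (Appendix $\ref{FCA}$) it suffices to compare the two sides on a composite $f\circ u$ of a generating morphism $f\colon v_1\tdt v_n\to v_0$ with a product of generators $u$ feeding its inputs. Computing $L^!(\op{P})(f\circ u)$ requires a preimage $G$ with $f\circ u=\sigma\phi(G)$; applying the second axiom to the factorization $\phi(G)=(\sigma^{-1}f)\circ u$ produces a compatible decomposition $G=G_1\circ G_2$ and an intervening automorphism $\tau$ with $\sigma^{-1}f=\phi(G_1)\tau^{-1}$ and $u=\tau\phi(G_2)$. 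These two equations identify $G_1$ and $G_2$ as admissible preimage data for $L^!(\op{P})(f)$ and for $L^!(\op{P})(u)$ respectively, and the independence-of-choices result of the previous paragraph guarantees that it does not matter that these differ from any other allowable choices. Since $\op{P}(G)=\op{P}(G_1)\op{P}(G_2)$, the value of $L^!(\op{P})(f\circ u)$ then matches the composite $L^!(\op{P})(f)\circ L^!(\op{P})(u)$; the remaining work is bookkeeping, tracking the automorphism $\tau$ through the balanced tensor products.

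Finally, preservation of identities is clear, and the strict monoidal structure on both objects and morphisms is built into the construction (everything was defined by extending strict monoidally over tensor products of generators). Compatibility with the symmetry constraints of $\mathbb{F}_2$ then reduces to the automorphism formula $L^!(\tau)([p\tensor\sigma])=[p\tensor\tau\sigma]$, since those constraints are carried by permutation isomorphisms living in $\mathbb{V}_2$, on which the functor has already been shown to behave correctly. Assembling these verifications establishes that $L^!(\op{P})$ is a well-defined symmetric monoidal functor.
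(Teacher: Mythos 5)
Your proposal is correct and follows essentially the same route as the paper's proof: well-definedness on coinvariant representatives, independence of the chosen decomposition $f=\sigma\phi(g)$ via faithfulness plus the first factorization axiom, functoriality via the second factorization axiom applied to a preimage of the composite, and symmetric monoidality for free from the construction. The only point you gloss over is the degenerate case in the functoriality check --- when no preimage of $f\circ u$ exists one must still verify that the composite $L^!(\op{P})(f)\circ L^!(\op{P})(u)$ vanishes (by contraposition, nonvanishing would produce such a preimage) --- a step the paper likewise notes is easy.
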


\begin{proof}
Note that if $L^!(\op{P})$ is a well defined functor then it will be symmetric monoidal by construction.  To show it is well defined we check that this definition is independent of the choices made.  First we check that the definition is independent of any choice of representatives of coinvariants.  This means checking that:
\begin{equation}\label{eq123}
L^!(\op{P})(f)(\tensor_i[p_i\tensor \sigma_i])=L^!(\op{P})(f)(\tensor_i[\op{P}(\lambda_i^{-1})(p_i)\tensor \sigma_i\phi(\lambda_i)])
\end{equation}

Observe that there exists $g\colon \tensor_{i=1}^n w_i \to w_0$ and $\sigma\in Aut(v_0)$ such that $\sigma\phi(g)= f\circ\tensor_i\sigma_i\phi(\lambda_i)$ if and only if there exists $g^\prime\colon\tensor_{i=1}^n w_i \to w_0$ and $\sigma^\prime\in Aut(v_0)$ with $\sigma^\prime\phi(g^\prime)= f\circ\tensor_i\sigma_i$.  If there does not exist such $g,\sigma$ and $g^\prime,\sigma^\prime$ then equation $\ref{eq123}$ holds since both sides are zero.

So let's assume there exist such $g,\sigma$ and $g^\prime,\sigma^\prime$, which are thus related by $g=g^\prime\circ \tensor_i \lambda_i$ and $\sigma=\sigma^\prime$.  We have:
\begin{align*}
L^!(\op{P})(f)(\tensor_i[\op{P}(\lambda_i^{-1})(p_i)\tensor\sigma_i \phi(\lambda_i)]) 
& = L^!(\op{P})(f\circ \tensor \sigma_i\phi(\lambda_i))(\tensor_i[\op{P}(\lambda_i^{-1})(p_i)\tensor id_{v_i}])  \\ 
& = [\op{P}(g)(\tensor_i \op{P}(\lambda_i^{-1})(\tensor_ip_i))\tensor \sigma] 
 = [\op{P}(g\circ \tensor_i \lambda_i^{-1})(\tensor_i p_i)\tensor \sigma] \\
& = [\op{P}(g^\prime)(\tensor_i p_i)\tensor \sigma] 
= L^!(\op{P})(f\circ\tensor_i\sigma_i)(\tensor_i[p_i\tensor id_{v_i}]) \\
& = L^!(\op{P})(f)(\tensor_i[p_i\tensor \sigma_i]) 
\end{align*}
as desired.

Next we check that the definition is independent of any choice of decomposition of $f=\sigma\circ\phi(g)$ as above.  To this end suppose $f=\sigma_1\circ\phi(g_1)=\sigma_2\circ\phi(g_2)$ for $g_j\colon \tensor_{i=1}^nw_i\to w_0$ and $\sigma_j\in Aut(v_0)$; $j=1,2$.  Then by the factorization axioms (see Definition $\ref{admisdef}$) we know that there exists $\lambda\in Aut(w_0)$ such that $\phi(\lambda)=\sigma_2^{-1}\sigma_1$.  Thus $\phi(\lambda g_1) =\phi(g_2)$ and hence faithfulness implies $\lambda g_1=g_2$.  Thus on $Aut(w_0)$ coinvariants we have:
\begin{equation*}
[\op{P}(g_1)(\tensor_i p_i)\tensor \sigma_1] = [\op{P}(\lambda^{-1} g_2)(\tensor_i p_i)\tensor \sigma_2\phi(\lambda)]=[\op{P}(g_2)(\tensor_i p_i)\tensor \sigma_2]
\end{equation*}
as desired.

Finally it remains to check $L^!(\op{P})$ is a functor: i.e.\ that it respects composition of morphisms.  Let $f\colon X=\tensor_{i=1}^n v_i\to v_0$ as above and let $h_i\colon \tensor_{j\in J_i} \tensor u_j \to v_i$ and $h=\tensor h_i$, and we will check the composition $f\circ h$ on the exterior summand indexed by $Y:=\tensor_{j\in J} y_j$ where $\phi(y_j)=u_j$ for $J=\sqcup_{i}^n J_i$ and by $y_0:=w_0\stackrel{\phi}\mapsto v_0$.

We will check for $q_j\in\op{P}(y_j)$ and $\tau_j\in Aut(u_j)$ that;
\begin{equation}\label{compeq}
L^!(\op{P})(f\circ h)(\tensor_j [q_j\tensor \tau_j])= L^!(\op{P})(f)(L^!(\op{P})(h)(\tensor_j [q_j\tensor \tau_j]))
\end{equation}

And if we define $h^\prime = h\circ \tensor_j \tau_j$, this reduces to checking:
\begin{equation}\label{compeq2}
L^!(\op{P})(f\circ h^\prime)(\tensor_j [q_j\tensor id_{u_j}])= L^!(\op{P})(f)(L^!(\op{P})(h^\prime)(\tensor_j [q_j\tensor id_{u_j}]))
\end{equation}

To show this we ask: does there exists a $g\colon Y\to y_0$ and $\sigma\in Aut(v_0)$ such that $\phi(g)=\sigma f\circ h^\prime$?  First suppose there does not exist such a $g,\sigma$.  Then by definition the left hand side of $\ref{compeq2}$ is $0$.  Arguing by contradiction, it is easy to see that if the right hand side is not $0$ then there would exist such a $g,\sigma$.  And so the right hand side must also be $0$.

So now let us assume there does exist such a $g,\sigma$ with $\phi(g)=\sigma f\circ h^\prime$.  By the factorization axioms we can choose $m$ and $l$ with $h^\prime=\alpha \phi(m)$ and $\sigma f= \phi(l)\alpha^{-1}$.  By the axioms of a Feynman category we can decompose $\alpha$ (resp.\ $m$) as a $\tensor$-product of automorphisms $\alpha=\tensor_{i=1}^n\alpha_i$ (resp.\ generating morphisms  $m=\tensor_{i=1}^nm_i$). Then,

\begin{align*}
L^!(\op{P})(f)(L^!(\op{P})(h^\prime)(\tensor_j [q_j\tensor id_{u_j}])) 
& = L^!(\op{P})(f)(\tensor_i[\op{P}(m_i)(\tensor_{J_i} q_j)\tensor \alpha_i]) \\
& = L^!(\op{P})(f\circ\alpha)(\tensor_i[\op{P}(m_i)(\tensor_{J_i} q_j)\tensor id_{v_i}]) \\
& =[\op{P}(l)(\tensor_i\op{P}(m_i)(\tensor_{J_i} q_j))\tensor \sigma^{-1}] \\
& =[\op{P}(l\circ m)(\tensor_J q_j)\tensor \sigma^{-1}] = L^!(\op{P})(f\circ h^\prime)([\tensor_Jq_j\tensor id_{u_j}])
\end{align*}
as desired. \end{proof}

The construction of $L^!(-)$ is easily seen to be natural in the argument, hence this lemma tells us:

\begin{corollary}  The above construction specifies a functor $L^!\colon \mathbb{F}_1$-$\op{O}ps\to \mathbb{F}_2$-$\op{O}ps$.
\end{corollary}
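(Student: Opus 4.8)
The plan is to observe that Lemma \ref{wdlem} already produces, for each fixed $\op{P}\in\mathbb{F}_1\text{-}\op{O}ps$, a well-defined object $L^!(\op{P})\in\mathbb{F}_2\text{-}\op{O}ps$, so the only remaining content of the corollary is to define $L^!$ on morphisms of $\mathbb{F}_1\text{-}\op{O}ps$ and verify the two functor axioms. First I would take a morphism $\alpha\colon\op{P}\to\op{P}'$, i.e.\ a monoidal natural transformation of symmetric monoidal functors $\mathbb{F}_1\to dgVect_k$ with components $\alpha_w\colon\op{P}(w)\to\op{P}'(w)$, and define $L^!(\alpha)$ levelwise on the underlying $\mathbb{V}_2$-modules. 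Concretely, on the summand indexed by $w$ with $\phi(w)=v$, I would set $L^!(\alpha)_v[p\tensor\sigma]:=[\alpha_w(p)\tensor\sigma]$ and extend over the direct sum and linearly.

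The first thing to verify is that this descends to the $Aut(w)$-coinvariants defining $L^!(\op{P})(v)$. The balanced relation identifies $[p\tensor\sigma]$ with $[\op{P}(\lambda)(p)\tensor\sigma\phi(\lambda^{-1})]$ for $\lambda\in Aut(w)$, so I must check $[\alpha_w(\op{P}(\lambda)(p))\tensor\sigma\phi(\lambda^{-1})]=[\alpha_w(p)\tensor\sigma]$. This is exactly naturality of $\alpha$ applied to the automorphism $\lambda\in\mathbb{V}_1$, giving $\alpha_w\circ\op{P}(\lambda)=\op{P}'(\lambda)\circ\alpha_w$, after which the two sides agree via the balanced relation for $\op{P}'$. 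Equivariance of $L^!(\alpha)_v$ under $Aut(v)$ is immediate, since $Aut(v)$ acts only on the right tensor factor.

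Next I would check that the collection $\{L^!(\alpha)_v\}$ is genuinely a morphism in $\mathbb{F}_2\text{-}\op{O}ps$, i.e.\ that it commutes with all structure maps $L^!(\op{P})(f)$. By strict monoidality it suffices to treat a generating morphism $f\colon v_1\tdt v_n\to v_0$, and on a summand I may reduce (exactly as in the definition of $L^!$) to representatives $[p_i\tensor id_{v_i}]$ and a chosen factorization $f=\sigma\circ\phi(g)$ with $g\colon\tensor_i w_i\to w_0$. Both composites then reduce to comparing $[\alpha_{w_0}(\op{P}(g)(\tensor_i p_i))\tensor\sigma]$ with $[\op{P}'(g)(\tensor_i\alpha_{w_i}(p_i))\tensor\sigma]$, and these coincide by naturality of $\alpha$ applied to $g$ together with the monoidality identity $\alpha_{\tensor_i w_i}=\tensor_i\alpha_{w_i}$. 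The case where no such factorization exists is trivial, since both structure maps vanish on that summand.

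Finally, functoriality is immediate from the levelwise formula: $L^!(id_{\op{P}})_v[p\tensor\sigma]=[p\tensor\sigma]$ and $L^!(\beta\alpha)_v[p\tensor\sigma]=[(\beta\alpha)_w(p)\tensor\sigma]=L^!(\beta)_v\,L^!(\alpha)_v[p\tensor\sigma]$. I expect the only point requiring genuine care to be the compatibility with the structure maps $L^!(\op{P})(f)$; but because $L^!(\op{P})(f)$ and $L^!(\op{P}')(f)$ may be computed using the \emph{same} chosen factorization $f=\sigma\circ\phi(g)$, this compatibility collapses to naturality of $\alpha$ at $g$, and introduces none of the choice-independence subtleties already resolved in Lemma \ref{wdlem}.
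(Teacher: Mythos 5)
Your proposal is correct and takes essentially the same route as the paper: the paper's entire proof is the remark that the construction of Lemma~$\ref{wdlem}$ is ``easily seen to be natural in the argument,'' and your argument simply supplies the details behind that assertion, namely the levelwise definition $[p\tensor\sigma]\mapsto[\alpha_w(p)\tensor\sigma]$, its descent to $Aut(w)$-coinvariants via naturality of $\alpha$ at automorphisms, and compatibility with the structure maps $L^!(\op{P})(f)$ computed from the same chosen factorization $f=\sigma\circ\phi(g)$. All the checks are sound, so this is a faithful (if more explicit) rendering of the paper's proof.
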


Recall (Appendix $\ref{FCA}$) that a cubical Feynman category $\mathbb{F}$ admits a category of skew invariant functors $\mathbb{F}^-$-$\op{O}ps$.  If the morphism $\phi$ preserves degree then it will preserve skew-invariance.  We thus conclude: 

\begin{corollary}\label{oddLskriek} If $\phi$ is cubical, the above construction specifies a functor $L^!\colon \mathbb{F}_1^-$-$\op{O}ps\to \mathbb{F}_2^-$-$\op{O}ps$.
\end{corollary}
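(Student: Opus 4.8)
The plan is to verify that $L^!$ carries skew-invariant functors to skew-invariant functors, so that it restricts to the subcategories $\mathbb{F}_i^-$-$\op{O}ps$. Recall from Appendix $\ref{FCA}$ that the cubical structure equips $\mathbb{F}$ with a degree on morphisms together with an associated sign, that skew-invariance is compatibility of a functor with this sign, and that a morphism of Feynman categories is cubical precisely when it preserves this degree. The whole proof therefore amounts to tracking the degree through the defining formulas of Section $\ref{Lsec}$.

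First I would dispose of the underlying $\mathbb{V}_2$-module, which by Equation $\ref{ldualeq}$ is exactly $l\circ G_1(\op{P})$. Its $Aut(v)$-actions are assembled from the $Aut(w)$-representations $\op{P}(w)$ by restriction along $\phi$ followed by the induction appearing in the left Kan extension formula of Lemma $\ref{amblem}$. Since $\phi$ preserves degree, the sign characters on $Aut(w)$ and $Aut(v)$ correspond under $\phi$, so the skew-invariance carried by each $\op{P}(w)$ is transported to $L^!(\op{P})(v)$; this is the vertex-level analogue of Remark $\ref{LRminusrmk}$, and it settles the object assignment together with the action of the automorphism groups.

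Next I would check the morphism assignment, for which it suffices to treat a generating morphism $f\colon X\to v_0$ with its chosen factorization $f=\sigma\circ\phi(g)$, where $L^!(\op{P})(f)(\tensor_i[p_i\tensor id_{v_i}])=[\op{P}(g)(\tensor_i p_i)\tensor\sigma]$. Since $\sigma\in Aut(v_0)$ has degree zero we have $\deg(f)=\deg(\phi(g))=\deg(g)$, the last equality being degree-preservation of $\phi$. Skew-invariance of $\op{P}$ means that $\op{P}(g)$ already carries the sign prescribed by $\deg(g)$, and transporting this sign through the formula above yields exactly the sign prescribed by $\deg(f)$, which is the skew-invariance condition for $L^!(\op{P})$ on $f$. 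Because degree is additive under $\tensor$ and under composition, extending strict monoidally over products of generating morphisms preserves this compatibility.

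The point requiring care is that the sign-bookkeeping be consistent with the choices already made in Lemma $\ref{wdlem}$, namely the representative of the $Aut(w)$-coinvariants and the decomposition $f=\sigma\circ\phi(g)$. But a change of representative modifies $g$ by precomposition with some $\phi(\lambda_i)$ and $\sigma$ by an automorphism, while a change of decomposition modifies $g$ by an element $\lambda\in Aut(w_0)$ with $\phi(\lambda)=\sigma_2^{-1}\sigma_1$; in each case the modifying morphisms are automorphisms of degree zero, so the sign computed above is unchanged. Hence $L^!(\op{P})$ lies in $\mathbb{F}_2^-$-$\op{O}ps$ whenever $\op{P}$ lies in $\mathbb{F}_1^-$-$\op{O}ps$, and $L^!$ restricts to the asserted functor.
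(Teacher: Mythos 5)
Your high-level driver is the right one, and it coincides with the paper's own (one-sentence) argument: since $\phi$ preserves degree, it preserves skew-invariance. But your verification of skew-invariance rests on a misreading of what that property is in this paper. Per Appendix $\ref{FCA}$, $\mathbb{F}^-$-$\op{O}ps$ is the subcategory of $\hat{\mathbb{F}}$-$\op{O}ps$ consisting of functors satisfying $\op{P}(\sigma(f))=(-1)^{|\sigma|}\op{P}(f)$ for $\sigma\in S_{|f|}$, where $S_{|f|}$ acts on the set $C^{|f|}(A,B)$ of factorizations of $f$ into degree-$1$ morphisms. This is a condition only on morphisms of degree at least $2$; it imposes nothing at the level of objects or vertex automorphisms, and there are no ``sign characters on $Aut(w)$ and $Aut(v)$'' anywhere in the formalism. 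So your first paragraph argues for a vacuous statement by a mechanism that does not exist (the underlying $\mathbb{V}$-module of an object of $\mathbb{F}^-$-$\op{O}ps$ is an ordinary $\mathbb{V}$-module, identical in kind to the $+$ case), and, more seriously, your second paragraph replaces the real condition by the assertion that ``$\op{P}(g)$ carries the sign prescribed by $\deg(g)$,'' which is not a meaningful property of a single value of a functor; skew-invariance is anti-equivariance under reordering of a factorization, not a sign attached to a morphism by its degree.

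The step your sign-transport language skips, and which is the actual content of the corollary, is this: given a generating morphism $f=\sigma\circ\phi(g)$ of degree $n$ and a factorization of $f$ into degree-$1$ morphisms, iterated use of the factorization axioms of Definition $\ref{admisdef}$ together with faithfulness produces a factorization of $g$ into degree-$1$ morphisms (degree preservation by $\phi$, plus the fact that automorphisms have degree $0$, guarantees the pulled-back pieces again have degree $1$), and this correspondence intertwines the $S_n$-actions on $C^n$ in the two categories. Granting that, the relation $\op{P}(\tau(g))=(-1)^{|\tau|}\op{P}(g)$ transfers to $L^!(\op{P})(\tau(f))=(-1)^{|\tau|}L^!(\op{P})(f)$, which is the required skew-invariance. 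Your final paragraph is the correct and needed complement to this: the ambiguities in Lemma $\ref{wdlem}$ (choice of coinvariant representative, choice of the decomposition $f=\sigma\phi(g)$) only alter things by degree-$0$ morphisms, i.e.\ by isomorphisms, which is exactly what the sets $C^n$ are quotiented by. So the skeleton of your argument survives and matches the paper's, but as written the central verification addresses the wrong condition.
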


Finally we record a relationship between $L$ and $L^!$:
\begin{lemma}\label{lllemma}  Inclusion gives a natural transformation Id $\Rightarrow RL^!$. 
	\end{lemma}
\begin{proof} Given $\op{P}\in\mathbb{F}_1$-$\op{O}ps$ we define $\op{P}(w)\to \op{P}(w)\tensor_w k[Aut(\phi(w))]\subset L^!(\op{P})(\phi(v))$ by inclusion via the identity on the right hand side.  Since the image of $L^!(\op{P})$ on morphisms is given by pulling back, it is straightforward to show this induces a morphism $\op{P} \to RL^!(\op{P})$ in the category $\mathbb{F}_1$-$\op{O}ps$. \end{proof}

From this lemma one may construct an adjoint natural transformation $L\Rightarrow L^!$.  Although not built into the axiomatics of our formulation of the six operations, such a natural transformation often exists in examples; see Remark 4.5 of \cite{FHM}.

\subsection{The projection formula.}  Using the definition of $L^!$ given above, we now prove:

\begin{proposition}\label{projformula} For $\op{P}\in\mathbb{F}_1$-$\op{O}ps$ and $\op{O}\in \mathbb{F}_2$-$\op{O}ps$, there is a natural isomorphism:
	\begin{equation*}
\op{O}\tensor L^!(\op{P})\cong L^!(R(\op{O})\tensor\op{P}).
	\end{equation*} 
\end{proposition}
\begin{proof}  We start by comparing the underlying $\mathbb{V}_2$-modules.  Fix $v\in\mathbb{V}_2$.  On the left hand side we have:
\begin{equation*}
L^!(R(\op{O})\tensor\op{P})(v) =\ds\bigoplus_{\phi(w)=v} \left(\op{O}(v)\tensor \op{P}(w) \right)\tensor_wk[Aut(v)]
\end{equation*}
and on the right hand side we have:
\begin{equation*}
(\op{O}\tensor L^!(\op{P}))(v)=\op{O}(v)\tensor \ds\bigoplus_{\phi(w)=v} \left(\op{P}(w) \tensor_wk[Aut(v)]\right)\cong \ds\bigoplus_{\phi(w)=v} \op{O}(v)\tensor \left(\op{P}(w) \tensor_wk[Aut(v)]\right)
\end{equation*}
so we define a map $\beta_v\colon(\op{O}\tensor L^!(\op{P}))(v) \to L^!(R(\op{O})\tensor\op{P})(v)$ as the direct sum over $\phi(w)=v$ of the map
\begin{align*}
 \op{O}(v)\tensor \left(\op{P}(w) \tensor_wk[Aut(v)]\right) &\stackrel{\beta}\longrightarrow \left(\op{O}(v)\tensor \op{P}(w) \right)\tensor_wk[Aut(v)] \\ \beta(q\tensor [p \tensor \sigma])& =[(\op{O}(\sigma^{-1})(q)\tensor p) \tensor \sigma]
\end{align*}
One easily sees that $\beta_v$ is well defined.  In addition $\beta$ is $Aut(v)$ equivariant (since the induced action is diagonal on the left hand side and only on the right factor of the right hand side).  Finally one sees (eg by exhibiting an inverse) that $\beta_v$ is an isomorphism.  We extend $\beta$ strict monoidally on objects via $\beta_{\tensor v_i} := \tensor \beta_{v_i}$ and it remains to show that this morphism of $\mathbb{V}_2$-modules respects composition.  

For this it is enough to check a generating morphism and for a generating morphism $f\colon \tensor_{i=1}^n v_i\to v_0$ in $\mathbb{F}_2$ it is enough to check on each summand.  I.e.\ we fix $w_i$ with $\phi(w_i)=v_i$ and we shall show that the following diagram commutes:
\begin{equation*}
\xymatrix{ \ds\bigotimes_{i=1}^n\op{O}(v_i)\tensor \left(\op{P}(w_i) \tensor_{w_i}k[Aut(v_i)]\right) \ar[d]^{(\op{O}\tensor L^!(\op{P}))(f)} \ar[r]^{\beta_{\tensor v_i}} & \ds\bigotimes_{i=1}^n\left(\op{O}(v_i)\tensor \op{P}(w_i) \right)\tensor_{w_i}k[Aut(v_i)] \ar[d]^{L^!(R(\op{O})\tensor\op{P})(f)} \\ \op{O}(v_0)\tensor \left(\op{P}(w_0) \tensor_{w_0}k[Aut(v_0)]\right) \ar[r]^{\beta_{v_0}} & \left(\op{O}(v_0)\tensor \op{P}(w_0) \right)\tensor_{w_0}k[Aut(v_0)]}
\end{equation*}
Note, as above, if ever there does not exist such $w_i$ then both compositions are $0$.

Let $\tensor_i (q_i \tensor [p_i\tensor \sigma_i])$ be an element in the top left.  We ask the question: does there exist $\sigma\in Aut(v_0)$ and $g\colon \tensor_{i=1}^n w_i\to w_0$ such that $f\circ \tensor_i \sigma_i=\sigma\circ \phi(g)$.  If the answer is no, both compositions are $0$.  If the answer is yes then tracing this element down then right we have
\begin{equation*}
\beta_{v_0}(\op{O}(f)(\tensor q_i)\tensor [\op{P}(g)(\tensor_i p_i)\tensor \sigma])= [\op{O}(\sigma^{-1}\circ f)(\tensor q_i)\tensor \op{P}(g)(\tensor_i p_i)\tensor \sigma]
\end{equation*}
while tracing this element right then down we have:
\begin{equation*}
[ (R(\op{O})\tensor \op{P})(g)(\tensor_i(\op{O}(\sigma_i^{-1})(q_i)\tensor p_i))\tensor \sigma]
= [R(\op{O})(g)(\tensor_i\op{O}(\sigma_i^{-1})(q_i))\tensor \op{P}(g)(\tensor_i p_i)\tensor \sigma]
\end{equation*}

but $R(\op{O})(g)(\tensor_i\op{O}(\sigma_i^{-1})(q_i)) = \op{O}(\phi(g))(\tensor_i \op{O}(\sigma_i^{-1})(q_i))=\op{O}(\phi(g)\circ \tensor \sigma_i^{-1})(\tensor q_i) = \op{O}(\sigma^{-1}\circ f)(\tensor q_i)$ whence the claim. \end{proof}

\begin{remark}\label{oddprojformula}  Let us conclude this subsection by observing that if $\phi$ is cubical, the description of $\mathbb{F}^-$-$\op{O}ps\subset \hat{\mathbb{F}}$-$\op{O}ps$ as the skew-invariant functors (Appendix $\ref{FCA}$) makes it clear, simply by restriction, that the projection formula holds in the category $\mathbb{F}^-_2$-$\op{O}ps$.
\end{remark}

\subsection{The Intertwining theorem.}  We continue to consider an admissible morphism of Feynman categories $\phi\colon\mathbb{F}_1\to\mathbb{F}_2$, but we now further assume that $\phi$ is cubical.  In particular this allows us to consider the Feynman transform in the categories $\mathbb{F}_1$-$\op{O}ps$ and $\mathbb{F}_2$-$\op{O}ps$, which we denote by $D$ (after Appendix $\ref{FCA}$).

\begin{theorem}\label{itertwiningthm}  $DL^!\cong LD$

\end{theorem}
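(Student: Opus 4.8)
The plan is to compare the two functors level-wise, first as graded symmetric monoidal functors and then as complexes, exploiting the fact that the Feynman transform is a \emph{free} construction whose only non-formal ingredient is its differential. Recall from Appendix \ref{FCA} that, as a graded functor, $D(\op{O})$ is freely generated (via the free functor $F$ of the square in Section \ref{Lsec}) by a twisted linear dual of the underlying $\mathbb{V}$-module of $\op{O}$; writing $\fr{d}$ for this twisted-dualization functor on $\mathbb{V}$-modules, we have a natural isomorphism of graded functors $D(\op{O})\cong F(\fr{d}\,G(\op{O}))$, where the differential is the sum of the internal differential of $\op{O}$ and the dual of the one-generator composition maps. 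Since $\phi$ is cubical, all of this takes place in the skew-invariant categories (Corollary \ref{oddLskriek}, Remark \ref{LRminusrmk}), and I will suppress the decoration $(-)^-$.

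First I would identify the underlying graded functors. Using the definition of $L^!$ (Equation \ref{ldualeq}), its underlying $\mathbb{V}_2$-module is $G_2 L^!(\op{P}) = l\,G_1(\op{P})$, so that
\begin{equation*}
D\,L^!(\op{P}) \cong F_2\,\fr{d}\, l\, G_1(\op{P})
\end{equation*}
as graded functors. On the other side, applying $L=L_\phi$ to the free description of $D$ and invoking the natural isomorphism $F_2 l \cong L F_1$ of Lemma \ref{ntlem}(2) gives
\begin{equation*}
L\, D(\op{P}) \cong L\, F_1\, \fr{d}\, G_1(\op{P}) \cong F_2\, l\, \fr{d}\, G_1(\op{P}).
\end{equation*}
Thus matching the two graded functors reduces to a single natural isomorphism at the level of $\mathbb{V}_1$-modules, namely $\fr{d}\, l \cong l\, \fr{d}$: the twisted linear dual must commute with $l$.

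The commutation $\fr{d}\, l \cong l\, \fr{d}$ is exactly where ambidexterity enters. The functor $l$ is the induction $\bigoplus_{\phi(w)=v}\mathrm{Ind}_{Aut(w)}^{Aut(v)}$, and linear duality a priori converts induction into coinduction; but over a field of characteristic zero these coincide, which is precisely the content of Lemma \ref{amblem} (that $l$ is simultaneously a left and a right adjoint to $r$). Concretely, the explicit averaging formulas appearing in the proof of Lemma \ref{amblem} furnish the natural isomorphism identifying $(lA)^\vee$ with $l(A^\vee)$, and I would check that the determinant twists are carried along because $\phi$ is cubical and hence preserves skew-invariance (Corollary \ref{oddLskriek}). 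This yields the desired isomorphism of underlying graded functors.

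The main obstacle is matching the differentials. Under the identification above, the differential on $L\,D(\op{P})$ is $L$ applied to the Feynman differential of $D(\op{P})$, that is, the push-forward of the dualized one-generator composition maps of $\op{O}$; the differential on $D\,L^!(\op{P})$ is the dual of the one-generator composition maps of $L^!(\op{P})$. To see these agree I would evaluate on a generating morphism $f\colon X\to v_0$ of $\mathbb{F}_2$ and trace through the definition of $L^!$ on $f$, which selects a factorization $f=\sigma\circ\phi(g)$. The factorization axioms of admissibility (Definition \ref{admisdef}) guarantee that every one-generator expansion in $\mathbb{F}_2$ contributing to the $D\,L^!$ differential is, up to an automorphism absorbed by the $Aut$-coinvariants, the $\phi$-image of a one-generator expansion in $\mathbb{F}_1$ — and conversely that no spurious terms appear — so that the dualized composition maps of $L^!(\op{P})$ are precisely the push-forward under $L$ of the dualized composition maps of $\op{O}$. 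The internal differentials match trivially, since none of $L$, $L^!$, nor the free construction alters the internal direction. Naturality in $\op{P}$ is immediate from the naturality of each ingredient, giving the claimed isomorphism of functors $D L^! \cong L D$.
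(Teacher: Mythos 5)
Your proposal is correct and takes essentially the same approach as the paper: first identify the underlying graded functors via $F_2 l\cong LF_1$ (Lemma \ref{ntlem}) together with the commutation of linear duality with $l$ (which the paper leaves implicit and you justify via the ambidexterity Lemma \ref{amblem}), then match the Feynman differentials using the dichotomy built into the definition of $L^!$ --- a degree-one morphism of $\mathbb{F}_2$ contributes zero unless, up to an automorphism absorbed by the coinvariants, it is the $\phi$-image of a degree-one morphism of $\mathbb{F}_1$, in which case it contributes by evaluating $\op{P}$ on the pullback. The paper formalizes this last step as an equality of adjoints of a single sequence of maps (via limits over the categories of degree-one morphisms and a universal cone map $\eta$), but the mathematical content is the same dichotomy you invoke.
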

\begin{proof}  Keeping with our conventions of suppressing superscripts, (see Appendix $\ref{FCA})$, we emphasize that this is really two statements: $D^+{L^!}^+\cong L^-D^+$ and $D^-{L^!}^-\cong L^+D^-$ (after Remark $\ref{LRminusrmk}$ and Corollary $\ref{oddLskriek}$).  However, the argument that we use is completely symmetric.

Let $\op{P}\in\mathbb{F}_1$-$\op{O}ps$.  First, if we forget the differentials and merely consider the underlying graded $\mathbb{F}_2$-$\op{O}ps$, we see:
\begin{equation}\label{aniso}
D{L^!}(\op{P}) \cong F_2(l(G_1(\op{P})^\ast))\cong L(F_1(G_1(\op{P})^\ast))= LD(\op{P})
\end{equation}
where $\ast$ means linear dual.  

So it remains to check that this isomorphism respects the Feynman transform differentials on both sides.  Let $e_w\subset Iso(\mathbb{F}_1\downarrow w)$ be the full subcategory whose objects are the degree 1 morphisms of $\mathbb{F}_1$.  Recall (section 7.3 of \cite{KW}) that the differential in $D(\op{P})$ is defined as a composite of the form:
\begin{equation*}
F_1(\op{P}^\ast)(w)\to colim_{Iso(\mathbb{F}_1\downarrow w)} (lim_{e_{(-)}}\op{P}^\ast\circ s)\to F_1(\op{P}^\ast)(w).
\end{equation*}
This composite may be expressed in adjoint form, by restricting to the subspace $\op{P}(w)^\ast\subset F_1(\op{P}^\ast)(w)$, as the composite
\begin{equation*}
\op{P}(w)^\ast\stackrel{\partial_w}\longrightarrow lim_{e_w}(\op{P}^\ast\circ s)\stackrel{\Sigma}\to colim_{e_w}(\op{P}^\ast\circ s) \to colim_{Iso(\mathbb{F}_1\downarrow w)}(\op{P}^\ast\circ s)=G_1F_1(\op{P}^\ast)(w)
\end{equation*}
where $\partial_w$ is the universal map associated to the obvious cone: $(X\to w) \mapsto (\op{P}^\ast(w)\to \op{P}^\ast(X))$.  As a result, the differential in $L(D(\op{P}))$ is the adjoint of the following sequence of $\mathbb{V}_1$-modules:
\begin{equation*}
\op{P}(w)^\ast\stackrel{\partial_w}\longrightarrow lim_{e_w}(\op{P}^\ast\circ s)\to G_1F_1(\op{P}^\ast)(w)
\stackrel{id \Rightarrow RL}\longrightarrow G_1RLF_1(\op{P}^\ast)(w)
\end{equation*}
The differential on $LD(\op{P})$ transfers via the above isomorphism (equation $\ref{aniso}$) of $\mathbb{V}_2$-modules to an {\it a priori} new differential on $DL^!(\op{P})$ which is the adjoint of:
\begin{equation}\label{seqq}
\op{P}(w)^\ast\stackrel{\partial_w}\longrightarrow lim_{e_w}(\op{P}^\ast\circ s)\to G_1F_1(\op{P}^\ast)(w)
\stackrel{id \Rightarrow RL}\longrightarrow G_1RLF_1(\op{P}^\ast)(w)\cong rG_2F_2l(\op{P}^\ast)(w)
\end{equation}

On the other hand, the Feynman transform differential in $D(L^!(\op{P}))$ is defined to be the adjoint of the map:
\begin{equation*}
l\op{P}(v)^\ast\stackrel{\partial_v}\longrightarrow lim_{e_v}(l\op{P}^\ast\circ s)\stackrel{\Sigma}\to colim_{e_v}(l\op{P}^\ast\circ s) \to colim_{Iso(\mathbb{F}_2\downarrow v)}(l\op{P}^\ast\circ s)=G_2F_2(l\op{P}^\ast)(v)
\end{equation*}

To compare these two differentials we take the $l$ adjoint of the sequence of $\mathbb{V}_1$-modules in equation $\ref{seqq}$ to get a sequence of $\mathbb{V}_2$-modules:
\begin{equation*}
l(\op{P}^\ast)\stackrel{l(\partial_\ast)}\longrightarrow l(lim_{e_\ast}(\op{P}^\ast\circ s))\to ... 
\end{equation*}

Now fix $v\in \mathbb{V}_2$.  We claim that there exists a commutative triangle:
\begin{equation*}
\xymatrix{  l\op{P}(v)^\ast \ar[rr]^{l(\partial_\ast)} \ar[dr]_{\partial_v} & & \ar[dl]^{\eta} l(lim_{e_\ast}(\op{P}^\ast\circ s))(v) \\ & lim_{e_v}(l\op{P}^\ast\circ s) &  }
\end{equation*}
 
Where $\eta$ is the universal map given by realizing, for each $w$, $lim_{e_w} (\op{P}^\ast\circ s)$ as a cone over $lim_{e_v}(l\op{P}^\ast\circ s)$ as follows.  A morphism in $e_v$ is given by a triangle $X\stackrel{\sigma}\to X \stackrel{f}\to v$ where $|f|=1$ and $\sigma\in Aut(X)$.  For each such triangle, we ask if there exists a decomposition $\phi(Y)\stackrel{\sigma}\to \phi(Y)\stackrel{f=\phi(g)}\to \phi(w)$.  If not the cone map is $0$.  If yes we map:
\begin{equation*}
lim_{e_w}(\op{P}^\ast\circ s)\stackrel{\eta_g\tensor \sigma}\longrightarrow \op{P}^\ast(Y)\tensor_{Aut(Y)} k[Aut(X)]\subset l(\op{P}^\ast)(X)
\end{equation*}
Notice this map is independent of the decomposition after passing to $Aut(Y)$ coinvariants.  Here $\eta_g$ is the cone map $lim_{e_w}(\op{P}^\ast\circ s)\to \op{P}^\ast(Y)$ corresponding to $g$.

The fact that the diagram commutes is a consequence of the definition of $L^!(\op{P})$ on $Mor(\mathbb{F}_2)$.  In particular, whenever a morphism in $e_v$ can't be pulled back the cone map associated to the functor $l\op{P}^\ast\circ s$ (which is given by evaluating via $L^!(\op{P})$) is $0$, and hence agrees with $\eta$.  On the other hand, if a morphism in $e_v$ can be decomposed as above, then the evaluation at $L^!(\op{P})$ is given by pulling back and evaluating via $\op{P}$, hence the diagram commutes.

Thus, the two differentials that we would like to compare are defined as adjoints of the same sequence, and hence are equal. \end{proof}

\begin{corollary}  $DLD\sim L^!$

\end{corollary}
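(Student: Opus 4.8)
The plan is to obtain this as a formal consequence of the intertwining isomorphism $DL^!\cong LD$ (Theorem \ref{itertwiningthm}) together with the homotopy involutivity of the Feynman transform. Recall that for cubical Feynman categories the Feynman transform is homotopy involutive, meaning there is a natural weak equivalence $DD\sim \mathrm{Id}$; this is the standard bar-cobar/Feynman-transform involutivity recorded in \cite{KW} and recalled in Appendix \ref{FCA}. Since the hypotheses of Theorem \ref{itertwiningthm} already require $\phi$ to be cubical, $D$ is defined on both $\mathbb{F}_1$-$\op{O}ps$ and $\mathbb{F}_2$-$\op{O}ps$ and involutivity is available in both.

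First I would apply the functor $D$ to both sides of the natural isomorphism $DL^!\cong LD$ of Theorem \ref{itertwiningthm}. Applying any functor to a natural isomorphism again yields a natural isomorphism, so this produces
\begin{equation*}
DDL^!\cong DLD.
\end{equation*}
As in the proof of Theorem \ref{itertwiningthm}, I emphasize that this is really two parallel statements, one exchanging $D^+$ with $D^-$ and ${L^!}^+$ with ${L^!}^-$, and the symmetric mirror; but the manipulation is identical in the two cases (cf.\ Remark \ref{LRminusrmk} and Corollary \ref{oddLskriek}), so I treat them uniformly and suppress the superscripts.

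Next I would invoke homotopy involutivity on the left-hand side. Evaluating the natural weak equivalence $DD\sim\mathrm{Id}$ at $L^!(\op{P})$ gives a natural weak equivalence $DDL^!(\op{P})\sim L^!(\op{P})$. Composing with the isomorphism of the previous step produces the chain of natural transformations
\begin{equation*}
L^!\sim DDL^!\cong DLD,
\end{equation*}
and hence $DLD\sim L^!$, exactly as asserted (and as recorded in the derived column of the notation table).

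The only real subtlety — and so the step I would check most carefully — is naturality: to conclude an equivalence of \emph{functors} rather than a mere objectwise equivalence, one needs the involutivity weak equivalence $DD\sim\mathrm{Id}$ of \cite{KW} to be natural in its argument, which it is. Beyond this, there is no genuine obstacle; the content of the corollary resides entirely in Theorem \ref{itertwiningthm} and in the cubical hypothesis that makes the Feynman transform and its homotopy involutivity available on both sides.
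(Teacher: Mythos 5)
Your proof is correct and follows exactly the route the paper intends: the corollary is stated without proof precisely because it is immediate from Theorem $\ref{itertwiningthm}$ combined with the homotopy involutivity $D^2\sim \mathrm{id}$ of the Feynman transform (Theorem 7.12 of \cite{KW}, recalled in Appendix $\ref{FCA}$). Your attention to the parity bookkeeping ($D^+D^-\sim id^-$, $D^-D^+\sim id^+$) and to the naturality of the involutivity equivalence is exactly the right level of care.
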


\begin{corollary} \label{exactcor} $ L^!$ preserves weak equivalences.
	
\end{corollary}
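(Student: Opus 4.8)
The plan is to reduce the statement to the immediately preceding corollary, which supplies a natural weak equivalence $L^!\sim DLD$. By naturality and the two-out-of-three property for weak equivalences, it then suffices to prove that the composite $DLD$ carries weak equivalences to weak equivalences. The ingredients I would invoke, all recalled in Appendix~\ref{FCA}, are: (a) the Feynman transform $D$ preserves weak equivalences; (b) for every $\op{P}$ the object $D(\op{P})$ is quasi-free, hence cofibrant in $\mathbb{F}$-$\op{O}ps$; and (c) since $\phi$ is admissible it is faithful, so by Remark~\ref{Quillenadj} the pair $(L,R)$ is a Quillen adjunction and $L$ is a left Quillen functor.

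With these in hand the argument is a short chase. Let $f\colon\op{P}\to\op{P}'$ be a weak equivalence in $\mathbb{F}_1$-$\op{O}ps$. By (a), $D(f)$ is a weak equivalence, and by (b) both $D(\op{P})$ and $D(\op{P}')$ are cofibrant. Because $L$ is left Quillen, Ken Brown's lemma shows it preserves weak equivalences between cofibrant objects, so $LD(f)$ is a weak equivalence. Applying (a) once more, $DLD(f)$ is a weak equivalence. Thus $DLD$ preserves weak equivalences, and composing with the natural weak equivalence $L^!\sim DLD$ (applying two-out-of-three to the resulting naturality square) shows that $L^!(f)$ is a weak equivalence. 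As in the proof of Theorem~\ref{itertwiningthm}, the argument is symmetric in the two parities and so applies equally to ${L^!}^+$ and ${L^!}^-$.

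The only real subtlety, and the step I would flag as the crux, is the interplay between (b) and (c): the functor $L$ is not expected to preserve arbitrary weak equivalences, but only those between cofibrant objects. The proof therefore cannot proceed by applying $L$ naively to $f$; it works precisely because the Feynman transform produces quasi-free, hence cofibrant, objects, so that $L$ is only ever applied to a map between cofibrant objects. This is exactly why the intertwining theorem, and its corollary $L^!\sim DLD$, rather than any direct analysis of $L^!$, is the right vehicle for establishing homotopy invariance.
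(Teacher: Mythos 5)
Your reduction to $L^!\sim DLD$ is presumably what the paper intends (the corollary is stated without proof, immediately after $DLD\sim L^!$), and the model-categorical chase itself --- Ken Brown's lemma applied to the left Quillen functor $L$, two-out-of-three across the naturality squares, symmetry in the two parities --- is sound as far as it goes. The gap lies in your inputs (a) and (b). Contrary to what you assert, neither ``$D$ preserves weak equivalences'' nor ``$D(\op{P})$ is cofibrant'' is recalled in Appendix~\ref{FCA}: the appendix records only the involutivity statements $D^+D^-\sim id^-$ and $D^-D^+\sim id^+$ from Theorem 7.12 of \cite{KW}. Neither fact is stated or proved anywhere in this paper, and both are genuinely nontrivial: (a) requires a convergence argument for the weight (edge-number) filtration of the Feynman transform and is of essentially the same depth as the corollary you are proving, while the justification you offer for (b), ``quasi-free, hence cofibrant,'' is not a valid implication --- quasi-freeness is necessary but not sufficient for cofibrancy, which for the Feynman transform comes from a Sullivan-type filtration supplied by the combinatorics of the underlying graphs, not from freeness of the underlying graded object. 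So, as written, your proof rests on two unestablished claims, one of them supported by a false inference; they can be extracted from \cite{KW} for the examples of interest, but they must be proved or precisely cited.

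It is also worth knowing that the corollary admits a short, robust proof directly from the paper's construction of $L^!$, needing neither cubicality, nor the intertwining theorem, nor any of your inputs (a)--(c). By Equation~\ref{ldualeq}, at each vertex $v$ one has $L^!(\op{P})(v)=\bigoplus_{\phi(w)=v}\left[\op{P}(w)\tensor k[Aut(v)]\right]_{Aut(w)}$, so levelwise $L^!$ is a direct sum of $Aut(w)$-coinvariants of the exact functor $(-)\tensor k[Aut(v)]$. Since $k$ has characteristic zero, coinvariants under finite groups are exact, so $L^!(f)$ is a quasi-isomorphism at every vertex whenever $f$ is, hence (by monoidality and the K\"unneth theorem) at every object of $\mathbb{F}_2$; as weak equivalences in $\mathbb{F}_2$-$\op{O}ps$ are exactly the levelwise quasi-isomorphisms, $L^!$ preserves them. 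This is the same observation by which the paper later notes that $L^!$ ``can also easily be seen to preserve fibrations (which are levelwise surjections),'' and it closes the argument with no unproved ingredients.
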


\subsection{Compatibility with composition.}  We end this section by observing that $L^!$ is compatible with composition of functors of Feynman categories.  Let $\mathbb{F}_1\stackrel{\phi}\to\mathbb{F}_2\stackrel{\psi}\to\mathbb{F}_3 $
be admissible morphisms of Feynman categories.  Then:

\begin{proposition} \label{comp} The composition $\psi\circ \phi$ is admissible and there is an isomorphism of functors $L_{\psi\circ\phi}^!\cong L_\psi^!L_\phi^!$.
\end{proposition}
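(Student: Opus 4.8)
The plan is to prove the two assertions in order: first that $\psi\circ\phi$ satisfies Definition $\ref{admisdef}$, then that the two functors agree by comparing underlying $\mathbb{V}_3$-modules and checking compatibility on generating morphisms. Admissibility is a routine diagram chase that threads the axioms for $\phi$ and $\psi$ together. Faithfulness is immediate, as a composite of faithful functors is faithful. For the first factorization axiom, suppose $\psi\phi(g)=f\circ\psi\phi(l)$ in $Mor(\mathbb{F}_3)$; applying the first axiom for $\psi$ to $\psi(\phi(g))=f\circ\psi(\phi(l))$ yields $f=\psi(\tilde f)$, so $\psi(\phi(g))=\psi(\tilde f\circ\phi(l))$ and faithfulness of $\psi$ gives $\phi(g)=\tilde f\circ\phi(l)$; the first axiom for $\phi$ then places $\tilde f\in im(\phi)$, whence $f\in im(\psi\phi)$. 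For the second axiom, suppose $\psi\phi(g)=f\circ h$; the second axiom for $\psi$ produces a factorization $\phi(g)=g_1'\circ g_2'$ together with an automorphism $\tau$ with $f=\psi(g_1')\tau^{-1}$ and $h=\tau\psi(g_2')$, and the second axiom for $\phi$ refines $\phi(g)=g_1'\circ g_2'$ to $g=g_1\circ g_2$ together with an automorphism $\rho$ satisfying $g_1'=\phi(g_1)\rho^{-1}$ and $g_2'=\rho\phi(g_2)$. Setting $\sigma:=\tau\psi(\rho)$, which is an automorphism since $\tau$ and $\psi(\rho)$ are automorphisms of the same object, a short computation gives $f=\psi\phi(g_1)\sigma^{-1}$ and $h=\sigma\psi\phi(g_2)$, verifying the axiom.

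For the isomorphism of functors I would first identify the underlying $\mathbb{V}_3$-modules. By Equation $\ref{ldualeq}$ the $\mathbb{V}_2$-module underlying $L_\phi^!(\op{P})$ is $l_\phi G_1(\op{P})$, so its restriction to $\mathbb{V}_2$ is $G_2 L_\phi^!(\op{P})=l_\phi G_1(\op{P})$; applying $L_\psi^!$ then produces the underlying $\mathbb{V}_3$-module $l_\psi l_\phi G_1(\op{P})$. On the other hand $L_{\psi\phi}^!(\op{P})$ has underlying $\mathbb{V}_3$-module $l_{\psi\phi}G_1(\op{P})$. Since $l$ is preserved by composition, i.e.\ $l_{\psi\phi}=l_\psi l_\phi$, we obtain a natural isomorphism of these $\mathbb{V}_3$-modules.

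It then remains to check that this identification is compatible with the action on morphisms. Both functors are strict monoidal and, by Lemma $\ref{wdlem}$, well defined independently of all auxiliary choices, so it suffices to match their values on a single generating morphism $f\colon v_1\tdt v_n\to v_0$ of $\mathbb{F}_3$ for one convenient choice of decomposition on each side, and moreover only on the distinguished generators $\tensor_i[p_i\tensor id_{v_i}]$. The key observation is that $f$ admits a factorization $f=\sigma\circ\psi\phi(g)$ (the data computing $L_{\psi\phi}^!(\op{P})(f)$ by pullback) if and only if $f$ factors as $\sigma'\circ\psi(g')$ with $g'$ in turn factoring as $\tau\circ\phi(g)$ (the two-stage data computing $L_\psi^!(L_\phi^!(\op{P}))(f)$): given the former, take $g'=\phi(g)$, $\sigma'=\sigma$, $\tau=id$; given the latter, take $\sigma=\sigma'\psi(\tau)$. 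Under this correspondence both sides evaluate to $[\op{P}(g)(\tensor_i p_i)\tensor\sigma]$, and one checks that the two occurrences of the automorphism datum are identified by the composition isomorphism $l_\psi l_\phi\cong l_{\psi\phi}$ on the $k[Aut(v_0)]$ tensor factors.

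The step I expect to be the main obstacle is precisely this last bookkeeping: verifying that the nested coinvariant $[[p\tensor\tau]\tensor\sigma']$ appearing in the iterated Kan extension $l_\psi l_\phi$ is sent to the single coinvariant $[p\tensor\sigma'\psi(\tau)]$ of $l_{\psi\phi}$, and that the ``does a pullback exist'' dichotomy agrees on both sides so that the zero summands match. This is where admissibility is used essentially: the factorization axioms guarantee that a one-step pullback along $\psi\phi$ exists if and only if the corresponding two-step pullback does, and that the resulting morphisms $g$ and automorphisms $\sigma$ agree modulo the relations defining the balanced tensor products. Once this compatibility is in place, agreement on generating morphisms together with strict monoidality and naturality in $\op{P}$ upgrades the module isomorphism to the desired isomorphism of functors $L_{\psi\phi}^!\cong L_\psi^! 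L_\phi^!$.
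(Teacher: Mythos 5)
Your proposal is correct and follows essentially the same route as the paper's proof: underlying $\mathbb{V}_3$-modules identified via $l_{\psi\phi}\cong l_\psi l_\phi$ (with the same coinvariant formula $[[p\tensor\tau]\tensor\sigma']\mapsto[p\tensor\sigma'\psi(\tau)]$), followed by the equivalence between the one-step factorization question for $\psi\circ\phi$ and the two-step questions for $\psi$ then $\phi$ on generating morphisms, with matching zero summands. The only difference is that you spell out the admissibility check (threading the two factorization axioms and faithfulness) which the paper dismisses as ``easy to see,'' and you correctly write $\sigma=\sigma'\psi(\tau)$ where the paper has the typo $\sigma'\phi(\tau)$.
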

\begin{proof}  First, the fact that admissibility is preserved under composition is easy to see.  Second, for $\op{P}\in\mathbb{F}_1$-$\op{O}ps$ the underlying $\mathbb{V}_3$-modules, are $l_\psi l_\phi  G\op{P}$ and $l_{\psi\circ\phi}G\op{P}$.  These are easily seen to be isomorphic, e.g.\ since $ r_\phi r_\psi \cong r_{\psi\circ\phi}$.  On summands this isomorphism takes:
	\begin{align*}
(\op{P}(w)\tensor_w k[Aut(v)])\tensor_vk[Aut(x)] & \cong \op{P}(w)\tensor_w k[Aut(x)] \\
\text{ by   \ \ \ } [[p\tensor \tau ]\tensor \sigma] & \mapsto [p\tensor \sigma\psi(\tau)
]	\end{align*}

We thus find that the functors $L_{\psi\circ\phi}^!(\op{P})$ and $L_\psi^!(L_\phi^!(\op{P}))$ agree on objects in $\mathbb{F}_3$, and it remains to check that they agree on generating morphisms in $\mathbb{F}_3$.  If $f\colon X\to x$ is such a morphism, in order to consider $L_{\psi\circ\phi}^!(\op{P})(f)$ we ask (Q1) does there exists $g\in Mor(\mathbb{F}_1)$ and $\sigma\in Aut(x)$ such that $f= \sigma\psi(\phi(g))$? 

On the other hand to consider $L_\psi^!(L_\phi^!(\op{P}))(f)$ we first ask (Q2) does there exist $h\in Mor(\mathbb{F}_2)$ and $\sigma^\prime\in Aut(x)$ such that $f= \sigma^\prime\psi(h)$? And if the answer is yes we subsequently ask (Q3) does there exists $g^\prime \in Mor(\mathbb{F}_2)$ and $\tau\in Aut(t(h))$ such that $h= \tau\phi(g^\prime)$?

The answer to (Q1) is yes if and only if the answers to both (Q2) and (Q3) are yes.  In particular given $\sigma^\prime, \tau,h,g^\prime$ as in (Q2) and (Q3), we let $g=g^\prime$ and $\sigma=\sigma^\prime\phi(\tau)$.  Notice this agrees with the isomorphism on the underlying $\mathbb{V}_3$-modules.  If the answers are yes then we define the image of $f$ in both cases by pulling back and evaluating $\op{P}(g)=\op{P}(g^\prime)$. If the answers are no then both sides evaluate to zero. \end{proof}

\section{The functor $R^!$.}  We continue to posit an admissible morphism $\phi\colon\mathbb{F}_1\to\mathbb{F}_2$.  In this section we will construct a left adjoint to the functor $L^!_\phi$ which will be denoted $R^!_\phi$ or just $R^!$ if $\phi$ is fixed.  Working by analogy to the classical example of six operations formalism in sheaf theory, we first argue such a $R^!$ exists, and then proceed to give a description of it.

\subsection{Existence of $R^!$.}

\begin{lemma}\label{adjlem}  The functor $L^!$ has a left adjoint, which we call $R^!$.
\end{lemma}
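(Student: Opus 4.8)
The plan is to prove that $L^!$ has a right adjoint by invoking the Adjoint Functor Theorem, reducing the existence question to a cocontinuity (colimit-preservation) property of $L^!$. The categories involved, $\mathbb{F}_1$-$\op{O}ps$ and $\mathbb{F}_2$-$\op{O}ps$ over $dgVect_k$, are presentable (locally presentable): they are categories of symmetric monoidal functors into a presentable target, hence closed under all small limits and colimits computed level-wise. For a functor between presentable categories, the Special Adjoint Functor Theorem (or its modern phrasing, that a cocontinuous functor between presentable categories is a left adjoint) says that it suffices to show $L^!$ preserves all small colimits.

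\emph{First I would} establish that $L^!$ preserves small colimits. The key structural observation is the factorization, visible from Equation $\ref{ldualeq}$, that on underlying $\mathbb{V}_2$-modules $L^!(\op{P}) = l \circ G_1(\op{P})$. Here $G_1$ (restriction to the vertex groupoid, i.e.\ a right adjoint) does \emph{not} obviously preserve colimits in general, so I cannot argue purely from the underlying-module formula; instead I would work at the level of $\mathbb{F}_2$-$\op{O}ps$ directly. Since colimits in both $\mathbb{F}$-$\op{O}ps$ categories are computed on the underlying $\mathbb{V}$-modules (forgetting the monoidal-functor structure reflects colimits, as the free-forgetful adjunction $(F,G)$ exhibits $\mathbb{F}$-$\op{O}ps$ as monadic over $\mathbb{V}$-$Mods$), it is enough to check colimit preservation at the level of the underlying $\mathbb{V}_2$-module assignment $v \mapsto \bigoplus_{\phi(w)=v}[\op{P}(w)\tensor k[Aut(v)]]_{Aut(w)}$. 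This formula is manifestly a composite of operations that each preserve colimits in $\op{P}$: evaluation $\op{P}\mapsto \op{P}(w)$ preserves colimits (they are level-wise), the functor $(-)\tensor k[Aut(v)]$ is an exact additive functor, and the coinvariants $(-)_{Aut(w)}$ is a colimit (a coequalizer/quotient), hence preserves colimits; finally the direct sum $\bigoplus$ is a coproduct and commutes with colimits. Thus the underlying $\mathbb{V}_2$-module of $L^!$ preserves colimits, and since the monadic forgetful functor creates and reflects them, $L^!$ itself preserves all small colimits.

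\emph{The main obstacle} I anticipate is not the colimit computation but the verification that these level-wise colimit preservation statements are compatible with the \emph{monoidal-functor} (operad) structure — i.e.\ that the colimit of a diagram of $\mathbb{F}_2$-$\op{O}ps$ computed level-wise genuinely carries the correct $\mathbb{F}_2$-operad structure and that $L^!$'s action on generating morphisms (defined by the pull-back procedure in Section $\ref{Lsec}$) commutes with this. This is where the explicit construction of $L^!(\op{P})(f)$ on generating morphisms, together with the well-definedness established in Lemma $\ref{wdlem}$, does the real work: one must check that the pull-back-and-evaluate recipe is natural in $\op{P}$ with respect to colimit cocones, which follows because the recipe is defined summand-wise by applying $\op{P}$ to a chosen $g$, and application of $\op{P}(g)$ is colimit-preserving in $\op{P}$.

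Having verified cocontinuity, the existence of the right adjoint $R^!$ is then immediate from the Adjoint Functor Theorem for presentable categories. An alternative, more self-contained route that avoids the presentability machinery would be to exhibit $R^!$ explicitly via a limit formula and the Yoneda lemma; but since the subsequent sections construct $R^!$ explicitly anyway, the cleanest strategy here is the abstract existence argument via colimit preservation, deferring the concrete description.
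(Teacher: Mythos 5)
Your proposal fails at its central step, and the failure is not repairable in the direction you chose. The claim that ``colimits in $\mathbb{F}$-$\op{O}ps$ are computed on the underlying $\mathbb{V}$-modules'' \emph{because} $\mathbb{F}$-$\op{O}ps$ is monadic over $\mathbb{V}$-$Mods$ has the logic of monadicity backwards: monadic forgetful functors create \emph{limits}, not colimits. Already for the Feynman category encoding operads, the coproduct of two operads is a free product, not the level-wise direct sum, so the level-wise formula for $L^!$ says nothing about what $L^!$ does to coproducts. In fact $L^!$ is not cocontinuous in general, so no form of the adjoint functor theorem can produce a right adjoint, because none exists: for the inclusion $\fccyc\to\fcmod$, $L^!$ is extension by zero, and the coproduct in modular operads of two extensions by zero contains nonzero elements of higher genus (glue a $\op{P}$-labeled vertex to a $\op{Q}$-labeled vertex along two edges to create a loop), whereas $L^!$ applied to the cyclic coproduct is concentrated in genus zero; hence the canonical map $L^!(\op{P})\sqcup L^!(\op{Q})\to L^!(\op{P}\sqcup\op{Q})$ is not an isomorphism and $L^!$ does not preserve coproducts.

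The deeper issue is the handedness of the adjunction. Despite the word ``right'' in the statement, what the paper constructs --- and what is actually needed for Definition $\ref{6def}$, for Proposition $\ref{quad}$ (which uses $Hom(\op{O},L^!(\op{P}))\cong Hom(R^!(\op{O}),\op{P})$), for the fact that $R^!$ preserves free objects, and for Theorem $\ref{rint}$ --- is a \emph{left} adjoint $R^!$ of $L^!$; the introduction to that section says so explicitly. The paper's proof lifts the adjunction from the vertex level: by the ambidexterity Lemma $\ref{amblem}$, $r$ is not only a right adjoint but also a \emph{left} adjoint of $l$ (this is where characteristic zero and finiteness of the automorphism groups enter), and since $G_2L^!=lG_1$ with the $G_i$ monadic and $\mathbb{F}_1$-$\op{O}ps$ cocomplete, the adjoint lifting theorem for monadic functors lifts the left adjoint $r$ of $l$ to a left adjoint $R^!$ of $L^!$. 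Your strategy could only be salvaged after switching sides: limits in $\mathbb{F}$-$\op{O}ps$ \emph{are} computed level-wise (that is what monadicity gives), $G_1$ preserves them, and $l$ preserves them precisely because ambidexterity makes $l$ a right adjoint of $r$; continuity plus accessibility would then yield a left adjoint. Either way, the ambidexterity of $(l,r)$ is the irreplaceable ingredient, and it appears nowhere in your argument.
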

\begin{proof}
	We collect the following four facts. First, $lG_1=G_2L^!$ (equation $\ref{ldualeq}$).  Second, $l$ is a right adjoint (Lemma $\ref{amblem}$).  Third, the functors $G_1$ and $G_2$ are monadic, meaning that there is an equivalence of categories between algebras over the monad $G_iF_i$ and the source of $G_i$ (Theorem 1.5.6 of \cite{KW}).  Fourth, the category $\mathbb{F}_1$-$\op{O}ps$ has coequalizers (Lemma 8.1.5 of \cite{KW}).  
	
	The adjoint lifting theorem for monadic functors (Theorem 4.5.6 of \cite{FB}) states that when the four hypotheses above are satisfied, the functor $L^!$ has a left adjoint. \end{proof}

\subsection{Properties of $R^!$.}  
We can immediately record several properties of $R^!$ which follow from the adjunction $(R^!,L^!)$ and our work in Section $\ref{Lsec}$.  For example from Proposition $\ref{comp}$ we see:

\begin{lemma} 
	Let $\mathbb{F}_1\stackrel{\phi}\to\mathbb{F}_2\stackrel{\psi}\to\mathbb{F}_3 $
	be admissible morphisms of Feynman categories.  Then there is an isomorphism of functors $R_{\psi\circ\phi}^!\cong R_{\phi}^!R_{\psi}^!$.
\end{lemma}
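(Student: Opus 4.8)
The statement to prove is the contravariant compatibility of $R^!$ with composition: for admissible morphisms $\mathbb{F}_1 \stackrel{\phi}\to \mathbb{F}_2 \stackrel{\psi}\to \mathbb{F}_3$, there is an isomorphism $R_{\psi\circ\phi}^! \cong R_\phi^! R_\psi^!$.

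The plan is to deduce this purely formally from the already-established compatibility of $L^!$ with composition (Proposition \ref{comp}) together with the uniqueness of adjoints. First I would recall the adjunctions established in Lemma \ref{adjlem}: for each admissible morphism we have $(R^!, L^!)$ an adjoint pair, so $R_\phi^!$ is right adjoint to $L_\phi^!$, $R_\psi^!$ is right adjoint to $L_\psi^!$, and $R_{\psi\circ\phi}^!$ is right adjoint to $L_{\psi\circ\phi}^!$. (Admissibility of the composite $\psi\circ\phi$, needed for the last of these to exist, is exactly the first assertion of Proposition \ref{comp}.)

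The key step is then the standard fact that a composite of left adjoints is left adjoint to the composite of the right adjoints, in the reverse order. By Proposition \ref{comp} we have an isomorphism of functors $L_{\psi\circ\phi}^! \cong L_\psi^! L_\phi^!$. The right adjoint of the composite $L_\psi^! L_\phi^!$ is $R_\phi^! R_\psi^!$, since if $(R_\phi^!, L_\phi^!)$ and $(R_\psi^!, L_\psi^!)$ are adjoint pairs, one checks
\begin{equation*}
Hom(L_\psi^! L_\phi^!(\op{P}), \op{O}) \cong Hom(L_\phi^!(\op{P}), R_\psi^!(\op{O})) \cong Hom(\op{P}, R_\phi^! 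R_\psi^!(\op{O}))
\end{equation*}
naturally in $\op{P}$ and $\op{O}$. Hence $R_\phi^! R_\psi^!$ is a right adjoint to $L_\psi^! L_\phi^! \cong L_{\psi\circ\phi}^!$. But $R_{\psi\circ\phi}^!$ is also a right adjoint to $L_{\psi\circ\phi}^!$ by Lemma \ref{adjlem}, and right adjoints are unique up to canonical natural isomorphism. Therefore $R_{\psi\circ\phi}^! \cong R_\phi^! R_\psi^!$, as claimed.

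There is essentially no serious obstacle here, which is why I expect the argument to be short; the real content was already packaged into Proposition \ref{comp} and the existence Lemma \ref{adjlem}. The one point requiring a moment's care is making sure the isomorphism $L_{\psi\circ\phi}^! \cong L_\psi^! L_\phi^!$ of Proposition \ref{comp} is used as an isomorphism of functors (so that transporting along it preserves the adjunction), rather than a mere pointwise isomorphism; since Proposition \ref{comp} is stated as an isomorphism of functors, this is immediate. Everything else is the formal uniqueness-of-adjoints principle, so the proof reduces to citing Proposition \ref{comp} and invoking this principle.
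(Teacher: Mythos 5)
Your proposal is correct and is essentially the paper's own argument: the paper records this lemma as an immediate consequence of Proposition $\ref{comp}$ and the adjunction between $L^!$ and $R^!$, which is precisely the uniqueness-of-adjoints reasoning you spell out. The only caveat is handedness: the paper's prevailing convention (e.g.\ in Proposition $\ref{quad}$ and Theorem $\ref{rint}$) makes $R^!$ the \emph{left} adjoint of $L^!$, despite the wording of Lemma $\ref{adjlem}$ that you followed, but your argument goes through verbatim with the hom-isomorphisms reversed, yielding $R^!_\phi R^!_\psi$ as the left adjoint of $L^!_\psi L^!_\phi\cong L^!_{\psi\circ\phi}$.
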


In addition, the isomorphism $lG_1\cong G_2L^!$ and adjointness implies:

\begin{lemma}\label{R! lem}
	$R^!F_2\cong F_1r$.   In particular $R^!$ preserves free objects.
\end{lemma}

We now impose the condition that $\phi$ is cubical, and we would like to use the intertwining theorem (Theorem $\ref{itertwiningthm}$) to show that $R^!$ satisfies an intertwining relationship with $D$ and $R$.  This requires one additional ingredient which we draw from \cite{KW}; that morphisms from the Feynman transform can be described as solutions to an associated master equation.

\begin{theorem}\label{rint}
	$R^!D\cong DR$.
\end{theorem}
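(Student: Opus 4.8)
The plan is to leverage the adjunction $(R^!, L^!)$ established in Lemma~\ref{adjlem} together with the intertwining theorem $DL^!\cong LD$ (Theorem~\ref{itertwiningthm}), and to exploit the homotopy-involutivity of the Feynman transform $D$. The key structural fact is that $D$ is (up to weak equivalence) its own inverse: $D^2\simeq \mathrm{Id}$ on the relevant categories, which is precisely the mechanism converting one functor in an adjoint pair into the dual construction. First I would record the intertwining consequence in the form $DLD\sim L^!$ already stated as a corollary to Theorem~\ref{itertwiningthm}. Dualizing the original adjunction $(L,R)$ by the Feynman transform should produce a dual adjunction, and the natural candidate identity to prove is $R^!\cong DRD$, which upon applying $D$ on the right and using $D^2\sim\mathrm{Id}$ yields the claimed $R^!D\cong DR$.

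The main steps I would carry out, in order, are as follows. I would begin by computing the underlying graded $\mathbb{V}_1$-module of both sides, forgetting differentials: using $R(\op{O})=\op{O}\circ\phi$ on one side and the free-object behavior $R^!(F_2(E))=F_1(rE)$ from the preceding lemma on the other, together with the bar-cobar description of $D$, the underlying objects should match by the same bookkeeping (involving $r$ and linear duals) that appeared in equation~\eqref{aniso} of the proof of Theorem~\ref{itertwiningthm}. Having matched underlying objects, the remaining and genuinely substantive step is to show the isomorphism respects the two Feynman-transform differentials. Here I expect to use the characterization of morphisms out of the Feynman transform as solutions to a master equation (the ingredient flagged from \cite{KW} just before the statement), so that the differential on $D R(\op{O})$ and the differential transported to $R^!D(\op{O})$ are each expressed as adjoints of the same limit/colimit cone, exactly as in the $DL^!\cong LD$ argument where both differentials were exhibited as adjoints of a single sequence.

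The cleanest route, which I would try first, is purely formal: take the isomorphism $DL^!\cong LD$ of Theorem~\ref{itertwiningthm}, pass to the associated adjoints. Since $(L,R)$ is an adjunction and $(R^!,L^!)$ is an adjunction, and since $D$ is contravariant and homotopy-involutive, an isomorphism of left adjoints $DL^!\cong LD$ should dualize to an isomorphism of the corresponding right adjoints. Concretely, from $DL^!\cong LD$ one gets $L^!\sim DLD$, and feeding this through the adjunction uniqueness of right adjoints gives $R^!\sim DRD$; applying $D$ and cancelling $D^2$ then delivers $R^!D\cong DR$. The advantage of this formal argument is that it avoids re-deriving the differential compatibility from scratch.

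The hard part will be the differential-compatibility verification if the formal adjunction argument does not immediately close up to isomorphism (as opposed to mere weak equivalence). The subtlety is that $D^2\sim\mathrm{Id}$ is typically only a weak equivalence, not an identity, so passing from $R^!\sim DRD$ to a genuine isomorphism $R^!D\cong DR$ requires care; this is presumably where the master-equation description does real work, allowing one to identify the two differentials on the nose rather than up to homotopy, paralleling how Theorem~\ref{itertwiningthm} obtained an isomorphism and not just a quasi-isomorphism. I would therefore anticipate spending most of the proof establishing that, at the level of the underlying identification, the two differentials are adjoint to one and the same cone, mirroring the commutative-triangle argument used for $DL^!\cong LD$.
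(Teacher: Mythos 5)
Your proposal assembles the correct ingredients (the two adjunctions, Theorem~\ref{itertwiningthm}, and the master-equation theorem of \cite{KW}), but the route you rely on has a genuine gap, and you never locate the mechanism that closes it. Your primary argument runs: $DL^!\cong LD$, hence $L^!\sim DLD$, hence by uniqueness of adjoints $R^!\sim DRD$, hence $R^!D\cong DR$ after cancelling $D^2$. Each step after the first fails to deliver an isomorphism: passing from $DL^!\cong LD$ to $L^!\sim DLD$ already invokes $D^2\sim id$, which is only a weak equivalence; uniqueness of adjoints applies to isomorphic functors, not weakly equivalent ones (and for $DRD$ to even be a candidate adjoint of $DLD$ one needs to know that $D$ participates in an adjunction with itself, a fact you never state); and ``cancelling $D^2$'' can again only output a weak equivalence. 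You flag this difficulty yourself, but your fallback --- matching underlying graded objects and then the differentials by hand, as in the proof of Theorem~\ref{itertwiningthm} --- is not available in the form you describe: unlike $L^!$, the functor $R^!$ is defined abstractly as an adjoint (Lemma~\ref{adjlem}), so there is no a priori formula for the underlying graded $\mathbb{V}_1$-module of $R^!D(\op{O})$ with which to start the bookkeeping.

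The missing idea is to use theorem 7.17 of \cite{KW} not to compare differentials, but as the statement that $D$ is, on the nose, its own adjoint as a contravariant functor: since the master-equation set $ME(lim(G(\op{O}\tensor\op{Q})))$ is symmetric in $\op{O}$ and $\op{Q}$, there is a natural isomorphism of sets $Hom(D(\op{O}),\op{Q})\cong Hom(D(\op{Q}),\op{O})$. This completely replaces any appeal to $D^2\sim id$. The paper's proof is then purely formal:
\begin{align*}
Hom_1(R^!D(\op{O}),\op{P})&\cong Hom_2(D(\op{O}),L^!(\op{P}))\cong Hom_2(DL^!(\op{P}),\op{O})\\
&\cong Hom_2(LD(\op{P}),\op{O})\cong Hom_1(D(\op{P}),R(\op{O}))\cong Hom_1(DR(\op{O}),\op{P}),
\end{align*}
using in turn the adjunction $(R^!,L^!)$, the self-adjointness of $D$, Theorem~\ref{itertwiningthm}, the adjunction $(L,R)$, and self-adjointness of $D$ once more; the Yoneda lemma then gives $R^!D\cong DR$ as an honest isomorphism of functors. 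So your instinct that ``this is presumably where the master-equation description does real work'' is right, but the work it does is to supply a contravariant self-adjunction for $D$ at the level of $Hom$-sets, not to identify two differentials on a common underlying object.
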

\begin{proof}  
	We use theorem 7.5.3 of \cite{KW} which says that morphisms from the Feynman transform are equivalent to solutions of a so-called master equation.  In particular, there is a natural isomorphism of sets:
	\begin{equation*}
	Hom(D(\op{O}), \op{Q})\cong ME(lim(G(\op{O}\tensor\op{Q})))\cong  Hom(D(\op{Q}), \op{O})
	\end{equation*}	
	Thus (after Theorem $\ref{itertwiningthm}$) natural isomorphisms:
	\begin{align*}
Hom_1(R^!D(\op{O}),\op{P})\cong  
Hom_2(D(\op{O}),L^!(\op{P}))\cong 
Hom_2(DL^!(\op{P}),\op{O}) \\
\cong Hom_2(LD(\op{P}),\op{O})\cong Hom_1(D(\op{P}),R(\op{O}))\cong Hom_1(DR(\op{O}),\op{P})	\end{align*}	\end{proof}

\begin{corollary}
	$DR^!D\sim R$
\end{corollary}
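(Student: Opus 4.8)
The final statement is the corollary $DR^!D\sim R$, where $\sim$ denotes weak equivalence. I need to prove this follows from the intertwining theorem $R^!D\cong DR$ (Theorem \ref{rint}) together with the homotopy-involutivity of the Feynman transform.

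**Plan.** The strategy is to apply the functor $D$ on the left to the isomorphism of Theorem \ref{rint} and then use that $D$ is a homotopy involution. Concretely, I would start from $R^!D\cong DR$ and apply $D$ to both sides, obtaining $DR^!D\cong DDR = D^2R$. The key input is that the double Feynman transform $D^2$ is naturally weakly equivalent to the identity functor; this is the standard statement that the bar-cobar (Feynman) construction furnishes a cofibrant resolution, so that the canonical map $D^2(\op{O})\xrightarrow{\sim}\op{O}$ is a quasi-isomorphism. This fact is recalled in the cubical Feynman category context of \cite{KW} (Appendix \ref{FCA}) and is precisely the ingredient that justifies the earlier corollaries of the same shape, e.g. $DLD\sim L^!$.

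**Steps, in order.** First I would invoke Theorem \ref{rint} to get the genuine isomorphism $R^!D\cong DR$ of functors on the relevant homotopy-appropriate subcategory. Second, since $D$ is a well-defined functor, applying it preserves this isomorphism:
\begin{equation*}
DR^!D\cong D(DR)=D^2R.
\end{equation*}
Third, I would compose with the natural weak equivalence $D^2\xrightarrow{\sim}\mathrm{Id}$ evaluated at $R(\op{O})$, yielding a natural weak equivalence
\begin{equation*}
DR^!D\cong D^2R\xrightarrow{\;\sim\;}R.
\end{equation*}
This chains an isomorphism with a weak equivalence, so the composite is a weak equivalence, which is exactly the claim. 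One should note the same caveat regarding signs/superscripts $(+/-)$ as in the proof of Theorem \ref{itertwiningthm}: the statement is really two parallel statements obtained by tracking the skew-invariant versions, but the argument is identical in both cases.

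**Anticipated obstacle.** The only genuine subtlety is making sure the homotopy involutivity $D^2\sim\mathrm{Id}$ applies on the nose in this generality — i.e. that $\phi$ being cubical guarantees $D$ is defined and is a homotopy involution on $\mathbb{F}_i\text{-}\op{O}ps$, and that the naturality of $D^2\xrightarrow{\sim}\mathrm{Id}$ is compatible with $R$. Since these are recorded for cubical Feynman categories in \cite{KW} and have already been used to deduce the sibling corollary $DLD\sim L^!$, I expect no new difficulty beyond citing them correctly. The proof is therefore genuinely short; the real content lives in Theorem \ref{rint}.
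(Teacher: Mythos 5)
Your proof is correct and is exactly the argument the paper intends: the corollary follows from Theorem $\ref{rint}$ by applying $D$ and invoking the homotopy involutivity $D^2\sim \mathrm{Id}$ of the Feynman transform (Theorem 7.12 of \cite{KW}, recalled in Appendix $\ref{FCA}$), just as for the sibling corollary $DLD\sim L^!$. Your caveat about tracking the $\pm$ superscripts is also the right one, and the paper's convention of suppressing them covers it.
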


As a consequence of this corollary, we see that, while $R^!$ doesn't preserves weak equivalences in general, it does preserve weak equivalences after applying $D$, i.e. after moving to cofibrant domains.  This suggests that $R^!$ may be a left Quillen functor, which it is.  In particular we saw above that $L^!$ preserves weak equivalences and it can also easily be seen to preserve fibrations (which are levelwise surjections).  Hence $(R^!, L^!)$ is a Quillen adjunction.

\subsection{Description of $R^!$} 
Above, we defined the functor $R^!$ as an adjoint which was seen to exist by general principles.  In the case when $\phi$ is cubical, it is possible to give a more hands on description of the functor $R^!$.

To begin, for $E\in\mathbb{V}_2$-Mods we can consider the adjoint of the isomorphism on $R^!(F_2(E))\cong F_1(rE)$, which is a map $F_2(E)\to L^!(F_1(rE))$.  Taking $G_2$ of both sides, using the isomorphism $lG\cong GL^!$, and the ambidexterity lemma (Lemma $\ref{amblem}$) we get a natural transformation, which we call $\sigma$:
\begin{equation}\label{sum}
\sigma\colon rG_2F_2=G_1RF_2\Rightarrow G_1F_1r
\end{equation}

We use the notation $\sigma$ for ``sum''.  In particular, $\sigma$ can be informally described as summing over pre-images of a morphism.  The functor $R^!$ then has the following description:

\begin{proposition}\label{quad}  Let $\op{O}\cong F_2(E)/\langle S \rangle$ be a presentation of $\op{O}$.  Then $R^!(\op{O})\cong F_1(rE) / \langle \sigma(S) \rangle$.
\end{proposition}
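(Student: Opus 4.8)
The plan is to prove that $R^!$ takes the presentation $\op{O}\cong F_2(E)/\langle S\rangle$ to $F_1(rE)/\langle \sigma(S)\rangle$ by exploiting the fact that $R^!$ is a left adjoint (Lemma $\ref{adjlem}$), hence preserves colimits, and in particular preserves the coequalizer that encodes a presentation. First I would recall that the statement ``$\op{O}\cong F_2(E)/\langle S\rangle$ is a presentation'' means that $\op{O}$ is the cokernel (in the appropriate sense for $\mathbb{F}_2$-$\op{O}ps$) of a map built from the generators $E$ and relations $S$; concretely, $\op{O}$ is presented as a coequalizer or pushout diagram involving the free functor $F_2$. Since $R^!$ is a left adjoint, applying $R^!$ to this colimit diagram yields the colimit of the image diagram.

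The key step is then to identify the image diagram. I would use the already-established isomorphism $R^!(F_2(E))\cong F_1(rE)$ (the preceding lemma on preservation of free objects) to see that $R^!$ sends the generating object $F_2(E)$ to $F_1(rE)$. The subtle point is tracking what happens to the relations $S$. Here the natural transformation $\sigma\colon G_1RF_2\Rightarrow G_1F_1 r$ defined in Equation $\ref{sum}$ does exactly the bookkeeping: it records how the map $R^!(F_2(E))\to F_1(rE)$ acts, and therefore how a relation $S\subset G_2 F_2(E)$ (a set of elements in the underlying $\mathbb{V}_2$-module of the free object) is transported to $\sigma(S)\subset G_1 F_1(rE)$. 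I would verify that the counit/adjoint structure makes the square relating $F_2(E)\to\op{O}$ and $F_1(rE)\to R^!(\op{O})$ commute compatibly with $\sigma$, so that the relations generate precisely the ideal $\langle \sigma(S)\rangle$.

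Concretely I would argue as follows. Write the presentation as a coequalizer
\begin{equation*}
F_2(S) \rightrightarrows F_2(E) \to \op{O},
\end{equation*}
where one arrow is the free extension of the inclusion $S\hookrightarrow G_2F_2(E)$ and the other is zero (or, depending on conventions, the pair exhibiting $\langle S\rangle$ as the generated ideal). Applying the colimit-preserving functor $R^!$ and using $R^!F_2\cong F_1 r$ gives a coequalizer
\begin{equation*}
F_1(rS) \rightrightarrows F_1(rE) \to R^!(\op{O}).
\end{equation*}
It then remains to identify the relevant arrow $F_1(rS)\to F_1(rE)$ with the free extension of $\sigma(S)\hookrightarrow G_1F_1(rE)$, which is exactly the content of the definition of $\sigma$ in Equation $\ref{sum}$. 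This identification yields $R^!(\op{O})\cong F_1(rE)/\langle \sigma(S)\rangle$.

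The main obstacle I expect is the careful matching of the relation maps through the adjunctions: one must check that applying $R^!$ to the arrow encoding $S$ really does produce the arrow encoding $\sigma(S)$, rather than some a priori larger or smaller set of relations. This requires chasing the adjoint of $R^!(F_2(E))\cong F_1(rE)$ through $G_2$, the isomorphism $lG_1\cong G_2 L^!$, and the ambidexterity Lemma $\ref{amblem}$ exactly as in the construction of $\sigma$, and confirming that ideals (generated sub-objects) are preserved under the colimit-preserving functor $R^!$. Once the diagram-level compatibility is in place, the conclusion is formal.
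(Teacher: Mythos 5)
Your proof is correct, but it takes a genuinely different route from the paper's. The paper verifies the universal property directly: it takes a pair $\psi_1\leftrightarrow\psi_2$ corresponding under $Hom(F_2(E),L^!(\op{P}))\cong Hom(F_1(rE),\op{P})$, establishes the commuting square of diagram \ref{asquare} relating $\sigma$, $rG(\psi_1)$, $G(\psi_2)$ and the projection $rl\Rightarrow id$, and then proves that $\psi_1$ kills $S$ if and only if $\psi_2$ kills $\sigma(S)$; the nontrivial direction is an element-level contraposition using the explicit description of $rl\Rightarrow id$ as a projection (from the proof of Lemma \ref{amblem}) together with the $Aut(v)$-stability of $S$ (one replaces $s$ by $\tau s$). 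You instead encode the presentation as a coequalizer against the zero map, invoke preservation of coequalizers by the left adjoint $R^!$ and preservation of free objects, and reduce everything to identifying the induced arrow on generators with $\sigma$. The content of your flagged ``main obstacle'' is essentially the paper's commuting square, and it does go through: the isomorphism $R^!F_2\cong F_1r$ is produced by Yoneda from exactly the chain of adjunctions $(R^!,L^!)$, $(F_2,G_2)$, $G_2L^!=lG_1$, $(r,l)$ that defines $\sigma$ in equation \ref{sum}, so the restriction of $R^!(u)$ to generators is $\sigma\circ r(\bar{u})$ for $\bar{u}\colon S\hookrightarrow G_2F_2(E)$. As for what each approach buys: your formal argument absorbs the paper's $Aut$-orbit subtlety into the fact that $rS$ is a full $\mathbb{V}_1$-submodule (killing the image of $rS$ automatically kills $\sigma(\tau s)$ for every $\tau$) and into the fact that adjunction bijections carry zero morphisms to zero morphisms; in exchange you must know that $\mathbb{F}\text{-}\op{O}ps$ is pointed (true in this unit-free setting) and that a quotient by a generated ideal is precisely a coequalizer against zero, facts the paper's hands-on proof never needs. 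Two cautions. First, Lemma \ref{adjlem} is worded ``$L^!$ has a right adjoint,'' but the Hom-isomorphisms displayed there, in Theorem \ref{rint}, and in the proof of Proposition \ref{quad} itself exhibit $R^!\dashv L^!$; your reading ($R^!$ a left adjoint, hence colimit-preserving) is the correct one, and since your entire argument collapses under the literal wording, this should be said explicitly. Second, your closing phrase about ``ideals being preserved'' by $R^!$ is not what you use or need: $R^!$ preserves the coequalizer, and the coequalizer of a map against zero is the quotient by the ideal generated by the image on generators.
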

\begin{proof}
	We are obliged to show that $F_1(rE) / \langle \sigma(S) \rangle$ satisfies the requisite adjointness, i.e. that there are natural isomorphisms:
	\begin{equation*}
	Hom(F_2(E)/\langle S \rangle, L^!(\op{P})) \cong 
	Hom(F_1(rE) / \langle \sigma(S) \rangle, \op{P})
	\end{equation*}
	By Lemma $\ref{R! lem}$ we know there is a natural isomorphism:
	\begin{equation*}
	Hom(F_2(E), L^!(\op{P})) \cong Hom(F_1(rE), \op{P})
	\end{equation*}	
	and we consider a pair of morphisms $(\psi_1\stackrel{\cong}\leftrightarrow\psi_2)$ which correspond via this isomorphism.  Then it is a straightforward exercise to show that the following diagram commutes:
	\begin{equation}\label{asquare}
	\xymatrix{rGF_2(E)\ar[rr]^{rG(\psi_1)} \ar[d]^{\sigma} & &rlG(\op{P}) \ar[d]^{rl\Rightarrow id} \\
		GF_1(rE) \ar[rr]^{G(\psi_2)} & & G(\op{P})
	}
	\end{equation}	
	
	To prove the proposition, it is now enough to show that  $\psi_1(s)=0$ for every $s\in S$ if and only if $\psi_2(\sigma(s))=0$ for every $s\in S$.  For this, it is enough to consider $s \in S_v\subset G_2F_2(E)(v)$ for a given $v\in\mathbb{V}_2$.  If $\psi_1(s)=0$, then by commutativity of the diagram $\ref{asquare}$ we see that 
	$\psi_2(\sigma(s))=0$.  So it suffices to prove the other implication, for which we use contraposition.

		Suppose $\psi_1(s)\neq 0$.  First, we note that this implies $v\in im(\phi)$ and we fix $w$ such that $\phi(w)=v$ and $\psi_1(s)|_w\neq 0$.  We then trace $\psi_1(s)|_w$ downward in diagram $\ref{asquare}$ by revisiting the counit of the adjunction $(r,l)$ from the proof of Lemma $\ref{amblem}$ above.  If we restrict to the $w$-summand and evaluate at $G(\op{P})$, the counit $rl\Rightarrow id$ gives us an $Aut(w)$-equivariant map $\op{P}(w)\tensor_{Aut(w)} k[Aut(v)]\to \op{P}(w)$ of the form:
	\begin{equation*}
	\sum_{g\in Aut(v)} [p_g\tensor g] \mapsto \sum_{h\in Aut(w)} h p_{\phi(h)}.
	\end{equation*}
	Since $\phi$ is faithful, $Aut(w)$ is isomorphic to a subgroup of $Aut(v)$ via $\phi$, and we may choose representatives of its right cosets: $id=g_0,g_1\cdc  g_m$.  Then each $[p_g\tensor g]$ may be rewritten in the form $[hp_{\phi(h)g_i}\tensor g_i]$ for some $i$. Thus if we define $p_i:= \sum_{h\in Aut(w)} hp_{\phi(h)g_i}$ then
	\begin{equation*}
		\sum_{g\in Aut(v)} [p_g\tensor g]= 	\sum_{i=1}^m [p_i\tensor g_i].
	\end{equation*}
	Therefore we may write $0\neq \psi_1(s)|_w=\sum_{i=1}^m [p_i\tensor g_i]$ for some $p_i\in \op{P}(w)$, and conclude there exists $j$ such that $p_j\neq 0$.  Then, from the commutativity of diagram $\ref{asquare}$, we may conclude $\psi_2(\sigma(g_j^{-1} s))|_w= p_j\neq 0$ and since $g_j^{-1} s \in S$ this proves the claim.\end{proof}

By assumption (see appendix $\ref{FCA}$) the morphism $\phi$ respects degrees, and so $\sigma$ respects the induced grading on free objects.  It follows then from the above proposition that $R^!$ preserves quadratic objects.

\section{Quadratic intertwining and Manin products.} \label{sec:quad}

The notion of duality on $\mathbb{F}$-$\op{O}ps$ that we thus far considered is bar-cobar duality.  However, if we restrict our attention to sub-categories of quadratic objects, there is another candidate namely quadratic duality, denoted $(-)^!$.  It is natural to ask if there exist parallel quadratic intertwining statements which can be seen as a formal consequence of a six operations formalism on the quadratic categories $\mathbb{F}_\op{Q}$-$\op{O}ps$ (as defined in Appendix $\ref{FCA}$).  Such a result would give a description of $L^!$ and $R^!$ on these subcategories in terms of intertwining with $(-)^!$.  Of course, such a result would require a symmetric monoidal structure for these categories, but there is a candidate given by a generalization of Manin products.

It turns out, however, that implementing this idea is only possible in certain cases.  The first problem is that the right adjoints $L^!$ and $R$ do not preserve quadratic objects in general.  Additionally, a monoidal unit for Manin products requires restricting to binary quadratic objects.  These considerations lead us to present the results of this section as a specialization of the general story told above.

For the remainder of this section we let $\phi$ be a map of cubical Feynman categories, and refer to Appendix $\ref{FCA}$ for the notion of quadraticity at this level of generality.  Notice that in this context $L$ will preserve quadraticity, since it preserves freeness and weight.  

\subsection{Quadratic Intertwining of $L^!$} \label{6.1}

Let us first consider when $L^!$ preserves quadraticity.  Let $\op{P}=F_1(A)/\langle T \rangle$ be a quadratic presentation of $\op{P}\in\mathbb{F}_1$-$\op{O}ps$, and consider the following diagram:
\begin{equation*}
\xymatrix{ F_2(lA)\ar[dr]_\epsilon \ar@{.>}[rr]^{\alpha} && L^!(\op{P}) \\ & \ar[ur]_{L^!(\pi_1)} L^! F_1(A)   & }
\end{equation*}

Here the map $\epsilon\colon F_2(lA)\cong L(F_1(A))\to L^!(F_1(A))$ is adjoint to the natural transformation given in Lemma $\ref{lllemma}$ and is a level-wise surjection (as we will see below).  The map $\alpha$ is defined to be the composite.  

We would like to show that $L^!(\op{P})$ has a quadratic presentation as $L^!(\op{P})\cong F_2(lA)/ ker(\alpha)$.  Since $\alpha$ is a level-wise surjection, it remains to show that $ker(\alpha)$ is generated in weight $1$.  Clearly $ker(L^!(\pi_1))=ker(l(\pi_1))$ is generated in weight $1$, thus it remains to know if $ker(\epsilon)$ is generated in weight $1$.  For this we define:

\begin{definition}\label{lqp} $\phi$ is left quadratic preserving if $ker(\epsilon)$ is generated in weight $1$.
\end{definition}


\begin{lemma}\label{lqlem}  If $\phi$ is left quadratic preserving, $L^!$ preserves quadratic objects.
\end{lemma}
\begin{proof}  To show $L^!(\op{P})\cong F_2(lA)/ ker(\alpha)$ is a quadratic presentation, it remains only to show that $\epsilon$ is a level-wise surjection.  We analyze $\epsilon$ in some detail for future use.
	
Considering $F_2\circ l\colon \mathbb{V}_1$-$Mods\to \mathbb{F}_2$-$\op{O}ps$, then $F_2\circ l(A)=Lan_{\phi\circ \iota_1}(A)$.  Thus every element in $F_2(lA)(v)$ is represented by a finite sum of pairs of the form $(g,\vec{a})$ where $g\in Mor(\mathbb{F}_2)$ of the form $g\colon \phi(X)\to v$ for some $X\in Ob(\mathbb{F}_1)$, and $\vec{a}\in A(X)$.  Two such pairs represent the same element if and only if they are related by some $\lambda\in Aut(X)$ as $(g,\lambda(\vec{a}))=(g\phi(\lambda),\vec{a})$.  We denote equivalence classes of such pairs as $[g,\vec{a}]$.  We may thus write a generic non-zero element in $F_2(lA)(v)$ as $\sum_{j}[g_j,\vec{a}_j]$ where each summand is non-zero, and where $g_i\neq g_j\phi(\lambda)$ for any such $\lambda$, when $i\neq j$.

Given such a class $[g,\vec{a}]$ we can ask, does there exist $f\colon X\to w$ with $\phi(w)=v$, and a $\sigma\in Aut(v)$ such that $g=\sigma\phi(f)$?  The answer to this yes-or-no question is independent of the choice of representative, and so it determines a
direct sum decomposition $F_2(lA)(v)\cong F^{Y}_2(lA)(v)\oplus F^N_2(lA)(v)$, by letting $F^{Y}_2(lA)(v)$ (resp. $F^{N}_2(lA)(v)$) be the span of those classes in which the answer is yes (resp. no).  Then the map $\epsilon$ sends $F^N_2(lA)(v)$ to $0$ and $[\sigma\phi(f),\vec{a}]\in F^Y_2(lA)$ to $[[f,\vec{a}]\tensor \sigma]$.

We now argue that $ker(\epsilon)(v)\cong F^N_2(lA)(v)$, and it suffices to show that $\epsilon$ is injective when restricted to the summand $F^Y_2(lA)(v)$.  Let us show this by contradiction by setting $\epsilon(\sum_{j\in J}[g_j,\vec{a}_j])=0$ for a nonzero generic element in $F^Y_2(lA)(v)$ in the above form, and with $g_j=\sigma_j\phi(f_j)$, so that $\sum_j[[f_j,\vec{a}_j]\tensor \sigma_j]=0$.  This sum is sub-indexed over the targets of the morphisms $f_j$, and, for such a target $w\mapsto v$, in turn over orbits of the rights $Aut(w)$ action on $Aut(v)$.  Thus there exists a nonempty subset $J^\prime\subset J$ with
$\sum_{j\in J^\prime}[\lambda_jf_j,\vec{a}_j]=0$ in $F_1(A)(w)$ for some $\lambda_j\in Aut(w)$, such that $\sigma_j\phi(\lambda_j^{-1})$ is independent of $j\in J^\prime$.  Using the description of $F_1$ as a left Kan extension, it follows that there must be distinct indicies $i,l\in J^\prime$ such that $\lambda_if_i=\lambda_lf_l\nu$ for some $\nu\in Aut(s(f_l))=Aut(s(f_i))$.  Therefore, $g_i=g_l\phi(\eta)$, contradicting our choice of input.  \end{proof}

\begin{theorem}\label{leftint}
	Let $\phi$ be left quadratic preserving and $\op{P}\in\mathbb{F}_1$-$\op{O}ps$ quadratic.  Then $L^!(\op{P})\cong L(\op{P}^!)^!$.
\end{theorem}

\begin{proof}  

	Let $\op{P}=F_1(A)/\langle T \rangle$.  The generators of $L(\op{P})$ and $L^!(\op{P}^!)^!$ are $lA\cong (lA^\ast)^\ast$, and it suffices to show that this isomorphism identifies their relations. For this we will use the following diagrams:   
\begin{align*}
\xymatrix{ F_2(lA)\ar[dr]_\cong \ar@{.>}[rr]^\pi && L(\op{P}) \\ & \ar[ur]_{L(\pi_1)} LF_1(A)   & }
& \ \ \ \ \ \ \ &	
\xymatrix{ F_2(lA^\ast)\ar[dr]_\epsilon \ar@{.>}[rr]^{\alpha} && L^!(\op{P}^!) \\ & \ar[ur]_{L^!(T^\perp\mapsto 0)} L^! F_1(A^\ast)   & }
\end{align*}

	Considering these diagrams level-wise, the relations in $L(\op{P})(v)$ are given by $ker(\pi)$.  On the other hand, the relations in $L^!(\op{P}^!)$ are given by $ker(\alpha)$.  Using our finiteness assumption (Assumption $\ref{finite}$) we may identify $F_2(lA^\ast)\cong F_2(lA)^\ast$.  It then remains to show that
	$ker(\pi)^\perp\cong ker(\alpha)$.  Under the left quadratic preserving hypothesis this reduces to showing 
	 $ker(\pi)^\perp_1\cong ker(\alpha)_1$, where subscript denotes morphism degree, and where $ker(\pi)^\perp_1:= (ker(\pi)_1)^\perp$.

We continue to use the direct sum decomposition $F_2(lA)(v)=F^{Y}_2(lA)(v)\oplus F^N_2(lA)(v)$ as in the proof of Lemma $\ref{lqlem}$, and likewise for $F_2(lA^\ast)(v)$.  Fix $Z\in ker(\pi)_1$ and $Q\in ker(\alpha)_1$.  We endeavor to show $Q(Z)=0$.  Using the direct sum decomposition we may write $Q(Z)=(Q_Y+Q_N)(Z_Y+Z_N)= Q_Y(Z_Y)+Q_N(Z_N)$.  We then argue that $Z\in ker(\pi)_1\Rightarrow Z_N=0$.

 To see this, suppose that $[g,\vec{a}]$ is a term appearing in $Z_N$.  Note that $\pi([g,\vec{a}])$ can be described by taking the inclusion of generators $\vec{a}\in A(X)\hookrightarrow \op{P}(X)$ and applying the co-cone map $\op{P}(X)\to L(\op{P})(v)$ corresponding to $g$.  The image of $\vec{a}$ in $L(\op{P})(v)$ is identified with $\op{P}(f)(\vec{a})$ precisely when there is a factorization of $g$ of the form $\phi(X)\stackrel{\phi(f)}\to\phi(Y)\stackrel{\sigma}\to v$.  But by assumption, $|g|=1$, so either $|f|=1$ and $|\sigma|=0$ or $|f|=0$ and $|\sigma|=1$.  The former case is excluded by the assumption that $[g,\vec{a}]$ appears in $Z_N$, so the latter case must apply.  But this means that $f$ is an isomorphism, and hence the corresponding identification was already made in the preimage $L(F_1(A))(v)\to L(\op{P})$.  In particular we conclude that $\pi$ is injective on the degree $1$ summand of $F_2^N(lA)$.
The description of $\pi$ above makes clear that the image of the degree $1$ summands of $F_2^N(lA)$ and $F_2^Y(lA)$ are disjoint, hence $Z\in ker(\pi)_1\Rightarrow Z_N=0$.

 So we have $Q(Z)= Q_Y(Z_Y)$.  We may write
 \begin{equation*}
 Z_Y= \sum_{j\in J}[\sigma_j\phi(f_j),\vec{a}_j] \ \ \ \text{ and } \ \ \
 Q_Y= \sum_{i\in I}[\tau_i\phi(h_i),\vec{\eta}_i] 
 \end{equation*}
as above (so $\vec{a}_j\in A(s(f_j))$ and $\vec{\eta}_j\in A(s(h_j))^\ast$, $\sigma_j$ and $\tau_i$ automorphisms).  The natural transformation $lG\Rightarrow GL$ applied to the projection $F_1(A)\to\op{P}$ gives us the commutative square,
\begin{equation*}
\xymatrix{ lGF_1(A)(v) \ar[d] \ar[rr]^{lG(\pi_1)} & & lG(\op{P})(v) \ar[d] \\  GLF_1(A)(v) \ar[rr]^{GL(\pi_1)} & & GL(\op{P})(v)  }
\end{equation*}
whose down arrows are injective.  The preimage of $Z$ under the left downward arrow is $\hat{Z}:= \sum_j[f_j,\vec{a}_j]\tensor \sigma_j\in lGF(A)(v)$.  The fact that $Z\in ker(\pi)$ tells us that $lG(\pi_1)(\hat{Z})=0$.  Partition the indexing set $J$ into subsets $J_l$ by saying two indices are in the same block if and only if the corresponding terms satisfy $t(f_j)=t(f_i)$ and $\sigma_j=\sigma_i\phi(\lambda)$ for some $\lambda\in Aut(t(f_j))$.  The fact that $lG(\pi_1)(\hat{Z})=0$ implies that for each $l$, $\sum_{j\in J_l} [f_j,\vec{a}_j]\tensor \sigma_j$ maps to $0$ in the diagram.  Hence $\sum_{j\in J_l} [f_j,\vec{a}_j]\in T$.

Returning to $Q$ we note that $\epsilon(Q_N)=0$, and thus $Q_Y\in ker(\alpha)$.
The description of  $Q_Y= \sum_{i\in I}[\tau_i\phi(h_i),\vec{\eta}_i]$ allows us to write $\epsilon(Q_Y)= \sum_i [h_i,\vec{\eta}_i]\tensor \tau_i \in lG(F_1(A^\ast))(v)$, and we thus conclude this element is in the kernel of the map $lG(F_1(A^\ast))(v)\to l(G\op{P}^!)$.  As above, we appeal to the direct sum decomposition of this space to conclude that if we partition $I$ into blocks $I_k$ whose corresponding terms satisfy $t(h_j)=t(h_i)$ and $\tau_j=\tau_i\phi(\lambda)$ for some $\lambda\in Aut(t(h_j))$, then each $\sum_{i\in I_k} [h_i,\vec{\eta}_i]\tensor \tau_i$ will be in the kernel of $lG(T^\perp\mapsto 0)$.  From this we conclude that $\sum_{i\in I_k} [h_i,\vec{\eta}_i]\in T^\perp$.

Since the natural transformation $lG\Rightarrow GL$ is a level-wise injection, we may evaluate $Q_Y(Z_Y)$ by restricting to the pairing $lGF(A)(v)\tensor lGF(A^\ast)(v)\to k$; i.e.\ by evaluating $\epsilon(Q_Y)(\hat{Z})$.  Both terms are sums partitioned into blocks corresponding to a choice of a preimage $w$ of $v$ and a subsequent $Aut(w)$ orbit within $Aut(v)$.  Two different blocks of this partition pair to $0$, whereas the diagonal choices pair something in $T$ with something in $T^\perp$.  We thus conclude $Q_Y(Z_Y)=0$.

We have now shown that if $Q\in ker(\alpha)_1$ then $Q(Z)=0$ for every $Z\in ker(\pi)_1$, from which we conclude that $ker(\alpha)_1\subset ker(\pi)^\perp_1$.  To show the converse, we work by contraposition.  If $Q\not\in ker(\alpha)$ then $\epsilon(Q)=Q_Y\neq 0$. If we write $Q_Y= \sum_{i\in I}[\tau_i\phi(h_i),\vec{\eta}_i]$, then we can once again partition this sum over sources of the $h_i$ and subsequent $Aut(s(h_i))$ orbits of $v$.  Picking representatives of the orbits we can write each block as $\sum_{i\in I_j}[\tau\phi(h_i),\eta^\prime_i]$ for some $\tau\in Aut(v)$.  We may also assume that for distinct $i,l\in I_j$ we never have $h_i= h_l\lambda$ for $\lambda\in Aut(s(h_i))$, since otherwise we could combine these terms.  Under these assumptions, the functionals appearing in the sum $Q_Y$ have disjoint support.  

Now, since $Q_Y\not\in ker(\alpha)$, there must exist $i$ such that $(h_i,\eta^\prime)\not\in T^\perp$.  Let $s(h_i)=X$.  Then $\eta^\prime\in A(X)$, so we may choose $\vec{a}\in T_X$ such that $\eta^\prime(\vec{a})\neq 0$.  Then $Z:=[\tau\phi(h_i),\vec{a}]\in ker(\pi)$, but $Q(Z)=\eta^\prime(\vec{a})\neq 0$.  Hence $Q\not\in ker(\pi)^\perp$. \end{proof}

\begin{corollary}  Under the conditions of Theorem $\ref{leftint}$,  $DLD(\op{P})\sim L(\op{P}^!)^!$.
\end{corollary}

This result may come as a bit of an initial surprise considering that in general $L$ preserves neither weak equivalences nor Koszulity.  To understand the relationships between these notions we consider the natural maps:
\begin{equation*}
 DLD(\op{P}) \longleftarrow  DL(\op{P}^!)  \longrightarrow L(\op{P}^!)^!
\end{equation*}
In general, if $\op{P}$ is Koszul and $L$ preserves weak equivalences then the left arrow is a weak equivalence.  If $\op{P}$ is Koszul and $L$ preserves Koszulity then the right arrow is a weak equivalence.  But when $\phi$ is left quadratic preserving, the ends of this zig-zag are always weakly equivalent.
In particular, if $\op{P}$ is Koszul and $L$ is exact then $L(\op{P})$ and $L^!(\op{P})$ are Koszul.

\subsection{Quadratic Intertwining of $R^!$}

Recall that $R$ does not always preserve quadratic objects.  Thus we introduce the mirror image of the terminology of Definition $\ref{lqp}$, which will be subsequently required to state the quadratic intertwining theorem.  It involves the natural transformation $F_1r\Rightarrow RF_2$ from Lemma $\ref{ntlem}$ above.

\begin{definition}
	We say the morphism $\phi$ is right quadratic preserving if the natural transformation $F_1r\Rightarrow RF_2$ is a level-wise surjection whose kernel is generated in weight 1.  
\end{definition}
The terminology reflects the fact that if $\phi$ is right quadratic preserving then $R$ preserves quadratic objects.  To see this, let $\op{O}=F_2(E)/ \langle S \rangle$ be quadratic and consider:
\begin{equation*}
\xymatrix{ F_1(r E)\ar[dr]_\beta \ar@{.>}[rr]^{\pi_1} && R(\op{O}) \\ & \ar[ur]_{R(S\mapsto 0)} R F_2(E)   & }
\end{equation*}
We view $R(\op{O})$ as quadratic with generators $rE$ and relations $ker(\pi_1)$. Note that under the assumptions, $\pi_1$ is a composite of two level-wise surjections and hence a level-wise surjection itself.

For example, forgetting directed structure and many inclusions are right quadratic preserving.  Forgetting planar structures is not right quadratic preserving. (See Section $\ref{examples}$.)

\begin{theorem}\label{rightint}
	If $\phi$ is right quadratic preserving and $\op{O}$ is quadratic then $R^!(\op{O})\cong R(\op{O}^!)^!$.
\end{theorem}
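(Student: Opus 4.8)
The plan is to mirror the structure of the proof of Theorem \ref{leftint} as closely as possible, but dualizing the roles of generators and relations and using the quadratic-preserving hypothesis to control $R$. Concretely, write $\op{O}=F_2(E)/\langle S\rangle$ as a quadratic presentation. By Proposition \ref{quad} we have $R^!(\op{O})\cong F_1(rE)/\langle \sigma(S)\rangle$, and since $\phi$ respects degrees, $\sigma$ preserves weight, so $R^!(\op{O})$ is quadratic with generators $rE$ and relations $\langle \sigma(S)_1\rangle$ (the weight-$1$ part). On the other side, $R(\op{O}^!)^!$ has generators $(rE^\ast)^\ast\cong rE$, since the quadratic-preserving hypothesis guarantees $R$ preserves quadraticity via the diagram displayed just before the theorem statement, with $R(\op{O}^!)$ presented on generators $rE^\ast$. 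So, exactly as in Theorem \ref{leftint}, the whole problem reduces to identifying the two weight-$1$ relation spaces under the pairing: I would show that the relations defining $R^!(\op{O})$ are the annihilator of the relations defining $R(\op{O}^!)$, i.e.\ $\sigma(S)_1 = (ker(\pi_1^{\,\op{O}^!}))^\perp_1$ where $\pi_1^{\,\op{O}^!}\colon F_1(rE^\ast)\to R(\op{O}^!)$ is the presentation map.

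The key computational step is to make the natural transformation $\sigma\colon G_1RF_2\Rightarrow G_1F_1r$ of equation \ref{sum} explicit in weight $1$ and compare it with the kernel of $\beta\colon F_1(rE^\ast)\to RF_2(E^\ast)$ (the map $F_1r\Rightarrow RF_2$ appearing in the quadratic-preserving definition, built from Lemma \ref{ntlem}(3)). The heuristic is that $\sigma$ sums over preimages of a morphism under $\phi$, while $\beta$ (hence the relations in $R(\op{O}^!)$) records which weight-$1$ relations in $E^\ast$ descend along $\phi$; these are adjoint operations. I would set up the bilinear pairing $G_1F_1(rE)(v)\tensor G_1F_1(rE^\ast)(v)\to k$ on weight-$1$ pieces and, fixing $v\in\mathbb{V}_1$, decompose both sides over the single relevant preimage structure. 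Because $r$ is restriction along $\phi\colon\mathbb{V}_1\to\mathbb{V}_2$ (an honest functor of groupoids, not a Kan extension), the combinatorics here are genuinely simpler than in the $L^!$ case — there are no coinvariant quotients $\tensor_w k[Aut(v)]$ to carry around — so the pairing should be close to diagonal. Then the statement ``$\sigma(s)$ pairs to zero against $\beta$-coboundaries'' becomes the assertion that the image of $\sigma$ is the orthogonal complement of $ker(\beta)$, which follows formally once one checks the two subspaces are exact annihilators of each other and have complementary dimension.

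I anticipate the main obstacle to be precisely the place where Theorem \ref{leftint} did its real work: proving \emph{both} inclusions of the orthogonality $\sigma(S)_1 \subseteq (ker\,\pi_1^{\,\op{O}^!})^\perp_1$ and the reverse. The forward inclusion should be a direct commutative-diagram chase (the analogue of diagram \ref{asquare} together with the square $lG\Rightarrow GL$ used in Theorem \ref{leftint}), saying that $\sigma(s)$ annihilates everything killed by the presentation of $R(\op{O}^!)$. The reverse inclusion is the subtle one and I expect to prove it by contraposition, exactly as in the last paragraph of the proof of Theorem \ref{leftint}: assuming a functional $Q$ lies outside $R(\op{O}^!)$'s relations, partition its support over sources and automorphism orbits so that the summands have disjoint support, extract a single term $(h_i,\eta')\notin$ the relevant orthogonal space, and use quadratic-preservation to produce an explicit $Z\in ker(\pi_1)$ with $Q(Z)=\eta'(\vec a)\neq 0$. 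The one genuinely new ingredient beyond Theorem \ref{leftint} is that I must invoke the quadratic-preserving hypothesis twice — once to know $R(\op{O}^!)$ even admits a quadratic presentation on generators $rE^\ast$, and once more to guarantee that the weight-$1$ kernel of $F_1r\Rightarrow RF_2$ is itself generated in weight $1$ so that the orthogonality is being taken in the correct finite-dimensional weight-graded piece; identifying this kernel correctly with the data entering $\sigma$ is where the argument could go wrong if one is not careful about the distinction between relations coming from $\langle S\rangle$ and relations intrinsic to the failure of $R$ to commute with $F$.
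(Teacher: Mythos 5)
Your proposal is correct, and its central idea --- that $\sigma$ and $\beta$ are ``adjoint operations'' --- is exactly the fact on which the paper's by-hand argument rests, namely $\beta_{E^\ast}\cong\sigma_E^\ast$ in the notation of line $\ref{sum}$. However, the paper's primary proof is a purely formal one which you did not find: by Theorem $\ref{leftint}$ one has $L^!\cong L(-^!)^!$ on the quadratic subcategories; since $(L,R)$ restricts to an adjunction there (this uses the quadratic-preserving hypothesis) and $(-)^!$ is a contravariant involution, both $R^!$ and $R(-^!)^!$ are left adjoints of this single functor, and uniqueness of adjoints finishes the proof with no computation. Comparing the two routes: the formal argument buys brevity and exhibits the theorem as a corollary of Theorem $\ref{leftint}$ rather than a parallel effort; your explicit route buys a concrete identification of relation spaces, which is what one actually uses in examples such as $R^!(\op{L}ie)=\op{B}iLie$. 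One substantive simplification to your plan: once $\beta_{E^\ast}\cong\sigma_E^\ast$ is established, the two-inclusion orthogonality you flag as ``the subtle part'' collapses to finite-dimensional linear algebra, with no contraposition or support-partition argument. Writing $U$ for the weight-one span of $rS$, one has level-wise $r\,\mathrm{span}(S^\perp)=(rS)^\perp$ (because $r$ is plain restriction along $\phi$, with no coinvariants), hence
\begin{equation*}
(\ker\pi_1)_1=\beta^{-1}\bigl(r\,\mathrm{span}(S^\perp)\bigr)=(\sigma^\ast)^{-1}(U^\perp)=\bigl(\sigma(U)\bigr)^\perp,
\end{equation*}
and taking annihilators once more identifies the weight-one relations of $R(\op{O}^!)^!$ with $\mathrm{span}(\sigma(S))_1$, i.e.\ with the relations of $R^!(\op{O})$ from Proposition $\ref{quad}$. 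The contraposition machinery was needed in Theorem $\ref{leftint}$ precisely because of the coinvariant quotients $\op{P}(w)\tensor_w k[Aut(v)]$, which you correctly observe are absent here; importing it would make your proof longer than necessary but not wrong.
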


\begin{proof}  

The natural transformation $\beta_E\colon F_1(rE)(w)\to F_2(E)(\phi(w))$ is given level-wise simply by evaluation at $\phi$.  It follows that the linear dual $\beta_E^\ast \colon F_2(E^\ast)(\phi(w))\to F_1(rE^\ast)(w)$, given by summing over pre-images, may be identified with the natural transformation $\sigma_{E^\ast}$ (in the notation of line $\ref{sum}$).  Here we have used Assumption $\ref{finite}$ to identify $F_2(E)(\phi(w))^\ast\cong F_2(E^\ast)(\phi(w))$.  Then from Proposition $\ref{quad}$ we know the relations of $R^!(\op{O}^!)$ are $\sigma_{E^\ast}(S^\perp) = \beta_E^\ast(S^\perp)$ which is precisely the orthogonal complement of the relations of $R(\op{O})$ by the above description.
 \end{proof}

\subsection{Manin products and the quadratic six operations formalism}\label{MP}

\begin{definition} $\phi$ is called quadratic preserving if it is both left quadratic preserving and right quadratic preserving.
\end{definition}

We now fix $\phi$ to be quadratic preserving. In light of the quadratic intertwining theorems given above, it is natural to ask if there is a six operations formalism for which these theorems become formal consequences.  For this we would need an appropriate notion of monoidal product for quadratic objects. A candidate for this monoidal product is given by a generalization of operadic Manin products, and in this section we define this product and show that it satisfies the projection formula.

We must emphasize, however, that generalized  Manin projects do not generally give symmetric monoidal categories, in particular they do not generally have monoidal units.  However, if we can restrict our operations $(L,R,R^!,L^!)$ to subcategories in which the $\tensor$-unit is also a unit for this new product, then we would have a six-operations formalism on these subcategories. This is typically the case for subcategories of binary quadratic objects.

Let $E_1$ and $E_2\in \mathbb{V}$-Mods.  Notice $F$ is op-lax monoidal, owing to the fact that $G$ is strict monoidal.  In particular we can take the adjoint of,
\begin{equation*}
E_1\tensor E_2\stackrel{}\longrightarrow GF(E_1)\tensor GF(E_2)\cong G(F(E_1)\tensor F(E_2))
\end{equation*}
to produce a map $F(E_1\tensor E_2)\stackrel{\lambda}\to F(E_1)\tensor F(E_2)$.  This map can be described as splitting a labeled morphism by its labels, i.e.\ sending $[g, \tensor_i(e^1_i\tensor e^2_i)]$ to $[g, \tensor e^1_i]\tensor [g, \tensor e^2_i]$, for $g\in Mor(\mathbb{F})$ and $\tensor e_i\in E(s(g))$.  We call this morphism $\lambda_{E_1,E_2}$, or just $\lambda$ if no confusion will arise.

\begin{definition}
	Let $\op{P}\cong F(E_1)/\langle S_1\rangle$ and $\op{Q}\cong F(E_2)/\langle S_2\rangle$ be quadratic objects in $\mathbb{F}$-$\op{O}ps$.  Let $\pi$ be the composition of the following sequence:
	\begin{equation*}
	F(E_1\tensor E_2)\stackrel{\lambda}\longrightarrow F(E_1)\tensor F(E_2) \stackrel{\pi_\op{P}\tensor \pi_\op{Q}}\longrightarrow \op{P}\tensor \op{Q}
	\end{equation*}
Then we define the Manin (white) product of $\op{P}$ and $\op{Q}$ as $\op{P}\circ \op{Q} := F(E_1\tensor E_2)/  ker(\pi) $.
\end{definition}

Manin products for operads were introduced in \cite{GiK}, inspired by \cite{Manin}, and more detail can be found in \cite{LV}.  The above definition gives us Manin products for any cubical $\mathbb{F}$, and we can ask how these products are related by a morphism $\phi\colon \mathbb{F}_1\to\mathbb{F}_2$.  It is not hard to see that $R$ is strong symmetric monoidal with respect to $\circ$.  We conclude by observing:

\begin{proposition}
	Generalized Manin products satisfy the projection formula.  That is, if $\op{O}\in\mathbb{F}_2$-$\op{O}ps$ and $\op{P}\in\mathbb{F}_1$-$\op{O}ps$ then
	\begin{equation*}
	\op{O}\circ L^!(\op{P}) \cong L^!(R(\op{O})\circ \op{P})
	\end{equation*}
\end{proposition}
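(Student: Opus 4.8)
The plan is to reduce this statement to the projection formula already established for the ordinary tensor product (Proposition \ref{projformula}) by analyzing how the Manin product is built out of the tensor product together with the data of generators and relations. The point is that the Manin white product $\op{P}\circ\op{Q}$ is defined precisely as $F(E_1\tensor E_2)/ker(\pi)$, and each of the four quantities appearing in the desired isomorphism is a subquotient of a corresponding tensor product of free or generated objects. So my strategy is first to identify compatible quadratic presentations of both sides, and then to show that the isomorphism $\pi_{c,d}$ of Proposition \ref{projformula} restricts and descends to the Manin products.

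First I would record presentations. Write $\op{O}\cong F_2(E)/\langle S\rangle$ and $\op{P}\cong F_1(A)/\langle T\rangle$. By the assumption that $\phi$ is quadratic preserving, $R(\op{O})\cong F_1(rE)/\langle\beta(S)\rangle$ is quadratic (using the notation of the displayed diagram defining $\beta$ just before Theorem \ref{rightint}). By the corollary to Lemma \ref{lqlem}, $L^!(\op{P})\cong F_2(lA)/ker(\alpha)$ is quadratic with generators $lA$. Thus both Manin products in the statement are governed by generators: the left side $\op{O}\circ L^!(\op{P})$ is built on $E\tensor lA$, while the right side $L^!(R(\op{O})\circ\op{P})$ is built on $l(rE\tensor A)$. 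The first key step is therefore to establish a natural isomorphism of $\mathbb{V}_2$-modules $E\tensor lA\cong l(rE\tensor A)$ on the level of generators; this is exactly the underlying-module content of the ordinary projection formula isomorphism $\beta_v$, restricted to degree one, and it should follow directly by unwinding the balanced tensor product formula \eqref{ldualeq} together with the fact that $\op{O}(v)=R(\op{O})(\phi(w))$ appears as a constant factor across the summand indexed by $w$.

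The second and main step is to check that this generator-level isomorphism carries the defining relations of one Manin product onto those of the other, i.e.\ that it identifies $ker(\pi_{\text{left}})$ with $ker(\pi_{\text{right}})$. Here I would invoke the full strength of Proposition \ref{projformula}, which gives a natural isomorphism $\op{O}\tensor L^!(\op{P}')\cong L^!(R(\op{O})\tensor\op{P}')$ for \emph{any} $\op{P}'\in\mathbb{F}_1$-$\op{O}ps$, and apply it with $\op{P}'$ taken to be each of the free and quotient objects entering the definition of the Manin product. The map $\lambda$ splitting a labeled morphism by its labels is natural and commutes with $R$ and with $l$, so the composites $\pi_\op{P}\tensor\pi_\op{Q}$ defining the two Manin products are intertwined by the $\beta$-isomorphism. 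Since the Manin product is the image of the tensor product under this splitting-then-projecting composite, and since taking kernels and quotients is preserved under the natural isomorphism furnished by Proposition \ref{projformula}, the kernels correspond and the isomorphism descends. Formally, I would display the commuting diagram relating $F_2(E\tensor lA)\to\op{O}\tensor L^!(\op{P})$ with $L^!$ applied to $F_1(rE\tensor A)\to R(\op{O})\tensor\op{P}$, in which the vertical maps are the instances of $\beta$ and $\lambda$ and the rows are the defining projections of the respective Manin products.

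The main obstacle I anticipate is precisely the bookkeeping of the coinvariant and relation structures in this last step: one must verify that the identification $E\tensor lA\cong l(rE\tensor A)$ is compatible not only with the splitting map $\lambda$ but also with passage to the $Aut(w)$-coinvariants used in defining $l$, so that the induced map on relations is well defined and bijective. In particular the subtlety is that $\lambda$ operates on the label data while the balanced tensor product identifies labels with automorphisms, and one must confirm these two operations commute; this is the quadratic analogue of the verification, already carried out in the proof of Proposition \ref{projformula}, that $\beta_v$ is $Aut(v)$-equivariant and respects composition on generating morphisms. Once that compatibility is in hand, the equality of kernels is formal, and the proposition follows. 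Indeed, because $R$ is strong symmetric monoidal for $\circ$ and $L^!$ has been shown to intertwine with the tensor-product projection formula, the cleanest route is to phrase the whole argument as an instance of Proposition \ref{projformula} applied functorially to the presentations, so that essentially no new computation beyond this equivariance check is required.
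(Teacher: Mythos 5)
Your proposal follows essentially the same route as the paper's own proof: fix quadratic presentations, identify the generator modules $E\tensor lA\cong l(rE\tensor A)$ by a trivial application of the tensor projection formula (Proposition \ref{projformula}), and then intertwine the two defining projections of the Manin products by a commutative rectangle built from $\lambda$, $\epsilon$ and $\beta$, so that the relations are the kernels of the two paths and the quotients agree. The only blemish is notational---$\beta$ maps $F_1(rE)\to RF_2(E)$, so ``$\langle\beta(S)\rangle$'' should be the weight-one kernel of $R(\pi_{\op{O}})\circ\beta$---but this does not affect the structure of the argument.
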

\begin{proof}
	Suppose $\op{P}= F_1(A)/ \langle T\rangle$ and $\op{O}= F_2(E)/ \langle S\rangle$.  Then the generators of $\op{O}\circ L^!(\op{P})$ are $E\tensor lA$ whereas the generators of $L^!(R(\op{O})\circ \op{P})$ are $l(rE\tensor A)$.  These are seen to be isomorphic by a (trivial) application of the $\tensor$-projection formula (Proposition $\ref{projformula}$) above.  We then consider the following rectangular diagram: \begin{equation*}
\xymatrix{ F_2(E\tensor lA)\ar[d]_\cong \ar[r]^{\lambda \ \ \ \ \ } &  F_2(E)\tensor L^!F_1(A) \ar[r]^{\ \ \pi_{\op{O}}\tensor L^!(\pi_{\op{P}})\epsilon} & \op{O}\tensor L^!(\op{P})  \ar[r]^{\cong}  &  L^!(R(\op{O})\tensor \op{P}) \\ F_2(l(rE\tensor A))  \ar[r]^\epsilon & L^!(F_1(rE\tensor A)) \ar[r]^{L^!(\lambda) \ \ \ }  &  L^!(F_1(rE)\tensor F_1(A)) \ar[r]^{L^!(\beta\tensor id)} & L^!(RF_2(E)\tensor F_1(A)) \ar[u]_{L^!(R(\pi_{\op{O}})\tensor \pi_{\op{P}})}}
\end{equation*}
	Note this diagram is easily seen to commute by chasing through an element.  In particular all the maps are morphisms in $\mathbb{F}_2$-$\op{O}ps$, and so it suffices to chase through a generator, eg at a generic level.  Since the relations in $\op{O}\circ L^!(\op{P})$ and $L^!(R(\op{O})\circ \op{P})$ are the kernels of the two paths of the diagram, the claim follows from its commutativity. \end{proof}

\section{Six operations formalisms.}

We have now laid the groundwork to state that morphisms of Feynman categories give rise to six operations formalisms on the basic categories and effective versions on the derived categories.  In this section we will record these statements.  We then consider the two ambidextrous specializations of this general context.  We postpone the discussion of specific examples and applications of these structures until Section $\ref{examples}$. 

\subsection{On the basic categories.}  The requisite work to prove the following theorem was all done above.  The main requirement was the construction of $L^!$ satisfying the projection formula Proposition $\ref{projformula}$.  Let us also remark that the categories $\mathbb{F}$-$\op{O}ps$ are indeed co-closed.  This follows as in \cite{BM} by applying the monadic lifting theorem, c.f.\ \cite{FB}.   

 \begin{theorem}\label{thm1}
 	Let $\phi\colon \mathbb{F}_1\to\mathbb{F}_2$ be an admissible morphism of Feynman categories.  Then $(L,R,R^!,L^!,cohom, \tensor)$ constitute a six operations formalism from $\mathbb{F}_1\text{-}\op{O}ps$ to $\mathbb{F}_2\text{-}\op{O}ps$.
 \end{theorem}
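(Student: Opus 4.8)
The plan is to verify that the data $(L, R, R^!, L^!, cohom, \tensor)$ satisfies each clause of Definition~\ref{6def}, checking off the required adjunctions, the strong monoidality of the pullback, and the projection formula. Since the theorem is the culmination of the section, essentially all the substantive work has already been done; the proof should simply assemble the previously established lemmas and propositions into the required package and confirm that they match the formal axioms. First I would identify the roles: $R = R_\phi$ plays the part of $f^\ast$, $L = L_\phi$ plays the part of $f_\ast$, $L^!$ plays the part of $f_!$, and $R^!$ plays the part of $f^!$. With this dictionary the definition demands: (i) the adjunction $(L, R)$ going $\mathbb{F}_1\text{-}\op{O}ps \rightleftarrows \mathbb{F}_2\text{-}\op{O}ps$, (ii) the adjunction $(R^!, L^!)$ going $\mathbb{F}_2\text{-}\op{O}ps \rightleftarrows \mathbb{F}_1\text{-}\op{O}ps$, and (iii) the projection-formula natural transformation, together with the two side conditions that $R$ is strong symmetric monoidal and that the projection map is an isomorphism.

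Next I would dispatch each item by citation. The adjunction $(L, R)$ is exactly the Kan-extension/composition adjunction recalled at the start of Section~3 (the functors $L_\phi = Lan_\phi$ and $R_\phi(\op{O}) = \op{O}\circ\phi$). The adjunction $(R^!, L^!)$ is Lemma~\ref{adjlem}, which established that $L^!$ admits a right adjoint $R^!$ via the adjoint lifting theorem. The strong symmetric monoidality of $R$ is precisely the content of the lemma stating that $R$ is a symmetric monoidal functor (and in fact strong, since it is given levelwise by composition with $\phi$, and the tensor product in $\mathbb{F}\text{-}\op{O}ps$ is computed levelwise). For the projection formula, Proposition~\ref{projformula} provides the natural isomorphism $\op{O}\tensor L^!(\op{P}) \cong L^!(R(\op{O})\tensor \op{P})$; unwinding the direction of the transformation confirms this is exactly the map $\pi$ of clause~(iii), and since it is an isomorphism the side condition~(2) is automatic.

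The one genuinely non-mechanical preliminary, flagged in the paragraph preceding the theorem, is that both $\mathbb{F}_1\text{-}\op{O}ps$ and $\mathbb{F}_2\text{-}\op{O}ps$ must be \emph{co-closed} symmetric monoidal categories, since the definition of a six operations formalism presupposes this ambient structure. I would handle this exactly as indicated: the existence of the $cohom$ bifunctor, i.e.\ the adjunction $(cohom(-, a), a\tensor -)$ for each object $a$, follows by the monadic lifting argument of \cite{BM}, c.f.\ \cite{FB}. Concretely, $\mathbb{F}\text{-}\op{O}ps$ is monadic over the presheaf category $\mathbb{V}\text{-}Mods$, where the co-closed structure is visible levelwise, and one lifts it along the monadic adjunction.

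I expect the main (and essentially the only) obstacle to be purely bookkeeping: ensuring that the handedness conventions of Definition~\ref{6def}—which, as emphasized in Section~2, are the mirror image of the sheaf-theoretic convention because we work with covariant functors—line up correctly with the directions of the adjunctions $(L, R)$ and $(R^!, L^!)$ as constructed. In particular I would take care that $(L, R)$ sits as the adjunction $f_\ast \dashv f^\ast$ of clause~(i) and that $(R^!, L^!)$ sits as $f^! \dashv f_!$ of clause~(ii), with the source and target categories of the projection natural transformation matching the prescribed functors $\op{C}\times\op{B} \rightrightarrows \op{C}$. Once the dictionary is fixed and the co-closedness is cited, the proof is a one-paragraph verification that each axiom coincides with an already-proven statement.
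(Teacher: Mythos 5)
Your proposal is correct and follows essentially the same route as the paper: the paper's own proof simply assembles the previously established pieces (the $(L,R)$ adjunction, Lemma \ref{adjlem} for $(R^!,L^!)$, strong monoidality of $R$, and Proposition \ref{projformula} for the projection formula), and handles co-closedness of $\mathbb{F}\text{-}\op{O}ps$ by the same monadic lifting argument via \cite{BM} and \cite{FB} that you cite. Your extra care with the handedness dictionary ($L=f_\ast$, $R=f^\ast$, $R^!=f^!$, $L^!=f_!$) is exactly the right bookkeeping and matches the paper's conventions.
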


\subsection{On the homotopy categories.}

We now assume that $\phi$ is cubical and consider the categories of $\mathbb{F}$-$\op{O}ps$ with suppressed subscripts to be the disjoint union $\mathbb{F}^+\text{-}\op{O}ps\coprod \mathbb{F}^-\text{-}\op{O}ps$ (as in Appendix \ref{FCA}).

If $\phi$ is admissible then $(L,R)$ is a Quillen adjunction (see Remark $\ref{Quillenadj}$) and since all objects are fibrant, we have an induced adjunction on the homotopy categories by cofibrant replacement on the left, $(LD^2,R)$.  There is a dual adjunction on the derived categories given by $(DRD, DLD)$ and by the intertwining theorems we have $(DRD, DLD)\sim (R^!D^2, L^!)$.  To construct the $(cohom, \tensor)$ adjunction we prove:

\begin{lemma}  Let $\mathbb{F}$ be a cubical Feynman category.  Then for each $\op{P}\in\mathbb{F}$-$\op{O}ps$ there is a self-adjunction $(D(D(\op{-})\tensor\op{P}), \op{P}\tensor-)$ on the category $Ho(\mathbb{F}\text{-}\op{O}ps)$.  In particular, $Ho(\mathbb{F}\text{-}\op{O}ps)$ is a co-closed symmetric monoidal category with $cohom(\op{O},\op{P})\cong D(D(\op{O})\tensor\op{P})$.
\end{lemma}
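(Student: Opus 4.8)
The plan is to produce, for each fixed $\op{P}$, a natural isomorphism
\begin{equation*}
Hom(D(D(\op{O})\tensor\op{P}),\op{Q})\cong Hom(\op{O},\op{P}\tensor\op{Q}),
\end{equation*}
where all morphism sets are taken in $Ho(\mathbb{F}\text{-}\op{O}ps)$, by the same device used in the proof of Theorem $\ref{rint}$. The two inputs are: (i) the master equation description of morphisms out of a Feynman transform (Theorem 7.17 of \cite{KW}), a natural isomorphism $Hom(D(\op{A}),\op{B})\cong ME(lim(G(\op{A}\tensor\op{B})))$ whose right-hand side is manifestly symmetric in $\op{A}$ and $\op{B}$ through the symmetry of $\tensor$; and (ii) the homotopy involutivity $D^2\simeq Id$ of the Feynman transform on $Ho(\mathbb{F}\text{-}\op{O}ps)$, also from \cite{KW}. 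I would first note that, since $k$ has characteristic zero, every object of $dgVect_k$ is flat, so the level-wise $\tensor$ already computes the derived tensor product, descends to a symmetric monoidal structure on the homotopy category, and requires no cofibrant replacement to form $D(\op{O})\tensor\op{P}$ or $\op{P}\tensor\op{Q}$.

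The computation proceeds in three natural steps. Setting $\op{A}=D(\op{O})\tensor\op{P}$ in (i) gives
\begin{equation*}
Hom(D(D(\op{O})\tensor\op{P}),\op{Q})\cong ME(lim(G(D(\op{O})\tensor\op{P}\tensor\op{Q}))).
\end{equation*}
Rebracketing the symmetric tensor product as $D(\op{O})\tensor(\op{P}\tensor\op{Q})$ and reading (i) backwards, now with source $D(\op{O})$ and target $\op{P}\tensor\op{Q}$, identifies this set with $Hom(D(D(\op{O})),\op{P}\tensor\op{Q})$. Finally, (ii) supplies a natural isomorphism $D(D(\op{O}))\cong\op{O}$, whence $Hom(D(D(\op{O})),\op{P}\tensor\op{Q})\cong Hom(\op{O},\op{P}\tensor\op{Q})$. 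Composing these three isomorphisms yields the asserted adjunction $(D(D(-)\tensor\op{P}),\op{P}\tensor-)$. As this is exactly the data required by the definition of co-closed applied to the (already symmetric monoidal) category $Ho(\mathbb{F}\text{-}\op{O}ps)$, we conclude that it is co-closed with $cohom(\op{O},\op{P})\cong D(D(\op{O})\tensor\op{P})$.

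The step I expect to be the main obstacle is not any individual isomorphism but the insistence that they all live in $Ho(\mathbb{F}\text{-}\op{O}ps)$: one must check that Theorem 7.17 of \cite{KW} computes \emph{homotopy} classes of maps out of the cofibrant object $D(\op{A})$ (its targets being fibrant, as all objects are), so that the set-level master equation identification may legitimately be spliced together with the homotopy-theoretic equivalence $D^2\simeq Id$. Concretely this amounts to verifying that gauge equivalence of master equation solutions matches the homotopy relation on morphisms, the same compatibility already relied upon in Theorem $\ref{rint}$. Granting this, naturality of the composite in $\op{O}$ and $\op{Q}$ follows from naturality of the master equation isomorphism in each of its variables together with naturality of the involutivity equivalence, and bifunctoriality of $cohom$ is then formal.
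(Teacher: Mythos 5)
Your proposal is correct and follows essentially the same route as the paper: both rest on the master-equation description of maps out of the Feynman transform (Theorem 7.17 of \cite{KW}), the symmetry of the tensor product inside the master equation giving $[D(\op{A}\tensor \op{P}),\op{Q}]\cong[D(\op{A}),\op{P}\tensor\op{Q}]$, and the homotopy involutivity $D^2\sim id$. The ``main obstacle'' you flag is precisely what the paper resolves by citing the homotopy-classes form of Theorem 7.17 (homotopy classes of morphisms from the Feynman transform correspond to homotopy classes of master equation solutions), after which cofibrancy of the Feynman transform and fibrancy of all objects identify these homotopy classes with morphism sets in $Ho(\mathbb{F}\text{-}\op{O}ps)$.
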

\begin{proof}  We recall from theorem 7.5.3 of \cite{KW} that homotopy classes of morphisms from the Feynman transform are equivalent to homotopy classes of solutions to an associated master equation.  In particular we see that there is an isomorphism of sets:
	\begin{equation*}
	[D(\op{N}\tensor \op{P}),\op{Q}] \cong [D(\op{N}),\op{P}\tensor \op{Q}]	
	\end{equation*}
since both are isomorphic to $\pi_0(ME(\op{N}\tensor\op{P}\tensor\op{Q}))$.  Substituting $\op{N}= D(\op{O})$, in $Ho(\mathbb{F}\text{-}\op{O}ps)$ we have:
\begin{equation*}
Hom(D(D(\op{O})\tensor\op{P}),\op{Q})\cong [D(D(\op{O})\tensor \op{P}),\op{Q}] \cong [D^2(\op{O}),\op{P}\tensor \op{Q}]	\cong Hom(\op{O},\op{P}\tensor\op{Q}),
\end{equation*}
hence the claim. \end{proof}

\begin{theorem}\label{hoso}
The six-operations formalism of Theorem $\ref{thm1}$ passes to the homotopy categories.  Moreover, it extends to an effective six-operations formalism from $Ho(\mathbb{F}_1\text{-}\op{O}ps)$ to $Ho(\mathbb{F}_2\text{-}\op{O}ps)$ with dualizing object $D(I_2)$.
\end{theorem}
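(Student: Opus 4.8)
The strategy is to recognize that all of the essential constructions have already been carried out in the previous sections, so that the proof is largely a matter of assembling the pieces into the data of Definition \ref{6def} and then performing one direct computation to establish effectiveness. Since $\phi$ is admissible and cubical, $(L,R)$ is a Quillen adjunction (Remark \ref{Quillenadj}), and since every object is fibrant its derived adjunction is obtained by cofibrant replacement on the left; using $D^2$ as the cofibrant replacement functor this reads $f_\ast = LD^2$, $f^\ast = R$, the functor $R$ descending unchanged as it preserves level-wise weak equivalences. Likewise $(R^!,L^!)$ is a Quillen adjunction, whose derived version is $f^! = R^!D^2 \cong DRD$ and $f_! = L^! \cong DLD$, the identifications with $DRD$ and $DLD$ following from Theorems \ref{rint} and \ref{itertwiningthm} respectively together with $D^2\sim\mathrm{id}$. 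The $(cohom,\tensor)$ adjunction on each homotopy category, with $cohom(\op{O},\op{P})\cong D(D(\op{O})\tensor\op{P})$, is supplied by the preceding lemma; here $\tensor$ is the level-wise tensor product, which descends without derivation since tensoring dg $k$-vector spaces over $k$ is exact. This furnishes data (i), (ii), (iii) of Definition \ref{6def}.

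Next I would verify the two conditions. Condition (1) asks that $f^\ast = R$ be strong symmetric monoidal, which is immediate since $R$ is strict symmetric monoidal on the underived categories and the monoidal structure descends. Condition (2), the projection formula, is exactly the natural isomorphism $\op{O}\tensor L^!(\op{P})\cong L^!(R(\op{O})\tensor\op{P})$ of Proposition \ref{projformula}; because $L^!$, $R$ and $\tensor$ all preserve level-wise weak equivalences, this isomorphism descends to the homotopy categories. With this, the six operations formalism of Theorem \ref{thm1} passes to the homotopy categories.

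Finally, for effectiveness I would take $\omega = D(I_2)$ and compute the associated dualizing functor using the $cohom$ formula of the preceding lemma together with homotopy involutivity and unitality:
\begin{equation*}
D_{D(I_2)}(\op{P}) = cohom(D(I_2),\op{P}) \cong D(D^2(I_2)\tensor\op{P}) \cong D(I_2\tensor\op{P}) \cong D(\op{P}).
\end{equation*}
Thus $D_{D(I_2)}\cong D$ is precisely the Feynman transform, so $D_{D(I_2)}^2 \cong D^2 \sim \mathrm{id}$ and $D(I_2)$ is dualizing. It then remains to identify $f^!(\omega)$; using $f^!\cong DRD$, the weak equivalence $D^2\sim\mathrm{id}$, and the fact that $R$ preserves the $\tensor$-unit ($R(I_2)=I_1$), one finds
\begin{equation*}
f^!(D(I_2)) = DRD^2(I_2) \cong DR(I_2) = D(I_1).
\end{equation*}
The identical computation, now carried out in $Ho(\mathbb{F}_1\text{-}\op{O}ps)$, shows $D_{D(I_1)}\cong D$, whence $D(I_1)$ is dualizing. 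Therefore $f^!(\omega)$ is dualizing and the formalism is effective with dualizing object $D(I_2)$.

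The main obstacle is not any single hard estimate but rather the bookkeeping of derived functors: one must be careful that $D^2$ genuinely serves as a cofibrant replacement functor, so that the derived left adjoints are correctly identified as $LD^2$ and $R^!D^2$, and that the intertwining isomorphisms of Theorems \ref{itertwiningthm} and \ref{rint} are applied with the correct handedness. Once these identifications are pinned down, the \emph{striking} point --- that the dualizing functor attached to $D(I_2)$ is exactly the Feynman transform $D$ --- drops out of the $cohom$ formula and homotopy involutivity, and the remainder is formal.
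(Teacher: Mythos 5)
Your proposal is correct and follows essentially the same route as the paper: it assembles the previously constructed derived adjunctions $(LD^2,R)$ and $(R^!D^2,L^!)\cong(DRD,DLD)$, invokes the $cohom(\op{O},\op{P})\cong D(D(\op{O})\tensor\op{P})$ lemma, notes that the projection formula descends because the functors involved ($R$, $L^!$, $\tensor$) preserve level-wise weak equivalences (the paper phrases this as ``it involves only right adjoints''), and identifies $cohom(D(I_2),-)\cong D(-)$ via homotopy involutivity. Your explicit computation $f^!(D(I_2))\cong DR(I_2)\cong D(I_1)$ and the verification that it is dualizing merely spell out what the paper declares ``immediate,'' so this is the same proof with slightly more detail.
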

\begin{proof}  Again the work has been done above.  Observe the projection formula is easily seen to be still satisfied, since it involves only right adjoints.  The fact that $D(I_2)$ is dualizing follows immediately from the description of $cohom$ above.  Moreover we see that $cohom(D(I_2),-)=D(-)$ and so the intertwining theorems which result from this dualizing object are the expected ones. \end{proof}

\subsection{Ambidexterity: the Grothendieck and Wirthm{\"u}ller contexts}\label{WGsec}
Inspired by \cite{FHM}, let us give two specializations of this general context.

\begin{definition}
A morphism as in Theorem $\ref{thm1}$ is called proper if $L^!\cong L$ and co-proper if $R^!\cong R$.
\end{definition}

In the terminology of \cite{FHM}, what we call a proper map is said to satisfy the ``Grothendieck context'' and what we call a co-proper map is said to satisfy the ``Wirthm{\"u}ller context''.  A proper map gives rise to a triple of adjoint functors $(R^!,L,R)$ whereas a co-proper map gives rise to a triple of adjoint functors $(L,R,L^!)$.  Symmetrization is often proper, inclusion is often co-proper; examples are discussed below (Section $\ref{examples}$).  Note that the ambidexterity lemma (Lemma $\ref{amblem}$) shows that maps between trivial Feynman categories (i.e.\ those which are groupoids) are both proper and co-proper.  

We now give criteria for recognizing proper and co-proper maps.

\begin{proposition}	A morphism $\phi$ is proper if for every morphism in $\mathbb{F}_2$ of the form $f\colon \phi(X)\to v$, there exists $\sigma \in Aut(v)$ and  $g\in \mathbb{F}_1$ with $s(g)=X$ such that $f=\sigma\phi(g)$. \end{proposition}

\begin{proof}   Recall that the functor $L$ is given by left Kan extension, explicitly $L(\op{P})= Lan_\phi(\op{P})$.  So it suffices to prove that, under the stated conditions, $L^!(\op{P})$ is such a left Kan extension.
		
		The conditions of the proposition ensure that any such generating morphism $f\in Mor(\mathbb{F}_2)$ can be written as:
		\begin{equation*}
		\xymatrix{  \phi(X) \ar[rr]^{\phi(g)}  \ar[dr]_f  &  & \phi(w) \ar[dl]^\sigma\\ &   v & }
		\end{equation*}	where $\sigma$ is an automorphism.  Hence, we may compute 
		$Lan_\phi(\op{P})(v)$ by restricting to the sub category of isomorphisms $Iso(\phi\searrow v)$.  But this is the level-wise left Kan extension of the underlying $\mathbb{V}_1$-module, which is given by the functor $l$.  Hence $lG(\op{P})(v)\cong Lan_\phi(\op{P})(v)$, and hence $L^!(\op{P})$ coincides with $L(\op{P})$ on objects.  This identification respects morphisms, since in both instances the image of a morphism $f$ is given by factoring as above and evaluating with $\op{P}(g)$. \end{proof}

\begin{proposition}\label{coprop}
	A morphism $\phi$ is co-proper if for every morphism in $\mathbb{F}_2$ of the form $f\colon Y\to \phi(w)$, there exists $\sigma\in Aut(Y)$ and $g\in\mathbb{F}_1$ with target $t(g)=w$ and $f=\phi(g)\sigma$.
\end{proposition}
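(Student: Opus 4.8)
The goal is to establish that $\phi$ is co-proper, i.e.\ that $R^!\cong R$. Since $R^!$ was characterized (Lemma $\ref{adjlem}$) by its adjointness with $L^!$, the plan is to show that the composition functor $R$ satisfies the very same universal property, and then to conclude $R\cong R^!$ by uniqueness of adjoints. Concretely, I would produce a natural isomorphism
\begin{equation*}
Hom_{\mathbb{F}_1}(R(\op{O}),\op{P})\cong Hom_{\mathbb{F}_2}(\op{O},L^!(\op{P}))
\end{equation*}
for $\op{O}\in\mathbb{F}_2$-$\op{O}ps$ and $\op{P}\in\mathbb{F}_1$-$\op{O}ps$, exhibiting $R$ as the left adjoint of $L^!$.

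To build this bijection I would first pass to the underlying module categories. Since the forgetful functors $G_1,G_2$ are faithful, a morphism of $\mathbb{F}_i$-$\op{O}ps$ is the same datum as a morphism of the underlying $\mathbb{V}_i$-modules which happens to commute with all of $Mor(\mathbb{F}_i)$. On underlying modules the ambidexterity lemma (Lemma $\ref{amblem}$) supplies a natural isomorphism $Hom_{\mathbb{V}_1}(rG_2\op{O},G_1\op{P})\cong Hom_{\mathbb{V}_2}(G_2\op{O},lG_1\op{P})$. Using the identities $G_1R=rG_2$ (Lemma $\ref{ntlem}$) and $G_2L^!=lG_1$ (the defining Equation $\ref{ldualeq}$), this reads
\begin{equation*}
Hom_{\mathbb{V}_1}(G_1R(\op{O}),G_1\op{P})\cong Hom_{\mathbb{V}_2}(G_2\op{O},G_2L^!(\op{P})).
\end{equation*}
Thus the two $\op{O}ps$-level Hom-sets I wish to compare sit as subsets of the two sides of a bijection, and it remains only to check that this bijection carries one subset onto the other.

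The heart of the argument --- and the step I expect to be the main obstacle --- is precisely this compatibility: I must show that a $\mathbb{V}_1$-module map $G_1R(\op{O})\to G_1\op{P}$ is natural with respect to all of $Mor(\mathbb{F}_1)$ if and only if its ambidexterity partner $G_2\op{O}\to G_2L^!(\op{P})$ is natural with respect to all of $Mor(\mathbb{F}_2)$. This is exactly where the hypothesis enters: the stated factorization condition guarantees that every morphism of $\mathbb{F}_2$ with target $\phi(w)$ is, up to an automorphism of its source, the image under $\phi$ of a morphism of $\mathbb{F}_1$ with target $w$. Consequently the evaluation of $L^!(\op{P})$ on a generating morphism of $\mathbb{F}_2$ --- which, by its construction in Section $\ref{Lsec}$, is computed by pulling morphisms back along $\phi$ and evaluating $\op{P}$ --- is completely determined by the $\mathbb{F}_1$-structure of $\op{P}$ together with the groupoid data already accounted for by Lemma $\ref{amblem}$. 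Tracking the ambidexterity formulas through such a factorization should show that the $\mathbb{F}_2$-naturality squares for the partnered map reduce termwise to the $\mathbb{F}_1$-naturality squares, the automorphism bookkeeping matching on the two sides by the same averaging and projection used in the proof of Lemma $\ref{amblem}$.

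Once this equivalence of naturality conditions is verified, the ambidexterity isomorphism restricts to the desired natural isomorphism of $\op{O}ps$-level Hom-sets, giving $R\cong R^!$. As a sanity check, and as an alternative route, one could instead fix a presentation $\op{O}\cong F_2(E)/\langle S\rangle$ and invoke the explicit description $R^!(\op{O})\cong F_1(rE)/\langle\sigma(S)\rangle$ of Proposition $\ref{quad}$: the co-proper hypothesis forces the comparison map $\beta\colon F_1(rE)\to RF_2(E)$ of Lemma $\ref{ntlem}$ to be an isomorphism (surjectivity is immediate from the factorization, injectivity from faithfulness together with the factorization axioms of Definition $\ref{admisdef}$), after which one checks that $\beta$ carries the ideal $\langle\sigma(S)\rangle$ onto the ideal cutting out $R(\op{O})$. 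The first route is cleaner and avoids choosing presentations, but the second makes the role of the factorization hypothesis especially transparent.
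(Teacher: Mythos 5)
Your proposal is correct, but it follows a genuinely different route from the paper's. The paper does not revisit the ambidexterity bijection at all: it shows that under the stated hypothesis the right Kan extension $Ran_\phi$, which is a priori only lax monoidal, becomes strong monoidal --- the factorization condition lets every object of the comma category $(Y\searrow\phi)$ be reached from the subcategory $Iso(Y\searrow\phi)$ of isomorphisms, so the limit defining $Ran_\phi(\op{P})(Y)$ collapses to a groupoid-indexed limit, which is identified levelwise with $lG_1\op{P}$ and splits as a $\tensor$-product by the hereditary axiom --- whence $L^!\cong Ran_\phi$, and since $Ran_\phi$ is by construction the right adjoint of $R$, uniqueness of adjoints gives $R^!\cong R$. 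You instead exhibit the adjunction $R\dashv L^!$ directly by restricting the explicit module-level bijection of Lemma~\ref{amblem} along the faithful functors $G_i$, via $G_1R=rG_2$ and $G_2L^!=lG_1$; this is more self-contained (no Kan extensions, no cofinality argument) and makes the role of the hypothesis concrete, as does your alternative route through Proposition~\ref{quad}. The price is that the step you flag as the ``main obstacle'' hides a real subtlety: the construction of $L^!$ in Section~\ref{Lsec} evaluates morphisms by asking for factorizations $f=\sigma\circ\phi(g)$ with $\sigma$ an automorphism of the \emph{target}, while the hypothesis of the proposition supplies factorizations $f=\phi(g)\sigma$ with $\sigma$ an automorphism of the \emph{source}. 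These are reconciled only because $L^!(\op{P})(f)$ first absorbs source automorphisms and because the partner map in Lemma~\ref{amblem} averages over all of $Aut(v)$; the termwise matching of naturality squares then needs the admissibility axioms (Definition~\ref{admisdef}) to guarantee that factorizations, when they exist, are unique up to the $\prod_i Aut(w_i)$-action, so that the $1/|Aut(w)|$ normalizations (legitimate since $char(k)=0$ and the automorphism groups are finite) exactly cancel the resulting overcounting. Phrased invariantly, what you must check is that the explicit unit $id\Rightarrow lr$ and counit $rl\Rightarrow id$ of Lemma~\ref{amblem} lift to natural transformations $id\Rightarrow L^!R$ and $RL^!\Rightarrow id$ at the level of $\op{O}ps$; the counit lifts using admissibility alone, and it is precisely the lifting of the unit that consumes the co-properness hypothesis. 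With that verification written out, your argument is complete and stands as a legitimate alternative to the paper's proof.
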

\begin{proof}  

As was pointed out in \cite{KW}, the right Kan extension $Ran_\phi(-)$ is a priori only a lax monoidal functor, but is strong monoidal in some examples.  We will show the conditions of the proposition are sufficient for this to be the case, and in particular $L^!\cong Ran_\phi$ under these conditions, from which the claim will follow.
	
	To see this, fix $\op{P}\in\mathbb{F}_1$-$\op{O}ps$.  We first observe that for $v\in \mathbb{V}_2$, using the description $Ran_\phi(\op{P})(v)=lim_{v\searrow \phi}\op{P}\circ t$, it is easy to see that $Ran_\phi(\op{P})(v)\cong L^!(\op{P})(v)=lG\op{P}(v)$.  In particular both sides are zero unless $v\in im(\phi)$.  We then check that for $Y=\tensor v_i$ we have $Ran_\phi(\op{P})(Y)\cong \tensor_i L^!(\op{P})(v_i)$.  First, if $Y\not\in im(\phi)$ then both sides are zero, since in particular our conditions would imply that there are no morphisms $Y\to \phi(w)$.  So we assume $Y\in im(\phi)$.
	
	Then, under the stated conditions, any object $f\in Y\searrow \phi$ can be completed to a diagram:
	\begin{equation*}
	\xymatrix{ & \ar[dr]^f \ar[dl]_\sigma Y & \\  \phi(X) \ar[rr]^{\phi(g)}&  & \phi(w)  }
	\end{equation*}	
	where $\phi(X)=Y$ and $\sigma\in Aut(Y)$.  Therefore $lim_{Y\searrow \phi}\op{P}\circ t$ may be computed by restricting to the sub-category of isomorphisms $Iso(Y\searrow \phi)$.  But if we factor $Y=\tensor_i v_i$ then the Feynman category axioms allow us to write $Iso(Y\searrow \phi)=\times_i Iso(v_i\searrow \phi)$.  We thus conclude $Ran_\phi(\op{P})(Y)\cong \tensor_iRan_\phi(\op{P})(v_i) $ and that $L^!(\op{P})$ and $Ran_\phi(\op{P})$ coincide on objects.
	
    It is then easy to see that this correspondence respects the image of morphisms.  Having concluded that $L^!(\op{P})$ and $Ran_\phi(\op{P})$ coincide, we conclude that their respective left adjoints coincide as well, i.e.\ $R^!\cong R$.  \end{proof}

Having characterized situations in which $\phi$ is proper or co-proper, let us now consider consequences of such situations.

\begin{lemma}  Let $\phi$ be co-proper.  Then the associated natural transformation $Fr\Rightarrow RF$ is an isomorphism.  In particular $\phi$ is right quadratic preserving, $R$ preserves free objects and $R$ commutes with $D$.  Moreover $R$ is a co-closed symmetric monoidal functor: $R(cohom(-,-)) \cong cohom(R(-),R(-))$.
\end{lemma}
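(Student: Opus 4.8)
The plan is to reduce the whole statement to the first claim—that the natural transformation $\nu\colon F_1 r \Rightarrow R F_2$ of Lemma \ref{ntlem} is an isomorphism—after which each remaining assertion follows formally from results already established. For the first claim I would argue as follows. By definition, co-properness means $R^!\cong R$, and the earlier lemma recording that $R^!$ preserves free objects supplies a canonical isomorphism $R^!(F_2(E))\cong F_1(rE)$, natural in $E$. Composing these gives $R F_2(E)\cong F_1(rE)$, so $R F_2$ and $F_1 r$ are isomorphic functors $\mathbb{V}_2\text{-}Mods\to\mathbb{F}_1\text{-}\op{O}ps$.

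It remains to identify this isomorphism with $\nu$ itself. Under the adjunction $F_1\dashv G_1$, a morphism out of $F_1(rE)$ is determined by its restriction to the generators $rE$; unwinding the construction of $\nu$ (built from the unit of $(F_2,G_2)$ and the counit of $(F_1,G_1)$ via the identity $r G_2=G_1 R$ of Lemma \ref{ntlem}) shows that $\nu$ corresponds to the module map $r(\eta_E)\colon rE\to r G_2 F_2(E)=G_1 R F_2(E)$, where $\eta$ is the unit of $(F_2,G_2)$. I would then check that the composite isomorphism above corresponds to the same module map, so that the two coincide and $\nu$ is an isomorphism. I expect this coherence check to be the main obstacle, as it pits the explicit construction of $\nu$ against the adjunction juggling producing the free-preservation isomorphism. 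As corroboration one computes $\nu$ levelwise: a class $[h,\vec e]$ with $h\colon W\to X$ in $\mathbb{F}_1$ and $\vec e\in rE(W)=E(\phi(W))$ is sent to $[\phi(h),\vec e]$ in $F_2(E)(\phi(X))$, and under the morphism-lifting criterion of Proposition \ref{coprop} this ``apply $\phi$'' map is a levelwise bijection—surjectivity being the factorization $g=\phi(h)\sigma$, and injectivity following from faithfulness and the factorization axioms (Definition \ref{admisdef}).

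Granting that $\nu$ is an isomorphism, the remaining claims are quick. Preservation of free objects is immediate, since $R F_2(E)\cong F_1(rE)$ is free. For quadraticity, note that an isomorphism is in particular a levelwise surjection whose kernel, being zero, is generated in weight $1$; hence $\phi$ is quadratic preserving in the sense defined above, and the discussion following that definition shows that $R$ carries quadratic objects to quadratic objects. That $R$ commutes with $D$ uses only $R^!\cong R$: substituting this identification into the intertwining isomorphism $R^! D\cong D R$ of Theorem \ref{rint} yields $R D\cong D R$.

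Finally I would deduce the co-closed monoidal statement on $Ho(\mathbb{F}\text{-}\op{O}ps)$ from three inputs: $R$ is strong symmetric monoidal (it is levelwise precomposition, so $R(\op{O}\tensor\op{P})=R\op{O}\tensor R\op{P}$), $R$ commutes with $D$, and $cohom(\op{O},\op{P})\cong D(D(\op{O})\tensor\op{P})$. Chaining these gives
\[
cohom(R\op{O},R\op{P})\cong D\!\left(D(R\op{O})\tensor R\op{P}\right)\cong D\!\left(R(D\op{O})\tensor R\op{P}\right)\cong D R\!\left(D\op{O}\tensor\op{P}\right)\cong R D\!\left(D\op{O}\tensor\op{P}\right)=R\,cohom(\op{O},\op{P}),
\]
which is the desired isomorphism. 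Beyond bookkeeping with the two established isomorphisms, no further idea is needed here.
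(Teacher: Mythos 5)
The paper states this lemma without proof, so your proposal stands or falls on its own, and its central step has a genuine gap. Your main argument produces an isomorphism $RF_2\cong R^!F_2\cong F_1r$ from the definition of co-properness together with the lemma that $R^!$ preserves free objects, and then proposes to ``identify'' this composite with the canonical transformation $\nu\colon F_1r\Rightarrow RF_2$ of Lemma $\ref{ntlem}$. But co-properness, as defined in the paper, asserts only the \emph{existence} of some natural isomorphism $R^!\cong R$; it singles out no canonical one. So there is nothing to check your identification against: post-composing the chosen isomorphism with any natural automorphism of $R$ (e.g.\ a nonzero scalar) changes the module map $rE\to G_1RF_2(E)$ that your composite restricts to on generators, so it cannot in general coincide with the fixed map $r(\eta_E)$. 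More basically, the existence of \emph{some} isomorphism between two functors never forces a \emph{given} natural transformation between them to be invertible (the zero endomorphism of the identity functor of $dgVect_k$ is natural but not an isomorphism). Hence the ``coherence check'' you flag as the main obstacle is not a check that can be carried out; this step fails.

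Your fallback levelwise computation is the right way to make the lemma true, but two caveats. First, it uses the lifting criterion of Proposition $\ref{coprop}$, which the paper establishes only as a \emph{sufficient} condition for co-properness, not as its definition; what you actually prove is the lemma under that criterion (almost surely the intended reading, and the one every example satisfies, but it should be said explicitly, or else one should work with the explicit identification $L^!\cong Ran_\phi$ from the proof of Proposition $\ref{coprop}$ rather than an abstract isomorphism $R^!\cong R$). Second, injectivity needs more than ``faithfulness and the factorization axioms'': both sides are direct sums of \emph{coinvariant} spaces indexed by isomorphism classes of morphisms, so you must (i) lift any isomorphism $\sigma$ with $\phi(h')=\phi(h)\sigma$ to an isomorphism $\lambda$ in $\mathbb{F}_1$ with $h'=h\lambda$, and (ii) show $\phi$ carries the stabilizer of $h$ \emph{onto} the stabilizer of $\phi(h)$, so that the induced map on coinvariants is injective. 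Faithfulness gives uniqueness of lifts, not existence, and the second factorization axiom produces automorphisms that are not a priori in the image of $\phi$, so (i) and (ii) require an actual argument. Finally, your $cohom$ chain proves the last claim only in $Ho(\mathbb{F}\text{-}\op{O}ps)$, where $cohom(\op{O},\op{P})\cong D(D(\op{O})\tensor\op{P})$ is available; the paper asserts co-closedness of $\mathbb{F}\text{-}\op{O}ps$ itself, and the on-the-nose identity follows more cleanly (and without cubicality) by Yoneda from the projection formula: $Hom(R(cohom(b,a)),c)\cong Hom(cohom(b,a),L^!c)\cong Hom(b,a\tensor L^!c)\cong Hom(b,L^!(R(a)\tensor c))\cong Hom(R(b),R(a)\tensor c)\cong Hom(cohom(R(b),R(a)),c)$, using $R\cong R^!$ and Proposition $\ref{projformula}$.
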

\begin{proof}  The first statement follows from Lemma $\ref{R! lem}$.  The second statement follows from Theorem $\ref{rint}$ and the fact that $R^!$ preserves quadratic objects.  For the final statement, we first fix $\op{O},\op{P}\in \mathbb{F}_2$-$\op{O}ps$ and $\op{Q}\in \mathbb{F}_1$-$\op{O}ps$.  We then use adjointness and the projection formula (Proposition $\ref{projformula}$) to exhibit a sequence of natural isomorphisms:
 \begin{align*}
	Hom_1(R^!(cohom(\op{O},\op{P})),\op{Q})\cong Hom_2(cohom(\op{O},\op{P}), L^!(\op{Q})) \cong 
	Hom_2(\op{O},\op{P}\tensor L^!(\op{Q})) 	
	 \\ \cong Hom_2(\op{O}, L^!(R(\op{P})\tensor \op{Q}))  \cong Hom_1(R^!(\op{O}), R(\op{P})\tensor \op{Q})  \cong Hom_1(cohom(R^!(\op{O}), R(\op{P})), \op{Q}).
	\end{align*}
We thus have a natural isomorphism of functors $R^!(cohom(-,-)) \cong cohom(R^!(-),R(-))$, which implies the final statement when $\phi$ is co-proper. \end{proof}

We say $\phi$ is exact if $L(-)$ preserves weak equivalences.  From Corollary $\ref{exactcor}$ and the fact that $L$ preserves quadratic objects we conclude:
\begin{lemma} Proper morphisms are exact and left quadratic preserving.
\end{lemma}

\section{Verdier duality is Koszul duality}\label{vdkd}

Recall that to define the Verdier dual of a sheaf on a space $X$, one looks at the map to a point, $p\colon X\to \ast$, and defines the dualizing complex $\omega_X:=p^!(k)$.  Verdier duality is then given by the functor $D(-):= RHom(-, \omega_X)$.  We may carry out the analogous construction for a Feynman category, and in this section we show that the dualizing complex $\omega_\mathbb{F}$ coincides with the dualizing object $D(I)$.  In other words, if we had not known about derived Koszul duality {\it a priori}, we could have created it via the six operations.  To be more precise the dualizing complex $\omega_\mathbb{F}$ has two ingredients: the six operations and bar-cobar duality for commutative and Lie algebras.

To begin we need the analog of a point.  It is provided by viewing the commutative operad as a Feynman category as in Example 2.9.8 of \cite{KW} as we now recall.

\begin{definition}  Let $\mathbb{C}om$ be the Feynman category with one vertex, call it $\ast$, and a unique morphism in each arity $Hom(\ast^{\tensor n}, \ast)$. Observe that a symmetric monoidal functor from $\mathbb{C}om$ is specified by an object with a commutative and associative multiplication and so $\mathbb{C}om$-$\op{O}ps_{dgVect}$ is the category of dg commutative algebras.
\end{definition}

Given a Feynman category $\mathbb{F}$ there is a unique morphism of Feynman categories $p\colon \mathbb{F}\to\mathbb{C}om$ sending the vertices of $\mathbb{F}$ to the vertex of $\mathbb{C}om$.  However the morphism $p$ is not admissible in the sense of Definition $\ref{admisdef}$, and so we can not apply our constructions to produce the adjunction $(R^!_p, L^!_p)$.  The problem being that we have no control over what the pull-back of a morphism looks like.    However, assuming that $\mathbb{F}$ is cubical, we do have some control if we sum over only degree $1$ morphisms.  The idea to do this comes from \cite{KWZ} and \cite{KW} which show summing over degree $1$ morphisms gives higher analogs of Maurer-Cartan elements which correspond to morphisms from the Feynman transform.

Recall that an odd $L_\infty$ algebra is an algebra over $\fr{s} \op{L}_\infty$.  This is equivalent to an $L_\infty$ structure on the shifted complex; in particular the algebra structure is generated by symmetric operations of degree $1$ in each arity.  We define:

\begin{definition}  Define a functor $L_p^!\colon \mathbb{F}^-$-$\op{O}ps\to  \{ \text{odd } L_\infty\text{-Algebras} \}$ by letting $L_p^!(\op{P})= \oplus_{w\in\mathbb{V}} \op{P}(w)_{Aut(w)}$ with operations $l_n:= \sum$ degree $1$ morphisms of length $n$ in $\mathbb{F}$.  That is, for each list of objects $w_1\cdc w_n; w_0$, we define $\tensor_{i\geq 1}\op{P}(w_i)\to \op{P}(w_0)$ to be the sum over degree $1$ morphisms with source  $\tensor_{i\geq 1}w_i$ and target $w_0$.  The finiteness of this sum is assured by Assumption $\ref{finite}$.  Since pre-composing with an automorphism preserves degree, and since we are taking the sum over all degree $1$ morphisms, this map lifts to $\times_{i\geq 1}Aut(w_i)$ coinvariants.  Finally we let $l_n$ be the linear extension of these maps. 

The fact that each $l_n$ is symmetric follows from the fact that $\mathbb{F}$ is a symmetric monoidal category.  The fact that these operations have degree $1$ follows from the fact that the input is odd and that we sum over operations of degree $1$. Finally the fact that this is an $L_\infty$ algebra follows from the fact that $\sum_{r+s=n} l_r\circ l_s$ is given by a sum of factorizations of degree $2$ morphisms into two degree $1$ morphisms.  Since $\mathbb{F}$ is assumed to be cubical, these come in pairs; since the input was assumed to be odd, these pairs have opposite sign and so the value of this sum is zero, as desired.
\end{definition}

\begin{remark}  In examples of interest this $L_\infty$ structure is often a Lie algebra (e.g.\ for operads \cite{KM}), or a dg Lie algebra (e.g.\ for modular operads \cite{Bar}).  It may carry additional structures as well,  see Table 3 of \cite{KW}.
\end{remark}

\begin{lemma}\label{compcor}  Let $\phi\colon \mathbb{F}_1\to\mathbb{F}_2$ be an admissible morphism of cubical Feynman categories. There is an isomorphism of functors $L^!_{p_2}L^!_\phi \cong L^!_{p_1}$
\end{lemma}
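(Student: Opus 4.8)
The plan is to compare the two odd $L_\infty$-algebras directly, since we cannot simply invoke Proposition \ref{comp}: although $p_2\circ\phi=p_1$ as morphisms of Feynman categories, the map $p_2$ is not admissible. First I would identify the underlying graded vector spaces. By definition the underlying space of $L^!_{p_2}(L^!_\phi(\op{P}))$ is $\bigoplus_{v\in\mathbb{V}_2}\left(L^!_\phi(\op{P})(v)\right)_{Aut(v)}$, and by Equation \ref{ldualeq} the $\mathbb{V}_2$-module underlying $L^!_\phi(\op{P})$ is $l_\phi G_1(\op{P})$. Thus this space is $l_{p_2}l_\phi G_1(\op{P})$, where $l_{p_2}$ denotes the left Kan extension of $\mathbb{V}_2$-modules along the vertex restriction $p_2\colon\mathbb{V}_2\to\mathbb{V}_{\mathbb{C}om}$ (induction to the trivial groupoid). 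Since $l$ is preserved by composition and $p_2\circ\phi=p_1$ on groupoids of vertices, this equals $l_{p_1}G_1(\op{P})=\bigoplus_{w\in\mathbb{V}_1}\op{P}(w)_{Aut(w)}$, which is the underlying space of $L^!_{p_1}(\op{P})$. Concretely, the identification is induced by collapsing the regular-representation factor $k[Aut(v)]$ appearing in \ref{ldualeq} upon passing to $Aut(v)$-coinvariants, sending the class of $[p\tensor \mathrm{id}]$ in the $w$-summand to the class of $p$ in $\op{P}(w)_{Aut(w)}$.

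Next I would check that this identification intertwines the $L_\infty$-operations. The bracket $l_n$ on $L^!_{p_2}(L^!_\phi(\op{P}))$ is the sum over degree $1$ morphisms $f$ of $\mathbb{F}_2$ evaluated through $L^!_\phi(\op{P})(f)$, followed by projection to $Aut$-coinvariants. By the construction of $L^!_\phi$ in Section \ref{Lsec}, $L^!_\phi(\op{P})(f)$ is zero unless $f$ is pullable, i.e.\ $f=\sigma\phi(g)$ for some $g\in Mor(\mathbb{F}_1)$ and some automorphism $\sigma$; and because $\phi$ is cubical and hence preserves degree, a pullable degree $1$ morphism $f$ arises precisely from a degree $1$ morphism $g$ of $\mathbb{F}_1$. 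On such an $f$ the evaluation returns $[\op{P}(g)(-)\tensor\sigma]$, and the automorphism $\sigma$ is absorbed once we pass to coinvariants at the target vertex. Thus, after the identification of the previous step, the operation $l_n$ should reduce to a sum over degree $1$ morphisms $g$ of $\mathbb{F}_1$ evaluated by $\op{P}(g)$, which is exactly the bracket defining $L^!_{p_1}(\op{P})$.

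The main obstacle is precisely the bookkeeping of automorphisms in this last step: one must verify that summing $L^!_\phi(\op{P})(f)$ over the pullable degree $1$ morphisms $f$ of $\mathbb{F}_2$ reproduces, after passing to coinvariants, the sum over the degree $1$ morphisms $g$ of $\mathbb{F}_1$ with the correct weights. This requires controlling both which morphisms pull back and the ambiguity in a factorization $f=\sigma\phi(g)$, and here the factorization axioms of admissibility (Definition \ref{admisdef}) together with faithfulness are the essential input, exactly as in the proof of Proposition \ref{comp}. The novelty relative to Proposition \ref{comp} is that the target of $p_2$ is the point $\mathbb{C}om$, whose vertex has trivial automorphisms, so the $k[Aut]$-factors that carried this information in Proposition \ref{comp} are now collapsed into $Aut$-coinvariants; the argument must therefore track the interplay between $Aut(v_0)$ for $v_0\in\mathbb{V}_2$ and $Aut(w_0)$ for its preimages $w_0\in\mathbb{V}_1$, using the free action of $Aut(w_0)$ on the set of factorizations (cf.\ Lemma \ref{amblem}) to match the two sums summand by summand.

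Finally I would observe that naturality in $\op{P}$ is immediate from the construction, so that the vector-space identification assembles into a natural transformation of functors; granting the operation-matching of the second paragraph, it is then a strict isomorphism of odd $L_\infty$-algebras, giving the claimed isomorphism $L^!_{p_2}L^!_\phi\cong L^!_{p_1}$.
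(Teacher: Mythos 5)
Your proposal is correct and follows essentially the same route as the paper's own proof: identify the underlying graded vector spaces via the composition of the vertex-level Kan extensions (exactly the computation in the proof of Proposition~\ref{comp}, which is what the paper cites, since $p_2$ itself is indeed not admissible), and then match the operations using that cubical morphisms preserve degree, so that the degree~$1$ morphisms of $\mathbb{F}_2$ on which $L^!_\phi(\op{P})$ evaluates nontrivially are precisely those of the form $\sigma\phi(g)$ with $g$ of degree~$1$ in $\mathbb{F}_1$. The automorphism bookkeeping you single out as the main obstacle is exactly the point the paper's own four-sentence proof passes over with the assertion that ``the sum over degree $1$ morphisms in $\mathbb{F}_1$ is the same as the sum over preimages of degree $1$ morphisms in $\mathbb{F}_2$,'' so your write-up is, if anything, more explicit than the paper's about where the factorization axioms and faithfulness enter.
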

\begin{proof}  Here $p_1$ and $p_2$ denote the respective projections to $\mathbb{C}om$.  The fact that the underlying graded vector spaces agree follows from the proof of Proposition $\ref{comp}$.  Recall (Appendix $\ref{FCA}$) that morphisms of cubical Feynman categories are by definition degree preserving.  Hence the sum over degree $1$ morphisms in $\mathbb{F}_1$ is the same as the sum over preimages of degree $1$ morphisms in $\mathbb{F}_2$, and so the $L_\infty$ structures agree as well.\end{proof}

\begin{lemma}  The functor $L^!_p$ has a left adjoint, which we call $R^!_p$.
\end{lemma}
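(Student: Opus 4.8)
The plan is to repeat the strategy of Lemma~\ref{adjlem} and obtain $R^!_p$ from the adjoint lifting theorem for monadic functors (Theorem 4.5.6 of \cite{FB}). Concretely, I would record the commuting square whose top arrow is $L^!_p\colon \mathbb{F}^-\text{-}\op{O}ps \to \{ \text{odd } L_\infty\text{-Algebras} \}$, whose vertical arrows are the forgetful functors $G\colon\mathbb{F}^-\text{-}\op{O}ps\to\mathbb{V}^-\text{-Mods}$ and the underlying-complex functor $U$ on odd $L_\infty$-algebras, and whose bottom arrow is $l_p\colon\mathbb{V}^-\text{-Mods}\to dgVect$ given by $l_p(A)=\bigoplus_{w\in\mathbb{V}}A(w)_{Aut(w)}$; commutativity $U L^!_p\cong l_p G$ is immediate from the definition of $L^!_p$. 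The routine hypotheses are dispatched quickly: $G$ is monadic by the Monadicity theorem of \cite{KW}, $U$ is monadic because odd $L_\infty$-algebras are the algebras of the operad $\fr{s}\op{L}_\infty$, and $\mathbb{F}^-\text{-}\op{O}ps$ is cocomplete, hence has reflexive coequalizers. Exactly as in Lemma~\ref{adjlem}, the lifting theorem then reduces the existence of $R^!_p$ to a single point: that the bottom functor $l_p$ admits a left adjoint. This is the precise analogue of the role played by the ambidexterity Lemma~\ref{amblem} in Lemma~\ref{adjlem}, where the left adjoint of $l_\phi$ was its ambidextrous partner $r_\phi$.

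The main obstacle is exactly this last point, and here the analogy with Lemma~\ref{adjlem} breaks down. Since $p$ is not admissible — indeed it is not a morphism to which Definition~\ref{admisdef} applies — the ambidexterity of Lemma~\ref{amblem} is unavailable, and $l_p$ has no left adjoint in full generality: the obstruction is that the infinite coproduct $\bigoplus_{w\in\mathbb{V}}$ fails to commute with products when $\mathbb{V}$ is infinite, so $l_p$ (equivalently $L^!_p$) does not preserve limits. This is precisely why the standing finiteness hypothesis on $\mathbb{F}$ was imposed, and it is where the real content of the lemma lies. I would use the finiteness (finitely many degree-one morphisms of each length, together with the cubical grading) to reduce to a degreewise situation: after filtering by length and internal degree, only finitely many vertices $w$ contribute to $l_p$ in each fixed degree, so that the relevant coproduct is a finite biproduct and therefore preserves the finite products appearing there.

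Assembling these degreewise left adjoints — equivalently, working in the filtered-complete target of pronilpotent odd $L_\infty$-algebras, in which $\bigoplus$ and $\prod$ agree degreewise — produces the required left adjoint to $l_p$, after which the adjoint lifting theorem yields $R^!_p$ verbatim as in Lemma~\ref{adjlem}. I expect this reduction to the pronilpotent/degreewise-finite setting to be the delicate step, since it is the one point at which the inadmissibility of $p$ must be compensated for by hand. As a consistency check I would finally verify that $R^!_p$ respects composition: uniqueness of adjoints applied to Lemma~\ref{compcor} should give $R^!_{p_1}\cong R^!_\phi R^!_{p_2}$, which is the compatibility needed for the subsequent identification of $\omega_\mathbb{F}$.
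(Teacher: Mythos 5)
Your skeleton is the same as the paper's: the square $U L^!_p \cong l_p G$ over the module categories, monadicity of $G$ and of $U$, cocompleteness, and the adjoint lifting theorem of \cite{FB}, reducing everything to a left adjoint for the bottom arrow $l_p$. The paper discharges that last step differently from you: it encodes odd $L_\infty$-algebras themselves as operads over an enriched Feynman category via indexed enrichment (Chapter 4 of \cite{KW}), observes that since $\fr{s}\op{L}_\infty$ is one-dimensional in arity one the vertex modules of that enriched Feynman category are plain dg vector spaces, asserts that Lemma \ref{amblem} therefore still applies to give the adjunction $(r_p,l_p)$, and then lifts through the enriched Monadicity theorem 4.4 of \cite{KW}. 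Incidentally, your stated reason for rejecting this step is misplaced: admissibility (Definition \ref{admisdef}) plays no role in Lemma \ref{amblem} at all; the hypothesis that lemma actually leans on is the standing finiteness-of-preimages assumption, which is what $p$ violates when $\mathbb{V}$ is infinite, since every vertex maps to $\ast$.

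The genuine gap is in your substitute argument. The paper's finiteness hypothesis --- finitely many degree-$1$ morphisms of each length --- exists so that the operations $l_n$ of $L^!_p(\op{P})$ are finite sums; it bounds morphisms, not vertices, and it does not make $\bigoplus_{w\in\mathbb{V}}A(w)_{Aut(w)}$ finite in any fixed internal degree. Concretely, for $\mathbb{F}=\fcops$ take the $\mathbb{V}$-module with $A(w)=k$ concentrated in internal degree $0$ at every vertex $w$: infinitely many vertices contribute in the same degree, because the cubical degree/length grading lives on morphisms of $\mathbb{F}$ (equivalently, on the weight of free objects), not on the values of a $\mathbb{V}$-module. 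So your claim that "only finitely many vertices $w$ contribute to $l_p$ in each fixed degree" is false, and the reduction to finite biproducts collapses. Moreover, passing to a "pronilpotent/filtered-complete" target is not a repair but a change of the statement: the lemma asserts an adjoint to $L^!_p$ valued in the category of odd $L_\infty$-algebras as defined in the paper. Note also the internal tension in your write-up: your observation that $\bigoplus_{w}$ fails to commute with infinite products is, if taken at face value, an obstruction to the existence of \emph{any} left adjoint of $l_p$ (hence of $L^!_p$) over an infinite $\mathbb{V}$ with that target, since right adjoints preserve products; no filtration argument carried out inside the same categories can circumvent it. You must therefore either engage with and use the paper's route through indexed enrichment and Lemma \ref{amblem}, or explicitly modify hypotheses --- your proposal does neither. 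The closing consistency check via Lemma \ref{compcor} is correct as a statement about uniqueness of adjoints, but it presupposes the existence you were meant to prove.
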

\begin{proof}  The proof follows as in Lemma $\ref{adjlem}$ above.  Algebras over operads can be encoded in the language of Feynman categories via indexed enrichment (Chapter 4 of \cite{KW}).  Since the $L_\infty$ operad is dimension $1$ in arity $1$, the $\mathbb{V}$-modules in the enriched Feynman category encoding $L_\infty$-algebras are just given by the underlying dg vector spaces.  Hence Lemma $\ref{amblem}$ still applies to give an adjunction $(r_p,l_p)$ between the respective categories of $\mathbb{V}$-modules.  We also recall the enriched monadicity theorem 4.1.4 of \cite{KW}, which states that the forgetful functor is still monadic in this enriched case.  We may therefore apply the adjoint lifting theorem for monadic functors (Theorem 4.5.6 of \cite{FB}) to the commutative square $l_pG= GL^!_p$ of functors $\mathbb{F}^-$-$\op{O}ps\to dgVect_k$ to prove the lemma.\end{proof}

We now define the analog of the dualizing complex to be the $\mathbb{F}^-$-operad $\omega_\mathbb{F}:=R^!_p(D(k))$.  Here $k$ is the ground field viewed as a commutative algebra and $D$ is its Feynman transform.  This is, up to a shift in degree, the cobar construction of $k$ viewed as a cocommutative coalgebra.  It may also be identified with the operadic deformation complex of the identity map $
\op{L}ie\to\op{L}ie$.

\begin{theorem} In $Ho(\mathbb{F}$-$ops)$ there is an isomorphism $D(\op{P})\cong cohom(\omega_\mathbb{F},\op{P})$.
\end{theorem}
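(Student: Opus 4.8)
The plan is to reduce the statement to the single isomorphism $\omega_\mathbb{F}\cong D(I)$ in $Ho(\mathbb{F}$-$ops)$, where $I$ is the monoidal unit. Indeed, in the proof of Theorem \ref{hoso} (applied to $\mathbb{F}$ itself) we saw that the co-closed structure $cohom(\op{O},\op{P})\cong D(D(\op{O})\tensor \op{P})$, together with the homotopy involutivity $D^2\sim\mathrm{id}$, yields $cohom(D(I),-)\cong D(-)$. Hence once we know $\omega_\mathbb{F}\cong D(I)$ we may substitute to obtain $cohom(\omega_\mathbb{F},\op{P})\cong cohom(D(I),\op{P})\cong D(\op{P})$, naturally in $\op{P}$, which is the theorem. (Equivalently one may reduce instead to $D(\omega_\mathbb{F})\cong I$; the two reductions are interchangeable modulo $D^2\sim\mathrm{id}$.)

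Next I would identify the target of the intertwining to be established. Since $R$ is symmetric monoidal, and in particular preserves the $\tensor$-unit, and since $k=I_{\mathbb{C}om}$ is precisely the monoidal unit of $\mathbb{C}om$-$\op{O}ps$ (the functor constant at the ground field, whose structure maps are the canonical multiplications), we have $R_p(k)=I$. Thus the reduction becomes the intertwining relation $R^!_p(D(k))\cong D(R_p(k))$, that is, $R^!_pD\cong DR_p$ evaluated at $k$ --- the analog for the (non-admissible) projection $p$ of the intertwining $R^!D\cong DR$ of Theorem \ref{rint}.

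To establish this I would mirror the proof of Theorem \ref{rint}, using the adjunction $(R^!_p,L^!_p)$ in place of $(R^!,L^!)$ together with the master-equation description of maps from a Feynman transform (Theorem 7.17 of \cite{KW}) on both sides. Concretely, for $\op{P}\in\mathbb{F}^-$-$\op{O}ps$ the adjunction gives $Hom(R^!_pD(k),\op{P})\cong Hom(D(k),L^!_p(\op{P}))$; since $D(k)$ is (up to shift) the cobar construction of $k$ as a cocommutative coalgebra, maps out of it into the odd $L_\infty$ algebra $L^!_p(\op{P})$ are exactly its Maurer--Cartan elements. By construction $L^!_p(\op{P})=\bigoplus_{w}\op{P}(w)_{Aut(w)}$ with brackets given by summing over degree-$1$ morphisms, and this is precisely the deformation $L_\infty$ algebra whose Maurer--Cartan set is the master-equation set $ME(\lim G(\op{P}))$ that classifies $Hom(D(I),\op{P})$ via Theorem 7.17. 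Chaining these natural isomorphisms and applying the Yoneda lemma yields $R^!_pD(k)\cong D(I)$, i.e.\ $\omega_\mathbb{F}\cong D(I)$, completing the reduction of the first paragraph.

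The \emph{main obstacle} is the third paragraph, and specifically the compatibility forced by the non-admissibility of $p$: one must check that restricting the ``sum over preimages'' to degree-$1$ morphisms --- which is exactly what produces the $L_\infty$ operations of $L^!_p$ --- matches the master equation of Theorem 7.17 on the nose, including the sign and parity bookkeeping that distinguishes the $\mathbb{F}^+$ and $\mathbb{F}^-$ cases and the degree shift implicit in $D(k)$. Equivalently, one must verify that the Maurer--Cartan description of $Hom(D(k),L^!_p(\op{P}))$ and the master-equation description of $Hom(D(I),\op{P})$ are identified by the $(R^!_p,L^!_p)$ adjunction compatibly with their $L_\infty$ and co-cone structures; this is where the explicit formula for $L^!_p$ and its compatibility with composition (as in Lemma \ref{compcor}) do the genuine work.
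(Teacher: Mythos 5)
Your proposal is correct and takes essentially the same approach as the paper's own proof: both reduce the statement to the identification $\omega_\mathbb{F}\cong D(I)$, established via the adjunction $Hom(\omega_\mathbb{F},\op{P})\cong Hom(D(k),L^!_p(\op{P}))$, the Maurer--Cartan description of morphisms out of the quasi-free object $D(k)$, the master-equation identification $MC(L^!_p(\op{P}))\cong Hom(D(I),\op{P})$ from Theorem 7.17 of \cite{KW}, and the Yoneda lemma, followed by an appeal to Theorem $\ref{hoso}$. Your reorganization (stating the reduction up front and phrasing it as the intertwining $R^!_pD\cong DR_p$ evaluated at $k$, using $R_p(k)=I$) is only cosmetic, and your flagged ``main obstacle'' is precisely the step the paper delegates to the cited bar-cobar duality and master-equation results.
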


\begin{proof}  First by adjointness we have $Hom(\omega_\mathbb{F},\op{P})\cong Hom(D(k),L^!_p(\op{P}))$.  This latter Hom is in the category of odd $L_\infty$ algebras and since $D(k)$ is quasi-free on the ground field $k$, morphisms from it are given by those elements in the target which satisfy the condition imposed by the requirement that the original morphism is dg.  From the theory of bar-cobar duality for $\op{P}_\infty$ algebras (see e.g.\ Chapter 11 of \cite{LV}) we know this condition translates to the Maurer-Cartan equation in the target. 

On the other hand from Theorem 7.5.3 of \cite{KW} we know that $MC(L^!(\op{P})) = Hom(D(I), \op{P})$.  Both of these isomorphisms are natural in $\op{P}$, and we may thus conclude $\omega_\mathbb{F}\cong D(I)$.  Applying Theorem $\ref{hoso}$ above then proves the claim.
\end{proof}

We conclude this section with an application of this theorem: it implies that $L^!$ preserves deformation complexes.  Given a morphism $\alpha\colon \omega_\mathbb{F}\to \op{P}$ we have a corresponding Maurer-Cartan element in $L_p^!(\op{P})$.  We define the deformation complex of $\alpha$ to be this $L_\infty$ algebra along with the differential formed by twisting with $\alpha$.

\begin{corollary}\label{prescor} Let $\phi\colon \mathbb{F}_1\to\mathbb{F}_2$ be an admissible morphism between cubical Feynman categories. The functor $R_\phi^!$ preserves dualizing complexes and the functor $L^!_\phi$ preserves deformation complexes.
\end{corollary}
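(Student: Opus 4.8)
Both assertions will be extracted from the composition identity $L^!_{p_2}L^!_\phi\cong L^!_{p_1}$ of Lemma \ref{compcor}, where $p_1\colon\mathbb{F}_1\to\mathbb{C}om$ and $p_2\colon\mathbb{F}_2\to\mathbb{C}om$ denote the two projections. For the statement about $R^!_\phi$, recall that $\omega_{\mathbb{F}_i}=R^!_{p_i}(D(k))$. The functors $L^!_\phi$, $L^!_{p_2}$, $L^!_{p_1}$ carry the respective left adjoints $R^!_\phi$, $R^!_{p_2}$, $R^!_{p_1}$ (Lemma \ref{adjlem} and its analogue for $p$), so the left adjoint of the composite $L^!_{p_2}L^!_\phi$ is $R^!_\phi R^!_{p_2}$. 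Taking left adjoints of Lemma \ref{compcor} and using uniqueness of adjoints will yield a natural isomorphism $R^!_\phi R^!_{p_2}\cong R^!_{p_1}$, whence
\begin{equation*}
R^!_\phi(\omega_{\mathbb{F}_2}) = R^!_\phi R^!_{p_2}(D(k)) \cong R^!_{p_1}(D(k)) = \omega_{\mathbb{F}_1}.
\end{equation*}
Thus $R^!_\phi$ preserves dualizing complexes. Note this route sidesteps the non-admissibility of $p$ precisely because Lemma \ref{compcor} was established directly for the $p$-functors, rather than via the composition lemma for $R^!$ which presumes admissibility.

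For the statement about $L^!_\phi$, fix $\op{P}\in\mathbb{F}_1$-$\op{O}ps$ with a structure morphism $\alpha\colon\omega_{\mathbb{F}_1}\to\op{P}$. Combining the isomorphism $\omega_{\mathbb{F}_1}\cong R^!_\phi(\omega_{\mathbb{F}_2})$ just obtained with the adjunction $(R^!_\phi,L^!_\phi)$ produces a corresponding morphism $\tilde\alpha\colon\omega_{\mathbb{F}_2}\to L^!_\phi(\op{P})$. The deformation complex of $\alpha$ has underlying $L_\infty$-algebra $L^!_{p_1}(\op{P})$, while that of $\tilde\alpha$ has underlying $L_\infty$-algebra $L^!_{p_2}(L^!_\phi(\op{P}))$; these are identified by Lemma \ref{compcor}. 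It therefore remains only to check that the twisting datum is transported correctly, i.e.\ that the Maurer-Cartan element attached to $\alpha$ is sent to the one attached to $\tilde\alpha$ under this identification, so that the twisted differentials agree and the two deformation complexes become isomorphic as dg $L_\infty$-algebras.

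The crux is thus the compatibility of the bijection $Hom(\omega_\mathbb{F},\op{P})\cong MC(L^!_p(\op{P}))$ established in the proof of the previous theorem with the three adjunctions at hand. Concretely, I would verify commutativity of the square
\begin{equation*}
\xymatrix{ Hom(\omega_{\mathbb{F}_1},\op{P}) \ar[r]^{\cong} \ar[d]_{\cong} & MC(L^!_{p_1}(\op{P})) \ar[d]^{\cong} \\ Hom(\omega_{\mathbb{F}_2}, L^!_\phi(\op{P})) \ar[r]^{\cong} & MC(L^!_{p_2}L^!_\phi(\op{P})) }
\end{equation*}
in which the horizontal maps are the Maurer-Cartan correspondences, the left vertical map is the $(R^!_\phi,L^!_\phi)$-adjunction bijection precomposed with $\omega_{\mathbb{F}_1}\cong R^!_\phi(\omega_{\mathbb{F}_2})$, and the right vertical map is induced by Lemma \ref{compcor}. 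Since each horizontal bijection factors as adjointness along $(R^!_{p_i},L^!_{p_i})$ followed by the identification of maps out of the quasi-free object $D(k)$ with Maurer-Cartan elements, commutativity is a formal consequence of the coherence of these adjunctions with the isomorphism $R^!_\phi R^!_{p_2}\cong R^!_{p_1}$. I expect this diagram chase — following a single morphism through three interlocking adjunctions and confirming that the passage respects both the $L_\infty$-brackets and the induced twisted differential — to be the only genuine obstacle; once it is settled, the preservation of deformation complexes follows at once.
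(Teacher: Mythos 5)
Your proposal is correct and follows essentially the same route as the paper: the first statement is obtained by passing to left adjoints in Lemma \ref{compcor} (exactly the paper's ``follows from Lemma \ref{compcor}''), and the second by transporting $\alpha$ through the $(R^!_\phi,L^!_\phi)$-adjunction to the composite $\omega_{\mathbb{F}_2}\to L^!_\phi R^!_\phi(\omega_{\mathbb{F}_2})\cong L^!_\phi(\omega_{\mathbb{F}_1})\to L^!_\phi(\op{P})$ and invoking Lemma \ref{compcor} together with the compatibility of the Maurer--Cartan correspondence with these adjunctions, which is precisely what the paper compresses into ``this follows from the first statement via adjointness.'' Your observation that this route avoids the non-admissibility of $p$, and your explicit commutative square of Hom/MC bijections, are faithful elaborations of the paper's argument rather than departures from it.
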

\begin{proof}  
The first statement follows from Lemma $\ref{compcor}$.  The second statement, explicitly, means that the deformation complex associated to $\alpha\colon \omega_{\mathbb{F}_1}\to \op{P}$ is isomorphic to the deformation complex associated to the composition $\omega_{\mathbb{F}_2}\to L_\phi^!R_\phi^!(\omega_{\mathbb{F}_2})\cong L_\phi^!(\omega_{\mathbb{F}_1}) \to L_\phi^!(\op{P})$.  This follows from the first statement via adjointness.
\end{proof}


\section{Examples.}\label{examples}

In this section we will investigate the implications of the six operations formalism and our above constructions in several examples of morphisms of Feynman categories.  The sources and targets of these morphisms will be several well studied generalizations of operads, which we will briefly recall here.  In addition to the references given below, these examples are discussed in the language of Feynman categories in Chapter 2 of \cite{KW}.  In the Feynman category interpretations the morphisms are given by classes of graphs and the objects are given by lists of vertices.

\begin{center}
	\begin{tabular}{r||l|l|l}
		Type of generalized operad  & F.C. morphisms & reference & our notation  \\ \hline
		operads  & rooted trees & \cite{LV}, etc. & $\mathbb{O}$  \\
		non-$\Sigma$ operads  & planar rooted trees & \cite{LV}, etc. & $\mathbb{PO}$  \\
		cyclic operads   &  trees & \cite{GeK1} & $\mathbb{C}$ \\
		non-$\Sigma$ cyclic operads  & planar trees & \cite{GeK1} & $\mathbb{PC}$  \\
		dioperads   & directed trees & \cite{Gan} & $\mathbb{DO}$ \\
		planar dioperads   & planar directed trees & \cite{TZ} & $\mathbb{PDO}$ \\
		modular operads  & connected graphs & \cite{GeK2} & $\mathbb{M}$  \\
		
	\end{tabular}

\end{center}

\subsection{Cyclic to Modular operads}

There is a morphism of Feynman categories $\phi\colon\fccyc\to\fcmod$ from the Feynman category encoding cyclic operads to the Feynman category encoding modular operads.  On both objects and morphisms this functor is given by inclusion.

\begin{lemma}
	The morphism $\phi\colon \fccyc\to\fcmod$ is admissible and co-proper.
\end{lemma}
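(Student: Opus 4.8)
The plan is to verify the two conditions—admissibility (Definition \ref{admisdef}) and co-properness (Proposition \ref{coprop})—separately, exploiting the fact that both the objects and morphisms of $\fccyc$ and $\fcmod$ are described by graphs, with $\phi$ being an honest inclusion. The key structural observation is that morphisms in $\fcmod$ are built from graphs of arbitrary genus, whereas morphisms in $\fccyc$ correspond exactly to genus-zero (i.e.\ tree-level) connected graphs. Thus the image of $\phi$ consists precisely of those modular morphisms whose underlying graphs are trees, and the entire verification reduces to checking that the genus-zero condition behaves well under the factorization and pullback operations appearing in the two definitions.

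First I would establish faithfulness, which is immediate since $\phi$ is an inclusion: a morphism in $\fccyc$ carries the data of an underlying graph together with its assembly, and this data is literally preserved, so no two distinct cyclic morphisms can become identified. Next I would check the two factorization axioms. For the first axiom, if $\phi(g)=f\circ\phi(l)$ in $\fcmod$, then the graph of $f$ is an ``external'' quotient obtained from the tree $\Gamma_{\phi(g)}$ by collapsing the subgraphs corresponding to $\Gamma_{\phi(l)}$; since collapsing subgraphs of a tree yields a tree (genus is additive under the nesting, and all pieces have genus zero), $f$ has tree-type underlying graph and hence lies in $im(\phi)$. For the second axiom, a factorization $\phi(g)=f\circ h$ corresponds to a nesting of the tree $\Gamma_{\phi(g)}$; because every sub- and quotient graph of a tree is again a tree, both $f$ and $h$ are genus zero, so they descend to morphisms $g_1,g_2$ in $\fccyc$ (up to the expected automorphism $\sigma$ accounting for relabeling), yielding the required decomposition $g=g_1\circ g_2$.

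For co-properness I would apply the criterion of Proposition \ref{coprop}: given any morphism $f\colon Y\to \phi(w)$ in $\fcmod$ whose target lies in the image of $\phi$, I must produce $\sigma\in Aut(Y)$ and $g\in\fccyc$ with $t(g)=w$ and $f=\phi(g)\sigma$. The essential point is that the target $\phi(w)$ is a single vertex of cyclic (genus-zero) type, and the underlying graph $\Gamma_f$ contracts down to that vertex; a connected graph that contracts to a genus-zero vertex must itself have genus zero, so $\Gamma_f$ is a tree. Hence $f$ is, up to an automorphism of its source $Y$, the image under $\phi$ of a cyclic morphism $g$ with $t(g)=w$. I expect the bookkeeping here—tracking how the automorphism $\sigma$ arises from the choice of identification of $Y$ with a product of vertices and verifying $t(g)=w$ on the nose—to be the only mildly delicate part, though it is entirely routine once the genus-zero observation is in hand.

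The main conceptual obstacle, such as it is, lies not in any single step but in confirming that the informal graphical reasoning of Section \ref{Lsec} (the discussion of nested graphs following Definition \ref{admisdef}) applies cleanly to this inclusion. Concretely, I would want to be careful that ``genus zero'' is genuinely the defining condition cut out by the inclusion and that it is stable under all the graph operations—contraction of subgraphs, taking external quotients, and restriction to nested components—that the factorization axioms and the co-properness criterion invoke. Since genus is additive over the vertices and edges involved in any such nesting, and a connected genus-zero graph has only genus-zero connected pieces in any nesting, all the stability properties follow, and the lemma is established.
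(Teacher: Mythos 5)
Your proof is correct and takes essentially the same approach as the paper: the paper likewise treats $\phi$ as the inclusion $\fccyc\subset\fcmod$, invokes Proposition~\ref{coprop} for co-properness, and rests everything on the observation that the genus labels on vertices in $\fcmod$ prevent a genus-zero object from being the target of any morphism of non-zero genus (which is exactly your genus-additivity argument). The only difference is length — the paper compresses into two sentences what you spell out in detail.
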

\begin{proof}
	Both statements follow from viewing $\fccyc \subset \fcmod$ as an inclusion, after Proposition $\ref{coprop}$.  Note the genus labeling for vertices in $\fcmod$ ensures that an object of genus zero can not be the target of a morphism of non-zero genus. \end{proof}

We thus see that $(L,R,L^!)$ is a triple of adjoint functors, all three of which are well known constructions.  $R$ forgets higher genus, $L$ is the modular envelope or modular completion, and $L^!$ is easily seen by the above description to be extension by $0$.  Explicitly, $L^!(\op{O})(g,n)= \op{O}((n))$ if $g=0$ and equals $0$ if $g\geq 1$.

Considering the intertwining theorems in this context we then see:
\begin{corollary}\label{modint}  There are isomorphisms of functors $DR\cong RD$ and $DL^!\cong LD$.
\end{corollary}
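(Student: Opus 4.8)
The plan is to recognize both isomorphisms as immediate instances of the general intertwining theorems proved earlier, once we observe that the inclusion $\phi\colon\fccyc\to\fcmod$ is a cubical morphism, so that the Feynman transform $D$ is defined on both sides and those theorems apply. Cubicality here holds because the inclusion of cyclic into modular operads is realized by inclusions of graphs, which preserve degree and satisfy the axioms recalled in Appendix \ref{FCA}; combined with admissibility (the content of the preceding lemma) this puts us squarely in the hypotheses needed below.

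First I would dispatch the isomorphism $DL^!\cong LD$. This is \emph{precisely} the statement of the intertwining theorem, Theorem \ref{itertwiningthm}, whose hypotheses are exactly that $\phi$ is admissible and cubical. Since both are in force, nothing further is required and the second isomorphism is immediate.

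For the isomorphism $DR\cong RD$ I would combine co-properness with the intertwining theorem for the pull-back. By definition a co-proper morphism satisfies $R^!\cong R$, and Theorem \ref{rint} gives $R^!D\cong DR$; substituting $R^!\cong R$ yields
\begin{equation*}
RD\cong R^!D\cong DR,
\end{equation*}
as claimed. Equivalently, this follows at once from the lemma recording that for a co-proper $\phi$ the functor $R$ commutes with $D$.

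The only point requiring genuine attention — and the potential obstacle — is the verification that $\phi$ is cubical, since this underlies both the definition of $D$ and the applicability of Theorems \ref{itertwiningthm} and \ref{rint}. This is routine for graph-based inclusions but should be noted explicitly. Beyond confirming that hypothesis, both isomorphisms are formal consequences of results already established, and no new computation is needed.
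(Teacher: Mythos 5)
Your proposal is correct and follows essentially the same route the paper intends: the corollary is an immediate consequence of the preceding lemma (admissibility and co-properness of $\phi\colon\fccyc\to\fcmod$), with $DL^!\cong LD$ coming straight from Theorem \ref{itertwiningthm} and $DR\cong RD$ from combining $R^!\cong R$ with Theorem \ref{rint} (equivalently, the lemma that $R$ commutes with $D$ for co-proper morphisms). Your explicit attention to cubicality of $\phi$ — which holds since the degree function is the number of edges and the inclusion preserves it — is exactly the right hypothesis to check and is consistent with the paper's conventions in Appendix \ref{FCA}.
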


The first statement is 5.9 of \cite{GeK2}; the second statement appears to be new.  Informally, for a cyclic operad $\op{O}$ we may view both $DL^!(\op{O})$ and $LD(\op{O})$ as complexes of $\op{O}$-labeled odd (or $\fr{K}$-twisted) graphs of arbitrary genus, but no non-zero genus labels. The differential in $LD(\op{O})$ allows us to blow up an edge ``with-in'' a vertex, which is then identified with the $\op{O}$-labeled graph having this additional edge and labels determined via the 
cyclic operad structure on $\op{O}$.  The differential in $DL^!(\op{O})$ (a priori) has extra terms corresponding to blowing up loops.  But since we're taking the Feynman transform of an extension by zero, these terms must be zero as well.  Similarly, if $\op{O}$ is an odd cyclic operad, the second statement of Corollary \ref{modint} gives us an isomorphism of untwisted modular operads.

{\bf Application to graph homology.}  Let $\op{O}$ be a Koszul cyclic operad.  The complexes $DL^!(\op{O})(g,n)$ are versions of Kontsevich's original graph complexes \cite{Kont}.  Our result shows that the dg modular operad comprised of these complexes is a model for the derived modular envelope of the Koszul dual $\op{O}^!$. On the other hand, we may consider the graph complexes $DL(\op{O}^!)(g,n)$ which are of interest in their own right.  Our results allow us to build a bridge between these two families of graph complexes.

To illustrate this idea we consider the case $\op{O}=\op{C}om^!=\Sigma^{-1}\fr{s}\op{L}ie$, the odd cyclic operad encoding odd Lie algebras.  The graph complexes $DL^!(\op{O})(g,n)$ compute (in the case $n=0$) homology of the group of outer automorphisms of the free group $F_g$.  For $n\geq 1$ these ``hairy'' graph complexes compute the homology of generalizations of $Out(F_g)$ realized as boundary preserving homotopy self-maps of bouquets of $g$ circles and $n$ segments, see \cite{CKV}, \cite{CHKV}, which call these groups $\Gamma_{g,n}$.  Our results allow us to study these graph complexes via the cofiltration;
\begin{equation}\label{cofil}
DL^!(\op{O})\cong L(\op{C}_\infty)\twoheadrightarrow\dots \twoheadrightarrow L(\op{C}_5)\twoheadrightarrow L(\op{C}_4)\twoheadrightarrow L(\op{C}om)
\end{equation}
where $\op{C}_N$ denotes the operad encoding those $\op{C}_\infty$ algebras whose generating operations $\mu_n$ vanish at and above $\mu_N$ (see \cite{LV} Proposition 13.1.6).

On the other hand, as I learned from Dan Petersen, the graph complexes $DL(\op{C}om)(g,n)$ are of interest in that they are related to the moduli space of tropical curves of constant volume, which we denote $\Delta_{g,n}$ after \cite{CGP}.  In particular, $DL(\op{C}om)(g,n)$ computes the reduced cohomology of $\Delta_{g,n}$ with a shift in degree. If we apply the Feynman transform to the above cofiltration we may study the graph complex $DL(\op{C}om)(g,n)$ in genus $g\geq 1$ by trapping it in an acyclic complex.  Alternatively, we may apply the Feynman transform to the homology cofiltration to study $\Delta_{g,n}$ via the filtration $DL(\op{C}om)\hookrightarrow \dots \hookrightarrow D H_\ast(\Gamma_{g,n})$.

To offer some evidence that this is a strategy worth considering, let us use it to calculate something.

\begin{calculation}\label{calculation}  $\chi(\Delta_{1,n})=1+(-1)^{n-1}(n-1)!/2$ for $n\geq 3$.
\end{calculation}
\begin{proof}  Our starting point for this calculation is threefold.  First, we recall from \cite{CHKV} that $\chi(\Gamma_{1,n})= 2^{n-2}$.  Second we observe that $\chi(DH_\ast(\Gamma)(1,n))= \chi(DLD(\op{C}om^!)(1,n))= \chi(L^!(\op{C}om^!)(1,n))=0$.  Here the first equality follows from the fact that $H_\ast(\Gamma_{g,n})\sim L(\op{C}_\infty)(g,n)$ as complexes; but not necessarily as modular operads and hence the use of Euler characteristic, and the second equality follows from Corollary $\ref{modint}$.  Third we recall the theory of characteristics of cyclic operads (see \cite{GeK2}).  In particular we define $f$ to be the formal power series;
\begin{equation*}
f(x)=x+\ds\sum_{n\geq 2} v_{n+1}\ds\frac{x^n}{n!}
\end{equation*}
and define $p_n$ to be the coefficient of $x^{n-1}/(n-1)!$ in $f^{-1}$.  Observe that $p_n$ is a polynomial in the variables $v_3\cdc v_{n}$.  The coefficient of the monomial $v_{i_1}\cdot... \cdot v_{i_m}$ is, up to sign, the number of trees with $n$ labeled leaves having vertices of valence $i_1\cdc i_m$.  The sign is equal to $-1$ to the number of vertices.  In particular, if $\op{O}$ is a cyclic operad, and we set $dim(\op{O}((n)))=v_{n+1}$, then $p_n$ evaluates to $-\chi(D\op{O}((n)))$.  For example $p_n(1\cdc 1)=(-1)^{n}(n-2)!$, using the fact that the dimension of $\op{L}ie((n))$ is $(n-2)!$.

Let us now define the following three numbers:
\begin{equation*}
A:= p_n(\prod_{i\in J} v_i \leftrightarrow \sum_{i\in J}(2^{i-2}-1)), \
B:= p_n(\prod_{i\in J} v_i \leftrightarrow \sum_{i\in J}(2^{i-1}-i)), \
C:=  p_n(\prod_{i\in J} v_i \leftrightarrow \sum_{i\in J}(i-2)) 
\end{equation*}
The notation $\leftrightarrow$ indicates that we substitute the number on the right for the monomial $\prod_{i\in J} v_i$.

First observe that $C=(n-2)p_n(1\cdc 1)$.  This follows from the fact that for any $n$ tree, $n-2=\sum_v (|v|-2)$ where the sum is taken over the (internal) vertices of the tree and $|v|$ denotes the valence of the vertex.

Second we observe that $B=p_n(1\cdc 1)$.  To see this, consider that each monomial along with its coefficient represents a count of trees along with a way to pick a vertex and partition its adjacent flags (or half edges) into two sets such that neither set has size $1$.  For a vertex of valence $i$ there are $2^{i-1}-i-1$ ways to do this such that each set has at least 2 elements and a unique way such that one set has all the elements.  We then identify the portion of the count corresponding to $2^{i-1}-i-1$ with a count of trees having a distinguished edge by blowing up an edge in the original decorated graph to separate the two sets of flags.  Over all monomials this gives us a signed count of graphs along with a distinguished edge.  On the other hand we identify the portion of the count corresponding to the unique uniform labeling of a vertex to be a count of trees along with a distinguished vertex.  Since all graphs in question are trees, they always have one more vertex than the number of edges, and for each tree the choices of distinguished vertex compared to distinguished edges are in adjacent degrees and so have opposite sign.  Therefore, each tree appears exactly once in the final count after cancellation, hence $B=p_n(1\cdc 1)$.

Finally, we will argue that $A=\chi(DL(\op{C}om)(1,n))$.  We will use the notation
$DH_{\not\equiv 0}(\Gamma)(1,n)\subset DH_\ast(\Gamma)(1,n)$
to denote the subspace where at least one vertex is labeled by a class in $H_\ast(\Gamma)$ of non-zero (and hence positive) degree.  Since $0=\chi(DH_\ast(\Gamma)(1,n))$, we know that
\begin{equation*}
0=\chi(DH_0(\Gamma)(1,n))+\chi(DH_{\not\equiv 0}(\Gamma)(1,n))= \chi(DL(\op{C}om)(1,n))+\chi(DH_{\not\equiv 0}(\Gamma)(1,n)),
\end{equation*}
and it remains to argue that $A=-\chi(DH_{\not\equiv 0}(\Gamma)(1,n))$.

Recall that in the theory of modular operads, the total genus of a graph is the genus of its realization plus the genus labeling at each vertex. In particular a graph of total genus $1$ is either a graph whose realization is of genus $1$ along with a genus-label of $0$ at each vertex, or it is a tree along with a genus-label of $1$ at one vertex and a genus-label of $0$ at every other vertex.  Thus, the underlying graded vector space of $DH_{\not\equiv 0}(\Gamma)(1,n)$ splits into a direct sum of components indexed by graphs whose realization is genus $1$ and graphs whose realization is genus $0$.  In the former summand, one vertex in each such graph is labeled by a class in $H_{\geq 1}(\Gamma_{0,i})$.  However $H_\ast(\Gamma)(0,-)=H_\ast(\op{C}_\infty)=\op{C}om$ is concentrated in degree $0$ and so no such classes exist. In particular this summand must be $0$.  

It remains to analyze the latter summand (which we now know is the entire space) in which each graph is a tree.  Each such tree has a distinguished vertex whose genus-label is $1$, with all other vertices labeled by genus $0$.  On the other hand the vertex labels must have at least one class of positive degree.  Since $H_\ast(\Gamma_{0,i})$ is concentrated in degree $0$, the only possibility is that the distinguished vertex (genus 1 vertex) is labeled by $H_{\geq 1}(\Gamma_{1,i})$.  We thus have the following description of $DH_{\not\equiv 0}(\Gamma)(1,n)$:  it is a complex of trees with $n$ tails/leaves along with a distinguished vertex, of valence $i$ say, labeled by a class in $H_{\geq 1}(\Gamma_{1,i})$.  The differential uses the modular operad structure of $H_\ast(\Gamma)$, but since we are concerned here with the Euler characteristic, it won't play a role.  Since $\chi(H_{\geq 1}(\Gamma)(1,i))=2^{i-2}-1$, we may calculate $-\chi(DH_{\not\equiv 0}(\Gamma)(1,n))$ as an alternating sum of counts of trees along with a distinguished vertex $v$ weighted by $(2^{|v|-2}-1)$.  The theory of characteristics of cyclic operads, summarized above, tells us that this is precisely the quantity $A$.

To conclude we observe that $2A=B+C$ and hence
\begin{equation*}
2\chi(DL(\op{C}om)(1,n))=(-1)^{n}(n-2)!+(-1)^{n}(n-2)(n-2)!
\end{equation*}
from which the calculation follows.\end{proof}

It is important for us to point out that the final result is not new; Theorem 1.2 of \cite{CGP} proves a stronger result.  We rather wish to emphasize the technique; using Corollary $\ref{modint}$ to turn information about $\Gamma_{g,n}$ into information about $\Delta_{g,n}$ via Koszul duality.

{\bf Comparison with the topological case.}  It is shown in \cite{Gian}, via the results of \cite{LVor}, that if $\op{X}$ is a topological cyclic operad, the complex $DL^!D(C_\ast(\op{X}))$ computes the levelwise cohomology of the topological modular envelope of $\op{X}$.  Here $C_\ast$ denotes chains.  This may be compared with our result which shows that  $DL^!D(C_\ast(\op{X}))$ is itself a model for the derived modular envelope of $C_\ast(\op{X})$ in the category of chain complexes.  

The first example of interest is the topological cyclic associative operad $\op{A}s^{top}$ and its homology, the cyclic operad $\op{A}s$.  Theorem B of \cite{Gian} identifies the topological derived modular envelope of $\op{A}s^{top}$ with the moduli space of Riemann surfaces with marked windows on the boundary.  On the other hand Theorem 9.4 of \cite{GeK2} states that $DL^!(\op{A}s)$ forms a chain model for these spaces.  Corollary $\ref{modint}$ gives some unification to these two results in that it allow us to view $DL^!(\op{A}s)$ as the derived modular envelope of $\op{A}s$ in the category of chain complexes.

These examples are of further interest due to their relationship to combinatorial operads and props acting on Hochschild and cyclic chain complexes, e.g.\ arc operads \cite{KLP} (see also \cite{KSurvey} and the references there-in).  This relationship promises to be further illuminated in upcoming work of R. Kaufmann and C. Berger \cite{KB}.

\subsection{Dioperads to Cyclic operads}\label{fdex}

There is a morphism of Feynman categories $\phi\colon\fcdi\to\fccyc$ from the Feynman category encoding dioperads to the Feynman category encoding cyclic operads.  On objects and morphisms one forgets directions using the morphism $S_n\times S_m \to S_{n+m}$ given by `in then out'.  The morphism $\phi$ is neither proper nor co-proper and in this example, all four functors $(L,R,R^!,L^!)$ are distinct.  The question of precisely how cyclic operads and dioperads are related was asked in \cite{Gan}, and we submit the results of this subsection as an answer to this question.

\begin{lemma}\label{addc}
	The morphism $\phi\colon \fcdi\to\fccyc$ is admissible, quadratic preserving, and exact.
\end{lemma}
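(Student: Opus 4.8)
The plan is to verify the three properties in turn, treating \emph{admissibility} and \emph{quadratic preservation} as applications of the local nature of the ``forget directions'' decoration, and isolating \emph{exactness} as the one genuinely combinatorial input.

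For admissibility I would invoke the heuristic following Definition~\ref{admisdef}: a choice of directions is a decoration living entirely at the flags of each vertex, so it is determined locally and is preserved under the nesting operations that factorizations encode. Concretely, faithfulness holds because once a preimage $X$ of the vertices of an undirected tree is fixed, every flag---and hence every edge---acquires a prescribed in/out label from the directed corollas at its endpoints, so at most one directed morphism lies over a given undirected one. For the first factorization axiom, a factorization $\phi(g)=f\circ\phi(l)$ exhibits the underlying undirected tree of $g$ as $\Gamma_f$ with the components of $\overline{\Gamma_l}$ inserted at its vertices; the directions carried by $\Gamma_g$ restrict to the external edges and vertices of $\Gamma_f$, lifting $f$ into $im(\phi)$. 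The second axiom is identical, with the automorphism $\sigma$ reconciling the in/out labelling of the cut flags along which $f$ and $h$ are glued; splitting the directions of $\Gamma_g$ into its external and internal parts produces the required $g_1,g_2$ with $f=\phi(g_1)\sigma^{-1}$ and $h=\sigma\phi(g_2)$.

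For quadratic preservation I must show the natural transformation $F_1 r\Rightarrow R F_2$ is a levelwise surjection with kernel generated in weight $1$. Surjectivity is the observation that $rE(w)=E(\phi(w))$ depends only on the underlying undirected corolla, so at each vertex \emph{any} splitting of the flags into inputs and outputs is permitted; given a decorated undirected tree over $\phi(w)$ one may therefore orient the internal edges arbitrarily (the boundary labels being fixed by $w$) to produce a directed preimage. The kernel is spanned by differences of directed trees sharing the same underlying undirected decorated tree, and any two such differ by reversing the orientations of a set of internal edges. Since reversing a single edge is precisely a weight-$1$ relation---the difference of the two orientations of a one-edge directed tree, both of which map to the same undirected edge---and since the ideal is closed under composition with the remaining edges of the tree, these weight-$1$ elements generate the kernel.

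The main obstacle is exactness, i.e.\ that $L=L_\phi=Lan_\phi$ preserves weak equivalences; note the general criterion that proper maps are exact is unavailable here, as $\phi$ is neither proper nor co-proper. The key point is that both $\fcdi$ and $\fccyc$ are genus-zero (tree) Feynman categories, which forces the colimit $L(\op{P})(v)=\operatorname{colim}_{\phi\downarrow v}\op{P}$ to be exact. I would analyze the comma category $\phi\downarrow v$: its morphisms contract directed edges, and an edge of an undirected tree over $v$ is contractible exactly when the in/out labels of its two flags are opposite. Contracting such ``in--out'' edges is confluent on a tree, so each connected component of $\phi\downarrow v$ admits a weakly terminal object, namely the maximally contracted tree in which only same-polarity edges remain; its inclusion (as a one-object groupoid $B\,Aut(T)$) is final. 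Cofinality then collapses the colimit to a direct sum, over isomorphism classes of these reduced trees $T$, of the finite-group coinvariants $\bigl[\bigotimes_{u\in V(T)}\op{P}(w_u)\bigr]_{Aut(T)}$. Direct sums, tensor products, and finite-group coinvariants are all exact over a field of characteristic zero, so $L$ preserves quasi-isomorphisms. I expect the delicate step to be justifying the confluence and finality precisely---equivalently, checking that no loops or parallel edges arise to spoil the reduction---which is exactly where genus zero is used, and which fails for the modular envelope.
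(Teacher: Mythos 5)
Your proposal is correct, and for all three properties it rests on the same combinatorial core as the paper's own (much terser) proof: a morphism of $\fccyc$ together with chosen vertex preimages is a flag-directed tree, faithfulness is the uniqueness of a compatibly edge-directed lift, the factorization axioms hold because decompositions of flag-directed trees are again flag-directed, and the map $F_1r\Rightarrow RF_2$ is ``forget edge directions,'' surjective with kernel generated in weight $1$ by the difference of the two orientations of a one-edged tree. The only genuine divergence is the endgame for exactness. The paper exhibits the comma category over $v$ --- whose morphisms contract compatibly directed edges, exactly your ``in--out'' edges --- as a disjoint union of \emph{filtered} categories and invokes exactness of filtered colimits of vector spaces; you instead locate the weakly terminal maximally contracted object $T$ in each component, argue that the inclusion of the one-object groupoid on $Aut(T)$ is final, and conclude by exactness of finite-group coinvariants in characteristic zero. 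These are two ways of cashing in the same observation, but they are not interchangeable in general: strict filteredness forces the stabilizer of the terminal object to be trivial (a nontrivial $\sigma\in Aut(T)$ could never be equalized, since every morphism out of $T$ is an isomorphism), and this triviality holds here precisely because an automorphism of a tree fixing all external legs is the identity --- which is where the paper, too, implicitly uses genus zero. Your coinvariants route does not need that triviality, so it is the more robust formulation; it isolates correctly that what breaks for, say, the modular envelope is not characteristic-zero averaging but the tree combinatorics (parallel edges and loops destroy the confluence and the terminal objects), whereas the paper's phrasing hides both points inside the single word ``filtered.''
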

\begin{proof}
	Admissible follows from the graph description of morphisms.  In particular, a generating morphism $f\in Hom_\fccyc(\phi(X),\phi(v))$, along with a choice of preimages $X$ and $v$, specifies a {\it flag directed tree}, meaning each flag (half edge) is directed, but the two directions on an edge needn't be compatible.  If the flag directions are compatible for each edge, then the tree is directed, and so there exists a unique $f^\prime\in Hom_\fcdi(X,v)$ with $\phi(f^\prime)=f$.  If the flag directions are not compatible then such an $f^\prime$ does not exist.  Thus $\phi\colon Hom_\fcdi(X,v)\to Hom_\fccyc(\phi(X),\phi(v))$ is injective and so $\phi$ is faithful.  The factorization axioms follow from the fact that the decomposition of a flag directed tree is flag directed.  Right quadratic preserving follows from the description of $\beta\colon F_dr \Rightarrow R F_c$ as `forgetting directions', meaning edge directions.  The kernel of $\beta$ is generated in weight $1$, by the difference of ways to direct a one edged tree.  Left quadratic preserving follows from the fact that any flag directed tree which is not directed can be assembled starting with a non-directed edge.  Exactness can be seen by exhibiting $L$ as a colimit over a category which is a disjoint union of filtered categories, by contracting those edges in a flag directed tree which are compatibly directed.
\end{proof}

\begin{corollary}
	If $\op{P}$ is a Koszul dioperad, then $L(\op{P})$ and $L^!(\op{P})$ are Koszul cyclic operads.
\end{corollary}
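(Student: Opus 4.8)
The plan is to obtain the statement as a direct instance of the general Koszulity corollary proved above, with its two hypotheses supplied by the preceding Lemma. That Lemma shows precisely that $\phi\colon\fcdi\to\fccyc$ is admissible---so that $L$, $L^!$ and the quadratic intertwining of Theorem \ref{leftint} are all at our disposal---and, decisively, that $\phi$ is exact, i.e.\ that $L$ preserves weak equivalences. Since $\op{P}$ is assumed Koszul, these are exactly the hypotheses of the general corollary asserting that whenever $\op{P}$ is Koszul and $L$ is exact, both $L(\op{P})$ and $L^!(\op{P})$ are Koszul. As the functors $L$ and $L^!$ land in $\fccyc$-$\op{O}ps$, the resulting objects are Koszul cyclic operads, which is the claim.

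For the reader's orientation I would briefly recall the mechanism behind the general corollary in this setting. One considers the zig-zag of natural maps
\[
DLD(\op{P})\longleftarrow DL(\op{P}^!)\longrightarrow L(\op{P}^!)^!,
\]
whose outer terms are always weakly equivalent by the corollary $DLD(\op{P})\sim L(\op{P}^!)^!$ (itself the composite of the homotopy relation $DLD\sim L^!$ with Theorem \ref{leftint}). Koszulity of $\op{P}$ together with exactness of $L$ forces the left-hand arrow to be a weak equivalence, so two-out-of-three makes the right-hand arrow $DL(\op{P}^!)\to L(\op{P}^!)^!$ a weak equivalence as well; this says exactly that $L(\op{P}^!)$ is Koszul, whence its quadratic dual $L(\op{P}^!)^!\cong L^!(\op{P})$ (Theorem \ref{leftint}) is Koszul too. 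Running the same argument with $\op{P}$ replaced by the (again Koszul) object $\op{P}^!$ yields that $L(\op{P})=L((\op{P}^!)^!)$ is Koszul.

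I expect no genuine obstacle: essentially all of the content resides in the previous Lemma's verification that $\phi$ is exact, which proceeds by exhibiting $L$ as a colimit over a disjoint union of filtered categories obtained by contracting the compatibly directed edges of a flag-directed tree. The one point meriting a moment's attention is that, in passing weak equivalences across $D$ along the zig-zag, $D$ is applied only to quasi-free objects---which holds automatically here, since Feynman transforms are quasi-free---so that the homotopy invariance of $D$ applies.
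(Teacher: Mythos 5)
Your proposal is correct and is essentially the paper's own argument: the paper gives no separate proof for this corollary precisely because it follows by combining the preceding lemma (which supplies admissibility and, crucially, exactness of $L$ for $\phi\colon\fcdi\to\fccyc$) with the general corollary of Section 6 stating that if $\op{P}$ is Koszul and $L$ is exact then $L(\op{P})$ and $L^!(\op{P})$ are Koszul, whose zig-zag/two-out-of-three mechanism you recount faithfully. The only quibble is your parenthetical claim that $D$ is applied only to quasi-free objects along the zig-zag: in the left-hand arrow $D$ is applied to the map $LD(\op{P})\to L(\op{P}^!)$, whose source is quasi-free but whose target is merely quadratic, so the homotopy invariance of $D$ being invoked there is not justified by quasi-freeness alone (a point the paper itself passes over silently).
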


The natural transformation $\sigma$ (see Proposition $\ref{quad}$) can be described as follows.  For an $S^+$-module $E$, define 
\begin{equation*}
\sigma_{r,s}\colon F_c(E)((r+s)) \to F_d(rE)(r,s)
\end{equation*}
by taking an $E$-labeled tree $E(T)$ to the direct sum of all possible directed trees $(T,\succ)$ which in turn yields labels in $rE$.  That is
\begin{equation*}
E(T)\mapsto \ds\bigoplus_{\succ\in \text{Dir}_{r,s}(T)} rE(T, \succ) \subset F_d(rE)(r,s)
\end{equation*} 
where $\text{Dir}_{r,s}(T)$ is the set of all directed trees whose underlying tree is $T$.

To be completely precise, we may apply the above discussion to two variants of dioperads: those which allow empty outputs or empty inputs (but not both) or those which allow neither empty inputs nor empty outputs.  To proceed we will need to differentiate between these variants, so let us denote the Feynman category encoding the former by $\fcdi_0$ and the Feynman category denoting the latter by $\fcdi_+$.  Inclusion $\iota\colon\fcdi_+\hookrightarrow \fcdi_0$  is an admissible morphism of Feynman categories and we may consider the sequence
\begin{equation*}
\fcdi_+ \stackrel{\iota}\hookrightarrow \fcdi_0 \stackrel{\phi_0}\longrightarrow \mathbb{C}
\end{equation*}
whose composite we denote by $\phi_+$.  In particular, $\phi_0$ and $\phi_+$ forget directions, as above.

From the description of $\sigma$ above we see that if we restrict attention to dioperads without empty inputs or outputs then, for the cyclic operad $\op{L}ie$, the image of $\sigma_{2,2}$ of the Jacobi identity is the Drinfeld compatibility criterion for Lie bialgebras.
Moreover, if we let $\op{B}iLie$ be the dioperad encoding Lie bialgebras and $\op{B}al$ be the dioperad encoding balanced infinitesimal bialgebras (see \cite{Agu}) then:  

\begin{lemma}  $R^!_{\phi_+}(\op{L}ie)=\op{B}iLie$ and $R^!_{\phi_+}(\op{A}s)= \op{B}al$.
\end{lemma}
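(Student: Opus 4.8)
The plan is to read off both isomorphisms directly from the explicit description of $R^!$ on quadratic objects in Proposition \ref{quad}, namely $R^!(\op{O})\cong F_d(rE)/\langle \sigma(S)\rangle$ for a quadratic presentation $\op{O}\cong F_c(E)/\langle S\rangle$, using the graphical formula for $\sigma_{r,s}\colon F_c(E)((r+s))\to F_d(rE)(r,s)$ recorded just above as ``sum over all directions of the underlying tree.'' First I would fix presentations. The cyclic operad $\op{L}ie$ is generated by a single skew-symmetric trivalent operation $E$, subject to the single quadratic Jacobi relation $S$ living on the two-edged trees in the four-leg piece $F_c(E)((4))$. By the graph description of $\phi$ used in the admissibility lemma, a trivalent cyclic vertex has exactly two directed preimages, of arities $(2,1)$ and $(1,2)$; thus $rE$ consists of a bracket and a cobracket, and these inherit from the skew-symmetry of the cyclic generator the (co)symmetry that the generators of $\op{B}iLie$ carry. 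So the generators already match.

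Next I would compute $\sigma(S)$ and decompose it by the dioperadic arity $(r,s)$ of the output, with $r+s=4$. Applying $\sigma_{r,s}$ to each two-edged tree and sorting the resulting directed trees by arity, the purely downward directions (arity $(3,1)$) reproduce the Jacobi relation for the bracket, the purely upward directions (arity $(1,3)$) reproduce the co-Jacobi relation for the cobracket, and the genuinely mixed directions (arity $(2,2)$) assemble into the Drinfeld compatibility relation — this last identification being exactly the computation of $\sigma_{2,2}$ on Jacobi already isolated in the text. These three arities exhaust $\sigma(S)$ and match precisely the three families of quadratic relations in the standard presentation of $\op{B}iLie$, so that $R^!(\op{L}ie)\cong F_d(rE)/\langle\sigma(S)\rangle=\op{B}iLie$. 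Running the identical argument with the associativity relation replacing Jacobi, and recalling that the cyclic associative generator now carries no symmetry, the three output arities yield associativity of the product, coassociativity of the coproduct, and the balanced bimodule compatibility of \cite{Agu}, giving $R^!(\op{A}s)=\op{B}al$.

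I expect the main obstacle to be the bookkeeping in this second step: one must verify that the directed decomposition of the single quadratic relation generates exactly the defining ideal of $\op{B}iLie$ (resp. $\op{B}al$), producing neither fewer relations (so $\langle\sigma(S)\rangle$ is not too small) nor spurious ones (so it is not too large), and with the correct signs under the Koszul rule. The restriction to dioperads without empty inputs or outputs is precisely what isolates the mixed-direction $(2,2)$ terms as the compatibility relation, and this restriction should be invoked explicitly. Once this relation-level matching is checked for the two generating relations, the isomorphisms are immediate from Proposition \ref{quad}. One could alternatively route the proof through Theorem \ref{rightint}, writing $R^!(\op{L}ie)\cong R(\op{L}ie^!)^!=R(\op{C}om)^!$, but identifying $R(\op{C}om)^!$ with $\op{B}iLie$ requires the same combinatorial computation, so I would present the direct argument via $\sigma$.
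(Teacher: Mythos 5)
Your proposal is correct and is essentially the paper's own argument: the paper presents this lemma as an immediate consequence of Proposition $\ref{quad}$ together with the graphical description of $\sigma$ as a sum over directions, having just recorded the key computation that $\sigma_{2,2}$ applied to the Jacobi identity yields the Drinfeld compatibility relation. Your arity-by-arity decomposition of $\sigma(S)$ into the $(3,1)$, $(1,3)$, and $(2,2)$ components (with the restriction to dioperads without empty inputs or outputs discarding the $(4,0)$ and $(0,4)$ directions) simply makes explicit the matching of relations that the paper leaves implicit.
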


Note that by the functoriality of $R^!$ we immediately recover the result of \cite{Agu} that the commutator and cocommutator in a balanced infinitesimal bialgebra satisfy the compatibility requirement of a Lie bialgebra.

We now turn our attention to algebras over cyclic operads and dioperads.  Recall that if $V$ in $dgVect_k$ is of finite type and $\langle-,-\rangle$ is a symmetric non-degenerate bilinear form, then $End_{(V,\langle-,-\rangle)}$ can be equipped with the structure of a cyclic operad.

For $A$ in $dgVect_k$, again of finite type, we let $A^\ast$ denote its linear dual and we define the double of $A$ to be $ \mathsf{d}(A):= A\oplus A^\ast$.  We equip $\mathsf{d}(A)$ with a bilinear form by defining $\langle a\oplus \eta, b\oplus \xi \rangle= \eta(b)+ \xi(a)$.  This form is easily seen to be symmetric and non-degenerate.

\begin{proposition}\label{bin} (Binomial theorem for cyclic operads.)  There is an isomorphism of cyclic operads:
	\begin{equation*}
	L^!_{\phi_0}(End_A^{ di})\cong End^{ cyc}_{(\mathsf{d}(A),\langle-,-\rangle)}
	\end{equation*}
\end{proposition}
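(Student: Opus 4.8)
The plan is to establish the isomorphism in two stages: first at the level of underlying $\mathbb{V}_2$-modules, and then to promote it to an isomorphism of cyclic operads by matching structure maps. At the module level the statement is literally a binomial expansion of $\mathsf{d}(A)^{\tensor(n+1)}$; at the level of compositions, the two terms of the pairing $\langle-,-\rangle$ will be seen to encode the two orientations of an edge of $\fcdi$ lying over an edge of $\fccyc$.

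First I would compute both sides as $\mathbb{V}_2$-modules. An object $v\in\mathbb{V}_2$ is a cyclic corolla with $n+1$ legs and $Aut(v)=S_{n+1}$, and its preimages $w$ under $\phi$ are the directed corollas obtained by declaring $r$ of the legs to be inputs and the remaining $s=n+1-r$ to be outputs; such a $w$ has $\op{P}(w)=End_A^{di}(r,s)=Hom(A^{\tensor r},A^{\tensor s})$ and $Aut(w)=S_r\times S_s$, with one isomorphism class for each $(r,s)$ satisfying $r+s=n+1$. Substituting into the defining formula (Equation \ref{ldualeq}) yields
\begin{equation*}
L^!(End_A^{di})(v)=\bigoplus_{r+s=n+1}\left[Hom(A^{\tensor r},A^{\tensor s})\tensor k[S_{n+1}]\right]_{S_r\times S_s}.
\end{equation*}
On the other side I would expand $\mathsf{d}(A)^{\tensor(n+1)}=(A\oplus A^\ast)^{\tensor(n+1)}$ as a direct sum over the $2^{n+1}$ assignments of $A$ or $A^\ast$ to each of the $n+1$ tensor slots. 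Since $A$ is of finite type, the summand of $Hom(\mathsf{d}(A)^{\tensor(n+1)},k)$ having $r$ slots equal to $A$ dualizes to $Hom(A^{\tensor r},A^{\tensor s})$, and the $S_{n+1}$-orbit through such a summand (of which there are $\binom{n+1}{r}$) assembles into the induced module $\left[Hom(A^{\tensor r},A^{\tensor s})\tensor k[S_{n+1}]\right]_{S_r\times S_s}$. This produces the desired $S_{n+1}$-equivariant isomorphism $\Theta$ of underlying $\mathbb{V}_2$-modules, under which a directed corolla with in-legs labeled by $A$ and out-legs labeled by $A^\ast$ corresponds to the matching slot assignment.

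Next I would verify that $\Theta$ intertwines the cyclic operad structures. By the construction of $L^!$ on morphisms and its strict monoidal extension, it suffices to check a single generating morphism of $\fccyc$, namely the grafting of two cyclic corollas along one new edge. Evaluating the $L^!(End_A^{di})$ composition through $\Theta$, the contributions from the directed summands organize into a sum over the directed preimages of the grafting: a directed grafting exists exactly when the two glued legs carry opposite orientations, so that the new edge is consistently directed, and there are precisely two such orientations. One orientation contracts an out-leg of the first corolla (an $A^\ast$-slot, carrying $\eta$) against an in-leg of the second (an $A$-slot, carrying $b$), contributing $\eta(b)$; the opposite orientation contributes $\xi(a)$. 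I would then observe that $\eta(b)+\xi(a)=\langle a\oplus\eta,\,b\oplus\xi\rangle$, i.e.\ the contraction of the glued slots against the copairing of the form, which is exactly how $End^{cyc}_{(\mathsf{d}(A),\langle-,-\rangle)}$ composes. Tracking the Koszul signs introduced by the grafting and the dualizations, and confirming that the $Aut(w)$-balancing in Equation \ref{ldualeq} matches the $S_{n+1}$-action permuting tensor slots, is then routine.

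The main obstacle is this second stage: showing that the sum over directed preimages in the $L^!$-structure map reproduces, term by term, the two-summand contraction by $\langle-,-\rangle$. This is the genuine content of the ``binomial theorem'' terminology---the two terms $\eta(b)+\xi(a)$ of the double's pairing are forced to match the two ways of orienting each edge of $\fcdi$ lying over an edge of $\fccyc$---and it is exactly here that admissibility of $\phi$ (which provides explicit control over preimages of morphisms, as exploited in the proof of Lemma \ref{wdlem}) is used in an essential way.
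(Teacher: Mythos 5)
Your proposal is correct and follows essentially the same route as the paper's proof: identify the underlying modules by the binomial expansion of $\mathsf{d}(A)^{\tensor(n+1)}$, then check a single grafting, observing that the two consistent edge-orientations (equivalently, the mixed $A$/$A^\ast$ label choices) reproduce the two terms $\eta(b)+\xi(a)$ of the form while the same-direction gluings vanish in $L^!$. Your version simply makes the coinvariant bookkeeping of Equation $\ref{ldualeq}$ more explicit than the paper's graphical phrasing.
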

\begin{proof}  If we consider $End_A^{di}(r,s)\cong A^{\tensor r}\tensor (A^{\ast})^{\tensor s}$ then the dioperad structure is given by evaluation.   Here we allow our dioperads to have empty inputs or empty outputs.  The underlying $\mathbb{S}_+$-modules are easily seen to be isomorphic by the binomial theorem.  The fact that these isomorphisms respect the cyclic operad structure on the respective sides can be seen graphically.  In particular, using the form we can identify $End_{\mathsf{d}(A)}^{cyc}((n))$ with $\mathsf{d}(A)^{\tensor n}$.  Hence a pure tensor can be described as a corolla with $n$ flags, each labeled by both an element of $A$ and an element of $A^\ast$.  Applying the binomial theorem splits this corolla into a sum over ways to choose one or the other label.  Composing two flags labeled by  $a\oplus \eta$ and $b\oplus \xi$ before splitting uses the form $\langle-,-\rangle$ and hence yields a factor of $\eta(b)+ \xi(a)$.  Composing the same two legs after splitting gives four sets of terms corresponding to the possible choices of labeling.  The cyclic operad structure of $L^!_{\phi_0}(End_A^{di})$ tells us that the composition of terms labeled by $a$ and $b$ is $0$, as is the composition of terms labeled by $\eta$ and $\xi$, but the other compositions are given by evaluation and hence we once again find a factor of $\eta(b)+ \xi(a)$.
\end{proof}

\begin{corollary}  Let $\op{O}$ be a cyclic operad.  An $R^!_{\phi_0}(\op{O})$-algebra structure on $A$ is equivalent to an $\op{O}$-algebra structure on $(\mathsf{d}(A), \langle-,-\rangle)$.
\end{corollary}

Recall that a Manin triple is equivalent to a morphism of cyclic operads $\op{L}ie \to End^{cyc}_{\mathsf{d}(A)}$ which maps the generator in $\op{L}ie(2)$ to the $(2,1)$ and $(1,2)$ indicies of the direct sum $End^{cyc}_{\mathsf{d}(A)}((3))$. 

Since $L^!_\iota$ is manifest as an extension by zero, combining Proposition $\ref{comp}$ with Proposition $\ref{bin}$ tells us that this is the same thing as a morphism $\op{L}ie\to L^!_{\phi_+}(End_A)$.  Adjointness tells us that such a morphism is in turn equivalent to an algebra over the dioperad $R^!_{\phi_+}(\op{L}ie)=\op{B}iLie$.  We thus see that the adjunction $(R^!,L^!)$ generalizes the equivalence between Manin triples and Lie bialgebras as described in \cite{Drinfeld}.

\subsection{Operads to Cyclic operads (and non-$\Sigma$ variant).}

There is a morphism of Feynman categories $\phi\colon\fcops \to\fccyc$ from the Feynman category for operads to the Feynman category for cyclic operads.  On objects it is given by the identity and morphisms by inclusion.  This morphism restricts to a functor between the Feynman categories for non-$\Sigma$ operads and non-$\Sigma$ cyclic operads.

\begin{lemma}
	The morphism $\phi\colon \fcops\to\fccyc$ is admissible and co-proper.
\end{lemma}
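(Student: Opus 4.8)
The plan is to mimic the two preceding graphical lemmas, since $\phi$ is again an inclusion of one class of trees into another. First I would set up the graphical dictionary: a morphism of $\fccyc$ together with a choice of preimage of its source and target under $\phi$ is a tree each of whose vertices has been assigned a distinguished ``output'' flag (the root of the corresponding rooted corolla of $\fcops$), and such a vertex-wise rooting underlies a morphism of $\fcops$ exactly when it is globally consistent, i.e.\ when directing every edge toward the chosen output flags produces a single global root with no conflicts. The observation to record here is that, when such a global rooting exists, it is uniquely determined by the vertex-wise data.

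With this dictionary in hand, faithfulness is immediate. Since $\phi$ is the identity on objects, two morphisms $g_1,g_2$ of $\fcops$ under comparison share a source and target; the source, being a tensor product of rooted corollas, already fixes the distinguished flag at every vertex. Hence if $\phi(g_1)$ and $\phi(g_2)$ have the same underlying tree, the uniqueness observation above forces $g_1=g_2$.

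Next I would verify the two factorization axioms of Definition $\ref{admisdef}$. Both reduce to the single structural fact that a global rooting of a tree restricts to a consistent rooting on every subtree and descends to a rooting of any contracted (external) tree. Concretely, a factorization of the rooted tree $\Gamma_{\phi(g)}$ as a nesting with external graph $\Gamma_f$ and internal components $\Gamma_h$ induces, by orienting each flag toward the global root, rootings of all the internal subtrees and of the external graph; this yields $f\in im(\phi)$ for the first axiom, and a rooted decomposition $g=g_1\circ g_2$ (together with the automorphism $\sigma$ absorbing the identification of the internal roots) for the second. This is the step I expect to require the most care, precisely because ``rooting'' is not a purely local decoration in the sense of the heuristic following Definition $\ref{admisdef}$ but carries a global-consistency constraint; the point to make clearly is that this constraint is automatically inherited from the ambient rooted tree $\Gamma_{\phi(g)}$, so no genuine obstacle arises.

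Finally, co-properness I would obtain from the criterion of Proposition $\ref{coprop}$. Reducing by the monoidal structure to the case of a single corolla $w$, a morphism $f\colon Y\to\phi(w)$ has target $\phi(w)$ carrying a distinguished output flag inherited from the root of $w$; because $Y$ is a tree and hence has no loops, directing all edges toward this flag propagates the rooting uniquely from the target to every vertex of $Y$. This exhibits a rooted object $X$ with $\phi(X)\cong Y$ and a morphism $g\colon X\to w$ of $\fcops$ with $t(g)=w$ satisfying $f=\phi(g)\sigma$ for a suitable $\sigma\in Aut(Y)$ matching the ordering of legs, which is exactly the hypothesis of Proposition $\ref{coprop}$. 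The absence of loops is what makes this propagation unambiguous, playing here the role that the genus labeling played in the cyclic-to-modular example.
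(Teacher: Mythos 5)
Your proposal is correct and is essentially an expanded version of the paper's own (one-line) argument: the paper says that a non-rooted tree assembled via graftings $\xycirc{i}{j}$ can be reassembled via $\circ_i=\xycirc{i}{0}$ after precomposing with cyclic permutations, which is precisely your re-rooting-toward-the-distinguished-flag construction with the discrepancies at each vertex absorbed into the automorphism $\sigma$. Your more detailed treatment of faithfulness, the factorization axioms, and the application of Proposition $\ref{coprop}$ fills in exactly what the paper leaves implicit, with no gap in the reasoning.
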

\begin{proof}  The functor $\phi$ may be written as a composition of faithful functors $\fcops\hookrightarrow \fcdi\to\fccyc$, the former being inclusion and the latter having been discussed above in section $\ref{fdex}$.  Hence $\phi$ is faithful.  The factorization axioms and co-properness follow from the fact that a non-rooted tree, assembled via graftings $\xycirc{i}{j}$, can also be assembled via $\circ_i=\xycirc{i}{0}$ by first precomposing with cyclic permutations.
\end{proof}

We thus have a triple of adjoint functors $(L,R,L^!)$.  The usual adjunction $(L,R)$ is reasonably well understood, and we would now like to describe $L^!$.

To do this, we construct a non-$\Sigma$ cyclic operad $\meta$.  First we define $\meta(n)= k[C_{n+1}]$, where $C_{n+1}$ is the cyclic group of order $n+1$, presented as $\{\tau_n: \tau_n^{n+1}=1  \}$, so in particular the subscript always denotes the operad arity.  We then give $\meta$ the structure of a non-$\Sigma$ operad by defining:

\begin{equation*}
\tau_n^r\circ_i\tau_m^s=  \begin{cases} 
	\tau^r & \text{ if } r< i \text{ and } s=0 \\
		\tau^{r+m-1} & \text{ if } r>i \text{ and } s=0 \\
	\tau^{r+s-1} & \text{ if } r=i \text{ and } s\neq 0 \\
	0 & \text{ if }  r\neq i \text{ and } s\neq 0 \text{ or if } (r,s)=(i,0) \\
	 \end{cases} 
\end{equation*}
where we have suppressed the subscript $n+m-1$ on the right hand side.
\begin{lemma}
	As defined above, $\meta$ is a non-$\Sigma$ cyclic operad.
\end{lemma}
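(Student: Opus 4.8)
The plan is to verify directly the axioms of a non-$\Sigma$ cyclic operad for the data consisting of the spaces $\meta(n)=k[C_{n+1}]$, the composition $\circ_i$ given by the displayed formula, and the cyclic action $\tau\cdot\tau_n^r=\tau_n^{r+1}$. Since this action is the regular representation of $C_{n+1}$, the relation $\tau^{n+1}=\mathrm{id}$ is immediate from $\tau_n^{n+1}=1$, so the substantive content is twofold: the associativity and unitality of $\circ_i$, and the equivariance of $\circ_i$ with respect to the cyclic rotations. I would organize the entire verification around the observation that every basis element is a rotate of a unit, $\tau_n^r=\tau^r\cdot 1_n$, so that the four-case formula is nothing but the \emph{unique} cyclic-equivariant bilinear extension of the single base identity $1_n\circ_i 1_m=1_{n+m-1}$. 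The underlying picture, which I would use only as an organizing heuristic, is that $\tau_n^r$ is a corolla whose marked (root) flag sits at cyclic position $r$, and that grafting is allowed precisely when the flags being joined are compatibly placed.

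First I would record the cyclic-equivariance axiom in the form the paper uses — propagating a rotation of an input through a $\circ_i$ to a rotation of the output, with the output exponent shifting according to the placement of the grafting site relative to the marked flag. The nonzero branches of the formula are then exactly: (a) $b$ unrotated ($s=0$) with the marker before the site ($r<i$), output unshifted; (b) $b$ unrotated with the marker after the site ($r>i$), output shifted by $m-1$; and (c) the marker exactly at the site ($r=i$) with $b$ rotated ($s\neq 0$), the two rotations combining as $r+s-1$; everything else vanishes. Checking equivariance then amounts to confirming that applying $\tau$ to either argument, or to the output, moves one between these branches consistently with the shifts $r\mapsto r$, $r\mapsto r+m-1$, and $r\mapsto r+s-1$. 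This is a finite case check requiring no input beyond the cyclic ordering of flags.

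Next I would verify associativity. There are the two standard relations — the sequential one $(a\circ_i b)\circ_j c=a\circ_i(b\circ_{j'} c)$ when $j'$ is an input of $b$, and the parallel one $(a\circ_i b)\circ_j c=(a\circ_{j''} c)\circ_{i'} b$ when $i,j$ are distinct inputs of $a$ — and I would check each on basis elements $\tau_n^r,\tau_m^s,\tau_k^t$. Using the equivariance established above to strip off rotations reduces both relations to the associativity of the base composition together with the bookkeeping of how the three kinds of index shift accumulate; the unit axioms reduce similarly to a direct evaluation of the formula with one factor taken in $\meta(1)=k[C_2]$.

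The step I expect to be the main obstacle is matching the vanishing cases on the two sides of each associativity relation: the zero branch of the formula encodes exactly the planar/cyclic incompatibility of a grafting, and one must check that a triple composition vanishes under one order of association if and only if it vanishes under the other, and that when it does not vanish the surviving exponents agree once the shifts $r+m-1$ and $r+s-1$ have compounded. I would handle this by tabulating, for each placement of the three positional markers relative to the grafting sites $i$ and $j$, which branch is selected on each side of the equation, and then confirming the two tables coincide. The cyclic equivariance proved at the outset cuts the number of genuinely independent configurations down to a manageable list, so that this is ultimately a bounded combinatorial check rather than an open-ended computation.
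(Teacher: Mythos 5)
Your plan --- a direct, case-by-case verification of the axioms organized around the cyclic action --- can be made to work, but it takes a genuinely different route from the paper, which avoids essentially all of the bookkeeping you anticipate. The paper depicts the basis element $\tau_n^r$ as a corolla with $n+1$ flags labeled $0,\dots,n$ in which flag $r$ is distinguished (``green''), and observes that the four-case formula is exactly the following gluing rule: $\circ_i$ glues flag $0$ of the right corolla to flag $i$ of the left one; the result is zero when green meets green or black meets black, and otherwise the unique unmet green flag becomes the distinguished flag of the composite. Since every nonvanishing gluing consumes exactly one green flag, an iterated composition along a tree either vanishes (as soon as some gluing pairs like colors) or retains exactly one green flag --- and both the vanishing pattern and the identity of the surviving flag are visibly independent of the order in which the gluings are performed. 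This single counting observation disposes simultaneously of sequential and parallel associativity, of the matching of zero branches that you correctly single out as the main obstacle, and of compatibility with the cyclic relabeling action; the tables you propose to compile are replaced by one sentence.

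Two concrete warnings if you execute your plan as written. First, your equivariance step is not yet a collection of true identities. In a cyclic operad there is no relation expressing $a\circ_i\tau(b)$ in terms of $a\circ_i b$: in your labeling, rotating $b$ in branch (a) lands in the zero branch, not in branch (c), because $a\circ_i\tau(b)$ is genuinely a different composition (a gluing onto a different flag of $b$). The correct compatibility is $\tau(a\circ_i b)=\tau(a)\circ_{i+1}b$ for $i<n$ together with the boundary case $\tau(a\circ_n b)=\tau(b)\circ_1\tau(a)$, in which the two arguments swap; your tables must be organized around these (index shift and swap included), and any ``stripping of rotations'' from inner arguments in the associativity check has to route through the swap case. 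Relatedly, your organizing claim that the formula is \emph{the unique} equivariant extension of $1_n\circ_i 1_m=1_{n+m-1}$ is not quite right: equivariance only propagates values along $\tau$-orbits, and the vanishing cases form orbits disjoint from those of the base identities, so the zero branches are extra data to be verified, not consequences. Second, if you include unitality (not actually part of the axioms in this paper's non-unital, cubical framework), note that $1_1=\tau_1^0$ is \emph{not} a unit, since $\tau_n^i\circ_i\tau_1^0=0$; as the paper points out, the unit is $1_1+\tau_1$.
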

\begin{proof}  Let $\tau^r_n$ be depicted as a corolla with $n+1$ flags labeled $\{0\cdc n\}$ whose $r^{th}$ flag is distinguished, say by drawing it in a different color such as green.  Then, the operadic composition can be described as follows: we view $\circ_i$ as gluing flag $0$ on the right to flag $i$ on the left.  If green meets green or black meets black the composition is zero.  Else green meets black, and the composition stipulates that the unmet green flag becomes the new green flag.  This diagrammatic description is easily seen to be associative, from which the claim follows.  Moreover it tells us that $1_1+\tau_1$ is a unit for the composition.\end{proof}

To characterize $L^!$, we may write ``$L^!(\op{P})=\op{P}\tensor\meta$'', however we emphasize that these factors live in different categories.  To give a more precise statement we first recall that a (non-$\Sigma$) cyclic operad is completely characterized by its underlying (non-$\Sigma$) operad and its level-wise cyclic structure.  Then:

\begin{lemma}  Let $\op{P}$ be a (non-$\Sigma$) operad.  Then $L^!(\op{P})$ is the (non-$\Sigma$) cyclic operad whose underlying (non-$\Sigma$) operad is $\op{P}\tensor R(\meta)$ and whose cyclic structure is that induced by $\meta$; explicitly $\tau(p\tensor \tau^r)=p\tensor\tau^{r+1}$.
\end{lemma}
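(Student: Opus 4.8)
The plan is to use the fact, recalled just above, that a non-$\Sigma$ cyclic operad is completely determined by its underlying non-$\Sigma$ operad together with its level-wise $C_{\ast+1}$-action, and to extract each of these two pieces of data directly from the explicit description of $L^!$ in Equation \ref{ldualeq}. First I would identify the underlying $\mathbb{V}$-module and the cyclic structure, which are almost immediate, and then turn to the operadic composition, which carries the real content.

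For the underlying module, recall that in the non-$\Sigma$ setting the automorphism group of an arity-$n$ vertex $w$ of $\fcops$ is trivial, whereas its image $v=\phi(w)$, an arity-$n$ corolla in $\fccyc$, has automorphism group the rotation group $C_{n+1}$. Since $\phi$ is the identity on objects, the balanced tensor product in Equation \ref{ldualeq} is trivial and we obtain $L^!(\op{P})(v)\cong\op{P}(n)\tensor k[C_{n+1}]=\op{P}(n)\tensor\meta(n)$. The level-wise cyclic action is then read off from the definition of $L^!$ on morphisms of $\mathbb{V}_2$ as $\tau\cdot[p\tensor\tau^r]=[p\tensor\tau^{r+1}]$, which is exactly the asserted cyclic structure and the one carried by $\meta$. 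This settles the underlying graded $\mathbb{V}$-module and the cyclic structure simultaneously.

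The substantive step is to match the operadic compositions. The operadic structure on the underlying operad $R(L^!(\op{P}))$ is obtained by restricting the cyclic composition, so it is computed by the map $L^!(\op{P})(\xycirc{i}{0})$. I would evaluate this on a pair $[p\tensor\tau_n^r]\tensor[q\tensor\tau_m^s]$ by first applying the reduction built into the construction of $L^!$, rewriting it as $L^!(\op{P})(\xycirc{i}{0}\circ(\tau_n^r\tensor\tau_m^s))$ evaluated on $[p\tensor id]\tensor[q\tensor id]$, and then asking the defining question of the construction: does there exist an operadic morphism $g$ in $\fcops$ and a rotation $\sigma$ with $\xycirc{i}{0}\circ(\tau_n^r\tensor\tau_m^s)=\sigma\circ\phi(g)$? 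The organizing principle is the green/black dictionary of the preceding lemma: the class $[p\tensor\tau^r]$ places the root of the $\op{P}$-corolla at cyclic position $r$, so that $\xycirc{i}{0}$ attempts to glue the flag of the first corolla sitting in position $i$, which is the root exactly when $i=r$, to the flag of the second in position $0$, which is the root exactly when $s=0$. Such a factorization exists precisely when a root meets a non-root flag, and then $\sigma$ is pinned down by the position of the surviving root. I expect that running through the four possibilities organized by whether $i=r$ and whether $s=0$ will reproduce, on the $\meta$-factor, exactly the four clauses of the composition law defining $\meta$ — the two cases where roots meet roots or non-roots meet non-roots contributing $0$, and the two mixed cases contributing the surviving root positions prescribed there — while on the $\op{P}$-factor the chosen operadic $g$ composes the labels via $\op{P}$. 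Assembling the two factors should display $L^!(\op{P})(\xycirc{i}{0})$ as the level-wise tensor product composition of $\op{P}\tensor R(\meta)$.

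The step I expect to be the main obstacle is precisely the bookkeeping in this case analysis: one must track, in each nonzero case, which input slot of $\op{P}$ is used and where the surviving root is relabeled in the output corolla, and verify that these agree with the uniform slot of the tensor product composition once the shift recorded by the $\meta$-factor is taken into account. Part of this is checking that the factorization $\xycirc{i}{0}\circ(\tau_n^r\tensor\tau_m^s)=\sigma\circ\phi(g)$, when it exists, is unique modulo the equivalence already divided out in the coinvariants, so that the output is well defined; I anticipate this following from admissibility exactly as in the well-definedness argument of Lemma \ref{wdlem}, with the factorization axioms of Definition \ref{admisdef} supplying the decomposition and faithfulness of $\phi$ rigidifying it. Granting this bookkeeping, the three preceding paragraphs together exhibit $L^!(\op{P})$ as the non-$\Sigma$ cyclic operad with underlying operad $\op{P}\tensor R(\meta)$ and cyclic structure $\tau(p\tensor\tau^r)=p\tensor\tau^{r+1}$.
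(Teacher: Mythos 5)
The paper records this lemma without proof, so the only question is whether your argument is correct; it is not, and the failure occurs exactly at the step you deferred as ``bookkeeping.''  Your identification of the underlying module and of the cyclic action is right, as is your vanishing criterion, but in the nonzero cases the operadic morphism $g$ produced by the factorization is not $\circ_i$.  As your own green/black dictionary shows, when the root of $p$ sits at cyclic position $r$, the flag of the first corolla sitting in position $i$ is the \emph{input numbered} $i-r \pmod{n+1}$ of $p$.  Hence for $s=0$, $r\neq i$ the factorization is $\xycirc{i}{0}\circ(\tau^r\tensor\tau^0)=\sigma\circ\phi(\circ_{i-r})$, and the value is $[(p\circ_{i-r}q)\tensor\sigma]$ with $\sigma=\tau^{r}$ if $r<i$ and $\sigma=\tau^{r+m-1}$ if $r>i$; while for $r=i$, $s\neq 0$ the root of $p$ is glued into input $m+1-s$ of $q$, so the factors swap and the value is $[(q\circ_{m+1-s}p)\tensor\tau^{r+s-1}]$.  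The $\meta$-factors do reproduce the four clauses of the composition law of $\meta$, but the $\op{P}$-factors are $p\circ_{i-r}q$ and $q\circ_{m+1-s}p$, not the uniform $p\circ_i q$ of the level-wise composition of $\op{P}\tensor R(\meta)$.  Already for $\op{P}$ free on one binary generator $\mu$ one has $[\mu\tensor\tau]\circ_2[\mu\tensor id]=[(\mu\circ_1\mu)\tensor\tau]$, whereas the Hadamard composition gives $(\mu\circ_2\mu)\tensor\tau$; these are linearly independent, so no relabeling ``taking the shift into account'' reconciles them.

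The obstruction is structural, not notational, so it also cannot be repaired by choosing a cleverer identification than the obvious one.  Functoriality on $\fcnscyc$ forces the relation $\tau(a\circ_i b)=\tau(a)\circ_{i+1}b$ for $1\leq i\leq n-1$ (coming from the relation $\xycirc{i+1}{0}\circ(\tau\tensor id)=\tau\circ\xycirc{i}{0}$ among morphisms), and $\meta$ itself satisfies it; but for $\op{P}\tensor R(\meta)$ with $\tau$ acting only on the $\meta$-factor the two sides become $(p\circ_i q)\tensor(\tau x\circ_{i+1}y)$ and $(p\circ_{i+1} q)\tensor(\tau x\circ_{i+1}y)$, which forces $p\circ_i q=p\circ_{i+1}q$ whenever $\tau x\circ_{i+1}y\neq 0$.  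Thus the structure named in the statement is not a cyclic operad at all unless the compositions of $\op{P}$ are slot- and order-independent (as for $\op{A}s$, where everything does collapse correctly, consistent with $\meta=L^!(\op{A}s)$).  What your strategy, carried out honestly, actually establishes is a \emph{twisted} tensor product description of $L^!(\op{P})$: underlying module $\op{P}(n)\tensor k[C_{n+1}]$, cyclic action by left multiplication on the second factor, and composition which glues in the slot of $\op{P}$ dictated by the rotation recorded in the $k[C_{n+1}]$-factor, exchanging the roles of the two arguments when the root lies on the first corolla.  A correct write-up must either prove that twisted statement (and flag the discrepancy with the lemma as printed) or isolate the hypothesis on $\op{P}$ under which the twisting is invisible.
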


Notice that the cyclic operad $\meta$ arises as the linear dual of the linearization of a cyclic co-operad structure on the groups $C_{n+1}$.  If $\op{P}$ is an operad then the convolution operad $Conv(C_{\ast+1},\op{P})$ is a linear cyclic operad with
$Conv(C_{\ast+1},\op{P})\cong L^!(\op{P})$.  Informally, we view $L^!$ as the operadic analogue of crossing with $S^1$, or of extending from a (co)simplicial set to a cyclic set.  For a concrete realization of this heuristic, consider a multiplicative (non-$\Sigma$) operad $\op{A}s\to \op{P}$.  The collection $\{Conv(C_{n+1},\op{P}(n))\}$ is a cyclic object in $dgVect_k$ lifting the standard cosimplicial structure associated to $\op{A}s\to \op{P}$.

This fact has the following implication in the study of deformation complexes of morphisms of operads and cyclic operads.  An operadic formulation of Deligne's conjecture states that the deformation complex associated to a morphism $\op{A}_\infty\to\op{P}$ carries the structure of a homotopy Gerstenhaber algebra extending the operadic Lie bracket.   An operadic formulation of the cyclic Deligne conjecture states that if the morphism $\op{A}_\infty\to\op{P}$ lifts to the category of non-$\Sigma$ cyclic operads then the homotopy Gerstenhaber structure above lifts to a compatible homotopy $\op{BV}$ algebra.  An $S^1$-equivariant variant of these results states that the deformation complex of this morphism in the category of cyclic operads carries the structure of a $\op{G}rav_\infty$ algebra.  See \cite{Ward2} for full details.

Let us write $\mathsf{Def}(\mu)$ for the deformation complex associated to the map of non-$\Sigma$ operads $\mu\colon\op{A}_\infty\to\op{P}$ and 
$\mathsf{Def}(\tilde{\mu})$ for the deformation complex associated to the adjoint map of non-$\Sigma$ cyclic operads $\tilde{\mu}\colon\op{A}_\infty\to Conv(C_{\ast+1}, \op{P})$. We thus conclude:

\begin{proposition}  
The $\op{G}rav_\infty$ structure on $\mathsf{Def}(\tilde{\mu})$ lifts to a $\op{G}er_\infty$ structure.  The $\op{G}er_\infty$ structure on $\mathsf{Def}(R(\tilde{\mu}))$ lifts to a $\op{BV}_\infty$ structure.  The inclusion $\mathsf{Def}(\tilde{\mu})\hookrightarrow \mathsf{Def}(R(\tilde{\mu})) $ is a $\op{G}er_\infty$ map.
\end{proposition}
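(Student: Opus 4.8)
\emph{Strategy.} The plan is to realize all three deformation complexes as Maurer--Cartan twistings of the $L_\infty$-algebra $L^!_p(-)$ of Section \ref{vdkd}, and then to transport the three structural results recalled above from \cite{Ward2} across the functorial identifications already available. Write $p_1\colon\fcops\to\mathbb{C}om$ and $p_2\colon\fccyc\to\mathbb{C}om$ for the two projections in the non-$\Sigma$ setting, so $p_1=p_2\circ\phi$. The element $\mu$ presents $\mathsf{Def}(\mu)$ as $L^!_{p_1}(\op{P})$ twisted, $\tilde\mu$ presents $\mathsf{Def}(\tilde\mu)$ as $L^!_{p_2}(L^!_\phi(\op{P}))$ twisted, and $R(\tilde\mu)$ presents $\mathsf{Def}(R(\tilde\mu))$ as $L^!_{p_1}(R(L^!_\phi(\op{P})))$ twisted. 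Lemma \ref{compcor}, $L^!_{p_2}L^!_\phi\cong L^!_{p_1}$, together with the correspondence $\mu\leftrightarrow\tilde\mu$ under the adjunction $(R,L^!)$, then yields an isomorphism of $L_\infty$-algebras $\mathsf{Def}(\tilde\mu)\cong\mathsf{Def}(\mu)$ — this is exactly the statement that $L^!$ preserves deformation complexes (Corollary \ref{prescor}). Using $L^!(\op{P})\cong Conv(C_{\ast+1},\op{P})$, I would identify the underlying space of $\mathsf{Def}(\tilde\mu)$ with $\bigoplus_n\op{P}(n)$ and that of $\mathsf{Def}(R(\tilde\mu))$ with $\bigoplus_n\op{P}(n)\tensor k[C_{n+1}]$.

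The inclusion in statement (3) is then the inclusion of the cyclic-invariant part. Working in characteristic zero I would identify the coinvariants computing $\mathsf{Def}(\tilde\mu)$ with the invariants $(\op{P}(n)\tensor k[C_{n+1}])^{C_{n+1}}=\op{P}(n)\tensor k\cdot u_n$, where $u_n:=\sum_r\tau^r_n$, and exhibit the inclusion as induced by the map of non-$\Sigma$ operads $\iota\colon\op{P}\to\op{P}\tensor R(\meta)$, $p\mapsto p\tensor u_{|p|}$. A short computation with the composition rules of $\meta$ gives $u_n\circ_i u_m=u_{n+m-1}$ and $u_1=1_1+\tau_1$, so $\iota$ is a unital map of operads, and one checks $\iota\circ\mu=R(\tilde\mu)$. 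Because the Deligne $\op{G}er_\infty$-structure on a deformation complex is functorial for maps of operads, $\iota$ induces a $\op{G}er_\infty$-morphism $\mathsf{Def}(\mu)\to\mathsf{Def}(R(\tilde\mu))$ which, under the isomorphism of the first paragraph, is precisely the inclusion of statement (3).

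For statements (1) and (2) I would import \cite{Ward2}, noting that $L^!(\op{P})=Conv(C_{\ast+1},\op{P})$ supplies exactly the non-$\Sigma$ cyclic lift those results demand, so that no Frobenius hypothesis on $\op{P}$ is needed. The operadic Deligne statement furnishes the $\op{G}er_\infty$-structure on $\mathsf{Def}(R(\tilde\mu))$, and the cyclic Deligne statement promotes it to a $\op{BV}_\infty$-structure along $\op{G}er_\infty\to\op{BV}_\infty$, giving (2). For (1), the $S^1$-equivariant statement equips $\mathsf{Def}(\tilde\mu)$ with a $\op{G}rav_\infty$-structure; transporting the Deligne $\op{G}er_\infty$-structure from $\mathsf{Def}(\mu)$ through the isomorphism $\mathsf{Def}(\tilde\mu)\cong\mathsf{Def}(\mu)$ puts a $\op{G}er_\infty$-structure on the same complex, and it remains to see that the given gravity operations are recovered from it along the comparison of $\op{G}rav_\infty$ with $\op{G}er_\infty$ of \cite{Ward2}.

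I expect the crux, and the main obstacle, to be this last point in statement (1): the passage from the $L_\infty$-level to the full dg operad of natural operations, in the sense of 6.6.4 of \cite{KW}. Corollary \ref{prescor} and Lemma \ref{compcor} a priori match only the underlying $L_\infty$-algebras, whereas the lift requires that the identification $\mathsf{Def}(\tilde\mu)\cong\mathsf{Def}(\mu)$ intertwine the entire gravity and Gerstenhaber actions, not merely their binary brackets. I would address this by checking that each natural operation is assembled as a sum over degree-one morphisms in the same way $L^!_p$ is, so that the comparison map of Lemma \ref{compcor} (summation over preimages of morphisms) and the operad map $\iota$ are morphisms of the operads of natural operations; verifying this graphical compatibility is the step I expect to demand the most care.
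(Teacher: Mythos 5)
Your proposal is correct, and its spine coincides with the paper's proof: view $\op{A}_\infty$ as a non-$\Sigma$ cyclic operad so that $\mu\colon R(\op{A}_\infty)\to\op{P}$, use co-properness ($R\cong R^!$) and the adjunction $(R,L^!)$ to form $\tilde\mu$, identify $\mathsf{Def}(\tilde\mu)\cong\mathsf{Def}(\mu)$ via Corollary $\ref{prescor}$ (your re-derivation from Lemma $\ref{compcor}$ plus adjointness is exactly how the paper proves that corollary), and then import \cite{Ward2} for the Deligne-type structures, which disposes of (1) and (2). Where you genuinely diverge is statement (3): the paper dispatches it by citing \cite{Ward2} Theorems A and B, whereas you make the inclusion explicit --- in characteristic zero it is the inclusion of cyclic invariants $\op{P}(n)\tensor k\cdot u_n\subset\op{P}(n)\tensor k[C_{n+1}]$ --- and realize it as induced by the morphism of multiplicative non-$\Sigma$ operads $\iota\colon\op{P}\to\op{P}\tensor R(\meta)$, $p\mapsto p\tensor u_{|p|}$, checking $u_n\circ_i u_m=u_{n+m-1}$ from the composition table of $\meta$ and $\iota\circ\mu=R(\tilde\mu)$ (both computations are right; the latter follows from cyclic invariance of the $\op{A}_\infty$ generators under the adjunction of Proposition $\ref{coprop}$), and then appeal to naturality of the Deligne $\op{G}er_\infty$ structure in the multiplicative operad. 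This buys a self-contained, verifiable proof of (3) at the price of needing that functoriality, which does hold since the structure of \cite{Ward2} is assembled from brace-type operations natural in the operad. Finally, the ``crux'' you flag in statement (1) --- that Corollary $\ref{prescor}$ a priori matches only underlying $L_\infty$ structures, while ``lifting'' requires the identification to intertwine all natural operations --- is a legitimate point of care, but it is not a gap separating you from the paper: the paper's one-line deduction of (1) defers precisely this compatibility to \cite{Ward2}, so your proposed graphical verification (that the operations on both sides are built as sums over degree $1$ morphisms compatibly with Lemma $\ref{compcor}$) is added rigor rather than the repair of a step the paper carries out differently.
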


\begin{proof}  Note that here we consider $\op{A}_\infty$ to be a non-$\Sigma$ cyclic operad so we may write $\mu\colon R(\op{A}_\infty)\to\op{P}$.  From co-properness $R\cong R^!$, and we can take the adjoint of this morphism to get a map of cyclic operads $\op{A}_\infty\to L^!(\op{P})$.  Using Corollary $\ref{prescor}$ above we may identify $\mathsf{Def}(\tilde{\mu})$ with $\mathsf{Def}(\mu)$ from which the first statement follows.  The other statements then follow immediately from \cite{Ward2} Theorems A and B.
\end{proof}

\subsection{\bf Non-$\Sigma$ operads to operads.}  Let $
\mathbb{PO}$ be the Feynman category encoding non-$\Sigma$ (aka planar) operads and consider the morphism of Feynman categories $\phi\colon\mathbb{PO}\to\mathbb{O}$ which forgets planar structures.
\begin{lemma} $\phi$ is admissible and proper.
\end{lemma}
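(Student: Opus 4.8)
The plan is to verify the two requirements of Definition \ref{admisdef}, faithfulness together with the two factorization axioms, and then to deduce properness from the criterion proved above, namely that it suffices to factor every morphism of $\mathbb{O}$ as an isomorphism followed by a morphism in $im(\phi)$. Throughout I would work in the standard graphical model: a morphism of $\mathbb{O}$ is a rooted tree whose vertices are decorated by the corollas of its source, a morphism of $\mathbb{PO}$ is such a tree equipped with a planar structure (a linear order of the inputs at each vertex), and $\phi$ is the functor forgetting this planar structure.

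For faithfulness the key observation is that the source object $s(g)$ of a morphism $g$ in $\mathbb{PO}$ is a tensor of planar corollas, each of which already carries a fixed linear order on its inputs. Hence the local orders at the vertices of a planar tree are not free data: they are dictated by the (rigid) planar source. A rooted planar tree is determined up to planar isotopy by its underlying rooted tree together with these vertex orders, so $g$ is completely recovered from $\phi(g)$ together with $s(g)$ and $t(g)$, and $\phi$ is injective on each hom-set. The two factorization axioms follow from the same locality principle. Given a factorization of $\phi(g)$, the corresponding nesting of the underlying rooted tree lifts to a nesting of the planar tree $g$, since the planar structure is recorded vertex-by-vertex and is therefore inherited by both the inner components and the external tree. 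This produces the decomposition $g=g_1\circ g_2$ demanded by the second axiom, the discrepancy between the given symmetric factorization and $(\phi(g_1),\phi(g_2))$ being a relabeling of the intermediate object, i.e.\ an automorphism $\sigma$ with $f=\phi(g_1)\sigma^{-1}$ and $h=\sigma\phi(g_2)$; in particular the external factor lies in $im(\phi)$, which is the content of the first axiom.

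For properness I would invoke the criterion established above, so that it suffices to write every morphism $f$ of $\mathbb{O}$ as $f=\sigma\,\phi(g)$ with $\sigma$ an isomorphism, and it is enough to treat generating morphisms $f\colon X\to Y$ with $Y$ a corolla. Given such an $f$, choose any planarization $g$ of $f$, that is, a linear order of the inputs at each vertex; this is a morphism of $\mathbb{PO}$ with the same underlying rooted tree and the same vertex decorations as $f$. Then $\phi(g)$ and $f$ differ only in the induced labeling of the external leaves, which are precisely the inputs of $Y$; that relabeling is an element $\sigma\in Aut(Y)=S_n$, and $f=\sigma\,\phi(g)$ as required. Thus $\phi$ is proper, and this realizes the expected heuristic that symmetrizations give the Grothendieck context.

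The only genuinely delicate point is faithfulness, and specifically the claim that fixing the planar source and target leaves no residual freedom in the planar structure of the connecting tree. I would make this precise by noting that the planar isotopy class of a rooted planar tree is pinned down by the linear orders at the root corolla and at each internal vertex, and that both are determined by the rigid planar objects $s(g)$ and $t(g)$; the remaining verifications are routine transcriptions of the locality of the planar decoration into the factorization axioms.
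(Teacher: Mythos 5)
Your proof is correct and follows essentially the same route as the paper, whose entire proof is the single observation that every rooted, leaf-labeled tree is within a relabeling (permutation) of being planar-embeddable in the planar order---exactly your properness argument via the criterion $f=\sigma\phi(g)$---while your admissibility check is the paper's general locality-of-decorations reasoning (following Definition~$\ref{admisdef}$) spelled out for planar structures. One small wrinkle of phrasing: in the properness step you cannot ``choose any planarization,'' since (as you yourself note in the faithfulness paragraph) the vertex orders are dictated by the planar source corollas; the argument works because that canonical planarization exists and the only discrepancy with $f$ is the induced leaf labeling, which is absorbed by $\sigma\in Aut(Y)$.
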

\begin{proof}
This follows from the fact that every rooted, leaf labeled tree is within a relabeling (permutation) of being embeddable in the plane, in the planar order.  
\end{proof}

Hence $(R^!,L,R)$ is a triple of adjoint functors.  The functors $L$ and $R$ are well understood, e.g.\ $L= -\tensor \op{A}s$.  Theorem $\ref{itertwiningthm}$ recovers the well known fact that $LD\cong DL$.  Note $\phi$ is not right quadratic preserving so $R^!$ is not as simple to describe in general.  However one can show $R^!(\op{L}ie)=\op{A}s$ directly via adjointness or that $R^!(\op{L}_\infty)= \op{A}_\infty$ and $R^!(\op{P}reLie_\infty)= \op{D}end_\infty$ after Theorem $\ref{rint}$.  For example, this recovers the known fact that there is a bijective correspondence between symmetric operad maps $\op{L}ie\to L(\op{P})$ and non-$\Sigma$ operad maps $\op{A}s\to \op{P}$.  We may also compute $R^!(\op{C}om)= \op{N}il$, the nilpotent (non-$\Sigma$) operad (in the parlance of \cite{LV}), having $\op{N}il(2)=k$ and all higher arities are $0$.  In particular this tells us maps $\op{C}om\to L(\op{P})$ for a non-$\Sigma$ operad $\op{P}$ are equivalent to nilpotent elements in $\op{P}(2)$.

A similar analysis can be applied to the morphism associating non-$\Sigma$ cyclic operads and symmetric cyclic operads.

\subsection{Planar dioperads}  There is a Feynman category encoding a planar analog of dioperads.  Its objects are generated by planar graphs with no edges and directed flags.  Its morphisms are generated by planar directed graphs of genus $0$.  Let us call this Feynman category $\mathbb{PDO}$ for planar dioperad and let $\mathbb{PC}$ be the Feynman category encoding non-$\Sigma$ (aka planar) cyclic operads.  There is a morphism $\phi\colon \mathbb{PDO}\to \mathbb{PC}$ by forgetting directed structures.  This morphism may be seen to be admissible via the planar analog of Lemma $\ref{addc}$.

  The dualizing object in the category of planar dioperads is given by $R^!_\phi(\op{A}_\infty)$, where $\op{A}_\infty$ denotes the cyclic $\op{A}_\infty$ operad.

\begin{lemma}  $R^!_\phi(\op{A}_\infty)$ is the planar dioperad encoding $V_\infty$-algebras in the language of \cite{TZ}.
\end{lemma}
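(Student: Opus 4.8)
The plan is to deduce the statement from the intertwining theorem $R^!D\cong DR$ (Theorem \ref{rint}) together with an explicit identification of the pullback of the planar cyclic associative operad along $\phi$. First I would recall that the cyclic $\op{A}_\infty$ operad is the Feynman transform of the planar cyclic associative operad: since $\op{A}s$ is Koszul and Koszul self-dual up to suspension, one has $\op{A}_\infty\cong D(\op{A}s)$, where $D$ is the Feynman transform in $\mathbb{PC}$-$\op{O}ps$ and the requisite shift and $\fr{K}$-twist are built into $D$. Granting this, Theorem \ref{rint} yields
\[
R^!_\phi(\op{A}_\infty)\;\cong\;R^!_\phi\bigl(D(\op{A}s)\bigr)\;\cong\;D\bigl(R_\phi(\op{A}s)\bigr),
\]
so the problem reduces to identifying $R_\phi(\op{A}s)$ as a planar dioperad and computing its Feynman transform.

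Second, I would compute $R_\phi(\op{A}s)$ directly from $R_\phi(-)=(-)\circ\phi$. Since $\phi$ only forgets the directions of flags, a generating planar dioperad corolla of biarity $(r,s)$ maps to the planar cyclic corolla on $r+s$ flags, and as the planar cyclic associative operad is one-dimensional in each cyclic arity, $R_\phi(\op{A}s)(r,s)$ is one-dimensional for all $r,s$ with $r+s\geq 3$. The dioperadic composition is inherited by splitting the cyclic word at the distinguished input and output flags (the `in then out' convention). This identifies $R_\phi(\op{A}s)$ with the planar dioperad governing associative algebras equipped with a planar, cyclically invariant non-degenerate pairing, i.e.\ the planar open-Frobenius dioperad.

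Third, I would compute the Feynman transform $D(R_\phi(\op{A}s))$. Being the Feynman transform of a levelwise one-dimensional object, it is quasi-free: its generators are the degree-shifted, $\fr{K}$-twisted duals of the spaces $R_\phi(\op{A}s)(r,s)$, namely one operation $m_{r,s}$ with $r$ inputs and $s$ outputs in each biarity, and its differential is the Feynman transform differential, which sums over the ways of expanding a planar directed genus-$0$ corolla into two such corollas joined by a single internal edge. I would then match this generators-and-differential presentation against the explicit definition of the $V_\infty$ dioperad in \cite{TZ}, whose higher operations are indexed in exactly the same fashion and whose defining structure equations are precisely the corresponding quadratic master equation.

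The main obstacle is this last matching step: one must verify that the signs, degrees, and the $\fr{K}$-twist produced by the Feynman transform on planar directed genus-$0$ graphs agree on the nose with the grading and sign conventions of \cite{TZ}. This requires carefully unwinding the $\mathbb{PDO}^{-}/\mathbb{PDO}^{+}$ (odd/even) bookkeeping of Appendix \ref{FCA} together with the suspension implicit in $\op{A}_\infty\cong D(\op{A}s)$, and confirming that the resulting family of operations and relations coincides with Tradler--Zeinalian's $V_\infty$. As an independent consistency check on the underlying (undifferentiated) dioperad, one can instead apply Proposition \ref{quad}, describing $R^!_\phi$ on a presentation via the summation transformation $\sigma$ of line \eqref{sum}; this should recover the planar directed doubling of the $\op{A}_\infty$ generators and thereby corroborate the identification.
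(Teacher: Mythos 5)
Your proposal is correct and takes essentially the same route as the paper: the paper's proof also invokes the intertwining theorem to get $R^!_\phi(\op{A}_\infty)\cong D(I)$ (your $R_\phi(\op{A}s)$ is exactly the unit planar dioperad $I$, since $\op{A}s$ is the unit in planar cyclic operads and $R$ preserves units), and then matches the resulting quasi-free dioperad, with differential the signed sum over one-edged planar directed trees, against Definition 3.1 of \cite{TZ}.
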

\begin{proof}  The intertwining theorems tell us that $R^!_\phi(\op{A}_\infty)= D(I)$ where $I$ is the planar dioperad having the ground field associated to each vertex.  Thus we may describe an algebra over $D(I)$ as having a multi-linear operation corresponding to each planar vertex and subject to the differential condition that the signed sum of one edged planar directed trees evaluates to $0$, with signs coming from the odd sign conventions as in \cite{KWZ}.  This coincides exactly with the description given in Definition 3.1 of \cite{TZ}.  To be precise, we recover their definition provided we adopt the convention of \cite{TZ} that planar directed graphs have non-empty inputs but allow empty outputs in the case that there is at least two inputs.
\end{proof}

Recall that the shifted homology of the moduli space of punctured Riemann spheres forms an operad called the gravity operad which has a Koszul resolution by the dg operad $\op{G}rav_\infty$.

\begin{corollary}  The Lie bracket on the deformation complex of a $V_\infty$-algebra extends to an action of $\op{G}rav_\infty$.
\end{corollary}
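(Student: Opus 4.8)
The plan is to transport the deformation complex of a $V_\infty$-algebra across $\phi\colon\mathbb{PDO}\to\mathbb{PC}$ into the world of planar cyclic operads, where a $\op{G}rav_\infty$-action is already available, and then to pull this action back along the isomorphism of deformation complexes supplied by Corollary \ref{prescor}. Recall that a $V_\infty$-algebra structure on $A$ is precisely a morphism $\alpha\colon\omega_{\mathbb{PDO}}\to End^{di}_A$ in $\mathbb{PDO}$-$\op{O}ps$, and that $\omega_{\mathbb{PDO}}=R^!_\phi(\op{A}_\infty)$ with $\omega_{\mathbb{PC}}\cong\op{A}_\infty$ the dualizing object of planar cyclic operads (this is the content of the preceding lemma together with the fact that $R^!_\phi$ preserves dualizing complexes, Corollary \ref{prescor}). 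The associated deformation complex is the $L_\infty$-algebra $L^!_{p_1}(End^{di}_A)$ twisted by the Maurer--Cartan element corresponding to $\alpha$, and its intrinsic binary bracket is the Lie bracket in the statement.

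First I would invoke the second half of Corollary \ref{prescor}: since $L^!_\phi$ preserves deformation complexes, the deformation complex of $\alpha$ is isomorphic, as a twisted $L_\infty$-algebra, to the deformation complex of the adjoint morphism $\widetilde{\alpha}\colon\omega_{\mathbb{PC}}\to L^!_\phi(End^{di}_A)$ arising from the composite $\omega_{\mathbb{PC}}\to L^!_\phi R^!_\phi(\omega_{\mathbb{PC}})\cong L^!_\phi(\omega_{\mathbb{PDO}})\to L^!_\phi(End^{di}_A)$. The key remaining identification is a planar analogue of the binomial theorem for cyclic operads, namely $L^!_\phi(End^{di}_A)\cong End^{cyc}_{\mathsf{d}(A)}$, the planar cyclic endomorphism operad on the double $\mathsf{d}(A)=A\oplus A^\ast$ with its canonical pairing. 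This is proved exactly as the symmetric binomial theorem above, tracking the forgetting-directions description of $L^!_\phi$ on a planar corolla and the splitting into $A$- and $A^\ast$-labels; the non-symmetric bookkeeping only simplifies the argument. Consequently $\widetilde{\alpha}$ is a morphism $\op{A}_\infty\to End^{cyc}_{\mathsf{d}(A)}$ of planar cyclic operads, i.e.\ a cyclic $\op{A}_\infty$-structure on $\mathsf{d}(A)$.

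I would then bring in the $S^1$-equivariant cyclic Deligne conjecture from \cite{Ward2}, exactly as in the Proposition on deformation complexes in the operads-to-cyclic-operads subsection above: the deformation complex of a morphism $\op{A}_\infty\to\op{P}$ in the category of planar cyclic operads carries a $\op{G}rav_\infty$-action whose underlying binary operation is the operadic Lie bracket. Applying this to $\widetilde{\alpha}$ and transporting the $\op{G}rav_\infty$-structure back along the $L_\infty$-isomorphism of the previous paragraph produces a $\op{G}rav_\infty$-action on the deformation complex of the original $V_\infty$-algebra. Because that isomorphism is one of $L_\infty$-algebras, it matches the two binary brackets, so the resulting action genuinely extends the Lie bracket. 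Conceptually this is an instance of the general paradigm ($6.6.4$ of \cite{KW}) that $L^!_\phi$ preserves the entire dg operad of natural operations on deformation complexes, so the gravity action computed on the cyclic side is literally the same structure on the dioperadic side.

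The main obstacle I anticipate is not the formal transport but the verification that the $\op{G}rav_\infty$-action produced by \cite{Ward2} is genuinely compatible with the isomorphism coming from Corollary \ref{prescor}: one must check that the natural operations assembled on the planar cyclic side, built from the genus-zero moduli spaces, are carried by $L^!_\phi$ to the corresponding operations on the planar dioperadic deformation complex, and in particular that forgetting directions neither loses nor creates operations. Establishing the planar binomial theorem with correct signs and pairing, and confirming that it is the intrinsic bracket (rather than merely some bracket) that is preserved, are the points requiring genuine care.
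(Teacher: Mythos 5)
Your proposal is correct and its skeleton is the same as the paper's: use Corollary \ref{prescor} to transport the deformation complex of the $V_\infty$-structure across $L^!_\phi$ for $\phi\colon\mathbb{PDO}\to\mathbb{PC}$, identify the result with the deformation complex of the double $\mathsf{d}(A)=A\oplus A^\ast$ as an algebra over planar cyclic operads, and then invoke the $\op{G}rav_\infty$-action on such cyclic deformation complexes from \cite{Ward2}. The one step where you diverge is the identification with the double: you propose proving a planar analogue of the binomial theorem, $L^!_\phi(End^{di}_A)\cong End^{cyc}_{\mathsf{d}(A)}$, by redoing the corolla-splitting argument in the non-symmetric setting, whereas the paper avoids re-proving anything and instead observes that both $\mathbb{PDO}$ and $\mathbb{PC}$ have symmetrizations given by \emph{proper} morphisms, so the already-established symmetric binomial theorem of Subsection \ref{fdex} can be applied to conclude that the deformation complex of $V$ coincides with that of its double. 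Your route is self-contained within the planar world and, as you note, the planar bookkeeping only simplifies the signs and orderings; the paper's route buys economy by reusing the symmetric result, at the cost of a short argument that properness of the symmetrization maps lets one pass back and forth. Two minor remarks: the compatibility check you flag as the ``main obstacle'' (that the gravity operations are genuinely carried across, and that it is the intrinsic bracket that is matched) is exactly what Corollary \ref{prescor} delivers, since it identifies the twisted $L_\infty$-algebras themselves, not merely their underlying complexes; and the paper's citation additionally routes through formality of the chain model for the gravity operad (\cite{CWard}) to phrase the conclusion as a $\op{G}rav_\infty$-action, a step you absorb by quoting the $S^1$-equivariant statement of \cite{Ward2} directly, which is equally fine.
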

\begin{proof}  By Corollary $\ref{prescor}$, the deformation complex of a $V_\infty$-algebra is preserved by the functor $L^!$.  Both of these Feynman categories have symmetrizations given by proper maps, so we may apply Subsection $\ref{fdex}$  to conclude that the deformation complex of $V$ coincides with the deformation complex of its double $V\oplus V^\ast$ viewed as an algebra over a cyclic operad.  It was shown in \cite{Ward2} that any such deformation complex of planar cyclic operads carries an action of such a chain model for the gravity operad, which was shown to be formal in \cite{CWard} hence the claim. \end{proof}

In \cite{TZ} the authors consider actions of string diagrams on cyclic complexes of $V_\infty$ algebras and propose (see their Remark 5.6) that these should be related to moduli spaces of Riemann surfaces.  The above result shows this to be the case in genus $0$.  Let us also observe that this corollary holds for any deformation complex of planar dioperads; the target need not be the endomorphism object.

\subsection{Additional examples.}  By way of conclusion, let us mention several additional examples.

\begin{itemize}
	  
\item {\bf  Shuffle operads to operads}.  The map is given by inclusion.  This map is co-proper; hence $R$ preserves freeness, quadratic objects and the Feynman transform.  Here we can use Manin products on binary subcategories (as pointed out in \cite{LV} 8.10.16) and $R(Lie)$ will be a dualizing object.
	
\item {\bf Half-props to dioperads.}  This map is given by inclusion but is not co-proper.  It was studied in \cite{MV} and we recover several of their results in this framework.
	
\item{\bf Dihedral operads.}  The proper morphism from non-$\Sigma$ cyclic operads to cyclic operads admits a factorization into a pair of proper morphisms via the intermediary of dihedral operads (c.f.\ \cite{AP},\cite{DV}).  In particular, symmetric cyclic operads and dihedral operads are related by a triple of adjoint functors $(R^!,L,R)$ and the study of the dihedral operad $DR(Grav)$ in loc.\ cit.\ can be recast as the study of $R^!(HyCom_\infty)$.
	
\item {\bf Changing colors.}  We can consider Feynman categories encoding colored versions of the above structures; eg colored operads.  Maps between color sets will induce admissible functors on the respective Feynman categories.

\item {\bf Non-$\Sigma$ modular operads to modular operads.}  Non-$\Sigma$ modular operads were introduced in \cite{Marklns}, which gives the adjunction $(L,R)$.  It would be interesting to consider the other operations; for example show that this morphism is proper and to consider $R^!$ and the associated dualizing objects.



\end{itemize}

\appendix

\section{Feynman categories reference guide.}\label{FCA}

In this appendix we will fix our assumptions and conventions regarding Feynman categories.  In particular we choose to impose several restrictions upon the general framework presented in \cite{KW} which will simplify our arguments without limiting the applicability of our results.

Recall that in general a Feynman category is specified by three pieces of data:  a groupoid $\mathbb{V}$, a symmetric monoidal category $(\mathbb{F},\tensor)$ and a functor $\iota\colon\mathbb{V}\to\mathbb{F}$.  This data is then required to satisfy axioms which we recall below.  In this paper we consider all Feynman categories to be strict (Definition 1.8.1 of \cite{KW}) which means that $\iota$ is an inclusion $\mathbb{V}\subset\mathbb{F}$ and we further consider all Feynman categories to be ``skeletal'' by which we mean that the groupoid $\mathbb{V}$ contains only automorphisms.  Let us also impose the condition that $\mathbb{V}$ is a small category and that each automorphism group in $\mathbb{V}$ is finite.
 
After imposing these restrictions, a Feynman category is equivalent to the following data:
\begin{enumerate}
	\item  A set $ob(\mathbb{V})$.  The elements of this set are often called vertices.
	\item  A finite group $Aut(v)=Hom_{\mathbb{V}}(v,v)$ for each vertex $v$.
	\item  A set of ``generating morphisms'' $Hom_{\mathbb{F}}(X,v)$ for each $X\in ob(\mathbb{V})^{\times n}$, each $n\in \mathbb{N}$ and each $v\in ob(\mathbb{V})$.
	\item  A composition law endowing the sets $\{ Hom_{\mathbb{F}}(X,v) \}$ with the structure of a $ob(\mathbb{V})$-colored operad. 
\end{enumerate}

The observation codified in \cite{KW} is that such data is equivalent to the structure of a symmetric monoidal category $(\mathbb{F},\tensor)$ with satisfies the ``hereditary condition'' with respect to $\mathbb{V}$ (see Definition 1.1.1 of \cite{KW}).  Moreover, under this correspondence, algebras over colored operads correspond to symmetric monoidal functors from Feynman categories.  

So, under our assumptions, and up to factors of the monoidal unit, a Feynman category is a pair $(\mathbb{V},\mathbb{F})$ as above, in which the objects of $\mathbb{F}$ can be written as $v_1\tdt v_n$ and the morphisms in $\mathbb{F}$ can be written as $\tensor$-products of compositions of generating morphisms $v_1\tdt v_n\to v_0$ with automorphisms of vertices.  We often denote a Feynman category by $\mathbb{F}$, leaving $\mathbb{V}$ implicit.  

For any fixed symmetric monoidal category $\op{C}$ we define $\mathbb{F}$-$\op{O}ps_\op{C}$ to be the category of strong symmetric monoidal functors from $\mathbb{F}\to\op{C}$ and $\mathbb{V}$-Mods$_\op{C}$ to be the category of functors from $\mathbb{V}$ to $\op{C}$.  In this paper we will consider $\op{C}=dgVect_k$, and omit the subscript $\op{C}$.  Any $\op{O}\in\mathbb{F}$-$\op{O}ps$ is determined by its underlying $\mathbb{V}$-module and its image on generating morphisms. 

Given Feynman categories $\mathbb{F}_1$ and $\mathbb{F}_2$, a morphism between them $\phi\colon\mathbb{F}_1\to\mathbb{F}_2$ is a symmetric monoidal functor which restricts to a functor $\mathbb{V}_1\to\mathbb{V}_2$, which we also call $\phi$.  In this paper we impose that morphisms satisfy the property that for any $Y\in\mathbb{F}_2$, the set $\{X\in \mathbb{F}_1: \phi(X)=Y \}$ is finite.

\subsection{Cubicality}  The conditions imposed above specifying which Feynman categories and morphisms between them that we consider were rather mild.  For some of our results, we need the notion of ``Feynman transform'' and this requires imposing a rather more restrictive condition which we call cubicality (Definition 7.2.2 of \cite{KW}).  Cubicality is an answer to the question: which colored operads have algebras which behave like operads?

A cubical Feynman category is a Feynman category along with a degree function for its morphisms; i.e.\ a function $Mor(\mathbb{F})\to\mathbb{N}$ which is additive with respect to composition and $\tensor$-products.  Cubicality then further requires that the degree function must satisfy two important properties.  First, we know that isomorphisms have degree $0$, but cubicality also requires the converse: a degree $0$ morphism is an isomorphism.  Second, cubicality requires that every degree $n$ morphism (for $n\geq 1$) can be written as a composition of $n$ degree $1$ morphisms is exactly $n!$ distinct ways (up to isomorphism).

To be more precise,  if we define $C^n(A,B)$ to be the set of sequences of $n$ composible degree $1$ morphisms in $\mathbb{F}$ from $A$ to $B$, modulo composition of isomorphisms, then we require $C^n(A,B)$ to have a free and transitive $S_n$ action such that composition of a sequence induces an isomorphism $Hom_\mathbb{F}(A,B)\cong \coprod_n C^n(A,B)_{S_n}$ for which concatenation of sequences corresponds to composition in $\mathbb{F}$.  We refer to Definition 7.2.2 of \cite{KW} for the full details of this definition and further discussion.

The cubicality conditions are often satisfied for Feynman categories whose morphisms are graphs.  The degree is often the number of (internal) edges and the fact that we can assemble $n$ edges in any of the possible $n!$ orders to produce the same graph shows us that the central axiom holds.  Examples include Feynman categories encoding operads, cyclic operads, modular operads, non-$\Sigma$ versions of these, as well as wheeled properads, dioperads, 1/2-props, wheeled operads, dihedral operads, etc.  We can also consider degrees to correspond to multi-edge contractions in which case one could also consider properads.  The fact that all degree $0$ morphisms are isomorphisms forces us to consider all of these objects without units.  This is a condition familiar in the usual bar construction for an operad say, when we first pass to the augmentation ideal of the unit.  It also forces us to restrict our attention to connected graphs.

Finally we define a morphism of cubical Feynman categories $\phi\colon\mathbb{F}_1\to\mathbb{F}_2$ to be a morphism of Feynman categories which is degree preserving.  In particular, to say that $\phi$ is cubical means that both its source and target are cubical, and that $\phi$ is degree preserving.

In the six operations formalism, Verier duality is an endofunctor.  For us the Feynman transform will play this duality role, but having an endofunctor requires a finiteness restriction that we now impose.  Recall (Definition 7.3.1 \cite{KW}) that a cubical Feynman category is of finite type if the number of isomorphism classes of degree $1$ morphisms with a fixed target is finite.  In this paper we impose:

\begin{assumption}\label{finite}  All cubical Feynman categories are of finite type.
\end{assumption}

\subsection{Parity and the Feynman transform.}  Informally, the Feynman transform of an object $\op{O}\in\mathbb{F}$-$\op{O}ps$ is formed by taking the free $\mathbb{F}$-operad on the underlying $\mathbb{V}$-module and imposing a differential formed by summing over degree $1$ morphisms.  In order for this operation to be square zero however, we must introduce some signs.  These signs are encoded by considering the sets $C^n(A,B)$ above with both their given $S_n$ action and the associated alternating action.

One way to make this precise is to introduce enriched Feynman categories and their functors, in which the set $C^n(A,B)$ is replaced with a graded $S_n$ module over $k$.  In particular, to a cubical Feynman category $\mathbb{F}$ we can associate two enriched Feynman categories $\mathbb{F}^+$ and $\mathbb{F}^-$ (see \cite{KW} Definition 5.2.4).  These have the same objects as $\mathbb{F}$, but the morphisms are respectively the trivial enrichment for $\mathbb{F}^+$ or the corresponding alternating action in degree $n$ for $\mathbb{F}^-$.  We can then define the Feynman transform (Definition 7.4.1 of \cite{KW}) as a pair of functors $D^+$ and $D^-$:
\begin{equation*}
D^+\colon \mathbb{F}^+\text{-}\op{O}ps\leftrightarrows\mathbb{F}^-\text{-}\op{O}ps :D^-
\end{equation*}

These functors are of the form $D^\pm(-)= (F^\mp(G(-)^\ast),\partial)$.  Theorem 7.4.3 of \cite{KW} then states that $D^+D^-\sim id^-$ and $D^-D^+\sim id^+$.

Another way to introduce the categories $\mathbb{F}^\pm$-$\op{O}ps$ is to first consider the Feynman category $\hat{\mathbb{F}}$ whose objects are the same as $\mathbb{F}$ and whose morphisms are $Hom_{\hat{\mathbb{F}}}(A,B)\cong \coprod_nC^n(A,B)$ and then to consider $\mathbb{F}^\pm$-$\op{O}ps\subset\hat{\mathbb{F}}$-$\op{O}ps$ to be those functors which are (skew) invariant: $\op{P}(\sigma(f))=(\pm 1)^{|\sigma|}\op{P}(f)$ for $\sigma\in S_{|f|}$.

In this paper we will typically suppress the superscript $\pm$ notation.  There are two equivalent ways in which this can be interpreted.  First, this may be considered to mean that a statement is valid for both choices of superscripts.  Of course, in the presence of multiple instances of $D$, the choices must be consistent.  For example, the statement $D^2\sim id$ means that both $D^+D^-\sim id^-$ and $D^-D^+\sim id^+$ (which is true).  Alternatively, we can consider the category $\mathbb{F}^+\text{-}\op{O}ps\coprod \mathbb{F}^-\text{-}\op{O}ps$ whose set of objects is the disjoint union of two objects sets and with no additional morphisms.  In this case we can consider $D:=D^+\coprod D^-$.
Notice $\mathbb{F}^+\text{-}\op{O}ps\coprod \mathbb{F}^-\text{-}\op{O}ps$ is a symmetric monoidal category in which $\tensor$ respects parity in the expected way.

There is a third way to interpret the suppression of superscripts in certain cases.  Often there exists an isomorphism  $\mathbb{F}^+\text{-}\op{O}ps \cong \mathbb{F}^-\text{-}\op{O}ps$ under which $D^+\cong D^-$.  In such a case we can view $D^+\cong D^-$ as an endomorphism.  In practice, these isomorphisms are realized by various shifts and suspensions, see Table $IV$ of \cite{KWZ}.  This is the case for operads and, up to this isomorphism, $D$ is the usual bar construction.  This is not the case for modular operads, for example, and here our category of $\mathbb{F}^-\text{-}\op{O}ps$ corresponds to the $\mathfrak{K}$-modular operads of \cite{GeK2}. 

\subsection{Quadratic objects and quadratic duality.}

One consequence of $\mathbb{F}$ being cubical is that the free objects in $\mathbb{F}$-$\op{O}ps$ are $\mathbb{N}$-graded by the degree of morphisms.  We call this grading the ``weight'' and depict it as a superscript.  Notice that for  $E\in\mathbb{V}$-Mods, the weight $r$ component of the free $\mathbb{F}$-operad can be viewed as a $\mathbb{V}$-module; i.e.\ $F(E)^r\in \mathbb{V}$-Mods.

Quadratic data relative to a given cubical Feynman category $\mathbb{F}$ is a pair $(E,S)$, where $E\in\mathbb{V}$-Mods which is finite dimensional in each arity, and $S\subset F(E)^1$ as a $\mathbb{V}$-module.  Let us denote the free $\mathbb{F}$-operad as $F=F_+$ and the free $\mathbb{F}_-$-operad as $F_-$.  These spaces are still weight graded, and in weight 1 we have  $F(E)^1 = F_-(E)^1$.  Thus quadratic data also determines a subspace $S\subset F_-(E)^1$.

We define the category $\mathbb{F}_\op{Q}$-$\op{O}ps$ as follows.  An object is a pair of a quadratic datum and an element of $\{+,-\}=\mathbb{Z}_2$, called the parity.  The morphisms always preserve parity and are given by equivariant maps $E\to E^\prime$ such that the induced map $F_\pm(E)^1\to F_\pm (E^\prime)^1$ sends $S$ in to $S^\prime$.  Quadratic duality is a functor $\mathbb{F}_\op{Q}\text{-}\op{O}ps \to \mathbb{F}_\op{Q}\text{-}\op{O}ps$ defined by $(E,S,\pm) \mapsto (E^\ast, S^\perp, \mp)$, where $\ast$ denotes linear dual and $S^\perp$ denotes those functionals vanishing on $S$.  And $\pm$ means (of course) $+$ or $-$, whence $\mp$ means we have changed parity.  Clearly this functor is involutive.

We let $\langle S\rangle $ denote the ideal generated by $S$.  This is the sub-$\mathbb{V}$-module of $F_\pm(E)$ generated by all images of $F_{\pm}(E)(-)(-)$ evaluated at morphisms $f\in Mor(\mathbb{F})$ and vectors in $F_\pm(E)(s(f))$ having at least one tensor factor in $S$.  It is an easy exercise to see that for $S\subset F_\pm(E)$, the quotient $F_\pm(E)/\langle S \rangle$ is naturally in $\mathbb{F}_\pm$-$\op{O}ps$.

There is a faithful functor $\mathbb{F}_\op{Q}$-$\op{O}ps \to \mathbb{F}$-$\op{O}ps$ given by sending $(E,S,\pm) \mapsto F_\pm(E)/\langle S \rangle$.  Here (as above) we abuse notation by considering $\mathbb{F}$-$\op{O}ps=\mathbb{F}^+\text{-}\op{O}ps\coprod \mathbb{F}^-\text{-}\op{O}ps$.    The objects in the image of this functor are called quadratic $\mathbb{F}$-$\op{O}ps$, and we often (further) abuse notation by writing $F_\pm(E)/\langle S \rangle\in \mathbb{F}_\op{Q}$-$\op{O}ps$.  If $\op{O}$ is quadratic in $\mathbb{F}$-$\op{O}ps$, then we let $\op{O}^!$ denote the quadratic object associated to the dual quadratic data.  Note, as above, in the presence of the usual shift and suspension isomorphisms $\mathbb{F}^+\text{-}\op{O}ps \cong \mathbb{F}^-\text{-}\op{O}ps$, and this definition recovers the usual notion of quadratic duality (after composing with said isomorphism).  For example, our notion of quadratic dual of the commutative operad is the ``odd Lie operad'': $\Sigma^{-1}\fr{s}\op{L}ie$, see \cite{KWZ}.

Finally, we observe that quadratic duality and the Feynman transform are related by a natural morphism $D(\op{P})\to\op{P}^!$.  For if $\op{P}=F_\pm(E)/\langle S \rangle$ then there is an inclusion of $\mathbb{V}$-modules $E\hookrightarrow GP$, and hence a sequence $F_\mp(GP^\ast)\to F_\mp(E^\ast)\to \op{P}^!$.  The composition in this sequence is non-zero only for vertices labeled by generators, and hence for cycles in $D(\op{P})$.  Thus composition of this sequence gives a dg map $D(\op{P})\to\op{P}^!$.  We say $\op{P}\in\mathbb{F}_\op{Q}$-$\op{O}ps$ is Koszul if this map is a weak equivalence.

{\small 
\subsection*{Acknowledgment:}
I would like to gratefully acknowledge the support of the Simons Center for Geometry and Physics where this paper was conceived of.  In particular lectures delivered there by John Morgan entitled ``A topologist looks at sheaf theory'' as part of the program ``Interactions of Homotopy Theory and Algebraic Topology with Physics through Algebra and Geometry'' helped form the conceptual basis for this work.  I have also benefited from helpful discussions with Alexander Berglund, Gabriel Drummond-Cole, Martin Markl, Dan Petersen, Dennis Sullivan, Bruno Vallette and Sasha Voronov.  I am grateful for the support of Stockholm University and IHES where portions of this paper were written.  This paper was improved significantly by detailed and helpful suggestions and comments from an anonymous referee.  Finally, this paper owes an immeasurable debt to my advisor Ralph Kaufmann who freely shared his knowledge and ideas with me over the course of countless conversations.
}

\end{document}